\documentclass[a4paper,11pt, reqno]{amsart}
\usepackage{tikz}
\usepackage{graphicx}
\usepackage{array}
\usepackage{geometry}
\usetikzlibrary{cd}
\usepackage{}
\usepackage{amssymb, amsmath, amsfonts, amscd}
\usepackage{mathrsfs, latexsym, array, longtable}
\usepackage{xcolor, bm, enumitem}
\usepackage{hyperref}
\usepackage{booktabs}

\hypersetup{colorlinks=true,citecolor=red,linkcolor=blue,urlcolor=black,}

\theoremstyle{definition}
\newtheorem{theorem}{Theorem}[section]
\newtheorem{proposition}[theorem]{Proposition}
\newtheorem{lemma}[theorem]{Lemma}

\newtheorem{conjecture}[theorem]{Conjecture}
\newtheorem{remark}[theorem]{Remark}
\newtheorem{definition}[theorem]{Definition}
\newtheorem{example}[theorem]{Example}
\newtheorem{corollary}[theorem]{Corollary}

\DeclareMathOperator{\Aut}{Aut}
\DeclareMathOperator{\Alb}{Alb}
\DeclareMathOperator{\vol}{vol}

\DeclareMathOperator{\Pic}{Pic}

\DeclareMathOperator{\im}{im}

\DeclareMathOperator{\GL}{GL}
\newcommand{\C}{\mathbb C}
\newcommand{\Z}{\mathbb Z}
\newcommand{\N}{\mathbb N}
\newcommand{\Q}{\mathbb Q}
\newcommand{\Pp}{\mathbb P}
\newcommand{\OO}{\mathcal O}

\newcommand{\Iv}{\operatorname{Ivol}}

\title{\textbf{On irregular threefolds of Kodaira dimension two}}
\author{Ziqi Liu \and Pengjin Wang}
\date{\today}
\address{\rm Ziqi Liu, Shanghai Center for Mathematical Sciences, Fudan University, Jiangwan Campus, Shanghai, 200438, China}
\email{zqliu22@m.fudan.edu.cn}
\address{\rm Pengjin Wang, Shanghai Center for Mathematical Sciences, Fudan University, Jiangwan Campus, Shanghai, 200438, China}
\email{pjwang25@m.fudan.edu.cn}
\begin{document}
\pagestyle{myheadings}
\begin{abstract}
    Let $X$ be an irregular threefold of Kodaira dimension two. We show that the Iitaka volume $\Iv(X)$ of $X$ is at least $\frac{1}{100}$ and $\lvert mK_X\rvert$ defines the Iitaka fibration for $m\geq 32$. Both $\frac{1}{100}$ and $32$ are sharp.
\end{abstract}
\markboth{\hfill Ziqi Liu and Pengjin Wang\hfill}{\hfill Irregular threefolds of Kodaira dimension two\hfill}

\maketitle
\tableofcontents
\addtocontents{toc}{\protect\setcounter{tocdepth}{1}}
\section{Introduction}
Classifying algebraic varieties is one of the most important goals in birational geometry. Curves are classified by their genus into rational curves, elliptic curves and curves of general type. In dimension two, the Enriques-Kodaira classification describes surfaces up to birational equivalence according to their Kodaira dimension. In higher-dimensional geometry, Kodaira dimension continues to play a central role in the birational classification of algebraic varieties, although a complete classification remains far out of reach. 

\subsection{Effective Iitaka fibration}
Let $X$ be a smooth projective variety of Kodaira dimension $\kappa(X)\geq 0$. By Iitaka's construction, there exists a smooth projective birational model $X^\prime $ of $X$ admitting a contraction $f: X^\prime\to Z$, where $Z$ is a normal projective variety of dimension $\kappa(X)$ and the general fiber of $f$ has Kodaira dimension zero. The induced map $X \dashrightarrow Z$ is called an Iitaka fibration of $X$ and is uniquely determined up to birational equivalence. For any sufficiently divisible integer $m\geq 1$, the pluricanonical system $\lvert mK_X\rvert$ defines the Iitaka fibration. The effective Iitaka fibration conjecture predicts the existence of a uniform integer depending only on the dimension and Kodaira dimension:

\begin{conjecture}[{\cite[Conjecture 1.7]{HaconMcKernan2006}}]\label{conj: eff-iitaka}
    Let $X$ be a smooth projective variety of dimension $d$ and Kodaira dimension $\kappa(X)\geq 0$. Then there is a positive integer $m(d, \kappa)$, depending only on $d$ and $\kappa$, such that the pluricanonical system $\lvert mK_X \rvert $ defines the Iitaka fibration for all positive integers $m$ divisible by $m(d,\kappa)$.

Moreover, when $\kappa=d$ or $d-1$, one expects $\lvert mK_X \rvert $ to define the Iitaka fibration for all sufficiently large $m$, without any divisibility assumption.
\end{conjecture}

When $\kappa=d$, the conjecture was proved by H. Tsuji, S. Takayama, C. D. Hacon and J. M\textsuperscript{c}Kernan in \cite{Tsuji06,Takayama06,HaconMcKernan2006}. When $\kappa=d-1$, the conjecture was proved in \cite{Jiang2013}. Note that the divisibility condition in \cite{Jiang2013} can be removed by applying \cite[Lemma 2.3.4]{HaconMcKernanXu2013} on the Iitaka base. For $0\leq \kappa\leq d-2$, the divisibility condition is still necessary. Further progress towards the conjecture was made in \cite{ViehwegZhang09,BirkarZhang16}. However, the bounds obtained there also depend on topological invariants of the general fiber. Recently, OpenAI \cite{OpenAI2026Iitaka} claimed a proof of the conjecture in full generality. Our arguments do not rely on this work.

The remaining problem is to determine explicit uniform bounds. For a $d$-dimensional smooth projective variety $X$ of Kodaira dimension $\geq d-1$, define
\[r_s(X):=\min\{l\in \mathbb{Z}\vert \lvert mK_X\rvert\, \text{defines the Iitaka fibration for all}\, m\geq l\}.\]
Extending the notion of the canonical stability index in \cite{ChenLiu2024}, we define 
\[r_d:=\sup\{r_s(X)\vert X \,\text{is a smooth projective}\,d\text{-fold} \,\text{of general type} \},\]
\[\hat{r}_{d}:=\sup\{r_s(X)\vert X \,\text{is a smooth projective}\,d\text{-fold} \,\text{with}\,\kappa(X)=d-1\}.\]
When $d=2$, we have $r_2=5$ and $\hat{r}_{2}=14$ by \cite{Bombieri1973,KatsuraUeno1985}. The optimal value of $m(2,0)$ is $12$ by \cite{Enriques1914I}. When $d=3$, we have that $r_3\leq 57$ by \cite{ChenChen2015, Chen2018}. Also, the optimal value of $m(3,0)$ is determined by \cite[Corollary 5]{MachidaOguiso98}. For threefolds with $\kappa=1$ and $2$, partial results were obtained by H. Chen \cite{Chen2021} and A. T. Ringler \cite{Ringler2007}.

In this article, we study smooth irregular threefolds of Kodaira dimension two, with particular emphasis on the relationship between the Albanese morphism and the Iitaka fibration. For an irregular surface (resp., threefold) $X$ of general type, $\lvert mK_X \rvert$ defines a birational map for $m\geq 3$ (resp., $m\geq 5$) by \cite{Bombieri1973, CCCJ21}. For an irregular surface of Kodaira dimension one, $\lvert mK_X \rvert$ defines the Iitaka fibration for $m\geq 14$ by \cite{KatsuraUeno1985}, which coincides with $\hat{r}_2$. The following theorem generalizes the result to dimension $3$ and establishes a sharp effective bound in this setting:

\begin{theorem}\label{mainthm: eff-iitaka}
    Let $X$ be a smooth projective irregular threefold with $\kappa(X)=2$. Then
\begin{enumerate}[label=(\arabic*), ref=\arabic*]
    \item\label{item: eff-iitaka-1} $|mK_X|$ defines the Iitaka fibration for any $m\geq 32$;  
    \item\label{item: eff-iitaka-2} $|mK_X|$ defines the Iitaka fibration for any $m\geq 14$, if the Albanese map factors through the Iitaka fibration.
\end{enumerate}
Both bounds $14$ and $32$ are optimal.
\end{theorem}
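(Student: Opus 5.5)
We would pass to a good minimal model and then to the Iitaka fibration. Run the MMP and abundance for threefolds to replace $X$ by a minimal model with $K_X$ semiample, giving $f\colon X\to Z$ onto a normal surface with general fiber $F$ an elliptic curve. The canonical bundle formula (Fujino--Mori) then gives
\[
12K_X\sim 12 f^*(K_Z+B_Z+M_Z),
\]
where $B_Z$ is the discriminant divisor, with coefficients in the standard Kodaira set $\{1-\tfrac1n\}\cup\{\tfrac16,\tfrac14,\tfrac13,\tfrac12,\dots\}$, and $12M_Z$ is base point free, induced by the $j$-map. For every $m$ we have
\[
H^0(mK_X)=H^0(Z,\lfloor m(K_Z+B_Z+M_Z)\rfloor),
\]
with the rounding appropriately interpreted. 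So $|mK_X|$ defines the Iitaka fibration if and only if $|\lfloor m(K_Z+B_Z+M_Z)\rfloor|$ is birational on $Z$. The problem thus becomes an effective birationality statement for a big log-canonical-type divisor $L=K_Z+B_Z+M_Z$ on a surface with controlled coefficients. The irregularity $q(X)>0$ will be used through the Albanese map $a\colon X\to A$.

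\emph{Case (2): the Albanese map factors through $f$.} Then $q(Z)=q(X)>0$, so $Z$ is an irregular surface. We would work with the pair $(Z,B_Z)$ and use the generic vanishing and continuous rank techniques available for irregular varieties (Chen--Hacon, Jiang). Concretely, consider the Albanese image of $Z$ and split into subcases.
\begin{itemize}
\item If $a_Z$ is generically finite, we would show $|\lfloor mL\rfloor|$ is birational for $m\geq 3$ or so, along the lines of Chen--Chen--Jiang. We would twist by $P\in\Pic^0$ and use Castelnuovo--Mumford regularity/IT$^0$ on $a_{Z*}\mathcal O(\lfloor mL\rfloor)$.
\item If $Z\to C$ is a fibration over a curve of genus $\geq1$, we would restrict to the general fiber $G$ of $Z\to C$. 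Then $L|_G$ is a divisor $K_G+B|_G+M|_G$ on a curve of degree $>0$ with standard coefficients. A Katsura--Ueno style analysis on curves shows that $\deg\lfloor mL|_G\rfloor\geq 2g(G)+1$, together with separation across fibers from the positivity of the base, holds as soon as $m\geq 14$. The worst case is $G=\Pp^1$ with coefficients $\tfrac12,\tfrac23$ and $\tfrac67$-type, i.e.\ the $(2,3,7)$-type orbifold contributions, exactly as in the surface case.
\end{itemize}
Sharpness would come from $E\times S$-type or product examples realizing the Katsura--Ueno surface example times a curve, or an elliptic surface times an elliptic curve arranged so $a$ factors.

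\emph{Case (1): $a$ does not factor through $f$.} Then the fiber $F$ maps nontrivially to $A$, so $F$ dominates an elliptic curve $E\subset A$ up to isogeny. After an étale base change we expect $f$ to be isotrivial, with $X$ birational to a quotient $(E\times S)/G$. In this situation the $j$-map is constant, $M_Z$ is torsion of order dividing $4$ or $6$, and $B_Z$ has only coefficients $1-\tfrac1n$ with $n\in\{2,3,4,6\}$. The problem becomes birationality of $\lfloor m(K_Z+B_Z)\rfloor+mM_Z$ on a surface pair $(Z,B_Z)$ of log general type with such restricted coefficients and possibly $Z$ regular. Here we would invoke effective birationality for surface pairs with hyperstandard coefficients in $\{\tfrac12,\tfrac23,\tfrac34,\tfrac56\}$ (Alexeev--Liu type bounds), combined with the $\Iv\geq \tfrac1{100}$ estimate from the abstract. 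That estimate comes from minimizing $(K_Z+B_Z)^2$ over such pairs, giving volume at least $\tfrac1{100}$ after accounting for the twist $M_Z$. Then a Tsuji/Takayama-type argument (lc centers cut by a $\Q$-divisor $\sim_{\Q}tL$ with $t^2L^2>8$) gives birationality for $m\geq 32$. The sharp example would be a $(E\times S)/G$ attaining volume $\tfrac1{100}$ and failing at $m=31$.

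\textbf{Main obstacle.} The hard part will be the sharp constant in Case (1). We must classify surface pairs $(Z,B_Z)$ with these coefficients and small volume, and handle the torsion twist $M_Z$ and the rounding, which do not behave linearly. A general volume argument alone will not give exactly $32$. For the lowest-volume cases we expect to need an explicit analysis, plausibly with $Z$ rational or ruled, checking each case by hand.
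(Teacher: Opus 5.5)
Your split into the Albanese-factoring and non-factoring cases matches the paper's framework, and so does the reduction to the Iitaka base. However, neither case contains the argument that produces the constants, and Case (1) rests on a false structural claim.

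\textbf{Case (1).} What must be shown is that $G$ acts on $E$ by translations. The paper proves this by comparing the irregularities of the Chen--Church--Hao model $(B'\times Z)/G_0$ and the Hao--Schreieder model $(T\times E)/G$. It follows that $G$ is abelian, the moduli part is trivial, $H^0(X,mK_X)=H^0(S,mK_S)^G$, and $\Iv(X)=V/|G|$.

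But translations can have any order, so stabilizers of curves in $T$ give boundary coefficients $1-\frac1e$ with $e$ not confined to $\{2,3,4,6\}$. For $T=\{w^2=x^{10}+y^{10}+z^5\}$ and $G=\Z_{10}^2$ one gets $T/G\cong\Pp^2$ with boundary $\frac{9}{10}\ell_1+\frac{9}{10}\ell_2+\frac45\ell_3+\frac12\ell_4$. So your appeal to pairs with coefficients in $\{\frac12,\frac23,\frac34,\frac56\}$ is unfounded, and using $\Iv\ge\frac1{100}$ ``from the abstract'' is circular.

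The missing ingredients are:
\begin{enumerate}
\item The bound $|G|\le 100V$ for abelian $G\le\Aut(S)$. The paper gets it from a $G$-invariant canonical or bicanonical pencil spanned by semi-invariant sections, using L\"u's bound for rational pencils, and Nakajima's $4g+4$ plus slope inequalities for irrational ones.
\item Lower bounds for $L\cdot C$ on curves through very general points, since $L^2>8$ alone never yields birationality. These come from $P\cdot C=r/h$, where $h$ is the stabilizer order of a component $\Gamma$ of $\pi^{-1}(C)$ and $h\le 4p_a(\Gamma)+4$.
\item Separate, explicit treatment of $(V,p_g,q)=(1,0,0),(1,1,0),(1,2,0),(2,3,0)$ via canonical rings, invariant monomials and lattice-index bounds. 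This is exactly where $31$ versus $32$ is decided.
\end{enumerate}
Your ``main obstacle'' paragraph concedes this last part, so Case (1) is not yet a proof.

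\textbf{Case (2).} Restricting $L$ to a fiber of $Z\to C$ does not produce sections on $Z$. The map $H^0(Z,\lfloor mL\rfloor)\to H^0(G,\lfloor mL\rfloor|_G)$ need not be surjective, and ``positivity of the base'' is not a lifting mechanism.

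The paper's key idea is fullness. Because $q(X)=q(Z)$, the divisor $kK_X+a^*P$ is effective for every $P\in\Pic^0(A)$ as soon as $k\ge2$ and $P_k(F)>0$, where $F$ is the Albanese fiber. A Chen--Hacon-type argument then shows that $|(m+n+t)K_X+a^*P|$ separates general points with distinct images in the Iitaka base, provided $mK_X,nK_X,tK_X$ are full and $|(m+n)K_F|$ defines the Iitaka fibration of $F$. Since $|10K_F|$ always does and $P_4(F)>0$, one gets $14=10+4$; other splittings handle $15$ and $17$.

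Your worst case $(2,3,7)$ is also wrong. With $\chi=0$ it has $P_{14}=1$, which is inconsistent with your threshold $14$. It is Iitaka's non-algebraic $86$-example and cannot occur on a projective surface. The algebraic extremal case is $(2,6,6)$.

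\textbf{Sharpness.} Your proposed sharpness examples fail. Every $\kappa=1$ surface $S$ for which $|13K_S|$ does not define the Iitaka fibration has $\chi=0$, base $\Pp^1$ and $q=1$. Hence its Albanese map does not contract the elliptic fibers, and $S\times C$ falls into Case (1). One needs a twisted example such as $(D\times W)/G$ with $W$ Shioda's modular surface of level $6$. For $32$ one needs the explicit $(1,2)$-surface quotient with $h^0(31K_X)=1$.
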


For an irregular smooth projective variety $X$ of general type and dimension $d$, M. Chen and Z. Jiang conjectured in \cite[Conjecture 4.8]{ChenJiang2022} that $\lvert mK_X \rvert$ induces a birational map for all $m\geq r_{d-1}$. This conjecture has been proven in dimension $\leq 3$, by \cite{Bombieri1973,CCCJ21}.

Theorem \ref{mainthm: eff-iitaka}(\ref{item: eff-iitaka-2}) motivates  the following analogous lifting principle:

\begin{conjecture}\label{conj:lifting-principle for kod n-1}
    Let $X$ be a smooth projective variety of dimension $d\geq 3$ and Kodaira dimension $d-1$. If $q(X)>0$ and the Albanese map factors through the Iitaka fibration, then $\lvert mK_X \rvert$ defines the Iitaka fibration for all $m\geq \hat{r}_{d-1}$.
\end{conjecture}

\subsection{The minimal Iitaka volume}
Let $X$ be a smooth projective variety of dimension $d$ and Kodaira dimension $0<\kappa\leq d$. The Iitaka volume of $X$ is defined as 

\begin{equation*}
  \Iv(X)=\limsup_{m\to +\infty} \frac{h^0(X,mK_X)}{m^{\kappa}/\kappa!}.
\end{equation*}

When $\kappa=d$, the Iitaka volume coincides with the usual canonical volume of $X$. C. D. Hacon and J. M\textsuperscript{c}Kernan \cite{HaconMcKernan2006} proved that canonical volumes of smooth projective varieties of general type with fixed dimension admit a uniform positive lower bound. Canonical volume has since played a central role in boundedness and moduli theory of varieties of general type \cite{HaconMcKernanXu2013,HaconMcKernanXu2014,HaconMcKernanXu2018,Kollar2023}. 

When $\kappa<d$, it is natural to seek analogous boundedness results and develop a corresponding moduli theory. The following conjecture is an analogue of \cite[Corollary 1.3]{HaconMcKernan2006}:

\begin{conjecture}\label{conj: min-iitaka-vol}
    Let $X$ be a smooth projective variety of dimension $d$ and Kodaira dimension $\kappa(X)> 0$. Then there is a positive real number $\epsilon(d, \kappa)$, depending only on $d$ and $\kappa$, such that $\Iv(X)\geq \epsilon(d, \kappa)$.
\end{conjecture}

Note that Conjecture \ref{conj: min-iitaka-vol} follows from Conjecture \ref{conj: eff-iitaka}, which is well-known to experts. Z. Li formulated a stronger conjecture \cite[Conjecture 1.6]{Li2024}: for a fixed dimension $d$ and a fixed DCC set $I\subset [0,1] \cap \Q$, the Iitaka volumes of projective klt pairs $(X,B)$ with $\operatorname{dim}X=d$ and coefficients of $B$ in $\mathcal{I}$ satisfy DCC. This more general conjecture holds in dimension $\leq 3$ by \cite[Theorem 8.1]{ProkhorovShokurov2009} and \cite[Corollary 4.4]{Li2024}. G. Chen, J. Han and W. Liu generalized the conjecture from klt pairs with $\Q$-coefficients to lc pairs with $\mathbb{R}$-coefficients in \cite[Conjecture 1.1]{ChenHanLiu2025} and proved it in dimension at most three in \cite[Theorem 1.2]{ChenHanLiu2025}. C. Birkar proved that the set of Iitaka volumes of lc-trivial fibrations with an ample $\Z$-divisor of fixed volume on the general fibers satisfies DCC in \cite[Theorem 1.6]{Birkar2021}. J. Jiao also proved the DCC property under additional assumptions in \cite{Jiao2025}.

In particular, Conjecture \ref{conj: min-iitaka-vol} holds in dimension $\leq 3$. It remains to determine the optimal lower bounds. For this purpose, define \[v(d,\kappa)=\inf\{\Iv(X)\,\vert\, X\,\text{ is a smooth projective}\,d\,\text{-fold with}\, \kappa(X)=\kappa  \}.\] When $d=2$, the canonical bundle formula gives $v(2,1)=\frac{1}{6}$; see \cite[Theorem 1.7(0)]{ChenHanLiu2025}. It is also well-known that $v(2,2)=1$. When $d=3$, by \cite{ChenChen2015} we have $\frac{1}{420}\geq v(3,3)\geq \frac{1}{1680}$ while there is still a lack of explicit values for $v(3,1)$ and $v(3,2)$. As a step towards determining $v(3,2)$, we establish the following sharp lower bound for irregular threefolds of Kodaira dimension two:

\begin{theorem}\label{mainthm: iitaka-vol}
    Let $X$ be a smooth projective irregular threefold with $\kappa(X)=2$. Then
    \begin{enumerate}

        \item\label{item2 for mainthm2} $\Iv(X)\geq \frac{1}{100}$;
        \iffalse
        if the Albanese map does not factor through the Iitaka fibration.
        \fi
        \item $\Iv(X)\geq \frac{1}{24}$, if the Albanese map factors through the Iitaka fibration. 
 \end{enumerate}
 Moreover, the bound $\frac{1}{100}$ is optimal.
\end{theorem}
The minimal Iitaka volume for irregular surfaces of Kodaira dimension one is equal to $v(2,1)=\frac{1}{6}$ by Example \ref{eg-1-6}. Theorem \ref{mainthm: iitaka-vol} is an analogue of \cite[Theorem 1.5]{Jiang2021}, which gives a lower bound $\frac{3}{8}$ for the canonical volume of irregular threefolds of general type. 

We also characterize the threefolds attaining the extremal bounds in Theorem \ref{mainthm: eff-iitaka} and Theorem \ref{mainthm: iitaka-vol}.
\begin{theorem}\label{mainthm: eqcaseiveff}
Let $X$ be a smooth projective irregular  threefold with $\kappa(X)=2$. 
The following conditions are equivalent.
\begin{itemize}
    \item[(1)] $\Iv(X)=\frac{1}{100}$;
    \item[(2)] $|31K_X|$ does not define the Iitaka fibration;
    \item[(3)] $h^0(X,31K_X)=1$;
    \item[(4)] $X\sim_{\text{bir}} (T\times E)/G$ is generically isotrivial (see subsection \ref{subsec: geneiso}), where  
    \[
 T\cong \{w^2=x^{10}+y^{10}+z^{5}\}\subset\Pp(1,1,2,5),\qquad G\cong \Z_{10} \times \Z_{10},
\]
and for any $g=(s,t)\in \Z_{10} \times \Z_{10},[x:y:z:w]\in T, e \in E$, one has
\[
 g.([x:y:z:w],e)=
([\zeta_{10}^sx:\zeta_{10}^ty:z:w],e+t_g),
\]
where $G$ acts on $E$ by translations $g: e \mapsto e+t_g$.
\end{itemize}
\end{theorem}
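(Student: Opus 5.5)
We prove the equivalences through the structure theory developed for Theorems \ref{mainthm: eff-iitaka} and \ref{mainthm: iitaka-vol}. The plan is to show (4)$\Rightarrow$(1), (4)$\Rightarrow$(3), (3)$\Rightarrow$(2), and then the hard direction that each of (1) and (2) forces (4).

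\emph{Setup.} Let $f\colon X'\to Z$ be the Iitaka fibration. Its general fiber $F$ is an elliptic curve or a surface with $\kappa=0$ (abelian, bielliptic, K3 or Enriques, depending on the Albanese). The canonical bundle formula gives $mK_{X'}\sim m f^*(K_Z+B_Z+M_Z)$ on the Iitaka base, so $\Iv(X)=\operatorname{vol}(K_Z+B_Z+M_Z)$. In case (2) of Theorem \ref{mainthm: iitaka-vol} the bound is $\frac1{24}>\frac1{100}$, and by Theorem \ref{mainthm: eff-iitaka}(2) the system $|31K_X|$ already works. So in either of the hypotheses (1) or (2) the Albanese map does not factor through $f$.

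\emph{Structure when the Albanese does not factor.} Then $F$ is an elliptic curve $E$ mapping nontrivially to $\Alb(X)$. The moduli part $M_Z$ is torsion, the fibration is generically isotrivial, and $X\sim_{\rm bir}(T\times E)/G$ with $G$ acting faithfully on $T$ and by translations on $E$. The Iitaka base is $Z=T/G$, and $K_Z+B_Z$ is the pushforward of $K_T$. Here $B_Z$ has standard coefficients $1-\frac1{n_i}$ from the isotropy, and $n_i\mid 12$-type constraints fail because $G$ acts by translation, so arbitrary orders are allowed. Thus $\Iv(X)=K_T^2/|G|$, where $T$ is (the canonical model of) a surface of general type. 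Equivalently, $\Iv(X)$ is the volume of the log-canonical-type orbifold pair $(Z,B_Z)$ with $G$ abelian, since it embeds in translations of $E$.

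The proof of the bound $\frac1{100}$ in Theorem \ref{mainthm: iitaka-vol} should then be retraced with all inequalities made equalities. Equality forces $G$ to be abelian of rank two, acting freely off finitely many curves and points, and forces $T$, $G$ and the branch data to be exactly the listed ones.

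\emph{Main obstacle.} The hardest step is this equality analysis, and I would tackle it first. Here $(Z,B_Z)$ must be a specific rational surface: the quotient $T/G\cong\Pp(1,1,2)/\ldots$, in fact $\Pp^2$ with branch $\frac9{10}$ on two lines and $\frac12$ on a conic, as I expect. One must classify pairs achieving $\operatorname{vol}=\frac1{100}$ and recover the cover. To do so, I would reconstruct $T$ as the $\Z_{10}^2$-cover branched with those data. The equation $w^2=x^{10}+y^{10}+z^5$ in $\Pp(1,1,2,5)$ gives $K_T=\OO(1)$ and $K_T^2=\frac{10}{1\cdot1\cdot2\cdot5}=1$, so $\Iv=\frac1{100}$. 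This proves (4)$\Rightarrow$(1).

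\emph{Plurigenera and the remaining implications.} For (4)$\Rightarrow$(3), compute $h^0(X,31K_X)=h^0(T,31K_T)^G$ by counting $G$-invariant monomials of degree $31$ in $x,y,z,w$, twisted by the character of $G$ on $\omega_T$. Only one such monomial should survive. For (3)$\Rightarrow$(2), note that $\dim Z=2$, so a single section cannot define the Iitaka fibration.

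For (2)$\Rightarrow$(4), follow the proof of Theorem \ref{mainthm: eff-iitaka}(1). By the setup, the Albanese does not factor. Using Riemann–Roch on the orbifold pair, $h^0(\lfloor m(K_Z+B_Z)\rfloor)$ is large enough to separate general points for $m\ge 31$ except when $\operatorname{vol}(K_Z+B_Z)$ is minimal. Failure for $m=31$ thus forces the extremal branch data from the equality case, and hence (4).
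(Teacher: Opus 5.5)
\textbf{There is a gap: (1)$\Rightarrow$(4) and (2)$\Rightarrow$(4) are described but not proved.} Your reductions and easy implications are fine. (4)$\Rightarrow$(1) follows from $\Iv(X)=\vol(T)/|G|$ and $K_T^2=1$. (4)$\Rightarrow$(3) follows by counting $G$-invariant degree-$31$ monomials twisted by the character $\zeta_{10}^{s+t}$ of the canonical form. (3)$\Rightarrow$(2) holds because the Iitaka base is a surface. The reduction of (1) and (2) to the non-factoring case, where $X$ is generically isotrivial with $G$ abelian acting on $E$ by translations, is Corollary~\ref{cor: chargeneisotrivial3fold}. (Incidentally, the general Iitaka fiber of a threefold with $\kappa=2$ is always an elliptic curve, never a surface.)

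For (1)$\Rightarrow$(4), ``retrace the proof of the $\frac1{100}$ bound with equalities'' presupposes an argument you have not given. What is needed is $|G|\le 100K_S^2$ for every finite abelian $G\le\Aut(S)$ with $S$ minimal of general type, together with the equality case. The paper obtains this from a $G$-invariant canonical or bicanonical pencil spanned by two semi-invariant sections. For a rational pencil, Lü's bound (Lemma~\ref{lem: calcu}, Proposition~\ref{prop: ratpencilcase}) combined with Noether's inequality reaches $100V$ only when $(V,p_g,q)=(1,2,0)$. For an irrational pencil, slope inequalities and the translation count on the base curve give $|G|\le 54V$. Finally, on $T=\{w^2=F_{10}\}\subset\Pp(1,1,2,5)$ with $G$ acting diagonally, two independent exponent vectors $(a,b),(c,d)$ of $F_{10}$ give $|G|\le|ad-bc|\le(a+b)(c+d)\le100$, with equality only for $x^{10},y^{10}$ (Lemma~\ref{lem: whenT12G100}).

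Your proposed substitute, classifying pairs $(Z,B_Z)$ of volume $\frac1{100}$, is not carried out. It also cannot rest on standard coefficients alone: $(\Pp^2,\frac12L_1+\frac23L_2+\frac67L_3+\frac{42}{43}L_4)$ has volume $1806^{-2}$. So the facts that $G$ is abelian and the cover is of general type must enter essentially, and you do not say how. Your guessed extremal pair is also wrong: $\frac9{10}$ on two lines plus $\frac12$ on a conic gives $K+B$ of degree $-\frac15$. The actual quotient is $\Pp^2$ with coordinates $(x^{10},y^{10},z^5)$ and coefficients $\frac9{10},\frac9{10},\frac45,\frac12$ on the lines $X=0$, $Y=0$, $Z=0$, $X+Y+Z=0$, which has degree $\frac1{10}$.

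For (2)$\Rightarrow$(4), Riemann--Roch on $(Z,B_Z)$ controls $h^0$, not separation of points. The claim that failure at $m=31$ forces minimal volume is exactly the statement to be proved, and it does not follow from any monotonicity in the volume. In the paper this is a case-by-case argument:
\begin{itemize}
\item For $V\ge 2$ other than $(2,3,0)$, one has $|G|\le48V+24$. The Reider-type criterion of Lemma~\ref{lem: criterionforsepa} then applies, because $h\le12r$ for curves through very general points.
\item For $V=1$ the numerical criterion genuinely fails in non-extremal situations. Numerical Godeaux surfaces with a $G$-invariant curve of canonical degree $5$ through very general points and $|G|\in\{63,\dots,68\}$, so that $\Iv(X)\ge\frac1{68}>\frac1{100}$, survive it. They are excluded only by Hurwitz-formula, involution fixed-locus and linearization-parity arguments (Proposition~\ref{prop: Godsurf}).
\item The cases $(1,2,0)$ with $|G|<100$ and $(2,3,0)$ require explicit invariant monomials generating $\C(S)^G$.
\item The case $(1,1,0)$ requires a lattice bound $|G|\le36$.
\end{itemize}
Nothing in your sketch handles these cases, so (2)$\Rightarrow$(4) remains unproved.
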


Indeed, Theorem \ref{mainthm: iitaka-vol} in the non-Albanese-factoring case follows from the following stronger result, which does not require $X$ to be irregular:

\begin{theorem}\label{mainthm:translations}
    Let $X$ be a smooth projective threefold of Kodaira dimension two. Assume that $X$ is generically isotrivial in the sense of Section \ref{subsec: geneiso} with $G$ abelian.
    
    Then the Iitaka volume $\Iv(X)\geq \frac{1}{100}$, where the bound  $\frac{1}{100}$ is optimal.
\end{theorem}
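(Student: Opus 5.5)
The plan is to reduce the statement to a lower bound for the volume of an orbifold surface whose orbifold cover is abelian, and then to bound that volume by a case analysis on the branch data. By definition of generic isotriviality (Section~\ref{subsec: geneiso}) we may write $X\sim_{\text{bir}}(S\times E)/G$. Here $E$ is an elliptic curve, since the general fibre of the Iitaka fibration is a curve of Kodaira dimension $0$. The surface $S$ is of general type, and $G$ is a finite abelian group acting faithfully on $S$ and on $E$, with the diagonal action. Replacing $S$ by its canonical model $S_{\rm can}$, on which $G$ still acts, we get
\[
H^0(X,mK_X)\cong H^0\bigl(S_{\rm can},mK_{S_{\rm can}}\otimes\chi^{m}\bigr)^G ,
\]
where $\chi$ is the character by which $G$ acts on $H^0(E,K_E)$. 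The right-hand side is trivial on translations and is of order dividing $2,4$ or $6$ on rotations.

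\textbf{Step 1: the orbifold base.} Set $Y=S_{\rm can}/G$ and $\pi\colon S_{\rm can}\to Y$, and let $B=\sum(1-1/e_i)B_i$ be the branch divisor, where $e_i$ is the order of the (cyclic) inertia group of $B_i$. Then $K_{S_{\rm can}}=\pi^*(K_Y+B)$. I would check through the canonical bundle formula for $X\to Y$ that the moduli part is torsion, because $j$ is constant. I would also check that the discriminant part coincides with $B$: a fibre over $B_i$ is either a multiple fibre of multiplicity $e_i$ (inertia acting by translations on $E$) or a quotient-type fibre with coefficient $1-1/e_i$. The twist by $\chi$ does not change asymptotic growth. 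This gives
\[
\Iv(X)=\vol(K_Y+B)=\frac{K_{S_{\rm can}}^2}{|G|}.
\]
The example of Theorem~\ref{mainthm: eqcaseiveff}, with $T\subset\Pp(1,1,2,5)$ of degree $10$, gives $K_T=\OO(1)$ and $K_T^2=1$. Together with $|G|=100$ this yields $\frac1{100}$ and shows sharpness, provided $\kappa=2$ is verified there.

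\textbf{Step 2: group-theoretic constraints.} Since $G$ embeds in $\Aut(E)\cong E\rtimes\mu_n$ and is abelian, there are two cases. Either $G$ consists of translations, so $G\cong\Z_a\times\Z_{ab}$ has rank $\le2$. Or $G$ contains a rotation, in which case its translation part is fixed by that rotation, hence very small ($\le 4$ elements), and $G$ is $\Z_n$ times this small group. In either case all inertia groups are cyclic of order dividing the exponent of $G$. The orbifold fundamental group $\pi_1^{\rm orb}(Y\setminus\operatorname{Sing},B)$ must also surject onto a rank $\le 2$ (respectively rank $\le 3$, small) abelian group, compatibly with the local inertia. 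This forces strong relations among the $e_i$: homological relations in $H_1$ of the complement, in the style of abelian cover theory (Pardini's building data). It also restricts the isolated fixed points to cyclic quotient singularities of types compatible with $G$.

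\textbf{Step 3: the inequality $K_{S_{\rm can}}^2\ge |G|/100$ (main obstacle).} I would split according to the Kodaira dimension and type of the minimal resolution of $Y$, and according to $\kappa(K_Y+B_{\rm red})$.
\begin{itemize}
\item If $Y$ has $\kappa\ge0$, the branch locus alone gives a large volume, since $K_Y$ is pseudo-effective. The bound then follows from $K_S^2\ge1$ together with the Xiao-type bound on abelian groups, which gives roughly $|G|\le 4K_S^2+O(1)$ for abelian groups acting on surfaces of general type.
\item The hard case is $Y$ rational with $B$ supported on a few curves, as in the extremal example: $Y=\Pp^2$ or a weighted plane, with $B$ made of lines of weight $1-1/10$ and a conic-type curve of weight $1/2$. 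Here I would write $\vol(K_Y+B)$ via Zariski decomposition. I would then use the Bogomolov--Miyaoka--Yau inequality for the log pair together with the relations from Step~2, which bound the number of branch components and force $\operatorname{lcm}(e_i)\le\exp(G)$. The aim is to reduce to a finite list of configurations of degrees and weights on $\Pp^2$ or Hirzebruch surfaces, and then to minimise $(K_Y+B)^2$ by direct computation, checking that the minimum $\frac1{100}$ is attained only by the $\Z_{10}\times\Z_{10}$ configuration.
\end{itemize}
I expect the difficulty to be organising this enumeration so that it is finite. The first thing I would try is to bound $|G|$ in terms of $K_S^2$ and $\chi(\OO_S)$ via the holomorphic Lefschetz formula for each cyclic subgroup, using that $G$ has rank at most $2$.
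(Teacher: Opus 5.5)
\textbf{There is a genuine gap: the key inequality is not proved.} Your reduction $\Iv(X)=K_S^2/|G|$ is correct, and it is exactly what the paper uses (Corollary~\ref{cor: chargeneisotrivial3fold}(1)); so is the sharpness example. After that reduction, the whole content of the theorem is the inequality $|G|\le 100\,K_S^2$ for a finite abelian $G\le\Aut(S)$ on a minimal surface of general type. Your Step~3 does not prove it. It is a programme (``I would split\dots'', ``I expect the difficulty to be organising this enumeration so that it is finite''), with no argument that the enumeration is finite or that its minimum is $\frac1{100}$.

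The orbifold route only works if commutativity is used quantitatively. Log surfaces with standard coefficients can have tiny volume: $\bigl(\Pp^2,\frac12L_1+\frac23L_2+\frac67L_3+\frac{42}{43}L_4\bigr)$ with four general lines has volume $1/1806^2$. Your unexecuted Step~2 relations are exactly what would have to exclude such data.

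The one concrete tool you cite is also wrong: there is no bound $|G|\le 4K_S^2+O(1)$ for abelian groups. Take $S=C\times C'$ with $C\colon y^2=x^6-1$ carrying $\Z_6\times\Z_2$, and $C'\colon y^2=x^{2g+2}-1$ carrying $\Z_{2g+2}\times\Z_2$. Then $|G|=48(g+1)$ while $K_S^2=8(g-1)$. Moreover, a valid linear bound of the right size would prove the theorem by itself and make your split on $\kappa(Y)$ pointless. Without one, the claim that for $\kappa(Y)\ge0$ ``the branch locus alone gives a large volume'' is unsupported.

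\textbf{How the paper closes it.} The paper proves the inequality in Theorem~\ref{thm: abeliangroupbd}, using commutativity on pluricanonical sections rather than on branch data. Since $G$ is abelian, $H^0(S,nK_S)$ splits into characters. Two semi-invariant sections therefore span a $G$-invariant pencil: canonical of canonical degree $\le1$ if $p_g\ge2$, and bicanonical of canonical degree $\le2$ otherwise (Lemma~\ref{lem: createGinvpencil}).
\begin{itemize}
\item For an irrational pencil, the kernel of $G\to\Aut(B)$ acts faithfully on a fibre, so Nakajima's bound $4g+4$ controls it.
\item The image in $\Aut(B)$ is controlled by Nakajima together with $K_S^2\ge 8(g-1)(b-1)$ when $b\ge2$.
\item When $B$ is elliptic, the image has order $\le9$ if it contains a non-translation. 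If it consists of translations, its order is at most $\deg(B\to\Pp^1)$, since translations act freely on the polar fibre.
\end{itemize}
This gives $|G|\le 54K_S^2$ (Proposition~\ref{prop: irratpencilcase}). For a rational pencil, the paper applies the inequality $|G|\le\max\{(2+\frac{4}{g-1})K_f^2,(2+\frac1g)e_f\}$ of \cite[Theorem~3.1]{lu2025sharp}. It estimates $K_f^2$ and $e_f$ through the resolution of base points and Noether's inequality. This yields $|G|\le 100K_S^2$, with equality only for $(K_S^2,p_g,q)=(1,2,0)$ (Proposition~\ref{prop: ratpencilcase}).

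Alternatively, quoting Xiao's linear bound $|G|\le 52K_S^2+48$ for abelian groups would close your gap at once, since $K_S^2\ge1$. Incidentally, your rotation case in Step~2 is trivial: by Lemma~\ref{lem: bdQwithlinearpart} it forces $|G|\le 9$.
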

Combining Theorem \ref{mainthm:translations} with Hao-Schreieder's theorem (Theorem \ref{thm:hao-schreieder}), we obtain the lower bound $\frac{1}{600}$ for the Iitaka volume of every generically isotrivial threefold of Kodaira dimension two, since $G$ has a translation subgroup of index at most $6$. However, it seems far from being optimal. Theorem \ref{mainthm:translations} is closely related to bounds on the orders of finite abelian automorphism groups of surfaces of general type. Consider the following example:

\begin{example}\label{eg-1-6}
Let $D$ be the smooth projective model of $y^2=x^6-1$, which has genus two. Let \[G:=<\sigma, \tau>\cong \mathbb{Z}/6\mathbb{Z}\oplus \mathbb{Z}/2\mathbb{Z},\; \sigma(x,y)=(\zeta_6 x, y), \; \tau(x,y)=(x,-y),\] where $\zeta_6$ is a primitive sixth root of unity. Let $E$ be an elliptic curve on which $G$ acts faithfully by translations, and set $S:=(D\times E)/G$, where the action on the product is diagonal. One can easily check that \[\Iv(S)=\frac{2g(D)-2}{\lvert G\rvert}=\frac{1}{6},\] which is the minimal Iitaka volume of all surfaces of Kodaira dimension one.

More generally, \cite[Theorem 1]{nakajima1987abelian} gives $\lvert G\rvert\leq 4g+4$ for every finite abelian subgroup $G\subset \operatorname{Aut}(C)$, where $C$ is a smooth projective curve of genus $g\geq 2$. Thus, for any surface $S^\prime$ birational to $(C\times E)/G$, where $C$ is a curve of genus $g\geq 2$, $E$ is an elliptic curve and $G$ acts as translations on $E$ and diagonally on $C\times E$, we have \[\operatorname{Ivol}(S^\prime)=\frac{2g-2}{\lvert G \rvert}\geq \frac{g-1}{2(g+1)}\geq \frac{1}{6}.\]
\end{example}

\subsection{Sketch of the proof} 
By \cite[Theorem 4.4]{ChenChurchHao2025}, smooth irregular threefolds $X$ of Kodaira dimension two fall into two classes. In the first, the Albanese morphism factors through the Iitaka fibration. In the second, $X$ is birational to $(S\times E)/G$, where $S$ is a surface of general type, $E$ is an elliptic curve, and $G$ acts as translations on $E$ and diagonally on $S\times E$.

For the first class, we establish a point-separation theorem for varieties of intermediate Kodaira dimension (Theorem \ref{thm:sep-inter-kodaira}), extending \cite[Theorem 2.8]{ChenHacon2007}. Combining this with an explicit analysis of pluricanonical systems on surfaces of Kodaira dimension one (Lemma \ref{lem: surface-4-6-10}), we obtain the effective bound in Theorem \ref{thm: 14K}. To bound the Iitaka volume, we distinguish two cases according to the dimension of a general Albanese fiber. When this dimension is two, we follow the strategy of \cite{ChenChen2008}, and seek a constant $c>0$ such that $K_X-cF$ is $\Q$-effective, where $F$ is the general fiber. In Theorem \ref{thm: non-iso-alb-fiber-two}, we obtain the required nonvanishing by studying the slopes of the pluricanonical pushforwards $a_\ast \omega_X^{\otimes m}$. When the general Albanese fiber has dimension one, the effective Iitaka bound gives $P_6\geq 4$, and the geometry of the $6$-canonical map yields $\Iv(X)\geq \frac{1}{12}$. To obtain a stronger bound, we use the method of \cite{Jiang2021}. In Theorem \ref{thm: non-iso-alb-fiber-one}, we perform finite \'{e}tale base change induced by isogenies of the Albanese variety. The Iitaka volume scales by the degree of the cover, and the IT$_0$ property ensures that $P_2$ scales in the same way. This yields the stronger bound $\frac{1}{4}$.

For the second class, $X$ is generically isotrivial characterized in subsection \ref{subsec: geneiso}.
Under this assumption, we obtain Theorem \ref{thm: geneisotrivialvollowerbd} for minimal Iitaka volume $\frac{1}{100}$ as follows.
Corollary \ref{cor: chargeneisotrivial3fold} identifies the Iitaka volume of $X$ with $\frac{V}{|G|}$, where $V$ denotes the volume of a smooth minimal surface $S$ of general type and $G\leq \Aut(S)$ is abelian.
This can be bounded from below by applying methods in \cite{lu2025sharp} via considering $G$-invariant pluricanonical pencils for such $G$ faithfully acting on $S$.
If this pencil is rational, then we apply \cite[Theorem 3.1]{lu2025sharp} in Proposition \ref{prop: ratpencilcase}.
If this pencil is irrational, then we apply slope inequalities and analyze the group action on elliptic curves as in Proposition \ref{prop: irratpencilcase}.
These arguments yield the sharp lower bound of $\frac{V}{|G|}$ and the Iitaka volume of $X$.
Moreover, for the extremal case $\frac{V}{|G|}=\frac{1}{100}$, we can characterize canonical model of $S$ and $G$ by Lemma \ref{lem: whenT12G100}.

To establish the effective Iitaka bound in Theorem \ref{thm: geneisotrivialeffIitakafib},
we first prove Proposition \ref{prop: nVbdimply32K}, which bounds the degree of the restriction of natural quotient map $S\to S/G$ to curves passing through very general points. 
These estimates satisfy the positivity of intersection numbers listed in Lemma \ref{lem: criterionforsepa} and we can apply this lemma.
Combined with more efficient bounds (Proposition \ref{prop: ratpencilcase}, Lemma \ref{lem: pgq1bdG}) of $\frac{|G|}{V}$, this settles most cases.
It remains to consider surfaces
with $q(S)=0$ and $V=1$, together with
$(2,3)$-surfaces.

For $(1,1)$-surfaces,  the description of the canonical ring and a lattice argument in Proposition \ref{prop: pg1-volume1-abelian} give
$|G|\leq 36$, after which
Lemma \ref{lem: Godsurf} applies.

For $(1,0)$-surfaces, Proposition \ref{prop: ratpencilcase} implies  $|G|\leq 70$. 
An argument similar to that of Proposition \ref{prop: nVbdimply32K} formulated in Lemma \ref{lem: Godsurf} handles all but a few cases.
Then we exclude those cases by analyzing the genera of
curves passing through very general points
and the induced group actions on their
normalizations.

For $(1,2)$ and $(2,3)$-surfaces, we use the known
descriptions of their canonical rings  and construct $G$-invariant pluricanonical global sections whose ratios generate $\C(S)^G$.
Then we obtain the effectivity of the Iitaka fibration by Lemma \ref{lem: reducemKbirattoS/G} (2).

Finally, for \((1,2)\)-surfaces with $|G|=100$,
Lemma~\ref{lem: whenT12G100} determines the canonical model
and the group action.
Then we compute the
pluricanonical maps of $X$ explicitly in
subsection~7.1, completing the proof and
establishing the optimality of the bound.

\section*{Acknowledgments} 
The authors appreciate Prof. Meng Chen and Prof. Chen Jiang for their support and encouragement. They also thank Prof. Xin L\"u and Dr. Songsong Huang for helpful discussions. Part of this work was done when the authors attended the Summer School ``Algebraic Geometry in Positive Characteristic''
held at Xiamen
University in July 2026. The authors are grateful to the organizers for their  hospitality. This work was supported by National Key Research and Development Program of China \#2023YFA1010600. 
This work was supported by NSFC for Innovative Research Groups (No. 12121001).

The main framework of this article was developed by the authors. The authors use ChatGPT to undertake some technical computations related to surfaces, improve the bound, suggest examples and polish the English. 
All arguments and computations were verified by the authors, who take full responsibility for the content of this article.

\section{Preliminaries}
\subsection{Notation and Conventions}
A \emph{fibration} is a projective morphism $f:X\to Z$ between normal varieties such that $f_*\OO_X=\OO_Z$.
In particular, $f$ is surjective with connected fibers.

For a smooth projective variety $X$, we denote by $\Pic^0(X)$ the abelian variety parametrizing line bundles algebraically equivalent to zero, and by $a_X: X\to \Alb(X)$ its Albanese morphism. We identify $\Pic^0(\Alb(X))$ with $\Pic^0(X)$ via pullback by $a_X$.

For a $\Q$-divisor $D$ on a normal projective variety $X$, $H^0(X,D)$ is defined to be $H^0(X, \lfloor D \rfloor)$.

For a positive integer $i$,
denote by $\zeta$ (resp. $\zeta_i$) the root of unity (resp. the root of unity with order $i$).
\subsection{Iitaka volume and Iitaka fibration}
Let $X$ be a projective variety with at worst $\Q$-factorial terminal singularities.
\begin{itemize}
    \item[(1)] If $|mK_X|\neq \emptyset$ for some positive integer $m$,
    the \emph{Kodaira dimension} of $X$ is defined by 
\[
 \kappa(X):= 
 \max \{\dim\Phi_{|mK_X|}(X)\mid m\in\mathbb{Z}_{\geq1}\},
\]
where $\Phi_{|mK_X|}$ is the rational map defined by $|mK_X|$. Otherwise, $\kappa(X):= -\infty$; 

    \item[(2)] If $\kappa(X)\geq 0$, then the \emph{Iitaka volume} of $X$ is defined as 
    \begin{equation*}
  \Iv(X)=\limsup_{m\to +\infty} \frac{h^0(X,mK_X)}{m^{\kappa}/\kappa!};
\end{equation*}
    \item[(3)] If $\kappa(X)>0$ and $K_X$ is semi-ample, then
    the \emph{Iitaka fibration} of $X$ is defined as a fibration     
\[
f:X\to Z
\]
with 
an ample $\mathbb{Q}$-divisor $L$ on $Z$ such that $K_X\sim_{\mathbb{Q}}f^*L$.
In this case,
we have $\dim Z=\kappa(X)$ and
$\Iv(X)=L^{\dim Z}$;
\item[(4)] If $\kappa(X)> 0$,
then we say that the linear system $|mK_X|$ \emph{defines the Iitaka fibration}, if 
there is a birational map
\[
\varphi_m:\Phi_{|mK_X|}(X) \dashrightarrow Z, \qquad  \varphi_m \circ\Phi_{|mK_X|} =f
\]
where $Z$ is the base of the Iitaka fibration $f$;
\item[(5)] The \emph{irregularity} of $X$ is defined as 
    \[q(X):=h^1(X,\OO_X).\]
    If $q(X)>0$, then we say $X$ is \emph{irregular}.
\item[(6)] For $m\geq 1$, the \emph{$m$-th plurigenus} of $X$ is defined by $P_m(X):=h^0(X, mK_X)$.
\end{itemize}
\begin{lemma}\label{lem: lower bound for volume}
    Let $S$ be a normal projective surface and $D$ be a $\Q$-Cartier divisor on $S$. If the rational map $f$ defined by $D$ has two-dimensional image, then $\operatorname{vol}(D)\geq h^0(S, D)-2$.
\end{lemma}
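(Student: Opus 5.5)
We reduce to the classical estimate for a surface mapped onto its image. Pass to a resolution $\pi\colon \tilde S\to S$. We may replace $D$ by $\lfloor D\rfloor$, since $H^0(S,D)$ is defined as $H^0(S,\lfloor D\rfloor)$ and $\operatorname{vol}(D)\geq\operatorname{vol}(\lfloor D\rfloor)$ whenever $D-\lfloor D\rfloor$ is effective. Write $|\pi^*D| = |M| + F$ with $F$ the fixed part. After further blowing up we may assume $|M|$ is base point free, and then $h^0(\tilde S, M)=h^0(S,D)=:n$. Since $M$ is nef and $\pi^*D - M$ is effective, we get
\[
\operatorname{vol}(D)=\operatorname{vol}(\pi^*D)\geq \operatorname{vol}(M)=M^2 .
\]
Here $\pi^*D$ is understood as a $\mathbb{Q}$-divisor; the first equality uses that volume is a birational invariant under pullback.

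It then suffices to show $M^2\geq n-2$. Let $\phi\colon\tilde S\to \Sigma\subset\mathbb{P}^{n-1}$ be the morphism given by $|M|$. By hypothesis $\Sigma$ is a surface, and it is nondegenerate because $|M|$ is the complete system. Then
\[
M^2=\deg\phi\cdot\deg\Sigma\geq\deg\Sigma .
\]
For an irreducible nondegenerate surface in $\mathbb{P}^{n-1}$ one has $\deg\Sigma\geq \operatorname{codim}\Sigma+1=n-2$. This is the standard bound: cut $\Sigma$ by a general linear subspace of dimension $n-3$. By Bertini the intersection is a set of $\deg\Sigma$ points in linearly general position spanning that subspace, so there are at least $n-2$ of them.

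The only step needing care is the bookkeeping on $D$ when it is merely $\mathbb{Q}$-Cartier and $S$ is singular. The fix is to work throughout with $\lfloor D\rfloor$ pulled back as a $\mathbb{Q}$-divisor, and to note that the mobile part of $|\lfloor\pi^*D\rfloor|$ has the same space of sections. Everything else is the degree bound for nondegenerate varieties.
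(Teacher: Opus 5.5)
Your proposal is correct and follows the paper's proof essentially verbatim. Both pass to a resolution on which the mobile part $M$ of $|\pi^*D|$ is free, bound $\operatorname{vol}(D)\geq M^2=\deg\phi\cdot\deg\Sigma$, and apply the degree bound $\deg\Sigma\geq n-2$ for a nondegenerate surface in $\mathbb{P}^{n-1}$; the paper cites this bound from Kobayashi's Lemma 1.3 rather than sketching it, and your extra bookkeeping with $\lfloor D\rfloor$ and the $\mathbb{Q}$-divisor pullback only makes explicit what the paper leaves implicit.
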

\begin{proof}
    Choose a resolution $\pi: S^\prime\to S$ such that $\operatorname{Mov}\lvert \pi^\ast D\rvert $ is free. We have the following diagram:
    \begin{center}
        \begin{tikzcd}
            S \arrow["f"]{r} & \Pp^{h^0-1}\\
            S^\prime \arrow["\pi"]{u} \arrow["\phi"]{r} & S_0, \arrow[hook]{u}
        \end{tikzcd}
    \end{center}
    where $S_0$ is the image of $f\circ \pi$ in $\Pp^{h^0-1}$ and $\phi$ is generically finite. Let $M$ be a general member of $\operatorname{Mov}\lvert \pi^\ast D\rvert$. Then \[\operatorname{vol}(D)\geq (M^2)= \operatorname{deg} (\phi)\cdot \operatorname{deg}_{\Pp^{h^0-1}}S_0\geq h^0(S,D)-2,\]
    where the last inequality follows from \cite[Lemma 1.3]{Kobayashi1992}.
\end{proof}
\begin{lemma}\label{lem: debarre}
    Let $f: X_0\to S_0$ be a fibration between normal projective varieties and $\pi:X\to X_0$ be a birational morphism. Let $D$ be a $\Q$-Cartier nef divisor on $S_0$ and $E$ be an effective $\pi$-exceptional divisor on $X$. Then $h^0(X, \pi^\ast f^\ast D+E)=h^0(S_0, D)$ and the map induced by $\pi^\ast f^\ast D+E$ on $X$ descends to $S_0$.
\end{lemma}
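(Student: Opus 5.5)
The plan is to reduce to two standard facts: $\pi_\ast\OO_X(E)=\OO_{X_0}$ for an effective $\pi$-exceptional divisor $E$ on a normal variety, and $f_\ast\OO_{X_0}=\OO_{S_0}$ for a fibration. The one subtlety is that $D$ is only $\Q$-Cartier, and by our convention $H^0(X,\pi^\ast f^\ast D+E)$ means $H^0(X,\lfloor \pi^\ast f^\ast D+E\rfloor)$. I would therefore first fix an integer $r>0$ with $rD$ Cartier and compare sections via the rounded-down divisor. The cleanest reading, consistent with how the lemma will be used for $mK$ with $m$ integral, is that $D$ is a Weil divisor that is $\Q$-Cartier.

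First I would show that every section of $\lfloor \pi^\ast f^\ast D+E\rfloor$ comes from $S_0$. Take $s$ with $\operatorname{div}(s)+\pi^\ast f^\ast D+E\geq 0$, i.e.\ a rational function $\phi$ on $X$, equivalently on $X_0$ and on $S_0$ after we show it is pulled back. Push forward by $\pi$. Since $E$ is $\pi$-exceptional, $\pi_\ast E=0$, so on $X_0$ we get $\operatorname{div}(\phi)+f^\ast D\geq 0$ as Weil divisors, and $\phi\in H^0(X_0,f^\ast D)$. Here $f^\ast D$ is defined since $D$ is $\Q$-Cartier, and $f^\ast D$ is integral where needed by taking $rD$ locally.

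Next I would descend from $X_0$ to $S_0$. Because $f$ has connected fibers and $f_\ast\OO_{X_0}=\OO_{S_0}$, a rational function $\phi$ on $X_0$ with poles bounded by $f^\ast D$ is constant on general fibers: restricted to a general fiber, $f^\ast D$ is trivial, so $\phi$ has no poles there and is constant. Hence $\phi=f^\ast\psi$ for a rational function $\psi$ on $S_0$. Then $f^\ast(\operatorname{div}\psi+D)\geq0$ forces $\operatorname{div}\psi+D\geq0$, since $f$ is surjective and every prime divisor of $S_0$ is dominated by some component of its preimage. Conversely, sections of $D$ pull back to sections of $\pi^\ast f^\ast D\leq \pi^\ast f^\ast D+E$. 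This gives the equality $h^0(X,\pi^\ast f^\ast D+E)=h^0(S_0,D)$, and it shows the map induced on $X$ is the composite of $f\circ\pi$ with the map of $|D|$, i.e.\ it descends to $S_0$.

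I expect the main obstacle to be the rounding bookkeeping when $D$ is not integral. There, $\lfloor \pi^\ast f^\ast D+E\rfloor$ versus $f^\ast\lfloor D\rfloor$ requires an honest comparison: one needs $\lfloor f^\ast D\rfloor$ to have the same sections as $\lfloor D\rfloor$, which can fail if $f$ has multiple fibers over divisors. This is where I expect nefness of $D$ or the intended integrality hypothesis to enter. My first attempt would be to state and use the lemma for integral Weil $\Q$-Cartier $D$, which suffices for all applications with $D=mL$ Cartier on the Iitaka base after clearing denominators. In the general case I would instead argue that the lemma is only applied in that situation.
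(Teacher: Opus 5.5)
Your core argument is correct and is the standard one behind the paper's proof. The paper simply cites \cite[Lemma 7.1]{Debarre2001} and notes that it carries over to $\Q$-divisors. You push forward along $\pi$ using $\pi_\ast E=0$, descend along $f$ using $f_\ast\OO_{X_0}=\OO_{S_0}$, and transfer effectivity from $f^\ast(\operatorname{div}\psi+D)$ to $\operatorname{div}\psi+D$.

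What is wrong is your last paragraph: there is no rounding obstruction. Since $\operatorname{div}\phi$ is integral, $\phi\in H^0(X,\lfloor A\rfloor)$ iff $\operatorname{div}\phi+A\geq 0$. So you never compare the divisors $\lfloor f^\ast D\rfloor$ and $f^\ast\lfloor D\rfloor$, which do differ over multiple fibers. You only compare spaces of rational functions, and these always agree. Each of your steps works verbatim for a $\Q$-Cartier $\Q$-divisor $D$ and a $\Q$-divisor $E$:
\begin{itemize}
\item $\phi$ is regular on $f^{-1}(S_0\setminus\operatorname{Supp}D)$, hence $\phi=f^\ast\psi$.
\item Suppose a prime divisor $P\subset S_0$ had coefficient $a<0$ in $A:=\operatorname{div}\psi+D$. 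Near the generic point of $P$, where $S_0$ is smooth, $A=aP$. A component $Q$ of $f^{-1}(P)$ dominating $P$ then has $\operatorname{mult}_Q f^\ast A=a\cdot\operatorname{mult}_Q f^\ast P<0$.
\end{itemize}
For instance, take a double fiber $f^\ast p=2F$ and $D=\frac12 p$. The condition $\operatorname{div}(f^\ast\psi)+F\geq 0$ along $F$ reads $2\operatorname{ord}_p\psi+1\geq0$, i.e.\ $\operatorname{ord}_p\psi\geq 0$. This is exactly the condition imposed by $\lfloor D\rfloor$. Nefness of $D$ is never used.

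Your fallback to integral $D$ would also not be enough. The lemma is applied in Theorem \ref{thm: non-iso-alb-fiber-one} with $D=2H_M+b_{0,M}^\ast Q$. Here $H_M$ is the $\Q$-divisor with $K_{X_{0,M}}=f_{0,M}^\ast H_M$, which is fractional in general (e.g.\ from multiple fibers in the canonical bundle formula), and $E=E_M$ is an exceptional $\Q$-divisor. Clearing denominators is not an option there, because the point is to identify $h^0(X_M,2K_{X_M}+a_M^\ast Q)$ itself with $h^0(S_{0,M},2H_M+b_{0,M}^\ast Q)$. Drop the hedge and state the lemma for $\Q$-divisors; your argument already proves it.
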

\begin{proof}
    The proof of \cite[Lemma 7.1]{Debarre2001} applies equally to $\Q$-divisors.
\end{proof}

\subsection{Vector bundles on elliptic curves}\label{subsec: HNfiltration}
Let $E$ be an elliptic curve. For a vector bundle $\mathcal{E}$ on $E$, we may write \[ \mathcal{E}=\bigoplus_{i=1}^{n} \mathcal{E}_i,\] where each $\mathcal{E}_i$ is indecomposable, that is, it cannot be written as a direct sum of nonzero subbundles. By \cite[Page 3, Fact]{Tu1993}, every $\mathcal{E}_i$ is semistable. Thus, it is easy to obtain the Harder-Narasimhan filtration of $\mathcal{E}$. Indeed, we may further assume that \[\mu(\mathcal{E}_1)=\cdots=\mu(\mathcal{E}_{i_1})>\cdots >\mu(\mathcal{E}_{i_{k-1}+1})=\cdots =\mu(\mathcal{E}_{i_k}),\]
where $\mu(\mathcal{E}_i)=\frac{\operatorname{deg}\mathcal{E}_i}{\operatorname{rank}\mathcal{E}_i}$ for each $i$ and $i_k=n$. Then the Harder-Narasimhan filtration is just \[0=\mathcal{F}_0\subsetneq \mathcal{F}_1 \subsetneq \cdots \subsetneq \mathcal{F}_k=\mathcal{E}, \]
where $\mathcal{F}_l=\mathcal{E}_1\oplus \cdots \oplus \mathcal{E}_{i_l}$ for any $1\leq l\leq k$.
\begin{definition}
    For a vector bundle $\mathcal{E}$ on $E$ as above, the \emph{minimal} (resp., \emph{maximal}) \emph{slope} $\mu_{\min}(\mathcal{E})$ (resp., $\mu_{\max} (\mathcal{E})$) of $\mathcal{E}$ is defined to be $\min\{\mu(\mathcal{E}_i)\vert 1\leq i\leq n\}$ (resp., $\max\{\mu(\mathcal{E}_i)\vert 1\leq i\leq n\}$).
\end{definition}
\begin{lemma}[{\cite[Lemma 2.6, Lemma 2.8]{HuZhang2025}}]\label{lem:HuZhangSlope}
    Let $V_1, V_2$ be two vector bundles on an elliptic curve $E$. Then,
    \begin{enumerate}
        \item $\mu_{\min}(V_1\otimes V_2)=\mu_{\min}(V_1)+\mu_{\min} (V_2)$.
        \item If there is a generically surjective morphism $V_1 \to V_2$, then $\mu_{\min}(V_2)\geq \mu_{\min} (V_1)$. 
    \end{enumerate}
\end{lemma}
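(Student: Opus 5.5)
Both parts reduce to two standard facts about vector bundles on the elliptic curve $E$, in characteristic zero. First, by \cite[Page 3, Fact]{Tu1993}, every indecomposable bundle on $E$ is semistable. Second, the tensor product of semistable bundles on a smooth projective curve is semistable, and its slope is the sum of the slopes.

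One preliminary observation makes the definition of $\mu_{\min}$ easy to use. If a bundle $W$ is semistable of slope $\mu$, then every indecomposable summand of $W$ also has slope $\mu$. Indeed, such a summand is both a subbundle of $W$, so its slope is $\leq\mu$, and a quotient of $W$, so its slope is $\geq\mu$. Consequently, if $V=\bigoplus W_k$ with each $W_k$ semistable, then $\mu_{\min}(V)=\min_k\mu(W_k)$. We may therefore compute $\mu_{\min}$ using any decomposition into semistable summands, not only the indecomposable one.

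\emph{Proof of (1).} Write $V_1=\bigoplus_i E_i$ and $V_2=\bigoplus_j F_j$ with $E_i,F_j$ indecomposable, hence semistable of slopes $\mu_i,\nu_j$. Then
\[
V_1\otimes V_2=\bigoplus_{i,j}E_i\otimes F_j ,
\]
and each $E_i\otimes F_j$ is semistable of slope $\mu_i+\nu_j$. By the observation above,
\[
\mu_{\min}(V_1\otimes V_2)=\min_{i,j}(\mu_i+\nu_j)=\min_i\mu_i+\min_j\nu_j=\mu_{\min}(V_1)+\mu_{\min}(V_2).
\]
The only nonformal input is the semistability of tensor products. I would quote this from Narasimhan--Seshadri (via unitary representations) or from Hartshorne's characterization of semistability on curves in characteristic zero. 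I regard this citation as the main point to get right.

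\emph{Proof of (2).} Let $\varphi\colon V_1\to V_2$ be generically surjective. Let $Q$ be the direct sum of the indecomposable summands of $V_2$ of minimal slope. Then $Q$ is semistable of slope $\mu_{\min}(V_2)$, and the projection $V_2\to Q$ is surjective. The composite $V_1\to Q$ is therefore generically surjective. Its image $I\subset Q$ is torsion free, hence a vector bundle, and it has the same rank as $Q$. Thus $Q/I$ is torsion, so $\deg I\leq\deg Q$ and $\mu(I)\leq\mu_{\min}(V_2)$.

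It remains to show $\mu(I)\geq\mu_{\min}(V_1)$, i.e.\ that every nonzero quotient $V_1\twoheadrightarrow I$ has slope at least $\mu_{\min}(V_1)$. Push the Harder--Narasimhan filtration $\mathcal F_\bullet$ of $V_1$ forward to $I$. Each graded piece of the induced filtration of $I$ is a quotient of the semistable bundle $\mathcal F_l/\mathcal F_{l-1}$. Consider the nonzero pieces. The torsion part of such a piece only raises the degree, so it can be discarded. The remaining torsion-free part is a quotient of a semistable bundle, so its slope is at least $\mu(\mathcal F_l/\mathcal F_{l-1})\geq\mu_{\min}(V_1)$. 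Since $\mu(I)$ is the rank-weighted average of these slopes (plus nonnegative torsion contributions), we get $\mu(I)\geq\mu_{\min}(V_1)$. Combining the two inequalities gives $\mu_{\min}(V_2)\geq\mu(I)\geq\mu_{\min}(V_1)$.
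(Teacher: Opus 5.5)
Your proof is correct. Note that the paper gives no proof of this lemma: it quotes the statement from \cite[Lemma 2.6, Lemma 2.8]{HuZhang2025}. Its only surrounding input is Tu's fact that indecomposable bundles on $E$ are semistable, which you also use to reduce $\mu_{\min}$ to slopes of semistable summands. So your argument is a self-contained replacement for the citation, not a variant of an argument in the paper.

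\emph{Part (1).} The splitting of bundles on an elliptic curve is what keeps your proof short. You only need semistability of $E_i\otimes F_j$ for semistable factors (characteristic zero). On a general curve you would instead have to analyze the filtration of $V_1\otimes V_2$ induced by the two Harder--Narasimhan filtrations, or dualize and argue with $\mu_{\max}$.

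\emph{Part (2).} Pushing the Harder--Narasimhan filtration forward to the image works on any smooth projective curve. On $E$ there is a shorter route. Write $V_1=\bigoplus_i E_i$ with each $E_i$ semistable. Then $\operatorname{Hom}(E_i,Q)=0$ whenever $\mu(E_i)>\mu(Q)=\mu_{\min}(V_2)$. If every $\mu(E_i)$ exceeded $\mu_{\min}(V_2)$, the composite $V_1\to Q$ would vanish, contradicting generic surjectivity onto the nonzero bundle $Q$.

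\emph{Well-definedness.} Your preliminary observation about semistable summands handles this point implicitly: whichever decomposition into semistable pieces you use, you recover the same minimal slope. This is the independence of choices that the paper's definition of $\mu_{\min}$ tacitly assumes (via Krull--Schmidt, or via the intrinsic Harder--Narasimhan description).
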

\begin{lemma}\label{lem:slope-nonvanishing}
    Let $E$ be an elliptic curve and $V$ be a vector bundle on $E$. If $\mu_{\max}(V)\geq 0$, then there exists a line bundle $D\in \Pic^0(E)$ such that $h^0(V\otimes D)\neq 0$.
\end{lemma}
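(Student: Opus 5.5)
The plan is to reduce to the indecomposable summands and use Atiyah's classification of indecomposable bundles on an elliptic curve. Write $V=\bigoplus_{i=1}^n V_i$ with each $V_i$ indecomposable, as in subsection \ref{subsec: HNfiltration}. Since $\mu_{\max}(V)=\max_i\mu(V_i)\geq 0$, we may fix an index $i$ with $\mu(V_i)\geq 0$. Because $h^0(V\otimes D)\geq h^0(V_i\otimes D)$ for every $D$, it suffices to find $D\in\Pic^0(E)$ with $h^0(V_i\otimes D)\neq 0$. So we may assume $V$ is indecomposable of rank $r$ and degree $d\geq 0$.

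\textbf{Case $d>0$.} Here $V$ is semistable of positive slope by \cite[Page 3, Fact]{Tu1993}. For any $D\in\Pic^0(E)$, the bundle $V^\vee\otimes D^{-1}$ is semistable of negative slope, so it has no nonzero sections. By Serre duality on $E$ (where $\omega_E\cong\OO_E$), this gives $h^1(V\otimes D)=h^0(V^\vee\otimes D^{-1})=0$. Riemann--Roch then yields $h^0(V\otimes D)=\deg(V\otimes D)=d>0$. Thus $D=\OO_E$ already works in this case.

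\textbf{Case $d=0$.} By Atiyah's classification, an indecomposable bundle of rank $r$ and degree $0$ is of the form $V\cong F_r\otimes L$, where $L\in\Pic^0(E)$ and $F_r$ is the unique indecomposable bundle of rank $r$ and degree $0$ with $h^0(F_r)=1$. It is built as an iterated nontrivial self-extension of $\OO_E$, and it contains $\OO_E$ as a subsheaf. Take $D=L^{-1}$. Then $V\otimes D\cong F_r$, so $h^0(V\otimes D)=1\neq 0$.

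The only nontrivial input is Atiyah's description of the degree-zero indecomposable bundles, which is the main point to cite carefully. If one prefers to avoid it, there is an alternative. A semistable degree-zero bundle $V$ on $E$ has a Jordan--Hölder filtration whose graded pieces are stable of degree zero, and a stable bundle of degree zero on an elliptic curve is a line bundle $M\in\Pic^0(E)$. The first step $M\subset V$ of the filtration gives $h^0(V\otimes M^{-1})\neq 0$, so we take $D=M^{-1}$. Either route finishes the proof.
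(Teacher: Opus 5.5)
Your proof is correct and follows the paper's argument. The paper also passes to an indecomposable summand of maximal (hence nonnegative) slope. It handles positive degree by Riemann--Roch; you additionally spell out the $h^1$-vanishing via semistability and Serre duality, which the paper leaves implicit. It handles degree zero by Atiyah's classification $V_i\cong F_r\otimes L$ \cite[Theorem 5]{Ati57}. Your Jordan--H\"older alternative is just a harmless variant of that degree-zero step.
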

\begin{proof}
    Let $V_0$ be an indecomposable direct summand with the maximal slope. If $\operatorname{deg}V_0>0$, it suffices to apply the Riemann-Roch formula. If $\operatorname{deg}V_0=0$, the conclusion follows from \cite[Theorem 5]{Ati57}.
\end{proof}

\subsection{Pushforwards under morphisms to abelian varieties} We recall some basic notions and results concerning pushforwards of pluricanonical bundles under morphisms to abelian varieties.

\begin{definition}
    Let $\mathcal{F}$ be a coherent sheaf on an abelian variety $A$.
    
    We say that $\mathcal{F}$ satisfies \emph{IT$_0$} if $H^i(A, \mathcal{F}\otimes P)=0$ for all $i>0$ and all $P\in \Pic^0(A)$.

     $\mathcal{F}$ is said to be \emph{continuously globally generated} if for any non-empty open subset $U$ of $\Pic^0(A)$, the evaluation map \[\operatorname{ev}_U:  \bigoplus_{Q\in U} H^0(A, \mathcal{F}\otimes Q)\otimes Q^{-1} \to \mathcal{F}\] is surjective.

\end{definition}
\begin{lemma}[{\cite[Lemma 2.2]{ChenHacon2007}}]\label{lem:IT-nonvanishing}
    Let $\mathcal{F}$ be a coherent IT$_0$ sheaf on an abelian variety $A$. If $\mathcal{F}\neq 0$, then $H^0(A,\mathcal{F}\otimes P)\neq 0$ for all $P\in \operatorname{Pic}^0(A)$.
\end{lemma}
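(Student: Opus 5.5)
The idea is to reduce the statement to one Fourier–Mukai type vanishing and then invoke Mukai's inversion theorem. Fix a nonzero IT$_0$ sheaf $\mathcal{F}$ on the abelian variety $A$, let $g=\dim A$, and let $\hat A=\Pic^0(A)$ with normalized Poincaré bundle $\mathcal{P}$ on $A\times\hat A$. Write $\Phi=R p_{\hat A *}(p_A^*(\cdot)\otimes\mathcal{P})$ for the Fourier–Mukai functor and $\Psi$ for the functor in the reverse direction. Since $H^i(A,\mathcal{F}\otimes P)=0$ for all $i>0$ and all $P$, cohomology and base change show that $\Phi(\mathcal{F})=\widehat{\mathcal{F}}[0]$ is a single sheaf. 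This sheaf is locally free of rank $\chi(\mathcal{F})=h^0(A,\mathcal{F}\otimes P)$, the rank being constant in $P$ because all higher cohomology vanishes. Its fibre at $P$ is $H^0(A,\mathcal{F}\otimes P)$.

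It therefore suffices to show that $\widehat{\mathcal{F}}\neq 0$. Because $\widehat{\mathcal{F}}$ has constant rank $h^0(A,\mathcal{F}\otimes P)$, nonvanishing of the bundle is the same as nonvanishing of $H^0(A,\mathcal{F}\otimes P)$ for every $P$ at once. Suppose instead that $\widehat{\mathcal{F}}=0$. Then $\Phi(\mathcal{F})=0$, and Mukai's theorem $\Psi\circ\Phi\cong(-1_A)^*[-g]$ gives $(-1_A)^*\mathcal{F}[-g]\cong\Psi(0)=0$, so $\mathcal{F}=0$. This contradicts the hypothesis.

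Alternatively one can avoid the full equivalence and use only that $\Phi$ is conservative, i.e. that $\Phi$ of a nonzero object is nonzero. This can be checked with Serre duality on the fibres of $p_{\hat A}$ together with the fact that $\Phi(\mathcal{O}_A)$ is supported at the origin. The only genuinely nontrivial input is Mukai's invertibility theorem. The remaining steps are base change and rank constancy, and these are routine because all higher cohomology vanishes uniformly in $P$.
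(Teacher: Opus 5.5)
Your argument is correct. The IT$_0$ hypothesis plus cohomology and base change makes $\widehat{\mathcal F}$ a vector bundle of constant rank $h^0(A,\mathcal F\otimes P)$, and Mukai's inversion $\Psi\circ\Phi\cong(-1_A)^*[-g]$ rules out $\widehat{\mathcal F}=0$ when $\mathcal F\neq 0$. The paper does not reprove the lemma but cites \cite[Lemma 2.2]{ChenHacon2007}; your Fourier--Mukai argument is the standard proof of that result, so there is nothing to add, and the sketched ``conservativity'' alternative is unnecessary.
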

The following result provides a criterion for pluricanonical pushforwards under Albanese morphism to satisfy IT$_0$.
\begin{theorem}[{\cite[Corollary 12.2]{LombardiPopaSchnell2020}}]\label{thm:LPS-IT}
   Let $X$ be a smooth projective variety with Iitaka fibration $I: X\to Z$ and Albanese map $f: X\to A$. If $q(X)=q(Z)$, and for some $m\geq 2$ we have $f_\ast \omega_X^{\otimes m}\neq 0$, then $f_\ast \omega_X^{\otimes m}$ satisfies IT$_0$.
\end{theorem}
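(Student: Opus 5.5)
The plan is to combine the generic vanishing structure of pluricanonical pushforwards with the hypothesis $q(X)=q(Z)$. The hypothesis forces every piece of that structure to live on $A$ itself; once this is known, IT$_0$ follows from Kollár-type vanishing on abelian varieties. Set $\mathcal F:=f_\ast\omega_X^{\otimes m}$ with $m\ge 2$ and $\mathcal F\neq 0$.

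\textbf{Step 1 (structure of $\mathcal F$).} I first invoke the Chen--Jiang decomposition for pluricanonical pushforwards to abelian varieties, in the form established by Lombardi--Popa--Schnell (and by Jiang for $m\ge 2$). It gives
\[
\mathcal F\cong \bigoplus_i p_i^\ast\mathcal F_i\otimes P_i,
\]
where $p_i:A\to B_i$ are quotient abelian varieties with connected fibres, each $\mathcal F_i$ is an IT$_0$ (in particular M-regular) sheaf on $B_i$, and each $P_i\in\Pic^0(A)$ is torsion. Pullbacks of IT$_0$ sheaves along isomorphisms stay IT$_0$, so the statement reduces to the following claim: every $p_i$ is an isomorphism, i.e. $\ker p_i=0$ for all $i$.

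\textbf{Step 2 (nontrivial kernels contradict $q(X)=q(Z)$).} Suppose some $K:=\ker p_i$ has positive dimension. This is the key step and the one I expect to be the main obstacle. Consider the composition $g:X\to A\to B_i$ and a general fibre $X_b$, which maps onto a translate of $K$. The summand $p_i^\ast\mathcal F_i\otimes P_i$ is constant along $K$-directions. Hence, restricted to $X_b$, twisting $mK_X$ by any $Q\in\Pic^0(K)$ does not change the corresponding sections: $h^0(X_b, mK_{X_b}\otimes P_i|_{X_b}\otimes Q)$ is independent of $Q$.

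I would then argue, via Kawamata's theorem on Kodaira dimension of fibrations over abelian varieties together with the standard fact that $V^0$-loci of $\omega^m$ along the Iitaka fibration are pulled back from the base, that the pluricanonical sections on $X_b$ are pulled back from the Iitaka base of $X_b$. The morphism $X_b\to K$ must then fail to factor through the Iitaka fibration of $X_b$: its image directions are invisible to $|mK|$, so they are contracted by the Iitaka map. Globally, this means $f$ does not factor through $I$ up to the $K$-directions. Therefore $\Alb(Z)$ is a proper quotient of $A$, with the kernel containing $K$ up to isogeny, which gives $q(Z)<q(X)$. This contradicts the hypothesis.

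Making this rigorous is the delicate part. I would try to phrase it purely cohomologically: the locus $V^0(\mathcal F)$ is invariant under translation by $\Pic^0(B_i)^{\perp}$-directions, and $V^0$ of $I_\ast\omega_X^{\otimes m}$ pushed forward from $Z$ is controlled by $\Pic^0(Z)=\Pic^0(X)$. When $q(X)=q(Z)$, the Albanese map factors as $X\xrightarrow{I}Z\to A$. Then $\mathcal F$ is the pushforward from $Z$ of a sheaf of the form $\omega_Z^{\otimes m}\otimes(\text{big boundary part})$, whose decomposition pieces can only have positive-dimensional kernel if the fibres of $Z\to B_i$ have non-maximal Kodaira dimension. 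That is impossible because $K_Z+B_Z$ is big.

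\textbf{Step 3 (conclusion).} With every $p_i$ an isomorphism, $\mathcal F$ is a finite direct sum of IT$_0$ sheaves twisted by torsion line bundles, hence IT$_0$. Alternatively, one can check directly that $H^j(A,\mathcal F\otimes P)=0$ for $j>0$ by writing $\mathcal F\otimes P$ as a pushforward of $K_X+((m-1)K_X+\text{nef/klt part})$ and applying Kollár vanishing, while M-regularity rules out the exceptional loci.
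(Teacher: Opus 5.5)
The paper does not prove this statement; it quotes it from Lombardi--Popa--Schnell, so your argument has to stand on its own. Your Step 1 reduction is fine: granted IT$_0$ pieces, $\mathcal F$ is IT$_0$ iff every $p_i$ is an isomorphism. Indeed, a summand with $\dim\ker p_i=k>0$ gives $H^k(A,\mathcal F\otimes P)\neq 0$ for general $P\in P_i^{-1}\otimes p_i^\ast\Pic^0(B_i)$.

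However, Step 2, which carries the entire content, is not a proof, and parts of it are false.

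\begin{itemize}
\item The claim that $h^0(X_b,mK_{X_b}\otimes P_i|_{X_b}\otimes Q)$ is independent of $Q\in\Pic^0(K)$ is wrong. On a translate of $K$ the summand $P_i\otimes p_i^\ast\mathcal F_i$ restricts to $\OO_K^{\oplus r}\otimes P_i|_K$, whose twists have sections for a single $Q$ only. Moreover, the other summands of $\mathcal F$ contribute to the same $h^0$ and may vary along $K$.
\item A single summand pulled back from $B_i$ carries no information about the fibres $X_b$ or the Iitaka fibres. So the passage to ``$f$ does not factor through $I$'' and then to $q(Z)<q(X)$ is unsupported. Besides, the Iitaka fibration of $X_b$ is not the restriction of $I$.
\item No $V^0$-argument can work. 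A decomposition $\mathcal F=\mathcal F_0\oplus(\alpha\otimes p^\ast\mathcal G)$ with $\mathcal F_0\neq 0$ IT$_0$ on $A$ and $\dim\ker p>0$ has $V^0(\mathcal F)=\hat A$. This is perfectly compatible with $\Pic^0(X)=I^\ast\Pic^0(Z)$, so the bad summand is invisible to $V^0$. Also, the $V^0$ of a summand is a translate of $p_i^\ast\Pic^0(B_i)$, not a set invariant under its orthogonal.
\item Your last paragraph simply asserts the needed statement (positive-dimensional kernels occur only when the fibres of $Z\to B_i$ have non-maximal Kodaira dimension) without argument.
\end{itemize}

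What is missing is an actual vanishing theorem for $H^j(A,f_\ast\omega_X^{\otimes m}\otimes P)$, $j>0$. Once you have it, the decomposition is superfluous.

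This vanishing cannot come from Koll\'ar or Kawamata--Viehweg vanishing on $X$ applied to $K_X+(m-1)K_X$, as your Step 3 suggests. When $f$ factors through $I$, $(m-1)K_X$ is neither big nor $f$-big. In fact $H^{\dim X-\kappa(X)}(X,\omega_X^{\otimes m}\otimes P)\neq 0$ in general, already for an elliptic surface $S\to C$ with $g(C)\geq 2$ and $q(S)=g(C)$, via the summand coming from $R^1\pi_\ast\omega_S$. So only the $E_2^{j,0}$-terms vanish, and the positivity must be used on the Iitaka base. One route:
\begin{itemize}
\item Use $q(X)=q(Z)$ to write $f=b\circ I$.
\item Express $I_\ast\omega_X^{\otimes m}$ as $I_\ast\omega_X$ twisted by something nef and big, up to a fractional part. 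For example, on a good minimal model with $K_{X_0}=I_0^\ast H$ and $H$ ample (which needs abundance in general), use $(m-1)H$ and handle the rounding in $\lfloor mH\rfloor$.
\item Apply Koll\'ar's vanishing for $I_0$ together with relative vanishing for $b$.
\end{itemize}
Alternatively, prove the structural statement that the kernels of the $p_i$ are governed by the image of the Iitaka fibre in $A$; that image is a point when $q(X)=q(Z)$.

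Finally, check your Step 1 input. The Chen--Jiang decomposition is usually stated with M-regular pieces, and M-regular does not imply IT$_0$. So even after showing all $p_i$ are isomorphisms, you would need a further argument, or a precise reference, for IT$_0$ of the $\mathcal F_i$.
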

\begin{definition}[{\cite[Lemma 2.3, Definition 2.4]{Jiang2021}}]
    Let $\mathcal{F}$ be a coherent sheaf on an abelian variety $A$ such that $h^0(A, \mathcal{F}\otimes Q)\neq 0$ for general $Q\in \Pic^0(A)$. There exists a unique continuously globally generated subsheaf $\mathcal{F}_c \subseteq \mathcal{F}$ such that $h^0(A, \mathcal{F}_c\otimes Q)=h^0(A, \mathcal{F}\otimes Q)$ for general $Q\in \Pic^0(A)$.

    We call $\mathcal{F}_c$ the \emph{maximal continuously globally generated subsheaf} of $\mathcal{F}$.
\end{definition}
Recall that a morphism $f: X\to A$ from a smooth projective variety to an abelian variety is called \emph{primitive} if the induced pull-back $f^\ast: \Pic^0(A) \to \Pic^0(X)$ is injective or equivalently $f$ does not factor through any non-trivial \'{e}tale cover of $A$.
\begin{definition}[{\cite[Definition 2.9]{Jiang2021}}]\label{def-eventual}
    Let $f: X\to A$ be a primitive morphism from a smooth projective variety to an abelian variety. Let $L$ be a line bundle on $X$ such that the image of the natural map $f^\ast (f_\ast L)_c\to L$ is non-zero. We denote by 
    \begin{center}
     \begin{tikzcd}
        X \arrow["\varphi_L",dashed]{r} \arrow["f"']{rrd} &Z_L \arrow[hook]{r} \arrow{rd} &\mathbb{P}_A((f_\ast L)_c)\arrow{d}\\
        &&A
    \end{tikzcd}   
    \end{center}  
    the relative evaluation map, where $Z_L$ is the image of $\varphi_L$.
    We call $\varphi_L$ the  \emph{eventual map} of $L$ (with respect to $f$).
\end{definition}

\subsection{Pluricanonical systems on surfaces of Kodaira dimension one}
We show some results about volumes and plurigenera of surfaces of Kodaira dimension one. 
\begin{theorem}[{\cite[Theorem 1.7]{ChenHanLiu2025}}]\label{thm:Ivol 1/6} 
    Let $S$ be a smooth projective surface with $\kappa(S)=1$. Then $\Iv(S)\geq \frac{1}{6}$, and this bound is optimal.
\end{theorem}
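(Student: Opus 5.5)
This statement is quoted from \cite[Theorem 1.7]{ChenHanLiu2025}, and the paper uses it without proof. Here is how I would prove it directly via the canonical bundle formula for elliptic fibrations.

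\textbf{Reduction.} Since the Iitaka volume is a birational invariant, we may replace $S$ by its relatively minimal model. Then $K_S$ is nef and semi-ample, and the Iitaka fibration $f\colon S\to B$ is an elliptic fibration onto a smooth curve $B$ of genus $b$.

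\textbf{Canonical bundle formula.} Kodaira's formula gives
\[
K_S\sim_{\Q} f^\ast\Bigl(K_B+L+\sum_i\bigl(1-\tfrac{1}{m_i}\bigr)p_i\Bigr),
\]
where $m_i$ are the multiplicities of the multiple fibres and $\deg L=\chi(\OO_S)\geq 0$. Because $\mathbb{Z}$-valued sections descend, $h^0(S,mK_S)=h^0(B,\lfloor m\,\cdot\,\rfloor)$ asymptotically in $m$. Hence
\[
\Iv(S)=2b-2+\chi(\OO_S)+\sum_i\Bigl(1-\frac{1}{m_i}\Bigr)=:\delta,
\]
and $\kappa(S)=1$ forces $\delta>0$.

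\textbf{Refined formula.} To get the sharp bound, I would use the finer form of the formula on the moduli part. By Fujita and Kodaira, $L$ is the pullback of $\frac{1}{12}\deg j^\ast\OO(1)$ plus contributions from the singular fibres. A singular fibre of type $II, III, IV, I_n^\ast, IV^\ast, III^\ast, II^\ast$ contributes a fractional part $\frac{1}{6},\frac14,\frac13,\frac12,\frac23,\frac34,\frac56$ to the discriminant divisor $B_S$. Thus
\[
K_S\sim_{\Q} f^\ast\bigl(K_B+B_S+M\bigr),
\]
where $M$ is the moduli part with $12M$ integral and nef, and the coefficients of $B_S$ lie in the hyperstandard set $\{1-\frac{1}{m}\}\cup\{\frac{1}{6},\frac14,\frac13,\frac12,\frac23,\frac34,\frac56\}$.

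\textbf{Minimising $\delta$.} It remains to minimise $\delta=2b-2+\deg B_S+\deg M>0$ over these data.
\begin{itemize}
\item If $b\geq 1$, then $\delta>0$ together with the coefficient restrictions gives $\delta\geq\frac16$ immediately.
\item If $b=0$, we need $\deg(B_S+M)>2$ with coefficients in the above set, where $\deg M\in\frac{1}{12}\Z_{\geq 0}$.
\end{itemize}
The main obstacle is this combinatorial case $b=0$. One must use the constraints linking $M$ and $B_S$: if $M\equiv 0$ the fibration is isotrivial, and the local monodromies must multiply to the identity. This parallels the classical argument giving the minimal Kodaira-dimension-one orbifold $\frac{1}{42}$, but here the extra integrality of $\chi(\OO_S)=12\deg M+\sum(\text{Euler numbers})/12$ is what should push the bound up to $\frac16$. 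I would enumerate the possible configurations and check in each case that $\delta\geq\frac16$.

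\textbf{Optimality.} Example \ref{eg-1-6} gives a surface with $\Iv=\frac16$, so the bound is attained.
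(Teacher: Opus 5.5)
Your reduction to $\Iv(S)=\delta=2b-2+\chi(\OO_S)+\sum_i(1-\frac{1}{m_i})$ is correct; it is the formula the paper uses in the proof of Lemma \ref{lem:1/24}. Your optimality argument via Example \ref{eg-1-6} is also fine. But the proposal stops exactly where the content of the theorem lies, and the mechanism you name for the hard case cannot work.

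\textbf{The refined formula is a detour.} Since $\chi(\OO_S)\in\Z$, the integer $N:=2b-2+\chi(\OO_S)\geq -2$ settles almost everything at once. If $N\geq 0$, then $\delta\geq\frac12$. If $N=-1$, i.e.\ $b=0$ and $\chi(\OO_S)=1$, one needs $\sum_i(1-\frac1{m_i})>1$, and the minimum is $-1+\frac12+\frac23=\frac16$ at multiplicities $(2,3)$ (Dolgachev surfaces). In your refined set-up, by contrast, $b\geq1$ does not give $\delta\geq\frac16$ ``immediately'' from the coefficient set, because $\deg M$ is only known to lie in $\frac1{12}\Z_{\geq0}$. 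You would have to invoke $\chi(\OO_S)=\frac1{12}e(S)\in\Z$ to recover the coarse formula anyway. Your displayed formula $\chi(\OO_S)=12\deg M+\dots$ is also misstated; in fact $\deg M=\frac1{12}\deg j$.

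\textbf{The genuine gap is the case $b=\chi(\OO_S)=0$.} Here $\delta=-2+\sum_i(1-\frac1{m_i})$ depends only on the multiplicities. Integrality of $\chi(\OO_S)$ says nothing in this case. Neither does the product relation for the $\operatorname{SL}_2(\Z)$-monodromies, since fibres of type ${}_mI_0$ have trivial monodromy on $H^1$. Numerically, the tuples $(2,3,7)$, $(2,3,8)$, $(2,4,5)$, $(3,3,4)$ give $\delta=\frac1{42},\frac1{24},\frac1{20},\frac1{12}$. So the enumeration you describe, with the constraints you list, would end at $\frac1{42}$, not $\frac16$.

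\textbf{The missing idea is that multiple fibres are governed by translations.} If $\chi(\OO_S)=0$, all singular fibres are of type ${}_mI_0$. Then $S\cong(C\times E)/G$ with $G$ acting freely on $C\times E$ and faithfully on $E$. Each stabilizer $G_c$, $c\in C$, acts freely on $E$, hence by translations. Since $C/G\cong\Pp^1$ admits no nontrivial \'etale covers, $G$ is generated by these stabilizers. Hence $G$ is a finite group of translations of $E$, in particular abelian. Nakajima's bound $|G|\leq 4g(C)+4$ then gives
\[
\delta=\frac{2g(C)-2}{|G|}\geq\frac{g(C)-1}{2(g(C)+1)}\geq\frac16,
\]
exactly as in Example \ref{eg-1-6}.

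Equivalently, the $m_i$ are the orders of torsion points $t_i\in E$ with $\sum_i t_i=0$. This forces, for each prime $p$, the maximal $p$-adic valuation among the $m_i$ to be attained at least twice. That condition excludes all the tuples above and leaves $(2,6,6)$, with $\delta=\frac16$, as the minimum. This is the Catanese--Li multiplicity analysis the paper invokes in Lemma \ref{lem: surface-4-6-10}.
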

\begin{lemma}\label{lem:1/24}
    Let $S$ be a smooth projective surface with $\kappa(S)=1$. Then there exists an integer $m\in \{2,3,4\}$ such that $P_m>0$ and $\frac{\Iv(S)}{mP_m}\geq \frac{1}{24}$. This bound is sharp, as shown by the example in \cite[Remark 3.4]{CataneseLi2019}.
\end{lemma}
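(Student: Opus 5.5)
The plan is to reduce everything to the canonical bundle formula on the base of the elliptic fibration, and then do an explicit case analysis on $\mathbb{P}^1$.

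\textbf{Setup.} We may assume $S$ is minimal. Let $f\colon S\to B$ be its elliptic fibration (the Iitaka fibration), $g=g(B)$, and let $m_1F_1,\dots,m_kF_k$ be the multiple fibres. By Kodaira's canonical bundle formula, $K_S\sim_{\Q} f^\ast\Delta$ with
\[
\Delta = K_B+\mathcal L+\sum_i\Bigl(1-\frac1{m_i}\Bigr)p_i, \qquad \deg\mathcal L=\chi(\OO_S)\geq 0 .
\]
Consequently
\[
\Iv(S)=\deg\Delta = 2g-2+\chi(\OO_S)+\sum_i\Bigl(1-\frac1{m_i}\Bigr)
\]
and, by Lemma~\ref{lem: debarre} (or the Katsura--Ueno formula),
\[
P_m=h^0\bigl(B,D_m\bigr), \qquad D_m:=mK_B+m\mathcal L+\sum_i\Bigl\lfloor \frac{m(m_i-1)}{m_i}\Bigr\rfloor p_i .
\]
Since $\lfloor m\Delta\rfloor$-type rounding only lowers the degree, we have $\deg D_m\le m\Iv(S)$. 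Hence $P_m\le m\Iv(S)+1$ whenever $D_m$ is effective. The target inequality $\Iv(S)/(mP_m)\ge\frac1{24}$ is equivalent to $P_m\le 24\Iv(S)/m$.

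\textbf{Easy case $P_2>0$.} Take $m=2$. We need $2\Iv(S)+1\le 12\Iv(S)$, i.e.\ $\Iv(S)\ge\frac1{10}$, and this follows from Theorem~\ref{thm:Ivol 1/6}. Note that $P_2>0$ holds automatically if $g\ge1$, since then $D_2\ge 2K_B+2\mathcal L$ with $\deg\ge0$, and more care gives effectivity. The main bound to use is the general one: if $\deg D_2\ge 0$ then $h^0\le\deg+1$.

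\textbf{Remaining case $g=0$ and $P_2=0$.} On $\mathbb{P}^1$ we have exactly $P_m=\max(0,\deg D_m+1)$, where
\[
\deg D_m=-2m+m\chi+\sum_i\lfloor m(1-1/m_i)\rfloor .
\]
Each multiple fibre contributes exactly $1$ to $\lfloor 2(1-1/m_i)\rfloor$, so $P_2=0$ means $2\chi+k\le 3$. Since $\Iv(S)>0$, this leaves a finite list of shapes:
\begin{itemize}
\item $\chi=1$, $k=1$: this is impossible, since then $\Iv<0$;
\item $\chi=0$, $k\le 3$;
\item $\chi=1$, $k\le1$, giving $\Iv=-1+\sum(1-1/m_i)$.
\end{itemize}
In practice the case to treat is $\chi=0$, $k=3$, $\sum 1/m_i<1$, i.e.\ hyperbolic triangle-type data $(m_1,m_2,m_3)$. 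For these, $P_3>0$ iff $\sum\lfloor 3(1-1/m_i)\rfloor\ge 6$, i.e.\ all $m_i\ge3$, and then $P_3=1$. The inequality $1\le 8\Iv(S)$ needs $\Iv\ge\frac18$. One checks that $1-\sum 1/m_i\ge 1-\frac13-\frac13-\frac14=\frac1{12}$ in the worst case; here I must verify directly, and if $(3,3,4)$ fails at $m=3$, switch to $m=4$.

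If some $m_i=2$, then $P_3=0$ and one uses $m=4$. Now $\lfloor 4(1-1/m_i)\rfloor$ equals $2$ for $m_i=2$ and $3$ for $m_i\ge 4$ (and $2$ for $m_i=3$). So $P_4>0$ iff the contributions sum to at least $8$, e.g.\ $(2,m_2,m_3)$ with $m_2,m_3\ge4$ and $P_4=1$, and we need $\Iv\ge\frac16$, which again holds by Theorem~\ref{thm:Ivol 1/6}. For $(3,3,m)$ with $m\ge 4$, I will compare $m=3$, where $P_3=1$ and we need $\Iv\ge\frac18$, against $m=4$. The smallest volume there is $(3,3,4)$ with $\Iv=\frac1{12}$: $P_3=1$ fails, while $P_4=\deg(-8+2+2+3)+1=0$. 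So this configuration must be re-examined, possibly by checking $P_2$ or the $\chi=1$ adjustments.

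\textbf{Main obstacle.} The hard part is this final finite enumeration: for every triple $(m_1,m_2,m_3)$, and for the $\chi=1$ cases, I have to exhibit $m\in\{2,3,4\}$ with $P_m>0$ and verify $P_m\le 24\Iv/m$ exactly, using the sharp values of $\Iv$ rather than Theorem~\ref{thm:Ivol 1/6}. I expect the borderline configuration to be the one in \cite[Remark 3.4]{CataneseLi2019}, which attains equality $\frac1{24}$. I will organize the check by the smallest $m_i$ and verify each borderline triple by hand.
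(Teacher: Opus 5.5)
\textbf{There is a genuine gap in the branch $P_2=0$.} You leave the triple $(3,3,4)$ explicitly unresolved (``must be re-examined''), and you never treat the triples $(2,3,n)$. These are not borderline cases that a finer count will settle: numerically they are counterexamples to the inequality.

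For $(3,3,4)$ with $g=\chi=0$, one gets $\Iv(S)=\frac1{12}$, $P_2=P_4=0$ and $P_3=1$. So the only available ratio is $\frac1{36}<\frac1{24}$.

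For $(2,3,n)$ with $n\ge 7$, one has $\Iv(S)=\frac16-\frac1n>0$ and $P_2=P_3=P_4=0$; for instance $\deg D_4=-8+2+2+3=-1$. So no $m\in\{2,3,4\}$ is available at all.

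Hence your stated plan, to ``verify $P_m\le 24\Iv/m$ exactly, using the sharp values of $\Iv$ rather than Theorem~\ref{thm:Ivol 1/6}'', cannot succeed. No ``$\chi=1$ adjustment'' helps either, since $\chi=1$ together with $P_2=0$ forces $k\le 1$ and hence $\Iv<0$.

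\textbf{The missing input is Theorem~\ref{thm:Ivol 1/6}, which you already invoke elsewhere.} These multiplicity data simply do not occur on a smooth projective surface with $\kappa=1$, because their Iitaka volume is $<\frac16$. This is how the paper closes the argument.

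First, once $P_2=0$, Riemann--Roch gives $3g-3+2\chi+r\le 0$, while $\Iv>0$ gives $2g-2+\chi+r>0$. Subtracting yields $g=\chi=0$ and $r=3$. This also replaces your unfinished remark that ``more care gives effectivity'' when $g\ge 1$.

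If all $n_i\ge 3$, then $P_3=1$ and $\Iv/3\ge\frac1{18}$ by $\Iv\ge\frac16$. In particular, $(3,3,4)$ never arises.

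If $n_1=2$, then $n_2\le 3$ would give $\Iv<\frac16$ (or even $\Iv<0$ when $n_2=2$), contradicting Theorem~\ref{thm:Ivol 1/6}. Hence $n_2,n_3\ge 4$, so $P_4=1$ and $\Iv/4\ge\frac1{24}$.

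Your treatment of the case $P_2>0$ is fine as written.
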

\begin{proof}
    We can assume that $S$ is minimal with elliptic fibration $f: S\to B$. Let $g:=g(B)$ and $\chi:=\chi(\OO_S)\geq 0$. The canonical bundle formula gives \[K_S=f^\ast \left(K_B+\Delta+\sum_{i=1}^{r}\left(1-\frac{1}{n_i}\right)p_i \right),\]
    where $\Delta$ is an integral divisor on $B$ of degree $\chi$, the points $p_1,\ldots,p_r$ support the multiple fibers, and $n_i\geq 2$ is the multiplicity of the fiber over $p_i$. Then \[P_m=h^0\left(B,mK_B+m\Delta+\sum_{i=1}^{r} \left\lfloor m\left(1-\frac{1}{n_i}\right) \right\rfloor p_i\right),\]
    and \[\Iv(S)=2g-2+\chi+\sum_{i=1}^{r}\left(1-\frac{1}{n_i}\right).\]
    By the estimate $h^0(B,L)\leq \max\{0, 1+\operatorname{deg}L\}$ for any line bundle $L$, \[P_m\leq \max\{0, 1+\operatorname{deg}\left(mK_B+m\Delta+\sum_{i=1}^{r}\left\lfloor m\left(1-\frac{1}{n_i}\right)\right\rfloor  \right)\}\leq 1+m\Iv(S).\]
    If $P_2>0$, using Theorem \ref{thm:Ivol 1/6}, we have \[\frac{\Iv(S)}{2P_2}\geq \frac{\Iv(S)}{2(1+2\Iv(S))}\geq \frac{1}{16}.\]
    Now suppose that $P_2=0$. The Riemann-Roch theorem gives
    \begin{equation}\label{eq:RiemannRoch in 1/24}
        0=P_2\geq\operatorname{deg}\left(2K_B+2\Delta+\sum_{i=1}^{r}\left\lfloor2-\frac{2}{n_i}\right\rfloor p_i\right)+1-g=3g-3+2\chi+r.
    \end{equation}
    The condition $\Iv(S)>0$ implies that
    \begin{equation}\label{eq: Ivol in 1/24}
        2g-2+\chi +r\geq 2g-2+\chi+\sum_{i=1}^{r}\left(1-\frac{1}{n_i}\right)>0
    \end{equation}
    Combining \eqref{eq:RiemannRoch in 1/24} and \eqref{eq: Ivol in 1/24}, one obtains \[0>g-1+\chi,\] so $g=\chi=0$ and $r=3$. Thus, $B\cong \Pp^1$ and 
    \begin{equation}\label{eq:plurigenera-formula}
        P_m=\max\left\{0,1-2m+\sum_{i=1}^{3} \left\lfloor m\left(1-\frac{1}{n_i}\right)\right\rfloor\right\}.
    \end{equation}
    If $n_i\geq 3$ for all $i$, then $P_3=1$ and \[\frac{\Iv(S)}{3P_3}=\frac{\Iv(S)}{3}\geq \frac{1}{18}.\] Now assume that $2=n_1\leq n_2\leq n_3$. If $n_2\leq 3$, $\Iv(S)=2g-2+\chi+\sum_{i=1}^{3}(1-\frac{1}{n_i})<\frac{1}{6}$, contradicting Theorem \ref{thm:Ivol 1/6}. If $n_2\geq 4$,  Equation \eqref{eq:plurigenera-formula} gives $P_4=1$. Thus, \[\frac{\Iv(S)}{4P_4}=\frac{\Iv(S)}{4}\geq \frac{1}{24}.\]
\end{proof}

\begin{lemma}\label{lem: surface-4-6-10}
    Let $S$ be a smooth projective surface with $\kappa(S)=1$. Then the following hold.
    \begin{enumerate}
        \item $P_4>0$, $P_6>0$ and $P_m>0$ for $m\geq 8$.
        \item $\lvert 10K_S \rvert$ defines the Iitaka fibration.
        \item If $P_5=0$, then $P_3,P_4,P_8>0$ and $\lvert 11K_S \rvert$ defines the Iitaka fibration.
        \item If $P_7=0$, then $P_5>0$ and $\lvert 11K_S \rvert$ defines the Iitaka fibration.
    \end{enumerate}
\end{lemma}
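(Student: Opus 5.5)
We reduce to the canonical bundle formula exactly as in the proof of Lemma \ref{lem:1/24}. We may assume $S$ is minimal with elliptic fibration $f\colon S\to B$, and write
\[K_S=f^\ast\Big(K_B+\Delta+\sum_{i=1}^r\big(1-\tfrac{1}{n_i}\big)p_i\Big),\]
with $\deg\Delta=\chi\geq 0$ and $g=g(B)$. Set
\[L_m:=mK_B+m\Delta+\sum_i\big\lfloor m(1-\tfrac1{n_i})\big\rfloor p_i .\]
Then $P_m=h^0(B,L_m)$. Moreover, by Lemma \ref{lem: debarre}, the map defined by $|mK_S|$ factors through $f$, and it is given by $|L_m|$ on $B$. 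So ``$|mK_S|$ defines the Iitaka fibration'' is equivalent to $|L_m|$ inducing a birational map of $B$ onto its image. By the standard curve criteria, it suffices that $\deg L_m\geq 2g+1$, or that $|L_m|$ separates two general points of $B$. Everything then becomes a case analysis on $(g,\chi,r,n_i)$ under the constraint
\[\Iv(S)=2g-2+\chi+\sum_i\big(1-\tfrac1{n_i}\big)>0.\]

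First we treat $g\geq 2$. Then $\deg L_m\geq m(2g-2)\geq 2g+1$ for $m\geq 3$, and $P_m>0$ for $m\geq 2$, so all statements are immediate. Next, for $g=1$ we need $\chi+r\geq 1$. Here $\deg L_m\geq m\chi+\sum_i\lfloor m/2\rfloor\geq 2$ for $m\geq 4$, with degree $\geq 3=2g+1$ once $m\geq 6$, and even for $m=5$ when $r=1,\chi=0$ since $\lfloor 5/2\rfloor=2<3$. This needs care: with $r=1$ and $n_1=2$ we get $\deg L_m=\lfloor m/2\rfloor$, which is $\geq 3$ only for $m\geq 6$, and $\geq 1$ already gives $P_m>0$ for $m\geq 2$. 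All required claims follow, since $10,11\geq 6$.

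The main case is $g=0$. Here $\deg L_m=-2m+m\chi+\sum_i\lfloor m(1-1/n_i)\rfloor$, $P_m=\max\{0,1+\deg L_m\}$, and birationality onto the image means $\deg L_m\geq 1$. If $\chi\geq 2$, or $\chi=1$ and $r\geq 1$, or $r$ is large, everything is easy. So the real work is in the finitely many ``small'' configurations: $\chi=1,r\leq 1$ (where positivity forces $r=1$), $\chi=0$ with $r=3$ or $r=4$, and in general $r\leq 4$. For $r\geq 5$ or $\chi\geq 1$ a direct estimate using $\lfloor m(1-1/n)\rfloor\geq m/2$ for even $m$ and $\geq (m-1)/2$ otherwise suffices.

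The key step, and the main obstacle, is the case $\chi=0$, $r=3$, i.e.\ hyperbolic triangle signatures $(n_1,n_2,n_3)$ with $\sum 1/n_i<1$, together with $r=4$, where the signature $(2,2,2,3)$ is the extreme. For $r=3$ we would organize by $n_1=2$ versus $n_1\geq 3$. When $n_1=2$, positivity forces $n_2\geq 3$, and $n_2=3$ requires $n_3\geq 7$. For $n_2=3$ we have
\[\deg L_m=-2m+\lfloor m/2\rfloor+\lfloor 2m/3\rfloor+\lfloor m(1-1/n_3)\rfloor,\]
and the worst case $(2,3,7)$ must be checked by hand. It gives $\deg L_m\geq 0$ for $m=4,6,8,9,\dots$ and $\deg L_{10}\geq 1$, while $P_5=P_7=0$ can only occur in specified small cases. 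We then verify directly that $\deg L_m\geq 0$ for $m\in\{4,6\}\cup[8,\infty)$ and $\deg L_{10}\geq 1$, by monotonicity in each $n_i$: each floor term is nondecreasing in $n_i$, so it suffices to check the minimal signatures $(2,3,7),(2,4,5),(3,3,4),(2,2,2,3)$. For (3) and (4), we would identify which minimal signatures have $P_5=0$ (resp.\ $P_7=0$), e.g.\ $(2,3,7),(2,4,5)$ and nearby ones. For these we check $P_3,P_4,P_8>0$ (resp.\ $P_5>0$) and $\deg L_{11}\geq 1$, using the fact that increasing $n_i$ only increases $\deg L_m$ but may change $P_5$ from zero. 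The careful bookkeeping of which signatures yield $P_5=0$ or $P_7=0$ is the delicate point, and we would tabulate it explicitly.
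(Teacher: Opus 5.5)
\textbf{There is a genuine gap, in the case $g(B)=0$, $\chi=0$.} Your reduction to the canonical bundle formula and your treatment of $g(B)\geq 1$ and of $g(B)=0$, $\chi\geq 1$ are fine. (One slip: for $\chi=1$, positivity of $\Iv$ forces $r\geq 2$, not $r=1$.)

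The problem is that for $g(B)=0$, $\chi=0$ the assertions of the lemma are \emph{false} for arbitrary hyperbolic multiplicity data. So no monotonicity argument over minimal hyperbolic signatures can prove them. Your hand computation for $(2,3,7)$ is wrong:
\[
\deg L_4=-8+2+2+3=-1,\qquad \deg L_8=-16+4+5+6=-1,\qquad \deg L_{10}=-20+5+6+8=-1 .
\]
Hence $P_4=P_8=P_{10}=0$, and in fact $\deg L_4=-1$ for every $(2,3,n)$. The other signatures you list fail as well:
\begin{itemize}
\item $(3,3,4)$ has $\deg L_{10}=-1$.
\item $(2,4,5)$ has $P_{10}=1$.
\item $(2,2,2,3)$ has $P_3=P_5=P_7=0$ and $P_{11}=1$, violating (3) and (4).
\end{itemize}
So the minimal signatures you propose to check are precisely counterexamples to the numerical statement. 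Even the bound $\Iv(S)\geq\frac16$ of Theorem \ref{thm:Ivol 1/6} does not rescue the approach. The tuples $(2,2,2,3)$, $(3,3,6)$ and $(2,4,12)$ all have $\Iv=\frac16$, yet they violate (3), (1) and (3) respectively; for instance $(3,3,6)$ gives $\deg L_4=-8+4+3=-1$.

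\textbf{The missing input is the geometric constraint on the multiplicities when $\chi(\OO_S)=0$.}
\begin{itemize}
\item In that case every fibre is a multiple of a smooth elliptic curve, and $S\cong (C\times E)/G$ with $G$ acting diagonally and freely.
\item Hence the stabilizer of a point of $C$ acts on $E$ by translations and injects into $\Aut(E)$.
\item Since $C/G\cong\Pp^1$, these stabilizers generate $G$, so their images in $\Aut(E)$ lie in an abelian group.
\item With local monodromies $c_i$ of orders $n_i$ and $\prod c_i=1$, this forces $n_i\mid\operatorname{lcm}_{j\neq i}n_j$.
\end{itemize}
This condition excludes $(2,3,n)$, $(2,4,5)$, $(3,3,n)$ for $n\geq 4$, $(2,2,2,n)$ for $n\geq 3$, $(2,4,12)$, and so on.

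This is what the Catanese--Li multiplicity analysis cited in the paper supplies. Every admissible tuple dominates one of
\[
(2,2,2,2,2),\ (2,2,3,3),\ (4,4,4),\ (3,6,6),\ (3,4,12),\ (2,5,10),\ (2,6,6),
\]
with extra multiple fibres allowed, and only for these does the table yield (1)--(4). This list also settles the logical issue in (3) and (4) that you leave open. If $P_5=0$, the tuple can dominate only $(4,4,4)$, which gives $P_3,P_4,P_8>0$ and $P_{11}\geq 3$. If $P_7=0$, it can dominate only $(2,6,6)$, which gives $P_5\geq 1$ and $P_{11}\geq 2$.
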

\begin{proof}
    We may assume that $S$ is minimal with the Iitaka fibration $f: S\to B$. We use the notation and formulas from the proof of Lemma \ref{lem:1/24} and use the same notation. If $g\geq 1$, the Riemann-Roch gives $P_m>0$ for $m\geq 2$. Indeed, when $g=1$ and $\chi=0$, $\Iv(S)>0$ implies that $r\geq 1$ so the Riemann-Roch still applies. Moreover,
 \[ \operatorname{deg}\left(10K_B+10\Delta+\sum_{i=1}^{r} \left\lfloor 10\left(1-\frac{1}{n_i}\right)\right\rfloor\cdot p_i \right)\geq 2g+1,\] so the corresponding divisor is very ample. Thus, $\lvert 10K_S \rvert$ defines the Iitaka fibration.

    Now assume that $g=0$. Then $\lvert mK_S \rvert$ defines the Iitaka fibration if and only if $P_m\geq 2$. By the plurigenus formula
    \begin{equation}
        P_m=\max\left\{0,1+m(\chi-2)+\sum_{i=1}^{r} \left\lfloor m\left(1-\frac{1}{n_i}\right)\right\rfloor\right\}.
    \end{equation} 
    \begin{enumerate}
        \item If $\chi \geq 3$, the formula gives $P_m\geq 2$ for $m\geq 1$.
        \item If $\chi=2$, $\Iv(S)>0$ gives $r\geq 1$ so $P_m\geq 2$ for $m\geq 2$.
        \item If $\chi =1$, $\Iv(S)>0$ implies $r\geq 2$. Moreover, if $r=2$, at most one $n_i$ is equal to $2$. Hence, $P_m>0$ for $m\geq 3$ and $P_{10}\geq 2$.
        \item  If $\chi =0$, the multiplicity analysis in \cite[Section 4, Case (5)]{CataneseLi2019} shows that, after arranging the multiplicities in non-decreasing order, they dominate one of the tuples listed below componentwise. Here additional multiple fibers are allowed. We can get the following lower bounds for the plurigenera:
    \begin{center}
        \begin{table}[ht]
\centering
\begin{tabular}{c|ccccc}
\toprule
Multiplicity tuple
& $P_3$ & $P_5$ & $P_7$ & $P_{10}$ & $P_{11}$ \\
\midrule
$(2,2,2,2,2)$ & $0$ & $1$ & $2$ & $6$ & $4$ \\
$(2,2,3,3)$   & $1$ & $1$ & $1$ & $3$ & $3$ \\
$(4,4,4)$     & $1$ & $0$ & $2$ & $2$ & $3$ \\
$(3,6,6)$     & $1$ & $2$ & $1$ & $3$ & $4$ \\
$(3,4,12)$    & $1$ & $1$ & $2$ & $3$ & $4$ \\
$(2,5,10)$    & $0$ & $1$ & $1$ & $3$ & $1$ \\
$(2,6,6)$     & $0$ & $1$ & $0$ & $2$ & $2$ \\
\bottomrule
\end{tabular}
\caption{Lower bounds for the plurigenera.}
\label{tab:plurigenera}
\end{table}
    \end{center}
Similarly, one obtains that $P_4, P_6, P_8, P_9>0$ in every row. The conclusion follows directly from the table.
    \end{enumerate}

\end{proof}
\subsection{Generically isotrivial threefolds}\label{subsec: geneiso}
Let $X$ be a
smooth projective threefold with
$\kappa(X)=2$.
We say $X$ is \emph{generically isotrivial}, if its general fibers of the Iitaka fibration are mutually isomorphic.

By \cite[Theorem 1.6]{HaoSchreieder2021} or Theorem \ref{thm:hao-schreieder} below, $X$ is generically isotrivial, if and only if
$X\sim_{\text{bir}} (T\times E) /G$, where 
\begin{itemize}
    \item[(1)] $T$ is a projective surface with canonical singularities and ample $K_T$, $E$ is an elliptic curve, and $ (T\times E) /G$ has canonical singularities;
    \item[(2)] $G$ acts diagonally and faithfully on each factor;
    \item[(3)] The natural quotient map $q:T\times E\to (T\times E) /G$ is \'{e}tale in codimension one.
\end{itemize}
Denote by $S$ the smooth minimal model of resolution of $T$.
Then $\Aut(S)=\Aut(T)$ by \cite[Section 2.5, p.183]{Sym4fold08} and $G$ acts faithfully on $S$.

\subsection{Numerical criterion for separation of adjoint system}
We introduce some numerical criteria for separating two distinct closed points by adjoint linear system.
These criteria mainly contribute to arguments in Section \ref{section: effIitgeneisotrivial}.

\begin{lemma}[]\label{lem: criterionforsepa}
Suppose that $Y$ is a smooth projective surface.
Let $L$ be a big and nef $\mathbb{Q}$-divisor on $Y$.
Assume there exists $\beta \in \mathbb{R}$ with $\beta > 8$
such that
\begin{enumerate}
    \item $L^2 \geq  \beta$;
    \item for every very general point $P \in Y$ and every
    irreducible curve $C \subset Y$ containing $P$, one has
    \[
        L \cdot C
        >
        \frac{4}{1+\sqrt{1-\frac{8}{\beta}}}.
    \]
\end{enumerate}
Then two distinct very general points of $Y$ are separated by
\[
    \left| K_Y + \lceil L \rceil \right|.
\]
In particular, this linear system defines a birational map.
\end{lemma}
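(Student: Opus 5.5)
We follow the standard Reider-type/Kawamata–Viehweg approach with multiplier ideals at two very general points $P_1,P_2\in Y$. The aim is an effective $\Q$-divisor $D\equiv tL$ with $t<1$ such that $P_1$ is an isolated point of the non-klt locus of $(Y,D)$ and $(Y,D)$ is lc but not klt near $P_1$ (or symmetrically at $P_2$), while $P_2$ lies in the non-klt locus, all with $\lceil L\rceil - D$ still big and nef up to a fractional part. Write $\lceil L\rceil = L + \Delta$ with $0\le\Delta$ fractional. Then Nadel vanishing gives $H^1(Y,K_Y+\lceil L\rceil\otimes\mathcal J(D))=0$ because $\lceil L\rceil - D \equiv (1-t)L+\Delta$ and $(1-t)L$ is big and nef (absorbing $\Delta$ into the boundary of the pair). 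Hence $H^0(K_Y+\lceil L\rceil)\to H^0(\OO/\mathcal J(D))$ is surjective. If $\mathcal J(D)$ cosupport contains $P_1,P_2$ with $P_1$ isolated, this separates the two points, and since they are very general the map is birational.

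\textbf{Step 1 (divisor with high multiplicity).} Since $L^2\ge\beta$, Riemann–Roch for $kL$ ($k$ divisible) gives $h^0(kL)\sim k^2L^2/2$, while vanishing to order $m$ at two points imposes about $2\cdot m^2/2$ conditions. Hence for any $\alpha$ with $2\alpha^2<L^2$ there is $D_0\equiv L$, built from a member of $|kL|$ divided by $k$, with $\operatorname{mult}_{P_i}D_0\ge \alpha$ for $i=1,2$. Take $\alpha$ slightly less than $\sqrt{\beta/2}>2$, so $D=\frac{2}{\alpha}D_0$ has multiplicity $\ge 2$ at both points with $t=2/\alpha<1$. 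Because the points are very general, the family of such divisors varies and we can choose $D_0$ with controlled structure.

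\textbf{Step 2 (isolating the non-klt locus, the main obstacle).} $\mathcal J(D)$ might be non-trivial along a curve $C$ through $P_1$ rather than at an isolated point. Let $c$ be the lc threshold of $D_0$ at $P_1$ (after possibly swapping the points), and let $C$ be a curve component of $\operatorname{Nklt}(Y,cD_0)$ through $P_1$. Then $cD_0= C+\text{rest}$ with coefficient $\ge1$ on $C$. The standard cutting-down trick (Kawamata's tie-breaking, as in Ein–Lazarsfeld's Reider proof) works as follows. Restrict to the normalization of $C$ and produce, via Riemann–Roch on $C$, a divisor of degree roughly $L\cdot C$ times a factor that cuts the non-klt locus down to the point. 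This requires $(1-c')L\cdot C>2$ for points on a curve, where $c'$ is the total coefficient used so far. The optimal bookkeeping is the following. Write $L\equiv xC+\ldots$, with $x\le 1/\alpha$-type bounds from multiplicity and $C^2\ge 0$ (moving curve through a very general point, so $C^2\ge0$, which makes Hodge index applicable: $(L\cdot C)^2\ge L^2C^2$). Optimizing the parameter gives exactly the quadratic condition $\gamma^2\beta - 4\gamma\beta/\ldots$ whose root is $\frac{4}{1+\sqrt{1-8/\beta}}$: solving $s(4-s)\beta/\ldots\ge 8$ shows that if $L\cdot C$ exceeds $\frac{4}{1+\sqrt{1-8/\beta}}=\frac{\beta}{2}\bigl(1-\sqrt{1-8/\beta}\bigr)$, then the remaining positivity suffices. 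I would carry out this computation carefully, first checking the case $C^2=0$ (a fibration by curves through general points), where the bound is tightest.

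\textbf{Step 3.} After the tie-breaking, we obtain $D'\equiv t'L$ with $t'<1$ and $P_1$ an isolated lc center with $P_2\in\operatorname{Nklt}$, or vice versa. Conclude by Nadel vanishing as above, then apply symmetry and the very generality of the points to obtain birationality.
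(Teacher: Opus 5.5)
\textbf{There is a genuine gap: Step 2, which carries the whole content of the lemma, is not carried out.}

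Your Steps 1 and 3 are the standard Nadel-vanishing framework and are fine:
\begin{itemize}
\item a divisor $D_0\equiv L$ with $\operatorname{mult}_{P_i}D_0\ge\alpha$ exists whenever $2\alpha^2<L^2$;
\item $\lceil L\rceil-L$ can be absorbed into the boundary, since very general points avoid its support;
\item an isolated point of the cosupport of $\mathcal J$ is separated from the rest by the surjection onto $H^0(\mathcal O/\mathcal J)$.
\end{itemize}
Step 2, however, is not an argument. Its formulas contain literal ellipses, and the outcome of the ``optimization'' is asserted rather than derived.

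The one inequality you do write, $(1-c)\,L\cdot C>2$ with $c\le 2/\alpha\approx 2\sqrt{2/\beta}$, only gives $L\cdot C>2/(1-2\sqrt{2/\beta})$. This blows up as $\beta\downarrow 8$. It is strictly weaker than $\tau(\beta):=\frac{4}{1+\sqrt{1-8/\beta}}$ for every $\beta>8$; for example, at $\beta=32$ it gives $4$ versus $\tau(32)\approx 2.14$.

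Your bound on the coefficient of $C$ also points the wrong way. From $cD_0=C+\cdots$ at the lc threshold, the coefficient of $C$ in $D_0$ is $1/c\ge\alpha/2$, and nefness gives $1/c\le L^2/(L\cdot C)$. To do better you must use the residual multiplicity of $D_0-\frac1c C$ at $P_1$ when restricting to $C$. You must also treat $P_2\in C$ versus $P_2\notin C$, and $C$ singular at $P_1$. None of these appears in the proposal.

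Finally, $C^2=0$ is not the tight case. Your identity is correct and shows that $\tau(\beta)$ is the smaller root of $s^2-\beta s+2\beta=0$. So the threshold is exactly the condition $s-s^2/\beta>2$. That is what a Reider-type obstruction $L\cdot C-2\le C^2$, combined with Hodge index $C^2\le (L\cdot C)^2/L^2$, produces. The critical curves are therefore those numerically proportional to $L$, whereas $C^2=0$ only requires $L\cdot C>2$.

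\textbf{You also skip a margin issue, and it is the one point the paper's proof actually addresses.} Step 1 needs $2\alpha^2<L^2$ strictly. So if $L^2=\beta$, any threshold your bookkeeping produces is $\tau(2\alpha^2)>\tau(\beta)$. Hypothesis (2) is only a strict inequality curve by curve, so you need a uniform gap. That gap comes from discreteness: $L\cdot C\in\frac1M\mathbb Z$ when $ML$ is Cartier.

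The paper uses exactly this to replace $\beta$ by $\beta-\varepsilon$, so that $L^2>\beta-\varepsilon>8$ and (2) still holds with $\beta-\varepsilon$. It then applies \cite[Theorem~2.8]{CCCJ21}, which is precisely this separation criterion. To close the gap, either cite that theorem after the same perturbation, or genuinely carry out the curve case of Step 2. As written, the proposal does not establish the constant $\frac{4}{1+\sqrt{1-8/\beta}}$.
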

\begin{proof}
Since $L$ is $\Q$-Cartier divisor, there is a positive number $M$ such that $ML$ is Cartier and $(L\cdot C) \in \frac{1}{M}\N$ belongs to a discrete subset for all irreducible $C$.
So there is a positive real number $0<\varepsilon\ll 1$ such that 
\[
L^2 >\beta-\varepsilon>8  , \qquad L \cdot C
        > \frac{4}{1+\sqrt{1-\frac{8}{\beta-\varepsilon}}} 
\]
Replacing $\beta$ by $\beta-\varepsilon$, one can apply \cite[Theorem~2.8]{CCCJ21} on $L, C$ and obtain the result.
\end{proof}

\subsection{Inequalities on fibrations}
We recall two well-known inequalities for surface fibrations.
\begin{lemma}\label{lem: slopeineqs}
Let $f:S\to B$ be a fibration from a smooth surface of general type to a smooth curve with general fiber $F$.
Denote $\chi:=\chi(\OO_S)$, $V:=\vol(S) >0$, $g:=g(F)\geq 2$ and $b:=g(B)$.
Then 
\begin{itemize}
    \item[(1)] If $b=1$, then $V\geq \chi \cdot\frac{4(g-1)}{g}$;
    \item[(2)] If $b\geq 2$, then $V\geq 8(g-1)(b-1)$.
\end{itemize}
\end{lemma}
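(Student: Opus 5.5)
We reduce both items to the relative canonical class and invoke the standard results. Since $V=\vol(S)$ and the slope inequalities are birational in nature, we may assume $S$ is relatively minimal over $B$, so $K_{S/B}$ is nef. Set $K_{S/B}=K_S-f^\ast K_B$. Then
\[
K_S^2=K_{S/B}^2+8(g-1)(b-1),\qquad \chi(\OO_S)=\chi_f+(g-1)(b-1),
\]
where $\chi_f=\deg f_\ast\omega_{S/B}\geq 0$. Since $S$ is of general type, $V=K_{S_{\min}}^2\geq K_S^2$. Indeed, contracting $(-1)$-curves only increases $K^2$, and the relatively minimal model dominates the minimal model.

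For (2), with $b\geq 2$, the nefness of $K_{S/B}$ (Arakelov) gives $K_{S/B}^2\geq 0$. Hence
\[
V\geq K_S^2=K_{S/B}^2+8(g-1)(b-1)\geq 8(g-1)(b-1).
\]
If $S$ is not relatively minimal, we first pass to the relatively minimal model. That step only increases $K^2$ and does not change $g$ or $b$.

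For (1), with $b=1$, the relative invariants coincide with the absolute ones: $K_{S/B}=K_S$ and $\chi_f=\chi(\OO_S)$. The slope inequality of Xiao (and Cornalba–Harris) gives
\[
K_{S/B}^2\geq \frac{4(g-1)}{g}\chi_f.
\]
This holds for relatively minimal fibrations with $g\geq 2$, including the isotrivial case, where $\chi_f=0$ or the bound holds trivially. Therefore
\[
V\geq K_S^2\geq \frac{4(g-1)}{g}\chi.
\]

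The only delicate point is making sure the slope inequality applies in the form we need. We take the fibration relatively minimal and keep track of $V$ versus $K^2$ of the relatively minimal model, using that $\chi$ is a birational invariant. Once that is done, both items are direct citations.
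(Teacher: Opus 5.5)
Your argument is correct and follows the paper's route. You reduce to the relatively minimal model, which for $b\geq 1$ is already the minimal model, so in fact $V=K_S^2$ there; you then apply Xiao's slope inequality with $K_{S/B}=K_S$ and $\chi_f=\chi(\OO_S)$ for (1), and for (2) you use $K_S^2=K_{S/B}^2+8(g-1)(b-1)$ together with Arakelov's nefness of $K_{S/B}$, which is the content of the Catanese result the paper cites. One small wording fix: the case where Xiao's inequality holds trivially is the \emph{locally trivial} one ($K_{S/B}^2=\chi_f=0$), not the isotrivial one in general, though the inequality holds in every case, so nothing breaks.
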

\begin{proof}
Since $b>0$, one can contract $f$-vertical $(-1)$-curves and assume $S$ is minimal.

Assertion (1) follows by Xiao's slope inequality \cite[Theorem 2]{Xiao87}, via taking $b=g(B)=1$, $\chi(\OO_B)=0$, $\omega_B\sim 0$, and  $\lambda(f)=\frac{K_S^2}{\deg(f_*\omega_{S})}=\frac{V}{\chi}$ here.

Assertion (2) follows by \cite[Theorem 2.2]{Catanese2000}.
\end{proof}

\subsection{Abelian automorphism groups of smooth minimal surfaces}
We introduce some notations mostly derived from \cite{lu2025sharp}.
Let $S$ be a smooth minimal surface of general type and $G\leq \Aut(S)$ be a finite abelian subgroup.
Denote the geometric genus, volume, irregularity and algebraic Euler characteristic of $S$ by $p_g,V,q,\chi$ respectively.
\begin{definition}
A pencil $\Lambda$ on $S$ is called \emph{$G$-invariant}, if $g.\Lambda=\Lambda$ for any $g\in G$.
A fibration $f:X\to C$ on a smooth model $X$ of $S$ is called \emph{$G$-invariant}, if any fiber is sent to a fiber by any $g\in G$.
We say $\Lambda$ is \emph{rational} (resp. \emph{irrational}), if the associated fibration $f:X\to C$ satisfies $g(C)=0$ (resp. $g(C)\geq 1$).

Let $H_n:=H^0(S,nK_S)$ be the $n$-canonical system and $v\in H_n$ be an element.
We say $v$ is a \emph{semi-invariant vector}, if for any $g\in G$, there is $\lambda_g\in \C^*$ with
$g.v=\lambda_g v$. 

If $v_1,v_2\in H_n$ are linearly independent semi-invariant vectors, then $\Lambda_n:=\operatorname{span}_\C\{v_1,v_2  \}$
generates a $G$-invariant pencil.
We say a fibration $f_n:X\to C$ is a \emph{$G$-invariant $n$-canonical pencil}, if $f_n$ is induced by Stein factorization of the resolution of the map defined by $\Lambda_n$.

We also define the \emph{canonical degree} $\lambda$ of pencil $\Lambda$ as $\lambda:=\frac{K_S\cdot M}{K_S^2}$, where $M$ is the moving part of $\Lambda$.   
\end{definition}

\subsubsection{Construct pluricanonical $G$-invariant pencils}
In the following, we construct $G$-invariant pluricanonical pencils by two linearly independent semi-invariant vectors. 
\begin{lemma}\label{lem: createGinvpencil}
Let $S$ be a smooth minimal surface of general type.
Let $G\leq \Aut(S)$ be a finite abelian subgroup.
Then there is a resolution $S'$ of $S$ satisfying the following conditions.
\begin{itemize}
    \item[(1)] If $p_g\geq 2$, then there is a $G$-invariant canonical pencil $f:S' \to B$ with canonical degree $\lambda \leq 1$;
    \item[(2)] If $p_g\leq 1$, then there is a $G$-invariant bicanonical pencil $f:S' \to B$ with canonical degree $\lambda \leq 2$.
\end{itemize}
\end{lemma}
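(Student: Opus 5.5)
The plan is to use that a finite abelian group acting linearly on a finite-dimensional complex vector space is diagonalizable. $G$ acts on $H_n=H^0(S,nK_S)$ by pullback of pluricanonical forms. Since $G$ is finite abelian, $H_n$ decomposes as a direct sum of character spaces $H_n=\bigoplus_{\chi\in\widehat G}H_n^\chi$. Any basis adapted to this decomposition consists of semi-invariant vectors. So whenever $\dim H_n\geq 2$, we obtain two linearly independent semi-invariant vectors $v_1,v_2$. Their span $\Lambda_n$ is $G$-invariant, because each $g$ rescales $v_1$ and $v_2$ separately. We then take $n=1$ if $p_g\geq 2$, and $n=2$ if $p_g\leq 1$. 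In the second case, $P_2=\chi+K_S^2\geq 1+1=2$ for a minimal surface of general type, since $\chi(\OO_S)\geq 1$. This gives $h^0(2K_S)\geq 2$ in both cases.

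Next we construct the fibration. We take a $G$-equivariant resolution $S'\to S$ of the base locus of $\Lambda_n$. For instance, we blow up $G$-orbits of base points repeatedly; since $\Lambda_n$ is $G$-invariant, its base scheme is $G$-stable, so $G$ lifts to $S'$. We write the pullback of $\Lambda_n$ as $|M'|+Z$ with $|M'|$ base point free. This yields a morphism $S'\to\Pp^1$, and we let $f:S'\to B$ be its Stein factorization. Each $g\in G$ preserves the pencil and hence sends members to members. Therefore $g$ maps fibers of $S'\to\Pp^1$ to fibers, and so it maps connected components, i.e. fibers of $f$, to fibers of $f$. Thus $f$ is a $G$-invariant $n$-canonical pencil.

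Finally, we bound the canonical degree. Let $M\subset S$ be the moving part of $\Lambda_n$, so that $nK_S\equiv M+Z_0$ with $Z_0\geq 0$. Since $K_S$ is nef on the minimal surface, $K_S\cdot Z_0\geq 0$. Hence $K_S\cdot M\leq nK_S^2$, which gives $\lambda=\frac{K_S\cdot M}{K_S^2}\leq n$. This yields $\lambda\leq 1$ in case (1) and $\lambda\leq 2$ in case (2).

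The only mildly delicate step is ensuring that the resolution is $G$-equivariant, so that $G$ acts on $S'$. We handle this by blowing up full $G$-orbits of base points at each stage. The rest is formal. If one wishes $M$ to be measured on $S'$, the moving part pulls back compatibly, so the intersection number is unchanged.
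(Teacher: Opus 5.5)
Your proposal is correct and follows essentially the same argument as the paper: decompose $H^0(S,nK_S)$ into characters of the finite abelian group $G$ and pick two semi-invariant sections; take $n=1$ if $p_g\ge 2$ and $n=2$ otherwise (Riemann--Roch); then Stein-factorize the induced map to $\Pp^1$, and bound $\lambda\le n$ using $nK_S\equiv M+Z_0$ and the nefness of $K_S$. You also supply two details the paper leaves implicit: a $G$-equivariant resolution of the base points, so that $G$ actually acts on $S'$, and the explicit check $P_2=\chi(\OO_S)+K_S^2\ge 2$.
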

\begin{proof}
    We consider the $n$-th pluricanonical system $V_n:=H^0(S,nK_S)$ for some $n\in \N_{\geq 1}$ and $h^0(S,nK_S) \geq 2$.
    Then there is a natural complex representation
    \[
    \rho: G \to \GL(V_n), \qquad g\mapsto(g^*: s \mapsto g^*s)
    \]
    induced by automorphisms on $S$.
    
    Since $G$ is abelian, every irreducible sub-representation has dimension one.
    So there are two semi-invariant vectors $s_1,s_2\in V_n$ such that 
    \begin{equation}\label{eq: s1s2semiinv}
    g^*s_1=c_1(g)s_1, g^*s_2=c_2(g)s_2, \qquad c_1(g),c_2(g)\in \C^*   
    \end{equation}
    for any $g\in G$.
    After choosing a resolution $\pi:S'\to S$ of $S$, there is a morphism $h:S'\to \Pp^1$ defined by $n$-canonical global sections $s_1,s_2$.
    Taking the Stein factorization 
    $S'\xrightarrow[]{f} B \xrightarrow[]{g} \Pp^1$ of $h$, $S'\xrightarrow[]{f} B$ is a $G$-invariant $n$-canonical fibration induced by a pencil with general fiber $F$.
    Moreover, the canonical degree \[\lambda=\frac{(K_{S}\cdot M_n)}{(K_{S}\cdot K_S)}\leq \frac{(K_{S}\cdot nK_S)}{(K_{S}\cdot K_S)}=n,\]
    where $M_n$ is the movable part of that pencil.
    
    If $p_g\geq 2$, then one can define $n=1$ and obtain (1).
    If $p_g\leq 1$, then $h^0(S,2K_S)\geq 2$ by Riemann-Roch formula and we obtain (2) by taking $n=2$.
\end{proof}

\subsubsection{Abelian automorphism groups with a rational $G$-invariant pencil}
The following Lemmas help us to bound the order of abelian automorphism group $G$ with a rational $G$-invariant pencil in Proposition \ref{prop: ratpencilcase}.

\begin{lemma} \label{lem: resolintnumcal}
Let $S$ be a smooth minimal projective surface of general type.
Suppose that there is a pencil $\Lambda$ on $S$.
Let $\pi:S'\to S$ be the resolution of base points of $\Lambda$.
Let $f:S' \to C$ be the fibration induced by $\Lambda$ with general fiber $F$.
By definition, the canonical degree $\lambda\geq \frac{K_S\cdot \hat{F}}{K_S^2}$, where $\hat{F}:=\pi(F)$.

By Castelnuovo's theorem, one can decompose $\pi:S'\to S$ as $r$ successive blow-ups at smooth points $P_i\in S_i$:
$S':=S_r \xrightarrow[]{\pi_r} S_{r-1} \xrightarrow[]{\pi_{r-1}} \dots \to S_0:=S$.
Denote $a_i:=\operatorname{mult}_{P_i}(F_i)$, where $F_i$ is image of $F$ on $S_i$.

Then 
\begin{itemize}
     \item[(1)] $2g(F)-2=K_{S'}\cdot F=\sum_{0\leq i\leq r-1} a_i +K_{S}\cdot \hat{F}\leq\sum_{0\leq i\leq r-1} a_i +\lambda V$;
    \item[(2)] $\hat{F}^2=\sum_{0\leq i\leq r-1}a_i^2$;
    \item[(3)] $g(F)=p_a(\hat{F})-\sum_{0\leq i\leq r-1} \frac{a_i(a_i-1)}{2}$;
    \item[(4)] $r\leq \sum_{0\leq i\leq r-1}a_i\leq \sum_{0\leq i\leq r-1}a_i^2\leq \lambda^2 V$.
\end{itemize}
\end{lemma}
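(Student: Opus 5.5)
The plan is to track everything through the blow-up sequence $S'=S_r\to\cdots\to S_0=S$ using the standard formulas for a single blow-up. At the $i$-th step let $E_{i+1}\subset S_{i+1}$ be the exceptional curve of $\pi_{i+1}$. Then
\[
K_{S_{i+1}}=\pi_{i+1}^*K_{S_i}+E_{i+1},\qquad F_{i+1}=\pi_{i+1}^*F_i-a_iE_{i+1},
\]
where $F_{i+1}$ is the strict transform of $F_i$. Since $F$ is a general member of the pencil and $\pi$ resolves its base points, $F=F_r$ is the strict transform of $\hat F=F_0$. Also $F^2=0$, because $F$ is a fiber of $f$.

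\textbf{Items (1) and (2).} From the formulas above,
\[
F_{i+1}^2=F_i^2-a_i^2,\qquad K_{S_{i+1}}\cdot F_{i+1}=K_{S_i}\cdot F_i+a_i .
\]
Summing over $i$ and using $F_r^2=0$ gives $\hat F^2=\sum a_i^2$, which is (2). For (1), summing the second relation gives $K_{S'}\cdot F=K_S\cdot\hat F+\sum a_i$. Adjunction with $F^2=0$ gives $2g(F)-2=K_{S'}\cdot F$. The final inequality in (1) is $K_S\cdot\hat F\le \lambda V$, which comes from the definition of the canonical degree. Here $\hat F$ is a member of the moving part $M$ of $\Lambda$ (or a component of it, if the map has disconnected fibers), and $K_S$ is nef. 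So $K_S\cdot\hat F\le K_S\cdot M=\lambda K_S^2=\lambda V$.

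\textbf{Item (3).} This is the usual drop of arithmetic genus under strict transform at a point of multiplicity $a_i$:
\[
p_a(F_{i+1})=p_a(F_i)-\tfrac{a_i(a_i-1)}{2}.
\]
It follows from adjunction and the two relations above. Since $F$ is smooth, $g(F)=p_a(F_r)$, and summing gives (3).

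\textbf{Item (4).} First, $a_i\ge1$ for every $i$, because each blown-up point $P_i$ is a base point of the strict transform of the pencil, so it lies on $F_i$. This gives $r\le\sum a_i\le\sum a_i^2$. The real content is $\sum a_i^2=\hat F^2\le\lambda^2V$, and I expect this to be the main step.

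To prove it, use the Hodge index theorem on $S$ with the nef and big class $K_S$:
\[
\hat F^2\,K_S^2\le (K_S\cdot\hat F)^2 .
\]
This gives $\hat F^2\le (K_S\cdot\hat F)^2/V\le(\lambda V)^2/V=\lambda^2V$. Hodge index applies because $K_S^2>0$ and $K_S$ is nef, so the intersection form restricted to $K_S^\perp$ is negative definite.

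One subtlety needs care: $\hat F$ may be only a component of $M$, with $M\equiv k\hat F$, if the pencil map is not connected. The bound $K_S\cdot\hat F\le K_S\cdot M$ still holds by nefness, and it is the only input needed.
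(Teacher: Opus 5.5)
Your proposal is correct and follows essentially the same route as the paper. The paper also uses the single-blow-up relations $K_{S_{i+1}}=\pi_{i+1}^*K_{S_i}+E_{i+1}$ and $F_{i+1}=\pi_{i+1}^*F_i-a_iE_{i+1}$, sums them along the sequence of blow-ups, and proves (4) by the Hodge index inequality $\hat F^2\le (K_S\cdot\hat F)^2/K_S^2$. Your extra justifications ($F^2=0$ because $F$ is a fiber, $a_i\ge 1$ because each $P_i$ is a base point, and $K_S\cdot\hat F\le K_S\cdot M$ by nefness) make explicit points that the paper leaves implicit.
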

\begin{proof}
    By assumptions, for $0\leq i\leq r-1$, we might write $F_{i+1}=\pi_{i+1}^*F_i-a_iE_{i+1}$ and $K_{S_{i+1}}=\pi_{i+1}^*K_{S_i}+E_{i+1}$,
where $E_{i+1}=\operatorname{Exc}(\pi_{i+1})$ and $E_{i+1}^2=-1$.

For assertions $(1)\sim (3)$, it suffices to prove that, for each step $\pi_{i+1}:S_{i+1}\to S_{i}$ of the resolution,
\[
\begin{cases}    
    K_{S_{i+1}}\cdot F_{i+1}= K_{S_{i}}\cdot F_{i}+a_i;                     \\
    F_{i+1}^2=F_{i}^2-a_{i}^2; \\
    p_a(F_{i+1})=p_a(F_{i})-\frac{a_{i}(a_{i}-1)}{2} \\    
\end{cases}
\]
holds respectively.
In fact, we have 
\[
\begin{cases}    
    K_{S_{i+1}}\cdot F_{i+1}=  (\pi_{i+1}^*K_{S_i}+E_{i+1})    \cdot(\pi_{i+1}^*F_i-a_iE_{i+1})=K_{S_i}\cdot F_i +a_i;                                \\
    F_{i+1}^2=(\pi_{i+1}^*F_i-a_iE_{i+1})^2=(\pi_{i+1}^*F_i)^2+a_i^2E_{i+1}^2=F_i^2-a_i^2;  \\
    2p_a(F_{i+1})-2=((K_{S_{i+1}}+F_{i+1})\cdot F_{i+1})=(K_{S_{i}}\cdot F_i+a_i)  +(F_i^2-a_i^2)=2p_a(F_i)-2-(a_i^2-a_i).   
\end{cases}
\]
Assertion (4) holds as 
\[
\sum_{0\leq i\leq r-1}a_i\leq \sum_{0\leq i\leq r-1}a_i^2= \hat{F}^2 \leq \frac{(K_S\cdot \hat{F})^2}{K_S^2} \leq \lambda^2 V,
\]
where the equality $ \sum_{0\leq i\leq r-1}a_i^2= \hat{F}^2$ follows from (2).
\end{proof}

\begin{lemma}\label{lem: calcu}
Denote by $S$ the minimal resolution of $T$.
Let $G \leq \Aut(T)=\Aut(S)$ be a finite abelian subgroup.
Suppose that there is a rational $G$-invariant pencil $f:S'\to \Pp^1$ with general fiber $F$ where $g:=g(F) \geq 2$
and canonical degree $\lambda$.

Then \[|G| \leq \max \left\{ \left( 2+\frac{4}{g-1}\right)\cdot (3\lambda^2+4\lambda +1)V, \left( 2+\frac{1}{g} \right)\cdot (\chi_{\text{top}}(S)+(3\lambda^2+2\lambda)V )   \right\}. \]
\end{lemma}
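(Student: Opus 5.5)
Since the pencil is $G$-invariant, $G$ acts on the base $\Pp^1$ compatibly with $f$. Let $K\le G$ be the subgroup acting trivially on $\Pp^1$ and $H:=G/K$ the image in $\Aut(\Pp^1)$. Then $|G|=|K|\cdot|H|$, and the two terms in the maximum come from the two ways of bounding $|H|$; in both cases $|K|$ is bounded by the Nakajima-type bound.

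\textbf{Step 1: bound $|K|$.} $K$ acts faithfully on the general fiber $F$, because a nontrivial automorphism of $S$ cannot fix a dense set of fibers pointwise. $K$ is abelian and $g\ge 2$, so \cite[Theorem 1]{nakajima1987abelian} gives $|K|\le 4g+4$. I may also need refined forms such as $|K|\le 2g+1$ or $|K|\le 4(g+1)$, which will produce the factors $(2+\frac{1}{g})$ and $(2+\frac{4}{g-1})$ after normalizing by $2g-2$ or by $g$.

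\textbf{Step 2: bound $|H|$.} $H$ is a finite abelian subgroup of $\operatorname{PGL}_2$, so it is cyclic or $\Z_2\times\Z_2$. If $H$ is cyclic of order $n$ with $n$ large, it has exactly two fixed points on $\Pp^1$ and free orbits of size $n$ elsewhere. The orbits of $H$ on the set of singular fibers, and on the set of fibers through the base points of the pencil (points of $\pi$-exceptional curves, i.e. the multiplicity data $a_i$), therefore have size $n$ except over two points. I will bound the number of special fibers in two ways.

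First, via the exceptional curves: the last exceptional curves of the resolution are sections or multisections. $H$ permutes the blown-up points, and the number of blown-up points satisfies $r\le\lambda^2V$ by Lemma \ref{lem: resolintnumcal}(4). If $H$ does not fix a base point, the orbit argument gives $n\le r$. Otherwise I use $\chi_{\text{top}}$: compute $\chi_{\text{top}}(S')=\chi_{\text{top}}(S)+r=4(1-g)+\sum_{\text{fibers}}(\chi_{\text{top}}(F_t)-\chi_{\text{top}}(F))$. Each singular fiber contributes positively, and a free $H$-orbit of them contributes at least $n$. So $n$ is bounded by roughly $\chi_{\text{top}}(S)+r+4(g-1)$, and using Lemma \ref{lem: resolintnumcal}(1) to write $2g-2\le \sum a_i+\lambda V\le \lambda^2V+\lambda V$ gives $|H|\lesssim \chi_{\text{top}}(S)+(3\lambda^2+2\lambda)V$ after dividing appropriately.

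Second, for the term $(3\lambda^2+4\lambda+1)V$: if the singular fibers are too few to force $n$ large, I will instead bound $n$ by the fixed-point structure on the sections coming from the exceptional curves. Each such section is a $\Pp^1$ on which $H$ acts. I combine this with $g-1\le(\lambda^2+\lambda)V/2$ to express the Nakajima bound $4g+4\le (2+\frac{4}{g-1})(2g-2)$, and multiply by a bound of $|H|$ in terms of $r+$const.

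\textbf{Step 3: combine.} Multiplying $|K|\le(2+\frac{4}{g-1})(g-1)$ (respectively $(2+\frac1g)g$) by the matching bound on $|H|$ should give the two expressions.

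\textbf{Main obstacle.} The hardest part is Step 2: making the orbit counting precise so that the constants come out exactly as $3\lambda^2+4\lambda+1$ and $3\lambda^2+2\lambda$. This requires handling the fixed points of $H$ on $\Pp^1$ (at most two fibers escaping the free-orbit count), and using the Zeuthen–Segre formula with $\chi_{\text{top}}(S')=\chi_{\text{top}}(S)+r$. I would first try the Euler-characteristic route, where $r\le\lambda^2V$ and $2g-2\le(\lambda^2+\lambda)V$ absorb the $4(g-1)$ and $r$ terms.
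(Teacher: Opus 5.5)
Your Step~3 relies on bounds that are false. The quantities $(2+\frac{4}{g-1})(g-1)=2g+2$ and $(2+\frac1g)g=2g+1$ do not bound an abelian group acting on a genus-$g$ curve, because Nakajima's $4g+4$ is attained (e.g.\ $\Z_6\times\Z_2$ on $y^2=x^6-1$, Example \ref{eg-1-6}).

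This already happens inside the lemma's setting. Take the paper's extremal surface $T=\{w^2=x^{10}+y^{10}+z^5\}$ with $G=\Z_{10}^2$ and the $G$-invariant canonical pencil $\langle x,y\rangle$. Then $g=2$, $\lambda=V=1$ and $\chi_{\text{top}}(S)=35$. The kernel $K=\{(s,s)\}\cong\Z_{10}$ has order $10>2g+2$, and $H\cong\Z_{10}$.

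With the correct bound $|K|\le 4g+4$, the product strategy would need $|H|\lesssim K_f^2/(2(g-1))$ or $|H|\lesssim e_f/(2g)$. That is a bound on $|H|$ that \emph{decreases} with $g$, and orbit counting cannot give it:
\begin{itemize}
\item A free $H$-orbit of singular fibers only forces $e_f\ge |H|$, losing a factor of order $g$.
\item You get no bound at all when every singular fiber lies over the two $H$-fixed points.
\item You also get none when a base point is $G$-fixed: then $H$ merely acts on an exceptional section $\cong\Pp^1$, which restricts nothing.
\item The step ``if $H$ does not fix a base point then $n\le r$'' conflates $S$ with the base. Base points are permuted by $G$, not $H$, and a free orbit would bound $|G|$, not $|H|$.
\end{itemize}
There is no slack to absorb these losses. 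In the example the second term equals $\frac52(35+5)=100=|G|$. Multiplying $|K|\le 12$ by any bound on $|H|$ valid there (necessarily $\ge 10$) gives at least $120$.

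What is missing is an input that controls $K$ and $H$ jointly. The paper takes it from \cite[Theorem 3.1]{lu2025sharp}, which gives $|G|\le\max\{(2+\frac{4}{g-1})K_f^2,(2+\frac1g)e_f\}$ for a finite abelian group acting on the fibration $f$. With that, the lemma is bookkeeping using Lemma \ref{lem: resolintnumcal}(1),(4):
\begin{itemize}
\item $K_f^2=K_{S'}^2+8(g-1)=V-r+8(g-1)$. This is at most $(1+4\lambda)V+\sum_i(4a_i-1)$, hence at most $(1+4\lambda)V+3\sum_i a_i^2\le(3\lambda^2+4\lambda+1)V$, using $4a-1\le 3a^2$ for $a\ge1$.
\item $e_f=\chi_{\text{top}}(S)+r+4(g-1)\le\chi_{\text{top}}(S)+\lambda^2V+2(\lambda^2+\lambda)V$.
\end{itemize}
Your Euler-characteristic computation is exactly the second estimate. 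However, you never identify $V-r+8(g-1)$ as the source of the first term.
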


\begin{proof}
By \cite[Theorem 3.1]{lu2025sharp}, we have 
\begin{equation}\label{eq: ineqthmLX}
|G| \leq  \max \left\{ \left( 2+\frac{4}{g-1}\right)K_f^2 , \left( 2+\frac{1}{g} \right)e_f   \right\}.
\end{equation}

Moreover, we can bound $K_f^2$ and $e_f$ as follows
\begin{equation}\label{eq: ineqGKf}
\begin{aligned}
K_f^2 &=K_{S'}^2-2(2g-2)(2g(\Pp^1)-2) \\
      &=V-r+8(g-1)  &\text{where } r\leq \rho(S'/S)  \\  
      &\leq(1+4\lambda)V+\sum_{0\leq i\leq r-1} (4a_i-1)  &\text{by Lemma \ref{lem: resolintnumcal} } (1)             \\
      &\leq (1+4\lambda)V+\sum_{0\leq i\leq r-1} 3a_i^2 &\text{by } a_i\geq 1 \\
      &\leq (1+4\lambda+3\lambda^2)V  &\text{by Lemma \ref{lem: resolintnumcal} } (4),
\end{aligned}    
\end{equation}
and 
\begin{equation}\label{eq: ineqGef}
\begin{aligned}
e_f &=\chi_{\text{top}}(S')-\chi_{\text{top}}(F) \chi_{\text{top}}(\Pp^1)\\
      &=\chi_{\text{top}}(S)+r+4(g-1)  \\
      &\leq \chi_{\text{top}}(S)+\lambda^2V+2\lambda(1+\lambda)V     &\text{by Lemma \ref{lem: resolintnumcal} } (1),(4)           \\
      &=\chi_{\text{top}}(S)+(3\lambda^2+2\lambda)V. 
\end{aligned}    
\end{equation}
Hence we complete the proof by combining inequalities \eqref{eq: ineqthmLX}, \eqref{eq: ineqGKf} and \eqref{eq: ineqGef} together.
\end{proof}
\subsubsection{Irrational $G$-invariant pluricanonical pencils}
The following lemmas help us to bound the order of abelian automorphism group $G$ with an irrational $G$-invariant pencil in Proposition \ref{prop: irratpencilcase}.
\begin{lemma}\label{lem: Qleqk}
Let $S$ be a smooth minimal surface of general type with $\vol(S)=V$ and a smooth model $S'$.
Let $G\leq \Aut(S)$ be a finite abelian subgroup.
Suppose that there is an $n$-th canonical fibration $h:S'\to \Pp^1$ defined by two semi-invariant vectors $s_1,s_2\in H^0(S,nK_S)$.
Denote by $h:S'\xrightarrow[]{f} B \xrightarrow[]{g} \Pp^1$ the Stein factorization of a $G$-invariant pencil with $\deg g=k$.
Denote $Q:=\im(G \to \Aut(B))$.
Assume that $B$ is an elliptic curve.

If $Q$ acts on $B$ by translations, then $|Q|\leq k$.
\end{lemma}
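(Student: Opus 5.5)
The plan is to make $G$ act compatibly on all three spaces $S'$, $B$ and $\Pp^1$, and then to count a free $Q$-orbit inside a single fiber of $g$. Because $s_1,s_2$ are semi-invariant, the action of $G$ on $\Pp^1$ is diagonal. Such an action fixes the points $0$ and $\infty$. Consequently $Q$ preserves the finite set $g^{-1}(0)\subset B$, which has at most $k$ points. Translations act freely, so this set is a disjoint union of free $Q$-orbits, which forces $|Q|\le k$.

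\textbf{Step 1: equivariance of $h$.} Write $h=[s_1:s_2]$ on $S'$ (after pulling back to the resolution). By \eqref{eq: s1s2semiinv}, for $\sigma\in G$ we have $\sigma^*s_i=c_i(\sigma)s_i$. Hence
\[
h\circ\sigma=\mu_\sigma\circ h,\qquad \mu_\sigma([u:v])=[c_1(\sigma)u:c_2(\sigma)v].
\]
The lift of the $G$-action to $S'$ is only birational, so this identity holds a priori on a dense open set. Since both sides are maps to a curve and $B$ is the normalization of $\Pp^1$ in $\C(S')$, this suffices for the next step.

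\textbf{Step 2: descent to $B$.} By uniqueness of the Stein factorization, equivalently by functoriality of the normalization of $\Pp^1$ in the function field of $S'$, each $\sigma$ induces an automorphism $\bar\sigma$ of $B$. These automorphisms define $Q$, and they satisfy
\[
g\circ\bar\sigma=\mu_\sigma\circ g .
\]
If $\bar\sigma=\mathrm{id}_B$, then $\mu_\sigma\circ g=g$; since $g$ is surjective, $\mu_\sigma=\mathrm{id}$. So $\mu$ factors through $Q$. We do not actually need this fact, but it is harmless.

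\textbf{Step 3: counting.} Every $\mu_\sigma$ is diagonal, so $\mu_\sigma(0)=0$, where $0=[0:1]$. Then Step 2 gives
\[
\bar\sigma\bigl(g^{-1}(0)\bigr)=g^{-1}(0),
\]
so $Q$ acts on the nonempty finite set $g^{-1}(0)$, whose cardinality is at most $\deg g=k$. By hypothesis $Q$ acts on the elliptic curve $B$ by translations. A nontrivial translation has no fixed points, so $Q$ acts freely on $B$, and in particular on $g^{-1}(0)$. Each $Q$-orbit in $g^{-1}(0)$ therefore has exactly $|Q|$ elements, which gives $|Q|\le |g^{-1}(0)|\le k$.

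The only step needing care is Step 2, namely justifying that the $G$-action descends to $B$ compatibly with $\mu_\sigma$. This follows from the standard description of $B$ as the normalization of $\Pp^1$ in $\C(S')$: $\sigma^*$ preserves the subfield $\C(B)$, which is the algebraic closure of $\C(s_1/s_2)$ in $\C(S')$, because $\sigma^*(s_1/s_2)=\frac{c_1(\sigma)}{c_2(\sigma)}\,s_1/s_2$.
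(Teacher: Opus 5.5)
Your proposal is correct and follows essentially the same argument as the paper. The paper notes that $\operatorname{div}(s_1/s_2)$ is $G$-invariant, so $Q$ preserves $\operatorname{Supp}(g^{-1}(\infty))$, and the freeness of translations then gives $|Q|\le|\operatorname{Supp}(g^{-1}(\infty))|\le k$. You use the other fixed point $0$ of the diagonal action instead, and you make explicit the descent of the action to $B$ and the equivariance $g\circ\bar\sigma=\mu_\sigma\circ g$, which the paper leaves implicit.
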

\begin{proof}
Denote $u:=\frac{s_1}{s_2}$ and $\operatorname{div}(u):=D_1-D_2$, where $D_1,D_2\geq 0$ without common components.
Then the divisor $D_2=g^{*}(\infty)>0$.
Since $s_1,s_2$ are semi-invariant and $Q$ is image of $G$, $\operatorname{div}(g.u)=\operatorname{div}(u)$ and $D_2$ is $Q$-invariant.
Take a closed point $P\in D_2$.
As $Q$ acts on $B$ by translations, $q_1(P)\neq q_2(P) \in \operatorname{Supp}(D_2)= \operatorname{Supp}(g^{-1}(\infty))$ for any $q_1\neq q_2\in Q$.
So $|Q|\leq |\operatorname{Supp}(g^{-1}(\infty))|$.
However,
we have $k=\deg g\geq |\operatorname{Supp}(g^{-1}(\infty))|$ by definition.
Hence $|Q|\leq |\operatorname{Supp}(g^{-1}(\infty))|\leq k$.

\end{proof}
\begin{lemma}\label{lem: bdGwhendimalb=1andqgeq2}
Let $S$ be a smooth minimal surface of general type with volume $V$.
Let $f:S\to B$ be a $G$-invariant fibration to smooth projective curve $B$ with $b:=g(B)\geq 2$.
Denote by $g\geq 2$ the genus of general fiber of $f$.
Then $|G|\leq 18V$.
\end{lemma}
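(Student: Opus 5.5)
The plan is to split $G$ along the fibration and bound the two pieces separately with Nakajima's bound for abelian automorphism groups of curves, recalled in Example \ref{eg-1-6}. Then I compare the product with the lower bound for $V$ from Lemma \ref{lem: slopeineqs}(2).

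First, since $f$ is $G$-invariant, every $\sigma\in G$ maps fibers to fibers. Hence there is a homomorphism $\rho\colon G\to \Aut(B)$. Let $Q:=\im\rho$ and $K:=\ker\rho$, so that
\[
|G| = |Q|\cdot|K|.
\]

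The image $Q$ is a finite abelian group of automorphisms of the curve $B$, which has genus $b\geq 2$. By \cite[Theorem 1]{nakajima1987abelian},
\[
|Q|\leq 4b+4.
\]

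The kernel $K$ preserves every fiber of $f$, so for a general fiber $F$ there is a restriction homomorphism $K\to\Aut(F)$. I claim this map is injective. Suppose $\sigma\in K$ acts trivially on a general fiber. The fixed locus of $\sigma$ is closed, and the general fibers cover a dense subset of $S$, so $\sigma$ fixes a dense subset and is therefore the identity. (For a single general fiber, one can either run this argument on the union of general fibers, or note that the locus of fibers fixed pointwise by $\sigma$ is closed in $B$; either way a nontrivial $\sigma$ fixes at most finitely many fibers pointwise.) Thus $K$ embeds as a finite abelian subgroup of $\Aut(F)$, where $g(F)=g\geq 2$. Nakajima's bound gives
\[
|K|\leq 4g+4,
\]
and therefore
\[
|G|\leq 16(g+1)(b+1).
\]

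On the other side, Lemma \ref{lem: slopeineqs}(2) gives
\[
V\geq 8(g-1)(b-1).
\]
That lemma already handles passing to the minimal model, since $b>0$. Combining the two estimates,
\[
\frac{|G|}{V}\leq 2\cdot\frac{g+1}{g-1}\cdot\frac{b+1}{b-1}.
\]
Each factor $\frac{x+1}{x-1}$ is decreasing in $x$ for $x\geq 2$, so the right-hand side is at most $2\cdot 3\cdot 3=18$, attained at $g=b=2$. This gives $|G|\leq 18V$.

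The only step needing care is the faithfulness of the $K$-action on the general fiber. Everything else is a direct application of earlier results.
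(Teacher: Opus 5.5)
Your proposal is correct and follows essentially the same route as the paper: you split $G$ into the kernel and image of $G\to\Aut(B)$, apply Nakajima's bound (Lemma \ref{lem: curveautos}) to each to get $|G|\leq (4g+4)(4b+4)$, and compare with $V\geq 8(g-1)(b-1)$ from Lemma \ref{lem: slopeineqs}(2). Your justification that the kernel acts faithfully on a general fiber (each nontrivial element fixes only finitely many fibers pointwise, and the kernel is finite) is slightly more detailed than the paper's one-line remark.
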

\begin{proof}
Let $K$ be the kernel of $G\to\Aut(B)$ and let $Q$ be its image. 
Since $K$ is finite, the action of $K$ on general $F$ is faithful so that $K\leq \Aut (F)$ is a subgroup.
Since $K$ and $Q$ are both abelian, one has $|G|\leq (4g+4)(4b+4)$ by Lemma \ref{lem: curveautos}.
By Lemma \ref{lem: slopeineqs} (2), we have
\begin{equation*}
|G|\leq  2\,\frac{g+1}{g-1}\frac{b+1}{b-1}V\leq 18V.
\end{equation*}
\end{proof}

\subsection{Abelian automorphism groups of irrational curves}
The following Lemmas bound the order of abelian automorphism groups of irrational curves.
\begin{lemma}\label{lem: bdQwithlinearpart}
Let $Q$ be a finite abelian automorphism  group of an elliptic curve $B$. 
Then either $Q$ consists of translations, or $|Q|\leq 9$.
\end{lemma}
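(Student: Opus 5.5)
The plan is to use the structure of the automorphism group of an elliptic curve. Fix an origin $O\in B$. Then $\Aut(B)\cong B\rtimes \Aut(B,O)$, where $B$ acts by translations and $\Aut(B,O)$ is cyclic of order $2$, $4$ or $6$. Consider the homomorphism $\rho: Q\to \Aut(B,O)$, sending $\sigma$ to its linear part; this is independent of the choice of origin because the translation subgroup is normal. Its kernel $Q_0:=Q\cap B$ is the subgroup of translations in $Q$. If $\rho$ is trivial, every element of $Q$ is a translation and we are done. So we assume that some $\sigma\in Q$ has nontrivial linear part $\zeta\neq 1$. Write $\sigma(x)=\zeta x+c$. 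Since $1-\zeta$ is an isogeny, hence surjective, after changing the origin we may assume $\sigma$ fixes $O$, that is, $\sigma(x)=\zeta x$.

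The key step is to use commutativity. Let $\tau_t\in Q_0$ be translation by $t$. Then $\sigma\tau_t\sigma^{-1}=\tau_{\zeta t}$, and commutativity forces $\zeta t=t$, i.e. $t\in\ker(1-\zeta)$. Hence $|Q_0|\leq \deg(1-\zeta)$. The degree of $1-\zeta$ is its norm $|1-\zeta|^2$:
\begin{itemize}
\item[] it equals $4$ for $\zeta=-1$,
\item[] $2$ for $\zeta=\pm i$,
\item[] $3$ for $\zeta$ a primitive cube root of unity,
\item[] $1$ for $\zeta$ a primitive sixth root of unity.
\end{itemize}
We then have $|Q|=|Q_0|\cdot|\rho(Q)|$, with $\rho(Q)$ cyclic of order $m\in\{2,3,4,6\}$ and $m\mid 6$ or $m\mid 4$ according to $j(B)$.

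For a sharper bound we choose $\sigma$ with $\rho(\sigma)$ a generator of $\rho(Q)$; the same argument applied to it bounds $Q_0$ by $\ker(1-\zeta)$ for this generator $\zeta$. The cases are as follows:
\begin{itemize}
\item[] $m=2$: $|Q|\leq 4\cdot 2=8$.
\item[] $m=4$: $|Q|\leq 2\cdot 4=8$.
\item[] $m=3$: $|Q|\leq 3\cdot 3=9$.
\item[] $m=6$: $|Q|\leq 1\cdot 6=6$.
\end{itemize}
In all cases $|Q|\leq 9$.

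The only step needing care is this choice of $\sigma$ realizing the generator, which exists because $\rho(Q)$ is the image of $\rho$, together with the normalization of its fixed point. The bound $9$ is attained by $\Z_3\times\Z_3$ on the Fermat cubic, generated by $\zeta_3$-multiplication and translation by a $3$-torsion point fixed by $\zeta_3$, so no further refinement is needed.
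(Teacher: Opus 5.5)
Your proposal is correct and takes essentially the same route as the paper. Both arguments use commutativity with an element whose linear part $\zeta$ generates the image of $Q\to\Aut(B,O)$, normalized to fix the origin, to force the translation subgroup into $\ker(1-\zeta)$ of order $|1-\zeta|^2$; this gives $|Q|\le m|1-\zeta|^2\in\{8,9,8,6\}$ for $m=2,3,4,6$. Your explicit change of origin, using surjectivity of $1-\zeta$, makes precise a step the paper leaves implicit when it writes $Q=N\rtimes\Z_m$ and takes $t_a=0$.
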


\begin{proof}
Let $N$ be the translation subgroup of $Q$, and let the linear part of $Q$ have order $m\in\{1,2,3,4,6\}$. 
If $m=1$, then $Q$ consists of translations.
If $m>1$, write $Q:=N\rtimes \Z_m \leq \Aut(B)$ where $m>1$ and $\zeta_m$ is $m$-th root of the unity, as a generator of linear part $\Z_m$.
Since $Q$ is abelian, $aba^{-1}b^{-1}=1$ for any $a,b\in Q$.

However, if we write $a,b$ as automorphisms on $B$:
\[
a: z\mapsto w_az+t_a, \qquad b:z\mapsto w_bz+t_b, \qquad z\in B,
\]
then $aba^{-1}b^{-1}$ is the transition
\[
z \overset{b^{-1}}{\mapsto} \frac{z-t_b}{w_b} \overset{a^{-1}}{\mapsto}
\frac{z-(w_bt_a+t_b)}{w_aw_b} \overset{b}{\mapsto} \frac{z-(w_bt_a+t_b)+w_at_b}{w_a} \overset{a}{\mapsto} z+(w_a-1)t_b-(w_b-1)t_a.
\]
Since $aba^{-1}b^{-1}=1$, $(w_a-1)t_b=(w_b-1)t_a$ for every $a,b\in Q$.
Take $a=(0,\zeta_m)\in N\rtimes \Z_m= Q$ so that $t_a=0,w_a=\zeta_m$ and run $b$ through $N$.
Then we get $t_b\in \ker(\zeta_m-1)$ for any $b\in N$.

Moreover, the isogeny $(\zeta_m-1):B\to B$ has degree $|\zeta_m-1|^2$.
Hence $|Q|\leq |\ker(\zeta_m-1)|\cdot |\Z_m|=m|\zeta_m-1|^2$ is upper bounded by $8,9,8,6$ for $m=2,3,4,6$
respectively.
\end{proof}

\begin{lemma}[See {\cite[Theorem 1]{nakajima1987abelian}}]\label{lem: curveautos}
Let $C$ be a smooth curve with $g:=g(C)\geq 2$.
Let $G\leq \Aut(C)$ be an abelian subgroup.
Then $|G|\leq 4g+4$.
\end{lemma}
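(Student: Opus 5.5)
This statement is \cite[Theorem 1]{nakajima1987abelian}, so in the paper it can simply be cited. If I had to prove it, I would use Riemann--Hurwitz for the quotient $\pi: C\to C/G$ together with the structure of generating vectors of an abelian group. Write $h:=g(C/G)$ and let $m_1\le\dots\le m_r$ be the branching orders. Then
\[
2g-2=|G|\Bigl(2h-2+\sum_{i=1}^r\bigl(1-\tfrac{1}{m_i}\bigr)\Bigr).
\]
The target $|G|\le 4g+4$ is equivalent to
\[
|G|\Bigl(\tfrac12-\bigl(2h-2+\textstyle\sum_i(1-\tfrac1{m_i})\bigr)\Bigr)\le 4 .
\]
So whenever the bracket is at least $\tfrac12$ we are done. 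It remains to treat the finitely many signature types with bracket in $(0,\tfrac12)$, and for these I would bound $|G|$ directly.

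The group-theoretic input is Riemann's existence theorem. $G$ is generated by elements $a_1,b_1,\dots,a_h,b_h,c_1,\dots,c_r$ with $\operatorname{ord}(c_i)=m_i$ and $\prod[a_j,b_j]\prod c_i=1$. Since $G$ is abelian, this relation reduces to $c_1\cdots c_r=1$. Two consequences follow.
\begin{itemize}
\item[(a)] Each $m_i$ divides $\operatorname{lcm}\{m_j : j\ne i\}$.
\item[(b)] If $h=0$, then $G=\langle c_1,\dots,c_{r-1}\rangle$, so $|G|\le m_1\cdots m_{r-1}$; one may drop any chosen index instead of the last.
\end{itemize}
If $h\ge1$ and $r\ge1$, then (a) rules out $r=1$, and $r\ge2$ already gives a bracket $\ge 1$. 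If $h\ge 1$ and $r=0$, the bracket is $2h-2\ge 2$ because $g\ge2$ forces $h\ge2$. So only $h=0$ matters.

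For $h=0$ I would enumerate the hyperbolic signatures with $\sum(1-1/m_i)-2<\tfrac12$ and discard those violating (a). With $r\ge 5$ the bracket is $\ge\tfrac12$. With $r=4$, condition (a) eliminates $(2,2,2,n)$ for $n>2$, and the remaining small cases such as $(2,2,3,3)$ are handled by (b): for example $|G|\le 12$ and $12\cdot(\tfrac12-\tfrac13)=2\le4$. With $r=3$, condition (a) kills $(2,3,n)$, $(2,4,n)$ for $n\ne 4$, $(2,5,5)$ and the like. The survivors are essentially $(2,n,n)$, $(3,4,12)$, $(3,6,6)$ and $(4,4,4)$, together with cases whose bracket is already $\ge \tfrac12$. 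For these I apply (b), choosing the two generators of smallest orders:
\begin{itemize}
\item $(2,n,n)$: $|G|\le 2n$, and $2n\cdot\tfrac{2}{n}=4$, which gives equality $|G|=4g+4$;
\item $(3,4,12)$: $|G|\le 12$;
\item $(3,6,6)$: $|G|\le 18$;
\item $(4,4,4)$: $|G|\le 16$.
\end{itemize}
Each of the last three satisfies the required inequality.

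The main obstacle is making this enumeration exhaustive and clean, that is, correctly listing all triangle and quadrilateral signatures with bracket below $\tfrac12$ that pass the lcm condition (a). I would organize it by $m_1\in\{2,3,4\}$ and bound $m_2,m_3$ from the bracket inequality. The remaining checks are routine arithmetic.
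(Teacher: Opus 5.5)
The paper gives no proof of this lemma; it only cites Nakajima, whose theorem is proved over an algebraically closed field of arbitrary characteristic. Your route is genuinely different: a self-contained characteristic-zero argument via Riemann--Hurwitz, Riemann's existence theorem and the abelianized relation $c_1\cdots c_r=1$. Riemann's existence theorem is unavailable in characteristic $p$ because of wild ramification, so your argument does not recover the characteristic-free statement. Over $\C$, which is all the paper needs, your framework is sound. The reformulation $|G|(\tfrac12-\text{bracket})\le 4$ is correct, as are facts (a) and (b) and the disposal of $h\ge 1$.

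What is wrong is your list of triangle survivors. Condition (a) does not eliminate the following signatures, all of which have bracket in $(0,\tfrac12)$:
\begin{itemize}
\item $(2,n,2n)$ with $n$ odd, $n\ge 5$, e.g.\ $(2,5,10)$, realized by $\Z/10$ acting on $y^2=x^5-1$;
\item $(3,5,15)$, $(3,7,21)$ and $(3,9,9)$;
\item $(5,5,5)$, realized by $(\Z/5)^2$ on the Fermat quintic.
\end{itemize}
These omissions are harmless. Applying (b) gives products $3,\tfrac32,\tfrac12,\tfrac32,\tfrac52$ respectively, all at most $4$.

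A cleaner fix is to drop (a) entirely when $h=0$, which removes the enumeration you identified as the main obstacle. Take $r=3$ with $m_1\le m_2\le m_3$, and use $|G|\le m_1m_2$ together with $m_3\ge m_2$:
\[
m_1m_2\Bigl(\tfrac{1}{m_1}+\tfrac{1}{m_2}+\tfrac{1}{m_3}-\tfrac12\Bigr)\le 2m_1+m_2\Bigl(1-\tfrac{m_1}{2}\Bigr).
\]
The right-hand side is at most $4$ in each of the following cases:
\begin{itemize}
\item $m_1=2$, where it equals $4$;
\item $m_1=3$ and $m_2\ge 4$;
\item $m_1\ge 4$.
\end{itemize}
The only leftover case is $(3,3,m_3)$. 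There the exact value is $\tfrac32+\tfrac{9}{m_3}<4$, because hyperbolicity forces $m_3\ge 4$.

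For $r=4$, the only quadruples with bracket below $\tfrac12$ are $(2,2,2,n)$ and $(2,2,3,n)$ with $n\le 5$. Using $|G|\le m_1m_2m_3$, they give $\tfrac{8}{n}\le 4$ and $\tfrac{12}{n}-2\le 2$ respectively.

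With this, (a) is needed only to rule out the case $h\ge1$, $r=1$.
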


\subsection{Abelian group action on $(1,2)$-surfaces}
The following lemma contributes to dealing with the extremal case in Theorem \ref{mainthm: eff-iitaka} and Theorem \ref{mainthm: iitaka-vol}.
\begin{lemma}\label{lem: whenT12G100}
Let $T$ be a normal projective surface with canonical singularities and ample $K_T$.
Let $G\leq \Aut(T)$ be an abelian automorphism group.

Assume $(\vol(T),p_g(T))=(1,2)$.
Then 
\begin{itemize}
\item[(1)] there are weighted homogeneous coordinates $x,y,z,w$ with weight $1,1,2,5$
respectively, such that
\[ T\cong \{w^2=F_{10}(x,y,z)\}\subset\Pp(1,1,2,5),\qquad
 R(T,K_T)=\C[x,y,z,w]/(w^2-F_{10})
\]
and $G$ acts on $x,y,z,w$ by scalars;
\item[(2)] $|G|\leq 100$. If $|G|=100$,  then $F_{10}(x,y,z)=x^{10}+y^{10}+z^5$, after possibly rescaling coordinates.
Moreover, the
$G$-action on $T$ can be characterized as  
\[
 g.([x:y:z:w])=
([\zeta_{10}^sx:\zeta_{10}^ty:z:w])
\]
for any $g=(s,t)\in \Z_{10} \times \Z_{10},[x:y:z:w]\in T$.
\end{itemize}
\end{lemma}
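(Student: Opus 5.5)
We argue in two steps: first identify the canonical ring so that $G$ acts diagonally, then bound $|G|$ by computing the characters of the generators.

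\textbf{Step 1: the canonical ring.} Since $K_T$ is ample with canonical singularities, $K_T^2=1$ and $p_g=2$. The classical description of $(1,2)$-surfaces (Horikawa; compare the invariants $K^2=1$, $p_g=2$, $q=0$) identifies $T$ with the canonical model $\operatorname{Proj}R(T,K_T)$. It also presents $R(T,K_T)$ as $\C[x,y,z,w]/(w^2-F_{10})$ with generators of degrees $1,1,2,5$, where $F_{10}$ has no $w$-terms after completing the square. The dimensions of the graded pieces are forced by Riemann--Roch: $P_n=\chi+n(n-1)/2\cdot K^2$ for $n\geq 2$, so $P_2=4$, and so on.

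\textbf{Step 2: diagonalizing the action.} As in Lemma \ref{lem: createGinvpencil}, $G$ acts linearly on each graded piece $H^0(T,nK_T)$, and since $G$ is abelian these representations split into characters.
\begin{enumerate}
\item Choose $x,y$ to be semi-invariant basis vectors of $H^0(K_T)$.
\item The space $H^0(2K_T)$ has dimension $4$ and contains the $G$-stable subspace $\langle x^2,xy,y^2\rangle$ of dimension $3$. By complete reducibility it has a $G$-stable complement, and we take $z$ to span it; $z$ is then semi-invariant.
\item Similarly, $H^0(5K_T)$ contains the $G$-stable subspace spanned by monomials in $x,y,z$, which has codimension one. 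Its complement gives a semi-invariant $w$.
\end{enumerate}
The relation $w^2=F_{10}$ is then preserved up to scalar. Finally, $G$ acts faithfully on $T$, so $G$ embeds into the diagonal torus modulo the $\C^*$-action defining weighted projective space. This proves (1).

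\textbf{Step 3: bounding $|G|$.} For (2), write $g$ as $(x,y,z,w)\mapsto(\alpha x,\beta y,\gamma z,\delta w)$, taken modulo $(t,t,t^2,t^5)$. Normalize each element using $t$ with $t^2=\gamma^{-1}$, so that $\gamma=1$; this leaves a residual ambiguity $t=\pm1$, which acts as $w\mapsto -w$ together with $(x,y)\mapsto -(x,y)$. Since $T$ has only canonical singularities, $F_{10}$ must contain monomials ensuring quasi-smoothness, in particular enough terms in $x,y,z$. $F_{10}$ must be a $G$-semi-invariant with character $\delta^2$. The key observation is that $z^5$ must appear, or else else near $[0:0:1:0]$ we would have a non-canonical singularity. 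Given $z^5$, the character is $\delta^2=1$ after normalizing.

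\textbf{Step 4: the monomials $x^{10},y^{10}$ (main obstacle).} Similarly, quasi-smoothness near the base point $[0:0:1:\ast]$ and along the $x$- and $y$-axes should force monomials such as $x^{10}$ or $x^9y$, and $y^{10}$ or $xy^9$. If $x^{10}$ and $y^{10}$ both occur, then $\alpha^{10}=\beta^{10}=1$, so $G\subset\mu_{10}\times\mu_{10}$ after accounting for $\delta=\pm1$ and the residual sign. This gives $|G|\leq 100$. In the alternative configurations ($x^9y$, $xy^9$, or mixed $z$-terms), the constraints such as $\alpha^9\beta=1$ should give smaller groups. We plan a case analysis over the possible "support-forcing" monomials, bounding the group cut out by each system of characters, in the spirit of Lemma \ref{lem: bdQwithlinearpart}. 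This combinatorics, together with correctly tracking the quotient by $\C^*$ and the faithfulness normalization, is where we expect the main difficulty.

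\textbf{Step 5: the equality case.} When $|G|=100$, the group must be the full $\mu_{10}\times\mu_{10}$ acting on $x,y$. The only semi-invariant degree-$10$ monomials are then $x^{10},y^{10},z^5$ and $x^{10}$-type combinations, so $F_{10}=ax^{10}+by^{10}+cz^5$. Rescaling the coordinates gives the stated normal form and action.
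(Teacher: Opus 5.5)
Your Steps 1--3 agree with the paper. These are the $(1,2)$ description of $R(T,K_T)$, diagonalization via $G$-stable complements of the decomposable parts (the paper's Lemma \ref{lem: PS17}), and the normalization $\gamma=\delta=1$ using that $z^5$ occurs in $F_{10}$.

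\textbf{The gap is Step 4, which is the heart of the lemma.} You prove $|G|\le 100$ only when both $x^{10}$ and $y^{10}$ occur, and you defer every other configuration to a case analysis you do not carry out. The mechanism you propose for forcing monomials is also not available as stated.
\begin{itemize}
\item $T$ is only assumed to have canonical singularities, so it need not be quasi-smooth: rational double points may sit at $[1:0:0:0]$ or $[0:1:0:0]$. In the chart $x=1$ the local equation is $w^2=F_{10}(1,y,z)$. All that is forced is some term $y^bz^c$ with $b+c\le 3$, for example from $x^8y^2$, $x^7yz$, $x^6z^2$ or $x^7y^3$.
\item Even under quasi-smoothness your list omits $x^8z$.
\item A single relation such as $\alpha^9\beta=1$ cuts out an infinite group. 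Each configuration therefore needs two independent relations, and your plan gives no uniform way to produce and bound them.
\end{itemize}

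\textbf{The missing idea is a uniform lattice bound.} Once $z$ and $w$ are fixed, every monomial $x^ay^bz^c$ of $F_{10}$ gives a relation $\alpha_g^a\beta_g^b=1$ with $a+b\le 10$.
\begin{itemize}
\item The exponent vectors $(a,b)$ must span a rank-two lattice. Otherwise they all lie on a line through a primitive vector $(a,b)$, and the torus $[x:y:z:w]\mapsto[\lambda^bx:\lambda^{-a}y:z:w]$ preserves $T$, contradicting finiteness of $\Aut(T)$.
\item Your singularity idea could alternatively supply such a pair. A monomial with $b+c\le 3$ near $[1:0:0:0]$ has $a\ge 4>b$, and symmetrically near $[0:1:0:0]$ one gets a monomial with $b'\ge 4>a'$.
\item For independent $(a,b),(c,d)$, the group $G$ embeds in $\{\alpha^a\beta^b=\alpha^c\beta^d=1\}$. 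This group has order $|ad-bc|\le ad+bc\le (a+b)(c+d)\le 100$.
\item Equality forces $\{(a,b),(c,d)\}=\{(10,0),(0,10)\}$.
\end{itemize}
This settles all configurations at once. It also proves what your Step 5 assumes. If $|G|=100$, every independent pair of exponent vectors must be $\{(10,0),(0,10)\}$. Hence $G=\Z_{10}\times\Z_{10}$, and the only monomials in $F_{10}$ are $x^{10}$, $y^{10}$, $z^5$, each with nonzero coefficient.
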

\begin{proof}
For (1), by \cite[Chapter VII, \S~7]{BHPV2004} and $(\vol(T),p_g(T))=(1,2)$, 
\[
 T\cong \{w^2=F_{10}(x,y,z)\}\subset\Pp(1,1,2,5),\qquad
 R(T,K_T)=\C[x,y,z,w]/(w^2-F_{10}(x,y,z)), 
\] 
where weighted homogeneous coordinates $x,y,z,w$ have weight $1,1,2,5$
respectively, and monomial $z^5$ occurs in $F_{10}(x,y,z)$.
By Lemma \ref{lem: PS17} below, we can assume that $G$ acts on $x,y,z,w$ by scalars. 

Then we prove (2). 
We use the normalized coordinates from (1) so that $G$ acts on $x,y,z,w$ by scalars $\alpha_g,\beta_g,\gamma_g,\delta_g \in \C^*$ respectively. 
Since $F_{10}$ is semi-invariant, $\gamma_g^5=\delta_g^2$ by comparing $G$-actions on monomials $z^5,w^2$. 
Set $\lambda_g:=\frac{\delta_g}{\gamma_g^2}$ and then $\gamma_g=\lambda_g^2,\delta_g=\lambda_g^5$.
So $G$ acts on $T$ as
\[
G\hookrightarrow(\mathbb C^*)^2,\qquad
g\cdot[x:y:z:w]=[\alpha_gx:\beta_gy:\lambda_g^2z:\lambda_g^5w]=[{\lambda_g}^{-1}\alpha_gx:\lambda_g^{-1}\beta_gy:z:w],
\]
Replacing $\alpha_g,\beta_g$ by ${\lambda_g}^{-1}\alpha_g,{\lambda_g}^{-1}\beta_g$ respectively, we might assume $\lambda_g=1$.

We claim that there are two distinct vectors $(a,b),(c,d) \in \Gamma$ spanning a rank-two lattice $L_0$, where $\Gamma:=\{(a,b)\in\mathbb Z^2|
             x^ay^bz^c \text{ is a monomial in } F_{10} \text{ for some } c\geq 0\}$. 
Indeed, if not, denote $u:=(a,b)\in \Z^2$.
Consider $(w^2-F_{10})$-invariant actions
\[
\lambda:[x:y:z:w] \mapsto [\lambda^bx:\lambda^{-a}y:z:w], \qquad \lambda \in \C^*, 
\]
which generates an infinite subgroup of $\Aut(S)$.
This contradicts the boundedness of $\Aut(S)$ for general type $S$.

For those two linearly independent vectors
$(a,b), (c,d)\in L_0\cap \Gamma $,
one has
\[
G\subseteq
H:=\{(\alpha,\beta)\in(\mathbb C^*)^2:
\alpha^a\beta^b=\alpha^c\beta^d=1\},
\]
and $|H|=|ad-bc|$.
Since $a+b\leq 10$ and $c+d\leq10$, we obtain
\[
|G|
\leq |ad-bc|
\leq ad+bc
\leq(a+b)(c+d)
\leq100,
\]
and equality holds if and only if
$a+b=c+d=10, ac=bd=0$ and
$(a,b)=(10,0),(c,d)=(0,10)$.
Hence $|G|=100$ implies $G=\Z_{10}\times\Z_{10}$, and
 $g:=(s,t)\in G\cong (\mathbb Z/10\mathbb Z)^2$ acts on $T$ as
\[g\cdot[x:y:z:w]=[\zeta_{10}^{s}x:\zeta_{10}^{t}y:z:w].\]

Finally, since $F_{10}$ is invariant under $\Z_{10}\times \Z_{10}$-action,
$F_{10}(x,y,z)=Ax^{10}+By^{10}+Cz^5$ and $ABC\neq 0$ by \cite[Chapter VII, \S 7]{BHPV2004}.
Then we obtain $F_{10}(x,y,z)=x^{10}+y^{10}+z^5$ after possibly rescaling the coordinates.
\end{proof}

The following lemma is derived from \cite[Appendix A]{PS17}.
\begin{lemma}\label{lem: PS17}
Let $T$ be a normal projective variety with canonical
singularities and ample $K_T$.
Denote by $R(T,K_T)$ the canonical ring of $T$.
Let
$G\leq\operatorname{Aut}(T)$ be a finite abelian subgroup.
Suppose that 
\[
R(T,K_T)\cong
\mathbb C[x_0,\ldots,x_N]/(F_1,\ldots,F_c)
\]
where the $x_i$ are minimal homogeneous generators.

Then $G$ acts on $x_i$ by scalars after a graded coordinate change.
\end{lemma}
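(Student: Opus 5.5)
We will follow the standard representation-theoretic argument of \cite[Appendix A]{PS17}, proceeding degree by degree in the graded ring $R:=R(T,K_T)=\bigoplus_{n\geq 0}R_n$, with $R_n=H^0(T,nK_T)$. First, every $g\in G$ acts on $T$ and preserves $K_T$ canonically: pulling back pluricanonical forms gives a natural action $g^\ast$ on each $R_n$. Since $\omega_T^{[n]}$ is intrinsically defined, no choice of linearization is involved. These actions are compatible with multiplication, so $G$ acts on $R$ by graded $\C$-algebra automorphisms. Because $G$ is finite and abelian, each $R_n$ is a finite-dimensional representation that splits as a direct sum of one-dimensional characters; that is, $R_n$ has a basis of semi-invariant vectors, exactly as in the proof of Lemma \ref{lem: createGinvpencil}.

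Next, we pass to the space of minimal generators. Let $R_+=\bigoplus_{n>0}R_n$ and $\mathfrak m:=R_+$. The space $V:=\mathfrak m/\mathfrak m^2=\bigoplus_n V_n$, with $V_n=R_n/(\mathfrak m^2)_n$, is a graded $G$-representation. The number of generators in degree $n$ of any minimal homogeneous generating set equals $\dim V_n$, by the graded Nakayama lemma. The subspace $(\mathfrak m^2)_n\subset R_n$ is $G$-stable. Since representations of a finite group are completely reducible, we choose a $G$-stable complement $W_n\subset R_n$ of $(\mathfrak m^2)_n$ and decompose $W_n$ into characters. This gives semi-invariant elements $y_{n,1},\dots,y_{n,\dim V_n}\in R_n$ whose images form a basis of $V_n$.

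By graded Nakayama, the collection of all $y_{n,j}$ is again a minimal homogeneous generating set of $R$, with the same number of generators in each degree as the original $x_i$. The substitution $x_i\mapsto y_i$ (matching degrees) is therefore a graded automorphism of $\C[x_0,\dots,x_N]$: each $x_i$ of degree $n$ differs from a linear combination of the $y$'s of degree $n$ by a polynomial in generators of lower degree. This is precisely a graded coordinate change. Under it, the presentation becomes $R\cong\C[y_0,\ldots,y_N]/(F_1',\dots,F_c')$, with $G$ acting on each $y_i$ by a scalar $\chi_i(g)$. This is the desired statement.

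The only step needing care is the first one, namely that the induced action on $R$ is a genuine graded algebra action with no twisting characters. This holds because $G$ acts on canonical forms by pullback, and pullback is multiplicative, so no projective ambiguity arises. We expect this to be the main (though mild) obstacle. The rest is complete reducibility plus graded Nakayama. Note that only finiteness of $G$ (for the $G$-stable complements) and commutativity (for splitting into characters) are used; the hypotheses that $T$ has canonical singularities and that $K_T$ is ample only guarantee that $R$ is finitely generated with $T=\operatorname{Proj}R$, so that the scalar action on the $y_i$ recovers the action on $T$.
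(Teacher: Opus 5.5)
Your proposal is correct and follows the same route as the paper. Both arguments use complete reducibility to choose $G$-stable complements of $(R_+^2)_n$ in $R_n$, take eigenbases because $G$ is abelian, and observe that the union of these bases is a minimal homogeneous generating set with the same degrees as the $x_i$. Your extra remarks (graded Nakayama, the triangular form of the coordinate change, and the pullback action having no twisting characters) just make explicit what the paper leaves implicit.
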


\begin{proof}
Denote by $R_m\subset R:=R(T,K_T)$ the subspace  of degree $m$ in a graded ring.
Let $R_+:=\sum_{m>0} R_m\leq R$ be a graded ideal of $R$.
Then $R_+^2$ is also a graded ideal and $(R_+^2)_m=\sum_{0<k<m} R_k R_{m-k}\leq R$ for each $m\in \N_{\geq 1}$.

Note that $R_m$ carries a natural graded $G$-action.
As in the proof of \cite[Lemma A.2.13]{PS17},
complete reducibility gives $G$-invariant complements $V_m$ such that
\[
R_m=(R_+^2)_m\oplus V_m.
\]
Since $G$ is finite abelian, choose an eigenbasis of
each $V_m$. 
The union of these bases forms a minimal homogeneous generating set of $R$ with the same degrees
as those of $x_i$.
Replacing $x_i$ by these coordinates,  $G$ acts on $x_i$ by scalars.
\end{proof}

\section{A structure theorem for $n$-folds of Kodaira dimension $n-1$}
We first recall a structure theorem for generically isotrivial Iitaka fibrations. This theorem does not require an irregularity assumption and will be used to study threefolds of Kodaira dimension two:
\begin{theorem}[{\cite[Theorem 1.6]{HaoSchreieder2021}}]\label{thm:hao-schreieder}
    Let $X$ be a minimal model of dimension $n$ and Kodaira dimension $n-1$. Then the following conditions are equivalent.
    \begin{enumerate}
        \item The Iitaka fibration $f: X\to S$ is generically isotrivial.
        \item There is an elliptic curve $E$, a normal projective variety $T$ with ample canonical class $K_T$, and a finite subgroup $G\subset \operatorname{Aut}(T)\times \operatorname{Aut}(E)$ with the following properties. The action of $G$ on $T\times E$ is diagonal and faithful on each factor, such that the quotient $Y:=(T\times E)/G$ has only canonical singularities. Moreover, there is a commutative diagram
        \begin{center}
            \begin{tikzcd}
                X \arrow["\phi", dashed]{r} \arrow["f"']{rd} &X^+ \arrow["\tau"]{r} &Y=(T\times E)/G \arrow["g"]{ld}\\
                &S\cong T/G,&
            \end{tikzcd}
        \end{center}
        where $f$ is the Iitaka fibration of $X$, $\phi$ is a composition of flops, $\tau$ is a terminalization, and $g$ is induced by the projection $T\times E\to T$. Moreover, there is a normal subgroup $G_0\subset G$ of index $\in\{1,2,3,4,6\}$ whose action on $T\times E$ is free.
    \end{enumerate}
\end{theorem}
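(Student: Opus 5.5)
This is a cited result (Hao–Schreieder, Theorem 1.6), so the plan is to reconstruct its proof rather than to use anything specific to this paper. I will treat the two directions separately.

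\emph{Direction (2) $\Rightarrow$ (1).} This direction is formal. Since $g:Y\to T/G$ is induced by the projection, its fibre over a general point $[t]$ is $(\{t\}\times E)/\mathrm{Stab}(t)$. A general point of $T$ has trivial stabiliser in $\Aut(T)$, so this fibre is just $E$. Moreover, $K_Y$ is the pullback of $K_{T/G}$ together with the branch part. To see this, use that $K_{T\times E}=p_T^*K_T$, and that $G$ acts on $E$ preserving the form $dz$ up to a character, so $K_Y$ is $g$-trivial. Hence $g$, composed with the terminalization and the flops, is the Iitaka fibration. Its general fibres are all isomorphic to $E$, so it is isotrivial.

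\emph{Direction (1) $\Rightarrow$ (2).} I would argue in the following order.

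\begin{enumerate}
\item By abundance for the minimal model $X$ (known when $\kappa=n-1$), the Iitaka fibration $f:X\to S$ is a morphism whose general fibre is an elliptic curve $E$. By assumption, all general fibres are isomorphic to this fixed $E$.
\item Isotriviality gives a finite Galois cover $T'\to S$ that trivialises the family birationally. Concretely, take the cover associated with the monodromy of $E$ together with the torsion points needed to kill the $J$-invariant variation. After this base change, $X\times_S T'$ is birational to $T'\times E$.
\item Let $G$ be the Galois group. It acts on $T'\times E$ diagonally: on $T'$ by deck transformations, and on $E$ through automorphisms that are affine maps. Faithfulness on each factor can be arranged by shrinking the cover, i.e.\ quotienting out the kernel acting on $E$ and on $T'$.
\item Replace $T'$ by its canonical model $T$. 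This requires showing that $T'$ is of general type. I would deduce it from the fact that $K_{(T'\times E)/G}$ is pulled back from $K_S+B$ with $K_S+B$ big, where $B$ is the canonical bundle formula divisor. Since $T'\to S$ is ramified exactly along the part of $B$ coming from multiple fibres, we get $K_{T'}=\pi^*(K_S+B)$ up to the moduli part. The moduli part vanishes because the $J$-map is constant. Hence $K_T$ is ample, and $G$ acts on $T$ by functoriality of canonical models.
\item Show that $Y=(T\times E)/G$ has canonical singularities and that $X$ is connected to $Y$ by flops and a terminalization. Both $X$ and $Y$ are $K$-trivial over $S$ with the same $K$, pulled back from $T/G$. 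So $X$ and the terminalization of $Y$ are both minimal models of the same variety, hence connected by flops. Canonicity of $Y$ follows because $K_Y$ is the pullback of an ample class on $T/G$ and $Y$ is crepant-birational to the terminal $X$. I would prove this via discrepancy comparison with $X$.
\item Let $G_0$ be the subgroup of $G$ acting on $E$ by translations. Its index is the order of the linear part, so it lies in $\{1,2,3,4,6\}$, and $G_0$ is normal as a kernel. Its action on $T\times E$ is free because translations on $E$ are fixed-point free.
\end{enumerate}

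I expect steps 4 and 5 to be the main obstacle: controlling the ramification and the moduli part so that the cover $T$ has ample canonical class exactly matching $K_X$, and so that the quotient $Y$ is canonical rather than merely klt.
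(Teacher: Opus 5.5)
The paper gives no argument for this statement beyond the citation of Hao--Schreieder, so this review is of your reconstruction alone. Direction (2)$\Rightarrow$(1) is fine, and so is Step~6. The general-type claim in Step~4 is also fine, though it is most easily seen as $\kappa(T')=\kappa(T'\times E)\geq\kappa(X)=n-1$.

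\textbf{The gap: Step~5 is circular.} It rests on the identity $K_{T'}=\pi^*(K_S+B)$ from Step~4, which is not justified. Faithfulness of $G$ on $E$ only makes $T\times E\to Y$ quasi-\'etale: a nontrivial $g$ fixes $\operatorname{Fix}(g_T)\times\operatorname{Fix}(g_E)$, which has codimension $\geq 2$. So $Y$ is only known to be klt, with $K_Y\sim_{\Q}g^*(K_S+B_\delta)$, where $B_\delta=\sum_i(1-\frac{1}{e_i})D_i$ is the Hurwitz divisor of $T\to T/G\cong S$. On the other side $K_X\sim_{\Q}f^*(K_S+B_f)$, where $B_f$ is the discriminant of the canonical bundle formula; the moduli part vanishes since $j$ is constant. ``Both are $K$-trivial over $S$'' does not give ``the same $K$''. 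In fact $X$ and $Y$ are crepant-birational iff $B_f=B_\delta$ coefficientwise. Since $X$ is terminal, this is equivalent to $Y$ being canonical, so the argument assumes what it needs to prove. Separately, $T'\to S$ does not ramify only along multiple fibres: it must ramify wherever the local monodromy has nontrivial linear part.

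\textbf{The step cannot be repaired in the stated generality.} Here generic isotriviality means only that general fibres are isomorphic.

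Let $\langle\gamma\rangle$ be the inertia along a branch divisor $D_i$, and write $\gamma_E(z)=\lambda z+c$. Translations ($\lambda=1$) act freely and only produce multiple fibres. If $\lambda\neq 1$, then $\gamma_E$ has order $\operatorname{ord}(\lambda)$, so faithfulness forces $e_i=\operatorname{ord}(\lambda)$. At its fixed points, $\gamma$ acts on the normal direction to $D_i$ and the tangent line of $E$ by $(\zeta_{e_i},\lambda)$, and $\lambda$ is dictated by the local monodromy of $f$. The quotient is canonical iff $\lambda=\zeta_{e_i}^{-1}$, i.e.\ iff the transversal fibre is of type $I_0^*$, $IV^*$, $III^*$ or $II^*$. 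For types $II$, $III$, $IV$ one gets $(\zeta,\zeta)$, hence transversal $\frac{1}{e}(1,1)$ points. This matches the mismatch between the coefficients $\frac16,\frac14,\frac13$ of $B_f$ and $\frac56,\frac34,\frac23$ of $B_\delta$.

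A concrete counterexample:
\begin{enumerate}
\item Take the minimal elliptic surface $y^2=x^3+f_{18}(t)$ over $\Pp^1$, with $f_{18}$ having simple roots. It is isotrivial, with $\kappa=1$ and $\Iv=1$, and has $18$ fibres of type $II$.
\item Any $(T,E,G)$ as in (2) must be branched exactly over these roots, with inertia $\Z/6$ acting by $(\zeta_6,\zeta_6)$.
\item Hence $Y$ is never canonical. Equivalently, $K_Y$ would have degree $-2+18\cdot\frac56=13\neq 1$ on the base.
\item Taking products with minimal models of general type gives examples for every $n\geq 3$.
\end{enumerate}
So (1)$\Rightarrow$(2) needs an extra hypothesis excluding fibres of type $II$, $III$, $IV$ over codimension-one points of $S$. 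Given such a hypothesis, $B_f=B_\delta$ and your crepant-birationality argument does go through.

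\textbf{A smaller gap in Step~3.} Diagonality is not automatic. A birational trivialization only yields $g\cdot(t,e)=(gt,\lambda_g e+\tau_g(t))$, where $\tau_g\colon T'\dashrightarrow E$ may be nonconstant. You should build $T'$ from the Kummer sequence of the Jacobian, so that the twisting cocycle takes values in the constant group $E[N]\rtimes\Aut(E,0)$.
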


The following theorem relates morphisms to abelian varieties to the Iitaka fibration.

\begin{theorem}[{\cite[Theorem 4.4]{ChenChurchHao2025}}]\label{thm: chen-church-chan}
    Let $X$ be a smooth projective variety of dimension $n$. Suppose there exists a morphism $f: X\to A$ to an abelian variety $A$ such that the general fiber of the Iitaka fibration is not contracted by $f$. Then there exists a $(\operatorname{dim}X-\kappa(X))$-dimensional quotient $q: A\to B$ and a birational map $X\dashrightarrow (B^\prime \times Z)/G_0$ making the diagram 
    \begin{center}
        \begin{tikzcd}
            X\arrow["f"]{r} \arrow[dashed]{d} &A \arrow["q"]{d}\\
            (B^\prime \times Z)/G_0 \arrow{r} &B
        \end{tikzcd}
    \end{center}
    commute. Here, $B^\prime \to B$ is an isogeny with kernel $G_0$ and $Z$ is a smooth variety of general type with a (not necessarily free) $G_0$-action. $G_0$ acts on $B^\prime \times Z$ diagonally.
\end{theorem}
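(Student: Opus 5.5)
The plan is to deduce the product structure from rigidity of abelian subvarieties, Kawamata's characterization of abelian varieties, and a base change that trivializes monodromy. Since the statement is birational, we may replace $X$ by a smooth birational model on which the Iitaka fibration is a morphism $I\colon X\to Z_0$ with $\dim Z_0=\kappa(X)$. Let $F$ be a general fiber of $I$, so $\kappa(F)=0$ and $\dim F=n-\kappa(X)$.

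\emph{Step 1: the images of the fibers are translates of one abelian subvariety.} By Kawamata's theorem on varieties of Kodaira dimension zero, $a_F\colon F\to \Alb(F)$ is surjective with connected fibers. Hence $f(F)$ is a translate of an abelian subvariety $K_F\subseteq A$, namely the image of $\Alb(F)\to A$. Abelian subvarieties of $A$ form a countable set, so $K:=K_F$ is the same for a very general, hence general, fiber $F$. By assumption $\dim K>0$.

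\emph{Step 2 (main obstacle): show $f|_F\colon F\to x+K$ is generically finite, i.e.\ $\dim K=\dim F$.} Once this holds, Kawamata's theorem (a variety with $\kappa=0$ mapping generically finitely onto an abelian variety) makes $F$ birational to an abelian variety isogenous to $K$. To prove it, I would use Poincar\'e reducibility to choose a quotient $q\colon A\to B$ with $q|_K\colon K\to B$ an isogeny, so $\dim B=\dim K$. I would then study $h=(I,q\circ f)\colon X\to Z_0\times B$, which is dominant. Suppose a general fiber $\Gamma$ of $h$ had positive dimension. Then $\Gamma$ is also a fiber of $F\to K$, which has $\kappa(\Gamma)\geq 0$. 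I would then apply the generic vanishing description of $f_*\omega_X^{\otimes m}$ (Lombardi--Popa--Schnell: direct sums of torsion translates of sheaves pulled back from quotients of $A$). The aim is to show that the pluricanonical sections of $X$ separate the $\Gamma$-directions after twisting by $\Pic^0$, which would force $\kappa(X)>\dim Z_0$, a contradiction. I expect this to be the hardest part: it amounts to a subadditivity statement over an abelian base, in the spirit of Cao--P\u{a}un. If the direct approach fails, I would instead use that $F$ has $\kappa=0$ with surjective Albanese, and argue that the fibers of $a_F$ must be contracted by $I$ as well.

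\emph{Step 3: trivialize the monodromy.} The family $\{F_z\to B\}_{z\in Z_0}$ is now, birationally, a family of isogenies onto the fixed abelian variety $B$. Its monodromy acts on the finite kernels, so after a finite base change $Z\to Z_0$ and a suitable isogeny $B'\to B$ the family becomes birationally trivial. Here $B'$ is the common étale cover dominating all $F_z$, and its kernel is $G_0$. The normalized fiber product $X\times_B B'$ then has a component birational to $B'\times Z$, with $G_0$ acting diagonally: by translations on $B'$ and via the Galois action on $Z$. This gives $X\sim_{\text{bir}}(B'\times Z)/G_0$ compatibly with $q\circ f$. Finally, since $\kappa((B'\times Z)/G_0)=\kappa(X)=\dim Z$, the variety $Z$ is of general type.
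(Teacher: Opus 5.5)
The paper does not prove this statement (it is quoted from Chen--Church--Hao), so your proposal is judged against the statement itself. Step~2 is a genuine gap. You read the hypothesis as saying only that $f(F)$ is not a point, i.e.\ $\dim K>0$, and then try to deduce $\dim K=\dim F$; that implication is false.

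Take $X=E_1\times E_2\times C$ with $E_1,E_2$ elliptic curves, $g(C)\geq 2$, and $f=\mathrm{pr}_1\colon X\to A=E_1$. Then $\kappa(X)=1$, the Iitaka fibre is $F=E_1\times E_2$, and $f(F)=E_1$ is not a point. Yet the fibres $\Gamma=E_2$ of your map $h$ are positive-dimensional. Since $mK_X=\mathrm{pr}_C^*(mK_C)$, no twist of $|mK_X|$ by $\Pic^0$ separates points of $\Gamma$. Subadditivity over an abelian base only gives lower bounds of the form $\kappa(X)\geq\kappa(\Gamma)+\cdots$ with $\kappa(\Gamma)=0$, so no contradiction is available. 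Your fallback is vacuous, because fibres of $a_F$ lie in $F$, which is already a fibre of $I$. In this example the conclusion of the theorem fails too, since $E_1$ has no $2$-dimensional quotient.

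So ``not contracted'' must be read as ``$f|_F$ is generically finite onto its image'', i.e.\ $\dim f(F)=\dim F$. This is automatic in the paper's only application, where $F$ is an elliptic curve. With that reading Step~2 is not an obstacle: Kawamata's theorem directly makes $F$ birational to an abelian variety \'etale over $x+K$.

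Step~3 also has a gap. The kernel $G_0$ of $B'\to B$ acts on the whole normalized fibre product $X\times_B B'$. If that fibre product is disconnected, a single component is stabilized only by a proper subgroup $G_1\subsetneq G_0$. You then get $(B'\times Z)/G_1$, which lies over $B'/G_1$ rather than over $B'/G_0=B$. The fibre product is connected exactly when $q\circ f$ has connected general fibres, i.e.\ does not factor through a nontrivial isogeny $B_1\to B$, and you have to prove this.

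For $f=a_X$ (the case the paper uses) and $q$ with connected kernel, for instance $B=A/K'$ with $K'$ a Poincar\'e complement of $K$, it holds. Such a factorization would make $q$ factor as $\Alb(X)\to B_1\to B$, so the finite group $\ker(B_1\to B)$ would be a quotient of the connected group $\ker q$, hence trivial. For an arbitrary $f$ (for example $f$ followed by an isogeny of $A$) connectedness can fail, and then only the stabilizer acts.

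Finally, the existence of one cover $B'$ ``dominating all $F_z$'' needs justification. After Stein factorization, $X\to Z_0\times B$ is finite \'etale over $U\times B$ for a dense open $U\subseteq Z_0$, by purity: a branch divisor dominating $Z_0$ would make $F_z\to B$ ramified. Since $\pi_1(U\times B)=\pi_1(U)\times\pi_1(B)$ with $\pi_1(B)$ abelian, all the covers $F_z\to B$ correspond to the same subgroup of $\pi_1(B)$. The remaining $\pi_1(U)$-monodromy is then removed by the finite base change.
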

Although our main results concern irregular varieties, part of the corollary holds without an irregularity assumption.
\begin{corollary}\label{cor: chargeneisotrivial3fold}
    Let $X$ be a minimal model of dimension $n$ and Kodaira dimension $n-1$. Let $f: X\to Z$ be its Iitaka fibration. 
    \begin{enumerate}
        \item If $f$ is generically isotrivial, then $\Iv(X)$ of $X$ is equal to \[\frac{\vol(T)}{\lvert G \rvert },
        \] where $T, E$ and $G$ are given in Theorem \ref{thm:hao-schreieder}. Since the action is faithful on each factor, $G$ embeds into both $\operatorname{Aut}(T)$ and $\operatorname{Aut}(E)$. In particular, \[G\cong (\mathbb{Z}/m\mathbb{Z}\times \mathbb{Z}/n\mathbb{Z})\rtimes \mathbb{Z}/\mu\mathbb{Z},\] where $n\vert m$ and $\mu\in\{1,2,3,4,6\}$.
        \item If $f$ is not generically isotrivial and $q(X)>0$, then the Albanese morphism of $X$ factors through the Iitaka fibration $f$, i.e. \[X\xrightarrow{f} Z \rightarrow \operatorname{Alb}(X).\]
        \item If $q(X)>0$ and the Albanese morphism of $X$ does not factor through the Iitaka fibration $f$, then $f$ must be generically isotrivial and the $G$ in Theorem \ref{thm:hao-schreieder} acts on $E$ by translations. In particular, $G\cong \mathbb{Z}/m\mathbb{Z}\times \mathbb{Z}/n\mathbb{Z}$ where $n\vert m$.
    \end{enumerate}
\end{corollary}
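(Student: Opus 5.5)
We prove the three parts of Corollary \ref{cor: chargeneisotrivial3fold} one at a time, using Theorem \ref{thm:hao-schreieder} for (1) and Theorem \ref{thm: chen-church-chan} for (2) and (3).

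\textbf{Part (1).} The Iitaka volume is a birational invariant computed from $h^0(X,mK_X)$, and flops and terminalizations preserve plurigenera. So we may compute on $Y=(T\times E)/G$.

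Let $p:T\times E\to Y$ be the quotient map. It is étale in codimension one, since $G$ acts faithfully on each factor and $Y$ has canonical singularities. Hence $p^*K_Y=K_{T\times E}=\mathrm{pr}_T^*K_T$, and
\[
H^0(Y,mK_Y)=H^0(T\times E,\mathrm{pr}_T^*mK_T)^G=H^0(T,mK_T)^G,
\]
where we use that $E$ has trivial canonical bundle, so $H^0(E,\mathcal O_E)=\C$ carries trivial $G$-action (invariant $1$).

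Moreover $K_Y\sim_\Q g^*L$, where $L$ is the descent of $K_T$ to $S=T/G$. Pulling back along the finite map $\pi:T\to T/G$ of degree $|G|$ gives $\pi^*L=K_T$, so
\[
L^2=\frac{K_T^2}{|G|}=\frac{\vol(T)}{|G|}.
\]
Since $\Iv(X)=L^{2}$ (or $L^{n-1}$ in general) by the definition in the preliminaries, we get $\Iv(X)=\vol(T)/|G|$.

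For the group structure: $G$ embeds in $\Aut(E)=E\rtimes \Aut_0(E)$. Its translation part $G\cap E$ is a finite subgroup of $E$, hence isomorphic to $\Z/m\times\Z/n$ with $n\mid m$. The image of $G$ in $\Aut_0(E)$ is cyclic of order $\mu\in\{1,2,3,4,6\}$, and the extension splits because of an element of $G$ mapping to a generator of the linear part. If that element has the form $z\mapsto \zeta z+t$, it has a fixed point, so after recentering it is linear; this gives the semidirect product.

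\textbf{Part (2).} Apply Theorem \ref{thm: chen-church-chan} to the Albanese map $a:X\to \Alb(X)$. If the general Iitaka fiber (an elliptic curve) is not contracted by $a$, we get a one-dimensional quotient $B$, an isogeny $B'\to B$ with kernel $G_0$, and $X\sim_{\mathrm{bir}}(B'\times Z)/G_0$ with $Z$ of general type. Then the Iitaka fibration is the projection to $Z/G_0$, whose fibers are all isomorphic to $B'$. So $f$ is generically isotrivial, a contradiction.

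Hence $a$ contracts the general fiber of $f$. Because $f$ is a fibration with connected fibers and $\Alb(X)$ contains no rational curves, rigidity shows that $a$ factors through $Z$.

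\textbf{Part (3).} The contrapositive of (2) gives generic isotriviality. From Theorem \ref{thm: chen-church-chan} we also get the model $(B'\times Z)/G_0$ in which $G_0$ acts on $B'$ by translations, namely by the kernel of the isogeny.

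The main obstacle is matching this model with the Hao--Schreieder presentation $(T\times E)/G$. We must check that the two coincide up to birational equivalence, with $E\cong B'$, $T$ the canonical model of $Z$, and $G$ the image of $G_0$ acting faithfully. The plan is to use uniqueness of the isotrivial description: the fiber $E$ is the general Iitaka fiber. $T$ is the canonical model of the normalization of the base change that trivializes the family. $G$ is the monodromy group, namely the minimal group with faithful action on each factor. One may have to pass to the quotient of $G_0$ by the subgroup acting trivially on $Z$, but this subgroup acts by translations on $B'$, so it can be absorbed into $B'$ by replacing $B'$ with its quotient.

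Once $G$ acts on $E$ by translations, it is a finite subgroup of $E$, hence isomorphic to $\Z/m\times\Z/n$ with $n\mid m$.
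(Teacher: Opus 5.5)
\emph{Parts (1) and (2).} These are fine and match the paper up to reordering; the paper proves (3) first. One slip in (1): the identification $H^0(Y,mK_Y)=H^0(T,mK_T)^G$ is wrong when $G$ has non-trivial linear part on $E$. In that case $G$ acts on $H^0(E,mK_E)$ by the $m$-th power of the linear character. You never use this identity, however. Your computation $K_Y\sim_{\Q}g^*L$, $\pi^*L=K_T$, $\Iv(X)=L^{n-1}=\vol(T)/|G|$ is exactly the paper's.

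\emph{The gap in (3).} The unproved step is the assertion that $G$ acts on $E$ by translations. Your plan identifies the Hao--Schreieder data $(T,E,G)$ with the Chen--Church--Hao data $(Z,B',G_0)$ via a ``uniqueness of the isotrivial description'', with $T$ the canonical model of $Z$ and $G$ a quotient of $G_0$. No such uniqueness is available, and it fails in general.

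Here is a counterexample to uniqueness. Take the following data:
\begin{itemize}
\item $T$ with ample $K_T$ whose Albanese map is a fibration $T\to C$ onto an elliptic curve;
\item an \'{e}tale isogeny $C'\to C$ of degree $2$ with kernel $\{0,e\}$;
\item $T':=T\times_C C'$, with projection $\phi:T'\to C'$;
\item $\Z/2\Z$ acting by the deck involution on $T'$ and by $x\mapsto x+e$ on $C'$.
\end{itemize}
Then $(t,c)\mapsto[(t',\phi(t')+c)]$ is an isomorphism $T\times C'\cong(T'\times C')/(\Z/2\Z)$. So one threefold with $q>0$ has Hao--Schreieder presentations both with $G=1$ and with $G=\Z/2\Z$, and $T\not\cong T'$.

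The statement concerns the $G$ of an arbitrary presentation. So ``the monodromy group, namely the minimal one'' does not determine the group in question. You would still have to show that every admissible $G$ has trivial linear part, and your proposal gives no mechanism for this.

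\emph{How the paper closes it.} It compares an invariant that does not depend on the choice of presentation, namely the irregularity. The varieties $(T\times E)/G$, $(B'\times Z)/G_0$, $T/G$ and $Z/G_0$ are klt, hence have rational singularities. The two total spaces are birational to each other, and so are the two bases, both being the Iitaka base. Künneth applied to $G$-invariants then gives
\[
q(T/G)+q(E/G)=q(Z/G_0)+q(B'/G_0),\qquad q(T/G)=q(Z/G_0).
\]
Since $G_0$ acts on $B'$ by translations, $q(E/G)=q(B'/G_0)=1$. Hence $G$ acts trivially on $H^0(E,\Omega^1_E)$, which means it acts by translations.

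Alternatively, your approach can be repaired without any uniqueness. Pass to a common Galois cover $V$ of $T$ and $Z$ over the Iitaka base; its Galois group surjects onto $G$. Over $V$ the two trivializations differ by fibrewise isomorphisms $E\to B'$, whose linear part is constant on $V$. Equivariance then forces every element of $G$ to have trivial linear part.
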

\begin{proof}
We first prove (3). Apply Theorem \ref{thm: chen-church-chan} to the Albanese morphism $a_X: X\to A:=\Alb(X)$, after passing to a smooth birational model if necessary. The proof of Theorem \ref{thm: chen-church-chan} (\cite[Theorem 4.4]{ChenChurchHao2025}) shows that the natural projection $(B^\prime \times Z)/G_0\to Z/G_0$ is birationally equivalent to the Iitaka fibration $f$. In particular, $f$ is generically isotrivial.
By Theorem \ref{thm:hao-schreieder}, $f$ is also birationally equivalent to $(T\times E)/G\to T/G$.
By \cite[Lemma 5.20]{KM98}, \[(T\times E)/G,\; (B^\prime\times Z)/G_0,\;T/G,\;Z/G_0\] are all klt.
Then \cite[Theorem 5.22]{KM98} implies they have rational singularities.
So \[q((B^\prime \times Z)/G_0)= q((T\times E)/G),\quad q(T/G)=q(Z/G_0),\] since the two total spaces are birational, as are the two bases.

Since any $G$ acts trivially on constant global section of structure sheaf, the K\"unneth formula gives \[q((B^\prime \times Z)/G_0)=q(B^\prime/G_0)+q(Z/G_0),\qquad q((T\times E)/G)=q(T/G)+q(E/G).\]
Comparing these equalities yields  $q(B'/G_0)=q(E/G)$. 
Since $G_0$ acts on the elliptic curve $B^\prime$ by translations, we have $q(E/G)=q(B'/G_0)=1$. 
So $G$ acts trivially on $H^0(E, \Omega_E^1)$ and therefore acts on $E$ by translations. This proves (3), and (2) follows immediately. Note that $Z\dashrightarrow \Alb(X)$ is indeed a morphism by \cite[Lemma 8.1]{Kawamata1985}.

To prove (1), consider the following commutative diagram:
\begin{center}
    \begin{tikzcd}
       T \times E \arrow[r,"\operatorname{pr}_1"] \arrow[d,"\alpha"] & T \arrow[d,"\delta"] \\
        Y\colon=(T\times E)/G \arrow[r,"\pi"] & T/G. 
    \end{tikzcd}
\end{center}
Notice that $\pi$ is the Iitaka fibration by Theorem \ref{thm:hao-schreieder} and $K_Y=\pi^\ast H$, where $H$ is an ample $\Q$-divisor on $T/G$. 
Since $\alpha$ is quasi-\'{e}tale by arguments in \cite[Theorem 1.6]{HaoSchreieder2021},
\[K_{T\times E}=\alpha^\ast K_Y=\alpha^\ast \pi^\ast H=\operatorname{pr}_1^\ast \delta^\ast H,\] 
and $\delta^\ast H$ is just $K_T$. 
Hence we obtain 
\[\operatorname{Ivol}(X)=H^{n-1}=\frac{1}{\lvert G\rvert}(\delta^\ast H)^{n-1}=\frac{\operatorname{vol}(T)}{\lvert G \rvert}.\]

\end{proof}

\section{The case where the Albanese map factors through the Iitaka fibration}
 Let $X$ be a smooth projective irregular threefold with $\kappa(X)=2$ and $X_0$ be one of its minimal models. Now we consider the case where the Albanese map $a: X\to A:=\Alb(X)$ factors through the Iitaka fibration $f_0:X_0\to S_0$. Since $S_0$ has rational singularities, the induced map $b_0:S_0\dashrightarrow \Alb(X)$ is indeed a morphism by \cite[Lemma 8.1]{Kawamata1985}. 

Let $\delta: S\to S_0$ be a resolution. After replacing $X$ by a higher birational model, we may assume that the induced maps $\psi:X\dashrightarrow X_0$ and $f:X\dashrightarrow S$ are morphisms. Let $b$ be the composition map $S\to A$. We obtain the following commutative diagram:
\begin{equation}
    \begin{tikzcd}\label{diagram}
        X_0 \arrow["f_0"]{r} & S_0 \arrow["b_0"]{r} & A\\
        X \arrow["f"]{r} \arrow["\psi"]{u}& S, \arrow["b"']{ur} \arrow{u}
    \end{tikzcd}
\end{equation}

Since $f$ is a fibration, $f^\ast: \Pic^0(S)\to \Pic^0(X)$ is injective. Since $a^\ast= f^\ast \circ b^\ast $ is an isomorphism, $f^\ast$ is surjective. Hence, $f^\ast$ is an isomorphism and, by duality, induces an isomorphism
\begin{equation}\label{eq:albx-albs}
    \Alb(X)\cong \Alb(S).
\end{equation}
In particular, $q(X)=q(S)$.

\subsection{A lower bound for the Iitaka volume}
In this subsection, we prove the following theorem:
\begin{theorem}\label{thm:non-iso-vol}
    Let $X$ be a smooth projective irregular threefold with $\kappa(X)=2$ such that the Albanese map factors through the Iitaka fibration. Then $\Iv(X)\geq \frac{1}{24}$.
\end{theorem}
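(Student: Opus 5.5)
The plan is to split according to $\dim b(S)$, where $b:S\to A$ is the map from the Iitaka base in diagram \eqref{diagram}. Since $q(X)=q(S)>0$ by \eqref{eq:albx-albs}, this dimension is $1$ or $2$. In other words, the general fiber of the Albanese map of $X$ has dimension two or one, which matches the sketch in the introduction. Throughout, I work on the minimal model $X_0$ with $K_{X_0}\sim_\Q f_0^\ast L$, where $L$ is ample on $S_0$, so that $\Iv(X)=L^2$.

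\emph{Case $\dim b(S)=1$.} Here $b$ factors through a fibration $S\to C$ onto a curve with $g(C)\geq 1$, and $A\cong \Alb(C)$. Let $F$ be a general fiber of $X\to C$ and $\Gamma$ its image in $S$, a general fiber of $S\to C$. Then $F$ is a smooth threefold-fiber surface of Kodaira dimension one, with Iitaka fibration $F\to \Gamma$. Following \cite{ChenChen2008}, I would look for a constant $c>0$ such that $K_X-cF$ is $\Q$-effective. Pulling back, this gives $L\geq c\,\Gamma$ up to $\Q$-linear equivalence, and hence
\[\Iv(X)=L^2\geq c\,(L\cdot \Gamma)=c\,\Iv(F)\geq \tfrac{c}{6}\]
by Theorem \ref{thm:Ivol 1/6}. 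So it suffices to have $c\geq \tfrac14$.

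To obtain $c$, I would use the slopes of $b_\ast\omega_X^{\otimes m}$ when $C$ is elliptic. If $g(C)\geq 2$, the factor $K_C$ already gives $c\geq 2$. On an elliptic curve $C$, weak positivity together with Lemma \ref{lem:HuZhangSlope} should show that $\mu_{\min}(a_\ast\omega_X^{\otimes m})\geq 0$. I would then choose $m\in\{2,3,4\}$ by Lemma \ref{lem:1/24}, so that $P_m(F)>0$ and $\Iv(F)/(mP_m(F))\geq 1/24$. The sheaf $a_\ast\omega_X^{\otimes m}\otimes\OO_C(-p)$ has rank $P_m(F)$. Twisting by a degree-zero line bundle and applying Lemma \ref{lem:slope-nonvanishing} shows that $mK_X-F$ is effective up to a numerically trivial twist.

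That route gives roughly $c=1/m$, which is too weak on its own. The sharper version divides the degree instead. I would compare $\deg a_\ast\omega^{\otimes m}\geq 0$ with $\mu_{\max}\geq 0$ and use $\mathcal O(-k p)$ with $k$ chosen by Riemann--Roch, aiming for $L\cdot\Gamma$ versus $mP_m$. The expected output is exactly the $1/24$ coming from Lemma \ref{lem:1/24}. This slope bookkeeping is where I expect the main obstacle: one has to make the constant come out exactly as $\Iv(F)/(mP_m)$ rather than something weaker.

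\emph{Case $\dim b(S)=2$.} Here $b$ is generically finite onto its image, so $S$ has maximal Albanese dimension. By Theorem \ref{thm:LPS-IT}, the sheaf $a_\ast\omega_X^{\otimes m}$ is IT$_0$ for every $m\geq 2$, and it is nonzero because $X\to A$ has general fiber a curve of genus $\geq 1$ with $\kappa\geq 0$. Lemma \ref{lem:IT-nonvanishing} and generic vanishing then give $P_m(X)=\chi(a_\ast\omega_X^{\otimes m})$. The crude bound comes from the effective Iitaka result: $P_6\geq 4$, and the map defined by $|6K_X|$ is generically finite on $S_0$. Lemma \ref{lem: lower bound for volume} then gives $36\,\Iv(X)\geq P_6-2$, i.e. $\Iv(X)\geq 1/18$, which already exceeds $1/24$.

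To be safe, I would refine this via \cite{Jiang2021}. Take the étale base change by the multiplication-by-$N$ isogeny of $A$. Both $\Iv$ and $P_2$ multiply by the degree $N^{2q}$, the latter because $P_2=\chi$ of an IT$_0$ sheaf. Applying Lemma \ref{lem: lower bound for volume} on the cover to $2L$ gives $4N^{2q}\Iv(X)\geq N^{2q}P_2(X)-2$. Letting $N\to\infty$ yields $\Iv(X)\geq P_2/4\geq 1/4$, using $P_2\geq 1$ from Lemma \ref{lem:IT-nonvanishing}. Combining the two cases gives $\Iv(X)\geq 1/24$.
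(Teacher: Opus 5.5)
Your case $\dim b(S)=2$ is fine. The crude bound $\Iv(X)\geq\frac{1}{18}$ already suffices once you justify $P_6\geq 4$: the image of $|6K_X|$ is birational to $S_0$, which is not rational. Your refinement follows the paper, except that you do not check that the map given by twice the canonical class on the cover still has two-dimensional image; the paper gets this from Jiang's eventual map \cite{Jiang2021}.

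The genuine gap is the case where the Albanese image is an elliptic curve $C$. This is exactly where $\frac{1}{24}$ is decided, and you leave it as ``the main obstacle''. Weak positivity gives nefness, i.e.\ $\mu_{\min}(E_m)\geq 0$ for $E_m:=a_\ast\omega_X^{\otimes m}$. This cannot produce sections of $E_m\otimes\OO_C(-p)\otimes P$. If $E_m$ is semistable of degree $0$, or stable of degree $1$ and rank $\geq 2$, every such twist has negative slope and no sections. So your claim that $mK_X-F$ is effective up to a numerically trivial twist is unjustified. If it did hold, $c=1/m\geq 1/4$ would already suffice by your own reduction, so calling that route ``too weak'' is inconsistent. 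Likewise, comparing $\deg E_m\geq 0$ with $\mu_{\max}\geq 0$ gives nothing: you need strict positivity.

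Two ideas are missing.

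\emph{Strict positivity.} Since $q(X)=q(S)$ by \eqref{eq:albx-albs}, Theorem \ref{thm:LPS-IT} makes $E_m$ IT$_0$. On an elliptic curve this means every indecomposable summand has positive degree. These summands have rank at most $r_m:=\operatorname{rank}E_m=P_m(F)$, so $\mu_{\min}(E_m)\geq 1/r_m$.

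\emph{Turning the fractional slope into a whole fiber.} Let $W\neq 0$ be the image of the multiplication map $E_m^{\otimes r_m}\to E_{mr_m}$. Lemma \ref{lem:HuZhangSlope} gives $\mu_{\min}(W)\geq r_m\mu_{\min}(E_m)\geq 1$. By Lemma \ref{lem:slope-nonvanishing} there is a degree-one $L'$ with $h^0(E_{mr_m}\otimes L'^{-1})\geq h^0(W\otimes L'^{-1})>0$. Hence $mr_mK_X-F$ is $\Q$-effective up to numerical equivalence, i.e.\ $c=\frac{1}{mP_m(F)}$. With $m\in\{2,3,4\}$ chosen by Lemma \ref{lem:1/24}, this gives $\Iv(X)\geq\frac{\Iv(F)}{mP_m(F)}\geq\frac{1}{24}$.

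Note that this $c$ may be much smaller than $\frac14$. The bound does not come from $c\geq\frac14$ together with $\Iv(F)\geq\frac16$. It comes from the ratio in Lemma \ref{lem:1/24}, where a large $P_m(F)$ is compensated by a large $\Iv(F)$.

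A minor point: your claim $c\geq 2$ when $g(C)\geq 2$ also needs an argument. One option is the nefness of $K_{X_0}-\sigma^\ast K_C$, with $\sigma\colon X_0\to C$, as used in Lemma \ref{lem:base-g>1}.
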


 We prove the theorem by treating the cases $\operatorname{dim}a(X)=1$ and $\operatorname{dim}a(X)=2$ separately. Throughout the proof, let $H$ be an ample $\Q$-divisor on $S_0$ such that $K_{X_0}=f_0^\ast H$. In particular, for every $m\geq 0$, we have $f_{0\ast}\OO_{X_0}(mK_{X_0})\cong \OO_{S_0}(\lfloor mH\rfloor)$.
 
\begin{theorem}\label{thm: non-iso-alb-fiber-one}
    Under the assumption of Theorem \ref{thm:non-iso-vol}, suppose that $\operatorname{dim} a(X)=2$. Then $\Iv(X)\geq \frac{1}{4}$. 
\end{theorem}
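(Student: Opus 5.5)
The plan is to bound $\Iv(X)$ from below through the bicanonical system, after passing to étale covers of $X$ pulled back from isogenies of $A$, in the spirit of \cite{Jiang2021}.

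\textbf{Setup.} Here $\dim a(X)=2$. Since $a$ factors through $f$, the map $b\colon S\to A$ is generically finite onto its image, and the general Albanese fiber is a fiber of the Iitaka fibration, i.e.\ an elliptic curve. By \eqref{eq:albx-albs} we have $q(X)=q(S)$, and $S$ is the Iitaka base.

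\textbf{Step 1: IT$_0$ for the bicanonical pushforward.} The sheaf $a_*\omega_X^{\otimes 2}$ is nonzero, because it has positive rank on $a(X)$: the general fiber of $a$ is an elliptic curve. Theorem~\ref{thm:LPS-IT} then shows that $\mathcal F:=a_*\omega_X^{\otimes 2}$ satisfies IT$_0$. Hence $h^0(A,\mathcal F\otimes P)=\chi(\mathcal F)=:\chi$ for every $P\in\Pic^0(A)$, and $\chi\ge 1$ by Lemma~\ref{lem:IT-nonvanishing}.

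\textbf{Step 2: étale base change.} Let $\mu_N\colon A'\to A$ be multiplication by $N$, of degree $d_N=N^{2q}$. Since $a^*$ is injective on $\Pic^0$, the fiber products $X':=X\times_A A'$ and $S':=S\times_A A'$ are connected étale covers of degree $d_N$. The map $f'\colon X'\to S'$ is again the Iitaka fibration, and $K_{X'}$ is the pullback of $K_X$. On the minimal model this gives $K_{X'_0}=f_0'^*H'$ with $H'$ the pullback of $H$, so
\[
\Iv(X')=(H'^2)=d_N\,\Iv(X).
\]
By flat base change and the projection formula,
\[
P_2(X')=h^0(A',\mu_N^*\mathcal F)=\sum_{P\in\ker\hat\mu_N}h^0(A,\mathcal F\otimes P)=d_N\chi .
\]

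\textbf{Step 3: apply Lemma~\ref{lem: lower bound for volume}.} Lemma~\ref{lem: debarre} shows that $|2K_{X'}|$ descends to $|\lfloor 2H'\rfloor|$ on $S'_0$ and that $h^0(S'_0,2H')=P_2(X')$. Once the image of this map is two-dimensional, Lemma~\ref{lem: lower bound for volume} gives
\[
4d_N\Iv(X)=\vol(2H')\ge d_N\chi-2.
\]
Dividing by $d_N$ and letting $N\to\infty$ yields $\Iv(X)\ge \chi/4\ge 1/4$.

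\textbf{Main obstacle.} The step I expect to be hardest is showing that $\Phi_{|2K_{X'}|}$ has two-dimensional image for $N$ large. I would first argue that the IT$_0$ sheaf $\mathcal F$ is continuously globally generated, so that the eventual map of Definition~\ref{def-eventual} is defined. I would then use the standard fact (as in \cite{Jiang2021}) that for $N$ sufficiently large, the global sections of $\mu_N^*\mathcal F$ realize this eventual map. This means $\Phi_{|2K_{X'}|}$ factors, up to birational modification, through the relative evaluation map to $\mathbb P_{A'}(\mu_N^*\mathcal F)$ over $A'$. That map is compatible with $b'\colon S'\to A'$, which is generically finite onto a surface. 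Hence the image of $S'$ is a surface.

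If the full global generation statement is unavailable, I would instead use continuous global generation directly. At a general point of $A$, the sections of $\mathcal F\otimes P$, for $P$ ranging over the dense set $\bigcup_N\ker\hat\mu_N$, span the fiber of $\mathcal F$. Pulling these back separates general points of $S'$ lying over distinct points of $A'$. This already forces the image to be at least two-dimensional, which is all Lemma~\ref{lem: lower bound for volume} requires.
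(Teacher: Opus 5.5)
Your outline follows the paper's proof step for step, with one exception: the step you flag as the main obstacle, where the argument as given does not go through. The matching steps are IT$_0$ via Theorem~\ref{thm:LPS-IT}, base change along multiplication by $N$, $P_2$ scaling by $d_N$, descent by Lemma~\ref{lem: debarre}, the bound of Lemma~\ref{lem: lower bound for volume}, and $N\to\infty$.

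\textbf{The gap.} The paper invokes Lemma 2.10 of \cite{Jiang2021}. That lemma says the map defined by $\mu_N^*\omega_X^{\otimes 2}\otimes Q$ is birational to the base change of the eventual map, for a \emph{general} $Q\in\Pic^0(A')$ and $N\gg 0$. You apply it with $Q$ trivial. Then every summand of $H^0(A',\mu_N^*\mathcal F)=\bigoplus_{P\in\hat A[N]}H^0(A,\mathcal F\otimes P)$ comes from a torsion point, and the lemma does not apply. The untwisted statement is a positivity property of $\mu_N^*\mathcal F$ that would need its own proof.

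Your fallback does not supply that proof. Continuous global generation shows that at a general $x\in A$ the sections span the fiber $\mathcal F(x)$. This only separates the finitely many elliptic curves lying over that one point. To separate points of $S'$ lying over two distinct points $x_1\neq x_2$ of $A$, you need sections that vanish over $x_1$ but not over $x_2$. That is a statement about $\mathcal F\otimes\mathcal I_{x_1}$, and fiberwise generation does not imply it. Points of $A'$ with the same image in $A$ can be separated by characters of $A[N]$, but that is not the issue. So, as written, you have not proved that the image of $\Phi_{|2K_{X'}|}$ is two-dimensional.

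\textbf{The fix.} Do what the paper does: replace $|2K_{X'}|$ by $|2K_{X'}+a'^*Q|$, where $a'\colon X'\to A'$ and $Q\in\Pic^0(A')$ is general.
\begin{itemize}
\item Since $\mu_N^*\mathcal F$ is again IT$_0$, $h^0(X',2K_{X'}+a'^*Q)=\chi(\mu_N^*\mathcal F)=d_N\chi$ for every $Q$.
\item By Lemma~\ref{lem: debarre}, the system descends to the ample $\Q$-divisor $2H'+Q_{S'_0}$ on $S'_0$, where $Q_{S'_0}$ is the pullback of $Q$.
\item The cited lemma now applies and gives a two-dimensional image.
\item Since $Q_{S'_0}$ is numerically trivial, $(2H'+Q_{S'_0})^2=4(H'^2)=4d_N\Iv(X)$.
\end{itemize}
The rest of your computation then gives $\Iv(X)\ge\chi/4\ge 1/4$ unchanged.
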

\begin{proof}
    In the diagram (\ref{diagram}), each connected component of the general fiber of $X\to a(X)$ is an elliptic curve since $S\to a(X)$ is generically finite. Thus, $a_\ast \omega_X^{\otimes 2}\neq 0$. By Theorem \ref{thm:LPS-IT} and the isomorphism \eqref{eq:albx-albs}, $a_\ast \omega_X^{\otimes 2}$ satisfies IT$_0$. For all $P\in \Pic^0(A)$, \[h^0(a_\ast \omega_X^{\otimes 2}\otimes P)=\chi(a_\ast \omega_X^{\otimes 2}\otimes P)=\chi(a_\ast \omega_X^{\otimes 2})\] is independent of $P$. Denote this common value by $h$. By Lemma \ref{lem:IT-nonvanishing}, $h\geq 1$.

    By definition, the eventual map $\varphi_0$ of $\omega_X^{\otimes 2}$ (see Definition \ref{def-eventual}) satisfies \[\operatorname{dim}\operatorname{Im}\varphi_0\geq \operatorname{dim} a(X)=2.\] By \cite[Corollary 2.11]{Jiang2021}, $\operatorname{dim}\operatorname{Im}\varphi_0\leq \kappa(\omega_X^{\otimes 2})=2$. Hence, $\operatorname{dim}\operatorname{Im}\varphi_0=2$. For a positive integer $M$, consider the Cartesian diagram:
    \begin{center}
        \begin{tikzcd}
            X \arrow["a"]{r} &A\\
            X_M \arrow["a_M"]{r} \arrow["\mu_M"]{u} & A, \arrow["\pi_M"']{u}
        \end{tikzcd}
    \end{center}
    where $\pi_M$ is the multiplication-by-$M$ isogeny. By \cite[Lemma 2.10]{Jiang2021}, for general $Q\in \Pic^0(A)$ and sufficiently large $M>0$, the map $\varphi$ associated to the line bundle $\mu_M^\ast \omega_X^{\otimes 2}\otimes a_M^\ast Q$ is birationally equivalent to the base change of the eventual map $\varphi_0$ along $\mu_M$. In particular, the image of $\varphi$ is a surface.

    Base-changing diagram (\ref{diagram}) along $\pi_M$, we obtain:
    \begin{equation}
        \begin{tikzcd}
            X_{0,M} \arrow["f_{0,M}"]{r} & S_{0,M} \arrow["b_{0,M}"]{r} &A\\
            X_M \arrow["\psi_M"]{u} \arrow["f_M"]{r}& S_M, \arrow{u} \arrow["b_M"']{ur}
        \end{tikzcd}
    \end{equation}
    where $X_{0,M}$ is also minimal and admits the Iitaka fibration $f_{0,M}$. Let $K_{X_{0,M}}=f_{0,M}^\ast H_M$, where $H_M$ is an ample $\Q$-divisor on $S_{0,M}.$ We have \[2\mu_M^\ast K_X+ a_M^\ast Q=2K_{X_M}+a_M^\ast Q=\psi_M^\ast f_{0,M}^\ast (2H_M+b_{0,M}^\ast Q)+E_M,\] where $E_M\geq 0$ is $\psi_M$-exceptional.
    Thus, $\varphi$ descends to $\varphi_S$ on $S_{0,M}$ by Lemma \ref{lem: debarre}. By Lemma \ref{lem: lower bound for volume}, \[(2H_M+b_{0,M}^\ast Q)^2\geq h^0(S_{0,M},2H_M+b_{0,M}^\ast Q)-2.\]
     Notice that \[h^0(\omega_{X_{M}}^{\otimes 2}\otimes a_M^\ast Q)=\chi(a_{M\ast}\omega_{X_{M}}^{\otimes 2})=\chi(\pi_M^\ast a_\ast \omega_X^{\otimes 2})=d_M h,\]where $d_M$ is the degree of $\pi_M$. One can easily check that $H_M$ is the pullback of $H$ from $S_0$ to $S_{0,M}$. Therefore, \[\Iv(X)=(H^2)=\frac{1}{d_M}(H_M^2)=\frac{1}{4d_M}(2H_M+b_{0,M}^\ast Q)^2\geq \frac{d_Mh-2}{4d_M}\geq \frac{h}{4}-\frac{1}{2d_M}.\] Letting $M\to +\infty$, we obtain $\Iv(X)\geq \frac{1}{4}$. 
\end{proof}
\begin{remark}
Alternatively, Theorem \ref{thm: 14K} shows that $\lvert 6K_X\rvert$ defines the Iitaka fibration in this case. Let \[Y:=\overline{\varphi_{\lvert 6K_X\rvert}(X)}\subset \Pp^{P_6-1},\] which is birational to the Iitaka base $S_0$. Since $S_0$ admits a generically finite morphism onto a positive-dimensional subvariety of an abelian variety, $Y$ cannot be rational. Thus, $P_6\geq 4$ and $\operatorname{deg}Y\geq P_6-1\geq 3$. This yields the weaker bound
 \[\Iv(X)\geq \frac{\operatorname{deg}Y}{36}\geq \frac{1}{12}.\]
\end{remark}

\begin{lemma}\label{lem:base-g>1}
    Under the assumptions of Theorem \ref{thm:non-iso-vol}, suppose that there is a fibration $c$ from $S_0$ to a curve $C$ of genus $\geq 2$. Then $\Iv(X)\geq \frac{2}{3}$.
\end{lemma}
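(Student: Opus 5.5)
The plan is to write $H$, the ample $\Q$-divisor with $K_{X_0}=f_0^\ast H$, as the pullback of $K_C$ plus a relative part $N$, and to bound the self-intersection term by term:
\[
\Iv(X)=H^2=(c^\ast K_C+N)^2=2(2g(C)-2)(N\cdot\Gamma)+N^2\geq \tfrac{2}{3}+N^2,
\]
where $\Gamma$ is a general fibre of $c$. This gives the claim once we know $N\cdot\Gamma\geq\frac16$ and $N^2\geq 0$. The first inequality is easy and the second is the main difficulty.

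\emph{Step 1: the decomposition.} After resolving, I assume $c\colon S\to C$ is a morphism from a smooth model (blowing up does not change $H^2$ once $H$ is pulled back). Apply the canonical bundle formula to the Iitaka fibration $f_0$, whose general fibre is an elliptic curve. This gives
\[
H\sim_\Q K_{S_0}+B+M,
\]
with $B\geq 0$ the discriminant part and $M$ the nef moduli part. Then set
\[
N:=K_{S_0/C}+B+M, \qquad H=c^\ast K_C+N,
\]
where $K_{S_0/C}=K_{S_0}-c^\ast K_C$.

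\emph{Step 2: $N\cdot\Gamma\geq\frac16$.} Let $F$ be the preimage of $\Gamma$ in $X$, a general fibre of $X\to C$. It is a smooth surface whose Iitaka fibration is $F\to\Gamma$, since $K_F=K_X|_F$ is pulled back from $H|_\Gamma$, which is ample. So $\kappa(F)=1$ and
\[
N\cdot\Gamma=H\cdot\Gamma=\Iv(F)\geq\tfrac16
\]
by Theorem \ref{thm:Ivol 1/6}. Since $g(C)\geq 2$ gives $2g(C)-2\geq 2$ and $(c^\ast K_C)^2=0$, this yields $H^2\geq \frac23+N^2$.

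\emph{Step 3: $N^2\geq 0$ (main obstacle).} The cleanest route is to show that $N$ is nef, i.e.\ that the relative log-canonical divisor $K_{S_0/C}+B+M$ is nef over a curve of positive genus. I would try this first via weak positivity / semipositivity: Fujita--Kawamata--Viehweg give nefness of $c_\ast\OO(m(K_{S/C}+B+M))$ for the lc-trivial structure. Combined with relative ampleness of $N$ on the general fibre (Step 2), this shows that every curve $D$ with $N\cdot D<0$ is vertical. On a vertical curve, write
\[
N\cdot D=H\cdot D-c^\ast K_C\cdot D=H\cdot D>0,
\]
using that $H$ is ample on $S_0$, so such $D$ cannot exist and $N$ is nef.

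If nefness on the chosen model is delicate, the fallback is to establish only that $N$ is pseudo-effective and then use $H$ nef:
\[
H^2=H\cdot c^\ast K_C+H\cdot N,\qquad H\cdot N=N\cdot c^\ast K_C+N^2 .
\]
The danger is a negative part in the Zariski decomposition of $N$, so I would pass to a relatively minimal (lc) model of $(S,B)$ over $C$ on which the relative log canonical divisor is $c$-nef. There Viehweg's positivity upgrades $c$-nefness to nefness, and intersection numbers with the pullback of the ample $H$ are unchanged.
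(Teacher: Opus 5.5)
Your proposal is correct and follows the paper's proof. Both use the decomposition $H=c^\ast K_C+(H-c^\ast K_C)$, the vanishing $(c^\ast K_C)^2=0$, the identity $\deg H|_\Gamma=\Iv(F)\geq\frac16$, and nefness of $N=H-c^\ast K_C$ to get $N^2\geq 0$. The only difference is that the paper takes this nefness directly from Patakfalvi's Corollary 1.3 (nefness of $K_{X_0}-\sigma^\ast K_C$), while your weak-positivity argument reproves it; it goes through cleanly because, for $m$ divisible enough, $c_\ast\OO_{S_0}(mN)\cong\sigma_\ast\OO_{X_0}(mK_{X_0/C})$ is nef by Viehweg and $N$ is $c$-ample since $H$ is ample, so the canonical bundle formula detour and your fallback are unnecessary.
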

\begin{proof}
    Let $\sigma:=c\circ f_0$. By \cite[Corollary 1.3]{Patakfalvi2014}, $K_{X_0}-\sigma^\ast K_C$ is nef. Let $F$ (resp., $G$) be the general fiber of $\sigma$ (resp., $c$). Since $K_{X_0}-\sigma^\ast K_C=f_0^\ast(H-c^\ast K_C)$, we have that $H-c^\ast K_C$ is nef. Therefore,
    \begin{equation*}
        \begin{aligned}
          \Iv(X)&=(H^2)=(H-c^\ast K_C+c^\ast K_C)^2\\
          &\geq 2(H\cdot c^\ast K_C)= 2(2g(C)-2)\cdot \operatorname{deg}H\vert_G\\
          &\geq 4\Iv(F)\geq \frac{2}{3}.
        \end{aligned}
    \end{equation*}
Here the first inequality follows from the nefness of $H-c^\ast K_C$ and the equality $(c^\ast K_C)^2=0$ while the last follows from Theorem \ref{thm:Ivol 1/6}.
\end{proof}

\begin{theorem}\label{thm: non-iso-alb-fiber-two}
    Under the assumptions of Theorem \ref{thm:non-iso-vol}, suppose that $\operatorname{dim} a(X)=1$. Then $\Iv(X)\geq \frac{1}{24}$. 
\end{theorem}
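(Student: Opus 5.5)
The plan is to reduce to an elliptic Albanese curve and then bound $\Iv(X)$ below by the Iitaka volume of a general Albanese fiber, divided by a suitable multiple. When $\dim a(X)=1$, the image $C:=a(X)$ is a smooth curve that generates $A$, and $q(X)=g(C)$. The fibration $b_0\colon S_0\to C$ (after Stein factorization, which is trivial since $a$ is the Albanese map) is then a fibration of the Iitaka base onto $C$.

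If $g(C)\geq 2$, Lemma \ref{lem:base-g>1} gives $\Iv(X)\geq \frac{2}{3}$. So we may assume $C=A=:E$ is an elliptic curve. Let $G$ be a general fiber of $S_0\to E$ and $F:=f_0^{-1}(G)$ the general Albanese fiber. Then $F$ is a surface with $K_F=K_{X_0}|_F=f_0^\ast H|_G$, so $\kappa(F)=1$ and $\Iv(F)=\deg H|_G=(H\cdot G)$.

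The key reduction is this. Suppose that for some $m$ and some $c>0$ the $\Q$-divisor $mH-cG$ is $\Q$-effective on $S_0$. Since $H$ is ample (in particular nef), we get
\[\Iv(X)=(H^2)\geq \tfrac{c}{m}(H\cdot G)=\tfrac{c}{m}\Iv(F).\]
I choose $m\in\{2,3,4\}$ from Lemma \ref{lem:1/24}, with $P_m(F)>0$ and $\Iv(F)/(mP_m(F))\geq \frac{1}{24}$. It therefore suffices to show that $mK_X-\frac{1}{P_m(F)}F$ is $\Q$-effective.

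To get this, I study $V:=a_\ast\omega_X^{\otimes m}$, a vector bundle on $E$ of rank $P_m(F)$. Since $\omega_E\cong\OO_E$, we have $V=a_\ast\omega_{X/E}^{\otimes m}$, which is nef by Viehweg/Fujita weak positivity. Hence $\mu_{\min}(V)\geq 0$ and $\deg V\geq 0$.

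Suppose $\deg V\geq 1$. Then the decomposition of \S\ref{subsec: HNfiltration} yields an indecomposable summand $V_0$ with $\mu(V_0)=\mu_{\max}(V)\geq \deg V/\operatorname{rank}V\geq 1/P_m(F)$. Multiplication gives a nonzero map $\operatorname{Sym}^kV_0\to a_\ast\omega_X^{\otimes km}$. Its image $W_k$ is a quotient of the semistable bundle $\operatorname{Sym}^kV_0$, so by Lemma \ref{lem:HuZhangSlope} it satisfies $\mu_{\min}(W_k)\geq k\mu(V_0)\geq k/P_m(F)$.

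Twisting by $\OO_E(-\lceil k/P_m(F)\rceil p+\epsilon)$ for a small correction, and using Riemann--Roch or Lemma \ref{lem:slope-nonvanishing} together with $\Pic^0$-twists, we obtain a section of $kmK_X-\lfloor k/P_m(F)\rfloor F$ up to a torsion/$\Pic^0(E)$ twist. This twist is absorbed by passing to multiples, using that $\Pic^0(E)$ is divisible and the slope gain is linear in $k$. Letting $k\to\infty$ gives the required $\Q$-effectivity with $c$ arbitrarily close to $1/P_m(F)$, and hence $\Iv(X)\geq \frac{1}{24}$.

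The main obstacle is the degenerate case $\deg a_\ast\omega_X^{\otimes m}=0$, i.e. the pluricanonical pushforward is numerically flat. Here I would argue via Fujita's decomposition or the variation results that $\omega_{X/E}^{\otimes m}$-pushforwards of degree zero force the family of Albanese fibers to have no variation of the relevant part. Then after an étale base change $E'\to E$ (an isogeny), $X$ becomes birationally $(F'\times E')/H$, with $H$ acting on $E'$ by translations (so that $q$ does not jump). This would force the Iitaka fibration to have an elliptic factor coming from $E$, so the general Iitaka fiber maps onto $E$. That contradicts the assumption that $a$ factors through $f$: an Iitaka fiber is then not contracted by $a$.

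If this contradiction is not clean, the fallback is to use the slope of a different $m'\in\{2,3,4,6\}$ where the degree is positive. This costs a worse constant, so one would need to check via Lemma \ref{lem: surface-4-6-10} that the bound $\frac{1}{24}$ survives.
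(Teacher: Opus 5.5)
Your reduction to an elliptic Albanese curve matches the paper. So do the passage from $\Q$-effectivity of $mP_m(F)K_X-F$ to $\Iv(X)\geq \Iv(F)/(mP_m(F))$ via nefness of $H$, and the slope argument when $\deg a_\ast\omega_X^{\otimes m}\geq 1$. Taking $k=P_m(F)$ gives $c=1/P_m(F)$ directly, with no limit and no $\Pic^0$-bookkeeping, since a degree-one line bundle on an elliptic curve is $\OO(p)$.

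\textbf{The gap: the case $\deg a_\ast\omega_X^{\otimes m}=0$.}
\begin{itemize}
\item Weak positivity only gives nefness, and a nef bundle on an elliptic curve can be numerically flat. So nothing you prove excludes this case, and it is exactly where the Albanese-factoring hypothesis must do real work.
\item Your sketch here is not an argument. No general result turns vanishing degree of an $m$-pluricanonical pushforward ($m\geq 2$) into a birational product structure after an isogeny.
\item Even granting such a result, the contradiction would be with $\kappa(X)=2$, not with the factorization as you state. A translation quotient of $F'\times E'$ has Kodaira dimension $\kappa(F')=1$.
\item The fallback of switching to another $m'$ is unchecked. The bound in Lemma \ref{lem:1/24} is sharp for its chosen $m$, so a change of $m$ needs a new estimate.
\end{itemize}

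\textbf{The missing ingredient is Theorem \ref{thm:LPS-IT}.} Because $a$ factors through the Iitaka fibration, $q(X)=q(S)$ by \eqref{eq:albx-albs}. Hence $E_m:=a_\ast\omega_X^{\otimes m}$ satisfies IT$_0$ for each $m\geq 2$ with $E_m\neq 0$.

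On an elliptic curve, IT$_0$ rules out indecomposable summands of degree $\leq 0$:
\begin{itemize}
\item a summand of negative degree has $h^1\neq 0$ by Riemann--Roch;
\item an indecomposable summand of degree zero is $U\otimes P_0$, with $U$ Atiyah's unipotent bundle and $P_0\in\Pic^0$. Twisting by $P_0^{-1}$ gives $h^1(U)=h^0(U)=1\neq 0$.
\end{itemize}
So every summand of $E_m$ has degree $\geq 1$ and rank $\leq r_m:=P_m(F)$. Thus $E_m$ is ample with $\mu_{\min}(E_m)\geq 1/r_m$, and your degenerate case is empty.

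You can then work with $\mu_{\min}$ throughout, as the paper does. The image $W$ of $E_m^{\otimes r_m}\to E_{mr_m}$ satisfies $\mu_{\min}(W)\geq r_m\mu_{\min}(E_m)\geq 1$ by Lemma \ref{lem:HuZhangSlope}. Lemma \ref{lem:slope-nonvanishing} then gives a degree-one $L'$ with $h^0(\omega_X^{\otimes mr_m}\otimes a^\ast L'^{-1})\neq 0$, which finishes the proof.
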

\begin{proof}
    By Lemma \ref{lem:base-g>1}, we can assume that $a(X)=A$ is an elliptic curve. If $b_0$ has disconnected fibers, applying Lemma \ref{lem:base-g>1} to its Stein factorization gives the desired bound. We may therefore assume that $b_0$ is a fibration.

    Let $F$ (resp., $F_0$, $G_0$) be a general fiber of $a$ (resp., $a_0$, $b_0$). Since \[K_{F_0}=K_{X_{0}}\vert_{F_0}=(f_0\vert_{F_0})^\ast H \vert_{G_0},\] $F$ is a smooth surface of Kodaira dimension one. Let $E_m:=a_\ast \omega_{X}^{\otimes m}$ for $m\geq 1$. Choose $m\in \{2,3,4\}$ as in Lemma \ref{lem:1/24}, so $E_m$ is nonzero of rank $r_m>0$. Consider the natural multiplication map $\nu: E_{m}^{\otimes r_m}\to E_{r_m m}$ and let $W$ be its image. Then $W$ is nonzero and torsion-free, hence a vector bundle on $A$. By Lemma \ref{lem:HuZhangSlope} and the ampleness of $E_m$, \[\mu_{\max}(W)\geq \mu_{\min}(W)\geq \mu_{\min}(E_{m}^{\otimes r_m})=r_m\mu_{\min}(E_m)\geq r_m\cdot \frac{1}{r_m}=1.\] For any line bundle $L$ of degree one, $\mu_{\max}(W\otimes L^{-1})\geq 0$. According to Lemma \ref{lem:slope-nonvanishing}, there exists a line bundle $L^\prime$ of degree one such that
    \begin{equation*}
        \begin{aligned}
          h^0(mr_mH-b_0^\ast L^{\prime})&=h^0(mr_mK_{X_0}-a_0^\ast L^{\prime })  \\
          &=h^0(\omega_X^{\otimes mr_m}\otimes a^\ast L^{\prime -1})\\
          &=h^0(a_\ast \omega_X^{\otimes mr_m}\otimes L^{\prime-1})\\
          &\geq h^0(W\otimes  L^{\prime -1})\geq 1.
        \end{aligned}
    \end{equation*}
    Thus, $mr_mH-G_0$ is numerically $\Q$-effective and by Lemma \ref{lem:1/24} \[\Iv(X)=(H^2)\geq \frac{1}{mr_m}\operatorname{deg}H\vert_{G_0}=\frac{\Iv(F)}{mr_m }\geq \frac{1}{24}.\]
    
\end{proof}

\textit{Proof of Theorem \ref{thm:non-iso-vol}}: The result follows from Theorem \ref{thm: non-iso-alb-fiber-one} and Theorem \ref{thm: non-iso-alb-fiber-two}.

\subsection{The effective Iitaka fibration}
\begin{definition}[{\cite[Definition 2.6]{ChenHacon2007}}]
    Let $a:X\to A$ be a morphism from a smooth projective variety to an abelian variety, and let $D$ be a Cartier divisor on $X$. Let \[V^0(D):=\{P\in \Pic^0(A)\vert h^0(X, \OO_X(D)\otimes a^\ast P)\neq 0\}.\]
    We say that $D$ is \emph{full} if $V^0(D)=\Pic^0(A)$.
\end{definition} 
The following theorem is an analogue of \cite[Theorem 2.8]{ChenHacon2007} for varieties of intermediate Kodaira dimension:
\begin{theorem}\label{thm:sep-inter-kodaira}
    Let $X$ be a smooth projective variety of dimension $d$ and Kodaira dimension $1\leq \kappa\leq d-1$. Assume that there is a factorization $a=b\circ f$:
    \[ X\xrightarrow{f} Z\xrightarrow{b} b(Z)=a(X)\subset A,\]
    where $Z$ is smooth, $A$ is an abelian variety and $f$ is birationally equivalent to the Iitaka fibration. Assume that $a$ is non-trivial. Let $F$ be the general fiber of $a$. If $mK_X, nK_X, tK_X$ are full for some positive integers $m,n,t$, then 
    \begin{enumerate}
        \item $\lvert (m+n)K_X+a^\ast P \rvert$ separates two general points lying on distinct general fibers of $a$ for general $P\in \Pic^0(A)$.
        \item $\lvert (m+n+t)K_X+a^\ast P \rvert$ separates two general points lying on distinct general fibers of $a$ for all $P\in \Pic^0(A)$.
        \item If $\lvert (m+n)K_F \rvert$ defines the Iitaka fibration, then $\lvert (m+n)K_X+a^\ast P \rvert$ separates two general points $x,y\in F$ with $f(x)\neq f(y)$, for general $P\in \Pic^0(A)$.
        \item If $\lvert (m+n)K_F \rvert$ defines the Iitaka fibration, then $\lvert (m+n+t)K_X+a^\ast P \rvert$ separates two general points $x,y\in F$ with $f(x)\neq f(y)$, for all $P\in \Pic^0(A)$.
    \end{enumerate}
\end{theorem}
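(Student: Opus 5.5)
The plan is to follow the proof of \cite[Theorem 2.8]{ChenHacon2007} closely. The one real change is that ``birational on the general fiber of $a$'' has to be replaced by ``separating points in different fibers of $f$''. Let $x,y$ be general points of $X$. By fullness of $mK_X$, for every $P\in\Pic^0(A)$ there is a divisor $D_P\in|mK_X+a^\ast P|$. The key point is that we can choose $D_P$ to avoid a prescribed general point. Indeed, $V^0(mK_X)=\Pic^0(A)$ means $a_\ast\OO_X(mK_X)$ has sections after every twist. Passing to the continuously globally generated part (as in Definition~2.? and \cite[Lemma 2.3]{Jiang2021}), the sections $H^0(mK_X+a^\ast P)$ for $P$ in any open set of $\Pic^0(A)$ generate the sheaf at a general point. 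Hence for general $P$ there is $D_P$ not passing through $x$.

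For (1), write
\[(m+n)K_X+a^\ast P=(mK_X+a^\ast P_1)+(nK_X+a^\ast P_2),\qquad P_1+P_2=P.\]
Given two general points $x,y$ on distinct general fibers of $a$, I want a divisor in the system containing $y$ but not $x$. The set of $P_1$ such that some $D_{P_1}\in|mK_X+a^\ast P_1|$ contains $y$ but not $x$ is nonempty and open in a suitable sense. To see this, pull back a divisor through $y$ by translations: $a(x)\neq a(y)$, and the family $\{D_{P_1}\}_{P_1}$ covers $X$ while moving with the $\Pic^0(A)$-twists. I will use the Chen--Hacon trick that the divisors $D_{P_1}$ containing the fiber $a^{-1}(a(y))$ form a proper closed condition not forcing $x$. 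Then I choose $D_{P_2}$ avoiding $x$ for general $P_2=P-P_1$. The sum gives a divisor in $|(m+n)K_X+a^\ast P|$ through $y$ and not $x$. Symmetrically we get one through $x$ and not $y$, and separation follows. The genericity of $P$ enters because both $P_1$ and $P-P_1$ must be general.

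For (2), apply the same argument with the extra summand $tK_X+a^\ast P_3$. Given any $P$, we choose $P_1,P_2$ general and set $P_3=P-P_1-P_2$, which is then also general. Fullness of $tK_X$ supplies a divisor avoiding $x$ and $y$, so all $P$ are allowed.

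For (3) and (4), the points lie on the same fiber $F$ of $a$, so I restrict to $F$. Since $f$ factors $a$, $F$ is a union of fibers of $f$, and $f|_F$ is (birationally) the Iitaka fibration of $F$. By hypothesis $|(m+n)K_F|$ separates $x,y$ when $f(x)\neq f(y)$. I then need to lift sections of $(m+n)K_F$ to $(m+n)K_X+a^\ast P$, at least enough of them to separate $x$ and $y$. As in \cite[Theorem 2.8]{ChenHacon2007}, I would use the restriction map
\[H^0(X,(m+n)K_X+a^\ast P)\to H^0(F,(m+n)K_F).\]
This can be shown to have image containing the relevant sections via Viehweg-type weak positivity or extension. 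The cleaner route is to write $(m+n)K_X=K_X+(m+n-1)K_X$ and use generic vanishing: $a_\ast\OO_X(K_X+L)$ with $L$ a suitable multiplier-ideal–twisted $\Q$-divisor is GV or IT$_0$, and its fiber at a general point surjects onto $H^0(F,(K_X+L)|_F)$. The multiplier ideal should be trivial at $x,y$, which fullness of $mK_X$ and $nK_X$ lets us arrange by taking $L\equiv (m+n-1)K_X$ built from general members $D_{P_i}$ avoiding $x,y$. The main obstacle is exactly this step. We must ensure the multiplier ideal $\mathcal J(\|(m+n-1)K_X\|)$ does not kill the restricted sections separating $x,y$ on $F$. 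Since $K_F$ is not big, we cannot appeal to bigness; we must instead compare with the Iitaka fibration of $F$ via $f$ and the factorization $a=b\circ f$. For (4), the extra full $tK_X$ again absorbs the generality condition on $P$.
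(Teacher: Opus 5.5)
Your handling of (2) and (4), where the genericity of $P$ is absorbed into an extra summand from $|tK_X+a^\ast P_3|$ with $P_3$ general, is the same as the paper's. The gap is in (1) and (3), which carry the content.

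\textbf{Part (1).} Your argument rests on one claim: for general $x,y$ with $a(x)\neq a(y)$, some member of $|mK_X+a^\ast P_1|$ contains $y$ but not $x$. This is not justified.
\begin{itemize}
\item There is no translation action on $X$ that would move a member through $y$ off $x$.
\item Members containing the whole fiber $a^{-1}(a(y))$ correspond to twisted sections of $a_\ast\OO_X(mK_X)$ vanishing at the point $a(y)$. Fullness does not supply these: if $a_\ast\OO_X(mK_X)$ were a stable bundle of rank two and degree one on an elliptic curve, each twist would have a single, nowhere vanishing section.
\item A dimension count makes the claim trivial for a general \emph{pair} in $X\times X$. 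That only shows the map is non-constant. What is needed, e.g.\ in Theorem \ref{thm: 14K}, is separation of all pairs of points in a dense open set, and for that fullness alone is too weak.
\end{itemize}

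The missing input is an IT$_0$ statement, which the paper obtains as follows.
\begin{itemize}
\item It runs Chen--Hacon's General Setting 2.7 relative to the Stein factorization $X\to W\to A$, using the factorization of $a$ through the Iitaka fibration in place of bigness of $K_X$. This produces a divisor $L_{m+n}$ with an attached multiplier ideal $\mathcal{J}$ such that $\mathcal{F}:=a_\ast(\OO_X(K_X+L_{m+n})\otimes\mathcal{J})$ is IT$_0$.
\item Fullness of $mK_X$ and $nK_X$, via exactly your ``$P_1$ and $P-P_1$ both general'' trick, shows that a general $x$ is not a base point of $|K_X+L_{m+n}+a^\ast P|$ for \emph{every} $P$.
\item That condition says precisely that the subsheaf $\mathcal{F}_x\subset\mathcal{F}$ of sections vanishing at $x$ has $H^1(\mathcal{F}_x\otimes P)=0$ for all $P$. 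So $\mathcal{F}_x$ is IT$_0$, hence continuously globally generated.
\item Consequently, for general $P$ there is a section vanishing at $x$ but not at $y$. This holds whenever $y$ is not a base point and either $a(y)\neq a(x)$, or $y\in F$ is separated from $x$ by $|(K_X+L_{m+n})|_F|$. This is \cite[Corollary 2.4]{ChenHacon2007}.
\end{itemize}

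\textbf{Part (3).} You locate the difficulty correctly but leave it open. The paper resolves it by the same mechanism. One never lifts individual sections from $F$ to a single twist. A fiberwise surjection onto $H^0(F,\cdot)$ would not suffice anyway, since a single twist of an IT$_0$ sheaf need not generate its fiber, as the rank-two example shows. One only needs that $|(K_X+L_{m+n})|_F|$ separates $x$ and $y$. The paper deduces this from $|(m+n)K_F|$ defining the Iitaka fibration of $F$ together with $f(x)\neq f(y)$. The continuous global generation of $\mathcal{F}_x$ then does the lifting for general $P$.
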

\begin{proof}
Let $X\to W\to A$ be the Stein factorization of $X\to A$. After a birational modification of both $X$ and $W$, we can assume that $W$ is also smooth. Notice that although \cite[2.7 General setting]{ChenHacon2007} is formulated for varieties of general type, its construction also works when $K_X$ is $W$-big. In our setting, $K_X$ is $W$-big since $X\to W$ factors through the Iitaka fibration. Using the same notation as \cite[2.7 General setting]{ChenHacon2007}, for any $m\geq 2$, $K_X+L_{m}$ and $mK_X$ are full if $P_m(G)\neq 0$, where $G$ is the general fiber of $X\to W$. In addition, if $mK_X$ and $nK_X$ are full, then for a general point $x\in X$, $x$ is not a base point of $\lvert K_X+L_{m+n}+a^\ast P\rvert $ for all $P\in \Pic^0(A)$.

    For two general points $x_1$, $x_2$ on $X$, we have seen that $x_i\notin \operatorname{Bs}\lvert K_X+L_{m+n}+a^\ast P \rvert$ for any $P\in \Pic^0(A)$. Thus, $x_i\notin \operatorname{Bs}\lvert (K_X+L_{m+n})\vert_F\rvert$. (1) follows from \cite[Corollary 2.4]{ChenHacon2007}.

    Now consider (3). Since $\lvert (m+n)K_F \rvert$ defines the Iitaka fibration, $\lvert (K_X+L_{m+n})\vert_F \rvert $ separates $x_1$, $x_2$. Then (3) follows from \cite[Corollary 2.4]{ChenHacon2007}.

    (2) and (4) follow by considering the morphism
    \[\lvert (m+n)K_X+a^\ast P_1 \rvert+\lvert tK_X+a^\ast P_2 \rvert\to \lvert (m+n+t)K_X+a^\ast (P_1+P_2)\rvert.\]
    Since $\lvert (m+n)K_X+a^\ast P_1\rvert $ separates $x_1$, $x_2$ for general $P_1\in \Pic^0(A)$ and $\lvert tK_X+a^\ast P_2 \rvert$ admits a section that is nonzero at both $x_1$ and $x_2$ for general $P_2\in \Pic^0(A)$, $\lvert (m+n+t)K_X+a^\ast P\rvert $ separates $x_1$, $x_2$ for all $P\in \Pic^0(A)$.

\end{proof}
We now establish an effective bound for the Iitaka fibration in this case:
\begin{theorem}\label{thm: 14K}
    Let $X$ be a smooth projective irregular threefold with $\kappa(X)=2$ such that the Albanese map $a: X\to A$ factors through the Iitaka fibration. Then $\lvert mK_X+a^\ast P\vert$ defines the Iitaka fibration for $m\geq 14$ and all $P\in \Pic^0(A)$.
\end{theorem}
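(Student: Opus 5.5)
We apply Theorem \ref{thm:sep-inter-kodaira} to the factorization $X\xrightarrow{f}S\xrightarrow{b}A$ of diagram (\ref{diagram}). Here $S$ is smooth and $f$ is birational to the Iitaka fibration. The proof then reduces to two inputs. First, we must know which multiples $kK_X$ are full. Second, we must know for which $k$ the system $|kK_F|$ on a general Albanese fiber $F$ defines its Iitaka fibration. We split into the cases $\dim a(X)=2$ and $\dim a(X)=1$.

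\emph{Fullness.} Let $G$ be a general fiber of $X\to W$, the Stein factorization of $a$. As in the proof of Theorem \ref{thm:sep-inter-kodaira}, $kK_X$ is full as soon as $P_k(G)\neq 0$ and $k\geq 2$. Alternatively, by the isomorphism \eqref{eq:albx-albs} we have $q(X)=q(S)$, so Theorem \ref{thm:LPS-IT} shows that $a_\ast\omega_X^{\otimes k}$ is IT$_0$ whenever it is nonzero. Lemma \ref{lem:IT-nonvanishing} then gives $h^0(kK_X+a^\ast P)\neq0$ for all $P$.

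If $\dim a(X)=2$, then $G$ is an elliptic curve, so every $kK_X$ with $k\geq 2$ is full. If $\dim a(X)=1$, then $F$ (or a component $G$) is a surface of Kodaira dimension one. By Lemma \ref{lem: surface-4-6-10}(1), $kK_X$ is full for $k=4,6$ and all $k\geq 8$, and also for $k=3,5,7$ under the extra hypotheses in parts (3), (4) of that lemma.

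\emph{Case $\dim a(X)=2$.} The map $S\to a(X)$ is generically finite, so the general fibers of $a$ are unions of fibers of $f$. Separating points on distinct general fibers of $a$ via Theorem \ref{thm:sep-inter-kodaira}(2), with $m=n=t=2$, handles $k\geq 6$. The remaining issue is to separate the finitely many $f$-fibers lying over a point of $a(X)$ when $b$ is not birational onto its image. For this we would use part (4) with $F$ a disjoint union of elliptic curves: $|kK_F|$ is trivially the Iitaka map on each component, and the eventual-map argument of Theorem \ref{thm: non-iso-alb-fiber-one} separates components.

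\emph{Case $\dim a(X)=1$.} Here $A$ may be assumed to be an elliptic curve; otherwise Lemma \ref{lem:base-g>1} applies, or we pass to the Stein factorization. The Iitaka fibration of $F$ is $f|_F\colon F\to G_0$, so the points we must separate are either on distinct fibers of $a$ or on one $F$ with $f(x)\neq f(y)$. By Theorem \ref{thm:sep-inter-kodaira}(4), it suffices to write $k=m+n+t$ with $mK_X,nK_X,tK_X$ full and $|(m+n)K_F|$ defining the Iitaka fibration of $F$. Lemma \ref{lem: surface-4-6-10}(2) gives this for $m+n=10$, with $m=4$, $n=6$. Taking $t=4$ yields $k=14$; taking $t=6$ and $t\geq 8$ yields $k=16$ and $k\geq 18$.

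The main obstacle is the leftover values $k=15$ and $k=17$, together with general large $k$, since the decompositions must use only full multiples. For these we would use decompositions such as $15=(4+6)+5$ or $(4+8)+3$. When $P_5(F)=0$ or $P_7(F)=0$, Lemma \ref{lem: surface-4-6-10}(3),(4) supply $P_3,P_5>0$ and show that $|11K_F|$ defines the Iitaka fibration, so $15=11+4$ with $11=5+6$ or $3+8$. When $P_5,P_7>0$, we would instead use $15=(4+6)+5$ and $17=(4+6)+7$, or $(8+6)+3$ if $P_3>0$. Each $k\geq 18$ is written as $10+t$ with $t\geq 8$.
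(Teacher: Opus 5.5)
Your framework is the paper's. You apply Theorem \ref{thm:sep-inter-kodaira}(2),(4) and get fullness of $kK_X$ from $P_k$ of the general fiber of $X\to W$. In the case $\dim a(X)=1$ you use the triples $(4,6,4)$, $(4,6,6)$ and $(4,6,k-10)$ for $k=14$, $16$ and $k\geq 18$, and $(4,6,5)$ or $(3,8,4)$ for $k=15$.

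The gap is $k=17$. You only decompose $17$ when $P_5(F),P_7(F)>0$, via $(4+6)+7$ or $(8+6)+3$, and say nothing about $17$ when $P_7(F)=0$. In that case neither option works. The first needs $7K_X$ to be full. The second needs $P_3(F)>0$, and also needs $|14K_F|$ to define the Iitaka fibration, which Lemma \ref{lem: surface-4-6-10} does not supply (it is Katsura--Ueno's bound). The case is not vacuous. Take $F=(D\times E)/G$ as in Example \ref{eg-1-6}, which is the general Albanese fiber in Example \ref{ex: 14Kissharp}. It has $\chi(\OO_F)=0$ and multiple fibers of multiplicities $(2,6,6)$, and Lemma \ref{lem: h0(D)^G} gives $d_3=d_7=-1$, so $P_3(F)=P_7(F)=0$. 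Your argument therefore says nothing about $|17K_X+a^\ast P|$ for the paper's own sharp example.

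The repair uses the lemma you already cite. If $P_7(F)=0$, Lemma \ref{lem: surface-4-6-10}(4) gives $P_5(F)>0$ and shows that $|11K_F|$ defines the Iitaka fibration, so $(m,n,t)=(5,6,6)$ works; this is the paper's choice. If $P_5(F)=0$, the same part (4) read contrapositively gives $P_7(F)>0$, so $(4,6,7)$ applies; you should say this rather than only treating $P_5,P_7>0$.

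Three smaller points:
\begin{itemize}
\item Your reduction to $A$ elliptic via Lemma \ref{lem:base-g>1} is misplaced. That lemma is a volume bound, and Theorem \ref{thm:sep-inter-kodaira} needs no such reduction.
\item For $\dim a(X)=2$ the eventual-map detour is unnecessary. Part (4) applies directly: on a disjoint union $F$ of elliptic curves, $4K_F$ is trivial on each component, so $|4K_F|$ sends components to distinct points.
\item You should state why separation yields the Iitaka fibration. Since $(kK_X+a^\ast P)|_E\sim 0$ on a general fiber $E$ of $f$, the map factors through $f$, and it is injective on a dense open subset of $S$.
\end{itemize}
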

\begin{proof}
After a birational modification, we can assume that \[X\xrightarrow{f} S\xrightarrow{b}b(S)=a(X)\subset A,\]
where $f$ is birationally equivalent to the Iitaka fibration.

Suppose first that $\operatorname{dim}a(X)=2$. Applying Theorem \ref{thm:sep-inter-kodaira} (2) (4) with $m=n=t=2$, we obtain that there is an open subset $U$ of $X$ such that $\lvert 6K_X+a^\ast P\rvert $ is defined at every point of $U$ and separates $x, y\in U$ with $f(x)\neq f(y)$. Since $(6K_X+a^\ast P)\vert_E \sim 0$ for a general fiber $E$ of $f$, the rational map defined by $\lvert 6K_X+a^\ast P\rvert $ is constant on each fiber $E$ meeting $U$. It therefore factors through $f$ as \[\psi_{\lvert 6K_X+a^\ast P\rvert }=\psi_S \circ f,\] where $\psi_S$ is a rational map on $S$. Since $\psi_S$ is injective on $f(U)$ which contains a dense open subset, $\psi_S$ is birational so $\lvert 6K_X+a^\ast P\rvert $ defines the Iitaka fibration. More generally, applying Theorem \ref{thm:sep-inter-kodaira} (2) (4) with $m=n=2$ and arbitrary $t\geq 2$, the same argument shows that $\lvert rK_X+a^\ast P\rvert $ defines the Iitaka fibration for every $r\geq 6$ and every $P\in \Pic^0(A)$.

    Now suppose that $\operatorname{dim}a(X)=1$ and let $F$ be the general fiber of the fibration in the Stein factorization of $a$. Using Lemma \ref{lem: surface-4-6-10} and Theorem \ref{thm:sep-inter-kodaira} (2) (4), we obtain the desired conclusion by choosing $(m,n,t)$ as follows:
    \begin{enumerate}
        \item For $r=14$, take $(m,n,t)=(6,4,4)$.
        \item For $r=15$, take $(m,n,t)=(6,4,5)$ if $P_5(F)\neq 0$, and $(m,n,t)=(3,8,4)$ otherwise.
        \item For $r=16$, take $(m,n,t)=(4,6,6)$.
        \item For $r=17$, take $(m,n,t)=(6,4,7)$ if $P_7(F)\neq 0$, and $(m,n,t)=(5,6,6)$ otherwise.
        \item For $r\geq 18$, take $(m,n,t)=(4,6,r-10)$.
    \end{enumerate}
    In each case, the same argument as above shows that $\lvert rK_X+a^\ast P\rvert $ defines the Iitaka fibration for every $P\in \Pic^0(A)$.
\end{proof}

\section{On the minimal Iitaka volume of abelian quotients}
In this section, we prove Theorem \ref{thm: geneisotrivialvollowerbd}, which gives the sharp lower bound of the Iitaka volume of generically isotrivial threefolds with abelian quotient group. 
This implies Theorem \ref{mainthm: iitaka-vol} (1) in the non-Albanese-factoring case by Corollary \ref{cor: chargeneisotrivial3fold}.

\begin{theorem}\label{thm: geneisotrivialvollowerbd}
Let $X$ be a smooth projective threefold with $\kappa(X)=2$. 
Suppose that 
$X\sim_{\text{bir}} (T\times E)/G$ is generically isotrivial, where $T,E,G$ are described in subsection \ref{subsec: geneiso}.
Assume that $G$ is abelian. Then 
\begin{itemize}
    \item[(1)] $\Iv(X)\geq \frac{1}{100}$;
    \item[(2)] $\Iv(X)=\frac{1}{100}$, if and only if 
    \[
 T\cong \{w^2=x^{10}+y^{10}+z^{5}\}\subset\Pp(1,1,2,5),\qquad G\cong \Z_{10} \times \Z_{10},
\]
and for any $g=(s,t)\in \Z_{10} \times \Z_{10},[x:y:z:w]\in T, e \in E$, one has
\[
 g.([x:y:z:w],e)=
([\zeta_{10}^sx:\zeta_{10}^ty:z:w],g_E.e),
\]
where $g_E$ is the induced action of $g\in G$ on $E$ by translations.
\end{itemize}

\end{theorem}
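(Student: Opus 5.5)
By Corollary \ref{cor: chargeneisotrivial3fold}(1), $\Iv(X)=\vol(T)/|G|=V/|G|$, where $V=K_S^2$ for the smooth minimal model $S$ of $T$, and $G\leq\Aut(S)=\Aut(T)$ is a finite abelian group acting faithfully. So the statement reduces to a purely surface-theoretic inequality, $|G|\leq 100V$ for every abelian $G\leq \Aut(S)$, together with a classification of the equality case. The plan is to apply Lemma \ref{lem: createGinvpencil} to obtain a $G$-invariant pencil $f:S'\to B$ of canonical degree $\lambda\leq 1$ if $p_g\geq 2$, and $\lambda\leq 2$ if $p_g\leq 1$. The argument then splits according to whether this pencil is rational or irrational.

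\emph{Rational pencil, $B\cong\Pp^1$.} Here I apply Lemma \ref{lem: calcu}. The first term of the bound is at most $(2+\frac{4}{g-1})(3\lambda^2+4\lambda+1)V$. For the second term I bound $\chi_{\text{top}}(S)=12\chi-V$ using Noether's inequality and $\chi\leq p_g+1$ (or $\chi\leq 1+p_g$ when $q=0$), so that $\chi_{\text{top}}(S)$ is a small multiple of $V$ plus a constant. Since $g\geq 2$, this yields a linear bound $|G|\leq cV+c'$. The constants are crude when $g=2$ and $\lambda=2$. To sharpen them I would use Lemma \ref{lem: resolintnumcal} to control $g$ in terms of $\lambda$ and $V$, and separate the cases $V=1$ and $V\geq 2$. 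The goal is to show that $|G|\leq 100V$, with equality forcing $V=1$ and $p_g=2$.

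\emph{Irrational pencil, $g(B)\geq 1$.} Let $K$ be the kernel and $Q$ the image of $G\to\Aut(B)$. Since $K$ acts faithfully on a general fiber, Lemma \ref{lem: curveautos} gives $|K|\leq 4g+4$. If $g(B)\geq 2$, Lemma \ref{lem: bdGwhendimalb=1andqgeq2} gives $|G|\leq 18V$ directly. If $B$ is elliptic, Lemma \ref{lem: bdQwithlinearpart} gives either $|Q|\leq 9$ or $Q$ acting by translations. In the translation case, Lemma \ref{lem: Qleqk} gives $|Q|\leq k$, and $k\leq n\,K_S\cdot\hat F$-type quantities are bounded by $\lambda V$. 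I then combine this with the slope inequality of Lemma \ref{lem: slopeineqs}(1), which bounds $g$ through $V\geq 4\chi(g-1)/g$ and $K_S\cdot F\leq\lambda V$, to get $|G|\leq (4g+4)\max(9,k)$. Converting this into a bound linear in $V$ should give something comfortably below $100V$.

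\emph{Equality case.} If $V/|G|=1/100$, the estimates above should force $V=1$ and $p_g=2$, i.e.\ $S$ is a $(1,2)$-surface. Then Lemma \ref{lem: whenT12G100} gives exactly the stated normal form for $T$ and the $\Z_{10}\times\Z_{10}$ action. The action on $E$ is by some faithful embedding of $G$ into $\Aut(E)$. I would argue that it must be by translations, either via Corollary \ref{cor: chargeneisotrivial3fold}(1), where the rotation part $\mu$ divides $6$ and is incompatible with $\Z_{10}^2$ unless trivial, or simply because $\Z_{10}^2$ cannot embed with a nontrivial rotation part. For the converse, I compute $V/|G|=1/100$ from the explicit model.

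The main obstacle is the rational-pencil case with small invariants, notably $p_g\leq 1$ where $\lambda=2$, and also $(1,1)$-, $(1,0)$- and $(2,3)$-type surfaces. There the generic constants of Lemma \ref{lem: calcu} exceed $100$. For these I would use the explicit canonical rings of surfaces with $V=1$, together with Lemma \ref{lem: PS17}, so that $G$ acts diagonally on the generators, and then a lattice argument as in Lemma \ref{lem: whenT12G100} to bound $|G|$ by the index of a monomial lattice. This is the step I expect to require the most case-by-case work.
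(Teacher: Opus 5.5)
Your skeleton matches the paper's. You reduce to $|G|\le 100V$ via Corollary \ref{cor: chargeneisotrivial3fold}(1) and take the $G$-invariant pencil of Lemma \ref{lem: createGinvpencil}. You treat irrational pencils by kernel/image with Lemmas \ref{lem: curveautos}, \ref{lem: bdGwhendimalb=1andqgeq2}, \ref{lem: bdQwithlinearpart}, \ref{lem: Qleqk}, rational ones by Lemma \ref{lem: calcu}, and finish with Lemma \ref{lem: whenT12G100}.

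The gap is exactly where you locate the difficulty: the rational pencil with $p_g\le 1$, so $\lambda=2$. Your proposed fix there does not work.
\begin{itemize}
\item For numerical Godeaux surfaces ($K_S^2=1$, $p_g=q=0$) there is no general explicit presentation of $R(S,K_S)$; their classification is open. So Lemma \ref{lem: PS17} plus a monomial-lattice index bound has nothing to run on.
\item For $p_g\le 1$, $V\ge 2$ you give no argument, although the first term of Lemma \ref{lem: calcu} at $g=2$, $\lambda=2$ is $6\cdot 21V=126V>100V$.
\item The $(2,3)$ case, by contrast, is harmless: there $\lambda\le 1$ and Lemma \ref{lem: calcu} already gives at most $70V$.
\item Without strict bounds in the $p_g\le 1$ cases you also cannot conclude that equality forces $(V,p_g)=(1,2)$.
\end{itemize}

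The missing idea is to work with the intermediate quantities of Lemma \ref{lem: calcu} rather than its $g$-uniform final form. Set $s=g-1$. Lemma \ref{lem: resolintnumcal}(1),(4) give $1\le s\le 3V$, and $U_1=(2+\frac{4}{s})(V-r+8s)$.
\begin{itemize}
\item For $s\le V$, drop $r$ to get $U_1\le\max\{6V+48,\,18V+36\}$.
\item For $s\ge V$, we have $\sum a_i\ge 2s-2V$. Cauchy--Schwarz then gives $4(s-V)^2\le(\sum a_i)^2\le r\sum a_i^2\le 4rV$, so $r\ge (s-V)^2/V$. Hence $U_1\le(2+\frac{4}{s})(10s-\frac{s^2}{V})\le 42V+28$.
\item Similarly $U_2\le(2+\frac{1}{s+1})(12\chi+3V+4s)$ is convex in $s$. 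With $\chi\le 2$, its endpoint values give at most $48V+24$ for $V\ge 2$, at most $88$ for $(V,p_g)=(1,1)$, and at most $70$ for $(1,0)$.
\end{itemize}
All of these are strictly below $100V$, so only $p_g=2$, $V=1$, $\chi=3$ can reach $100$.

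Two smaller corrections:
\begin{itemize}
\item In the irrational case the relation you need is $k(2g-2)=K_S\cdot M\le nV$, since on the minimal model the moving part is numerically $kF$. The slope inequality bounds $\chi$, not $g$.
\item In the equality case, the abstract form $(\Z_m\times\Z_n)\rtimes\Z_\mu$ does not exclude $\mu=2$ for $\Z_{10}^2$. What forces translations on $E$ is Lemma \ref{lem: bdQwithlinearpart}: an abelian group with nontrivial rotation part has order at most $9$.
\end{itemize}
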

\begin{proof}
Denote by $S$ the minimal resolution of $T$. 
By \cite[Section 2.5 p.183]{Sym4fold08}, $\Aut(S)=\Aut(T)$ so that $G$
is also an automorphism group on $S$.

Since $G$ is abelian and $|G|\leq 100\vol(S)$ as in Theorem \ref{thm: abeliangroupbd},
 $\Iv(X)=\frac{\vol(S)}{|G|}\geq \frac{1}{100}$ by Corollary \ref{cor: chargeneisotrivial3fold} (1).
By Theorem \ref{thm: abeliangroupbd},
equality implies that $(V,p_g,q)=(1,2,0)$
and $|G|=100$.
Since $G$ is abelian and acts faithfully on $E$, $G$ acts on $E$ by translations by Lemma \ref{lem: bdQwithlinearpart}.
Hence we obtain (2) by applying Lemma \ref{lem: whenT12G100} on canonical model $T$ of $S$.
\end{proof}

In the following, we bound the order $|G|$ of finite abelian automorphism group on smooth minimal surface $S$ of general type.

Recall that the geometric genus, volume, irregularity and algebraic Euler characteristic of $S$ are denoted by $p_g,V,q,\chi$  respectively.
\subsection{Abelian automorphism group for minimal surfaces of general type}\label{subsection: abautogrp}
\begin{theorem}\label{thm: abeliangroupbd}
If $G$ is abelian, then $|G|\leq 100V$.
Equality holds only if $S$ is $(1,2)$-surface.
\end{theorem}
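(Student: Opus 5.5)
Here $S$ is a smooth minimal surface of general type and $G\le\Aut(S)$ is a finite abelian group. The plan is to produce a $G$-invariant pluricanonical pencil using Lemma \ref{lem: createGinvpencil}, and then split according to whether the pencil is rational or irrational.

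\textbf{Construction of the pencil.} If $p_g\ge2$, Lemma \ref{lem: createGinvpencil} gives a $G$-invariant canonical pencil $f:S'\to B$ with canonical degree $\lambda\le1$. If $p_g\le1$, it gives a bicanonical pencil with $\lambda\le2$. Write $F$ for a general fiber and $g=g(F)$.

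\textbf{Rational pencils ($B\cong\Pp^1$).} Here I apply Lemma \ref{lem: calcu}, controlling $\chi_{\text{top}}(S)=12\chi-V$ by Noether's inequality and $\chi\le p_g+1$. We have $p_g\le V/2+2$, so $\chi_{\text{top}}$ is at most a small multiple of $V$ plus a constant.

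For $p_g\ge2$ and $\lambda\le1$, the first term is $(2+\frac{4}{g-1})\cdot 8V$. It is largest at $g=2$, giving $48V$. The second term is at most $(5/2)(\chi_{\text{top}}+5V)$. This is comfortably below $100V$ unless $V$ is small and $\chi_{\text{top}}$ is large relative to $V$. The worst such case is $V=1$, $p_g=2$, $q=0$, where $\chi_{\text{top}}=35$ and the bound is $(5/2)\cdot40=100$. This identifies the $(1,2)$-surfaces as the only possible equality case.

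For $p_g\le1$ and $\lambda\le2$, the crude estimate $(3\cdot4+8+1)V=21V$ is too large. So I would refine it via Lemma \ref{lem: resolintnumcal}, using the actual values $\hat F^2\le 4V$ and $K_S\cdot\hat F\le 2V$. The surfaces with $p_g\le1$ have $\chi\le 2$ (for $q\ge1$ one uses $\chi\ge1$ and the classification). I would also use that for small $V$ the genus $g$ of a bicanonical pencil is at least $3$ or so, which kills the factor $2+\frac{4}{g-1}$. I expect to need a short case check over $V\le 9$ to stay strictly below $100V$.

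\textbf{Irrational pencils ($g(B)\ge1$).} Let $K$ be the kernel of $G\to\Aut(B)$ and $Q$ its image. Then $K$ acts faithfully on $F$, so $|K|\le4g+4$ by Lemma \ref{lem: curveautos}.

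If $g(B)\ge2$, Lemma \ref{lem: bdGwhendimalb=1andqgeq2} gives $|G|\le18V$.

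If $B$ is elliptic, Lemma \ref{lem: bdQwithlinearpart} gives either $|Q|\le9$, or $Q$ acts by translations, in which case $|Q|\le k$ by Lemma \ref{lem: Qleqk}. Here $k$ is the degree of $B\to\Pp^1$ and is bounded by $F$-intersection data, e.g.\ $k\cdot K_S\cdot\hat F\le \lambda V$-type estimates. Lemma \ref{lem: slopeineqs}(1) gives $V\ge\chi\cdot\frac{4(g-1)}{g}$ with $\chi\ge1$. Combined with $2g-2\le\lambda V+\sum a_i$, this yields $|G|\le(4g+4)\max(9,k)$, which is a bounded multiple of $V$. The coefficient should come out well under $100$.

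\textbf{Main obstacle.} The hardest step is the rational-pencil case with small $V$ and large $\chi_{\text{top}}$, where the bound is only barely below or equal to $100V$. In particular, I must show that equality forces $(V,p_g)=(1,2)$: every inequality in \eqref{eq: ineqGKf} and \eqref{eq: ineqGef} must be tight, with $g=2$, $\lambda=1$, $r=1$, $q=0$, which leaves exactly the $(1,2)$-surfaces. The bicanonical subcase for $p_g\le1$ also requires care, and there I would first try to sharpen $\lambda$ using semi-invariant sections of $H^0(2K_S)$ whose pencils have smaller moving parts.
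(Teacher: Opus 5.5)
Your strategy is the paper's. Lemma \ref{lem: createGinvpencil} gives the pencil, and the paper then splits into rational and irrational pencils via Propositions \ref{prop: ratpencilcase} and \ref{prop: irratpencilcase}. Those proofs use exactly your ingredients: Lemma \ref{lem: calcu} with Noether's inequality in the rational case, and Lemmas \ref{lem: curveautos}, \ref{lem: bdQwithlinearpart}, \ref{lem: Qleqk}, \ref{lem: bdGwhendimalb=1andqgeq2} in the irrational case. Your $p_g\ge 2$ analysis is correct, including the identification of $V=1$, $p_g=2$, $\chi=3$ as the only way to reach $100V$.

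The gap is the rational bicanonical case $p_g\le 1$. You leave it as an expected case check over $V\le 9$. The remedies you float (fibre genus at least $3$, sharpening $\lambda$) are not justified, and in fact not needed.

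The missing step is to avoid the crude bound $K_f^2\le(3\lambda^2+4\lambda+1)V$ and keep the second line of \eqref{eq: ineqGKf} instead.
\begin{itemize}
\item Put $s=g-1$. Then $K_f^2=V-r+8s\le V+8s$, and Lemma \ref{lem: resolintnumcal} (1),(4) with $\lambda\le 2$ gives $1\le s\le 3V$.
\item The function $(2+\frac4s)(V+8s)=2V+32+16s+\frac{4V}{s}$ is convex in $s$. So the first term of Lemma \ref{lem: calcu} is at most $\max\{6V+48,\ 50V+34\}<100V$.
\item Since $\chi=1-q+p_g\le 2$, the second term is at most $\frac52(12\chi-V+16V)\le 60+\frac{75}{2}V<100V$.
\end{itemize}
Hence $p_g\le 1$ never attains $100V$, and no case analysis is required. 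The paper additionally uses Cauchy--Schwarz, $r\ge(\sum a_i)^2/\sum a_i^2\ge (s-V)^2/V$ for $s\ge V$. That refinement only serves the sharper bounds ($48V+24$, $88$, $70$) needed later, not this theorem.

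In the irrational case your sketch works, but only if you actually use the factor $k$, for instance $k\,K_S\cdot\hat F\le nV$. The estimate $2g-2\le\lambda V+\sum a_i\le 6V$ on its own gives only $9(4g+4)\le 108V+72$ in the non-translation subcase. The paper gets this cleanly: a pencil over a curve of positive genus is base-point free on $S$ after contracting vertical $(-1)$-curves. Then $K_S\cdot F=2g-2$ and $nV\ge k(2g-2)$, so $V\ge k(g-1)\ge 2$.
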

\begin{proof}[Proof of Theorem \ref{thm: abeliangroupbd}]
By Lemma \ref{lem: createGinvpencil}, there is an $n$-canonical $G$-invariant pencil generated by two semi-invariant vectors with canonical degree $\lambda\leq n$, where $n\in \{1,2\}$.
If this pencil is irrational, then $|G|\leq 54V$ by Proposition \ref{prop: irratpencilcase}. 
If this pencil is rational, then $|G|\leq 100V$ by Proposition \ref{prop: ratpencilcase}.
Moreover, equality holds only if $(V,p_g,q)=(1,2,0)$.
\end{proof}

In the following, we bound $|G|$ by  rational $G$-invariant pluricanonical pencil.
\begin{proposition}\label{prop: ratpencilcase}
If the $G$-invariant pencil defined in Lemma \ref{lem: createGinvpencil} is rational, then the following estimates hold:
\begin{enumerate}
\item[(1)] If $p_g\geq 2$ and $V\geq3$, or  $p_g=V=2$, then $|G|\leq 48V+24$;
\item[(2)] If $p_g= 3$ and $V=2$, then $|G|\leq 70V$; 
\item[(3)] If $p_g\geq2$ and $V=1$, then $|G|\leq100$, $|G|=100$ only if $p_g=2,q=0$;
\item[(4)] If $p_g\leq 1$ and $ V\geq 2$, then $|G|\leq 48V+24$;
\item[(5)] If $p_g=V=1$ and $q=0$, then $|G|\leq 88$.
\item[(6)] If $p_g=q=0$ and $V=1$, then $|G|\leq 70$.
\end{enumerate}
\end{proposition}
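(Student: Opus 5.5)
The plan is to feed the numerical data of the rational $G$-invariant pencil from Lemma \ref{lem: createGinvpencil} into Lemma \ref{lem: calcu}, and then refine case by case using the invariants $(V,p_g,q)$. For a rational pencil $f:S'\to\Pp^1$ with fiber genus $g\geq 2$ and canonical degree $\lambda\leq n$, Lemma \ref{lem: calcu} gives
\[|G|\leq \max\Bigl\{\bigl(2+\tfrac{4}{g-1}\bigr)(3\lambda^2+4\lambda+1)V,\ \bigl(2+\tfrac1g\bigr)\bigl(\chi_{\text{top}}(S)+(3\lambda^2+2\lambda)V\bigr)\Bigr\}.\]
Two auxiliary inputs control this bound. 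Noether's formula gives $\chi_{\text{top}}(S)=12\chi-V$, with $\chi\leq p_g+1$ and $q\leq p_g$ (or $q=0$ when needed). Lemma \ref{lem: resolintnumcal} (1),(4) together with adjunction $2g-2=K_S\cdot\hat F+\sum a_i$ and the Hodge index theorem constrains $g$ in terms of $\lambda V$. A crude use of the maximum is too weak: with $\lambda=1$, $g=2$ and the first term we only get $48V$. So the main step is to improve the estimate of $K_f^2$ and $e_f$ using the actual $\hat F$, instead of the worst-case chain $4a_i-1\leq 3a_i^2$ and $\sum a_i^2\leq\lambda^2V$.

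\emph{Case $p_g\geq 2$ (canonical pencil, $\lambda\leq1$).} Write $\hat F\equiv K_S-Z$ with $Z\geq0$. Then $K_S\cdot\hat F\leq V$ and $\hat F^2=\sum a_i^2\leq V$, so $K_f^2\leq V+8(g-1)-r$ and $2g-2\leq V+\sum a_i\leq 2V$, hence $g\leq V+1$. Substituting these gives the first term $\leq (2+\frac{4}{g-1})(V-r+8(g-1))$. This is maximized at $g=2$, where it equals $6(V+8-r)$; for large $g$ it is roughly $2(9V)$. The second term becomes $(2+\frac1g)(12\chi-V+r+4(g-1))$, and with $\chi\leq p_g+1$ and Noether-type inequalities ($V\geq 2p_g-4$) this is bounded by $48V+24$ once $V\geq3$ or $p_g=V=2$, which gives (1). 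For small volume we enumerate: $V=1$ forces $p_g\leq2$, $\hat F^2\in\{0,1\}$ and $g\leq2$ (with $g=2$, and a base point when $\hat F^2=1$). Here we must use $e_f$ exactly, $e_f=12\chi-1+r+4$, giving $|G|\leq\frac52\cdot(12\cdot 3-1+1+4)=100$ when $\chi=3$, i.e.\ $p_g=2$, $q=0$. When $q>0$ we have $\chi\leq2$, which drops the bound, giving (3). For $(p_g,V)=(3,2)$ the same exact computation with $\chi\leq4$ yields $140=70V$, giving (2).

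\emph{Case $p_g\leq1$ (bicanonical pencil, $\lambda\leq2$).} Now $\hat F\leq 2K_S$, so $2g-2\leq 4V+\sum a_i$ and $\sum a_i^2\leq 4V$, while $\chi\leq 2$. We redo the same optimization over $g$, $r$, $\sum a_i$ and $\chi$ to get (4) for $V\geq2$. For $V=1$ the quantities are tiny ($\hat F^2\leq4$, $g\leq 4$), and the finitely many configurations $(g,r,\{a_i\})$ are checked directly, using $\chi=2$ for $p_g=1,q=0$ and $\chi=1$ for $p_g=q=0$, giving $88$ and $70$.

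I expect the main obstacle to be the small-volume sharp cases (3), (5), (6). There the generic inequalities of Lemma \ref{lem: calcu} are off, and one must track exactly which term of \cite[Theorem 3.1]{lu2025sharp} dominates, together with the precise base-point data of the pencil. If $g$ is forced small, the first term is large, so $g=2$ has to be handled via $K_f^2=V-r+8$ with $r=\hat F^2$ exactly.
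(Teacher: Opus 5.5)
Your treatment of the canonical-pencil case $p_g\geq 2$ is essentially the paper's, and gives (1)--(3). You bound $s=g-1\leq V$ and drop $r\geq 0$ in $U_1=(2+\frac4s)(V-r+8s)$, which gives $\max\{6V+48,18V+36\}$. Then you bound the $e_f$-term by $30\chi+10V$, using $\chi\leq \frac V2+3$ and $\chi\leq 3$ when $p_g=2$. Your claim that the crude bound ``only gives $48V$'' for $\lambda=1$ is not a defect, since $48V$ already lies below every target in (1)--(3).

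For (5) and (6), your finite check at $V=1$ works and differs only in route from the paper, which specializes its general inequalities. With $K_S\cdot\hat F\in\{1,2\}$ one has $\hat F^2=1$, respectively $\hat F^2\in\{0,2,4\}$. The largest value is $\frac94\cdot 39=87.75$ when $\chi=2$, and $70$ when $\chi=1$ (attained by $U_1$ for four simple base points, $g=4$).

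The genuine gap is (4), which you dispose of with ``redo the same optimization''. That optimization does not go through. For a bicanonical pencil $s$ can reach $3V$, and discarding $r$ as in the $p_g\geq 2$ case gives $U_1\leq (2+\frac{4}{3V})\cdot 25V=50V+\frac{100}{3}$. This exceeds $48V+24$ for every $V$.

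The missing idea is that large genus forces many blow-ups. By Cauchy--Schwarz, $r\cdot\sum a_i^2\geq(\sum a_i)^2$. Lemma \ref{lem: resolintnumcal} gives $\sum a_i\geq 2s-\lambda V\geq 2s-2V$ and $\sum a_i^2\leq 4V$. Hence $r\geq (s-V)^2/V$ for $s\geq V$, and then
\[U_1\leq 20s-\frac{2s^2}{V}+40-\frac{4s}{V}\leq 42V+28\quad\text{on } V\leq s\leq 3V,\]
while $U_2$ is handled by convexity in $s$ at the endpoints $s=1$ and $s=3V$. So the real difficulty is this uniform-in-$V$ estimate, not the small-volume cases you flagged, which are routine.

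Your inequality $2g-2\leq 4V+\sum a_i$ is also a slip. From $\hat F\leq 2K_S$ you only get $K_S\cdot\hat F\leq 2V$, so it should read $2g-2\leq 2V+\sum a_i$. Taken literally, the $4V$ version gives $s\leq 4V$. Then even the $e_f$-term fails at $V=2$, $\chi=2$, where it gives $\frac{19}{9}\cdot 62>120=48V+24$.
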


\begin{proof}
If $V=1$ and $p_g\geq2$, then we can take a canonical pencil by Lemma \ref{lem: createGinvpencil}
so that $\lambda\leq 1 $. 
The first term in Lemma \ref{lem: calcu} is at most $48V$. 
The second term in Lemma \ref{lem: calcu} is at most
\[
\frac{5}{2}\bigl(12\chi-V+5V\bigr).
\]
  Noether's inequality gives $p_g=2$ and $\chi\leq 3$ so that the second term is not greater than $\frac{5}{2}(36-1+5)=100$, and equality holds only if $p_g=2,\chi=3$ so that $q=0$.
 This proves (3).

In the following, we apply Lemma \ref{lem: resolintnumcal} and Lemma \ref{lem: calcu} and use notations there.
Write $s:=g-1$.
By Lemma \ref{lem: resolintnumcal} (1) and (4),
$1\leq s\leq \frac{1}{2}(\sum_i a_i+\lambda V)\leq \frac{\lambda(\lambda+1)}{2}\,V$.
Denote by $U_1,U_2$ the first and the second term in \eqref{eq: ineqthmLX} respectively.

Firstly we bound $U_2$.
By the third line in \eqref{eq: ineqGef} and the Noether formula $\chi_{top}=12\chi-V$ on $S$, we have 
\begin{equation}\label{eq: U2}
U_2\leq \left(2+\frac1{s+1}\right)(12\chi+(\lambda^2-1)V+4s).    
\end{equation}
This is a convex function of $s\in[1,3V]$, so we can take values at end points to obtain the maximal value.

If $p_g\geq 2$, then $\lambda\leq 1$ by Lemma \ref{lem: createGinvpencil}.
Then  
\begin{equation*}
 U_2\leq \left(2+\frac1{s+1}\right)(12\chi+4s) \leq  30\chi+10V.
\end{equation*}
by \eqref{eq: U2} and $1\leq s\leq V$.
By Noether's inequality, $V\geq 2p_g-4$ implies $\chi\leq \frac{V}{2}+3$.
Moreover, we have $\chi\leq 3$ when $p_g=2$.
So
\begin{equation}\label{eq: U2bdpg2}
 U_2\leq 30\chi+10V\leq \begin{cases}
 25V+90\leq 48V+24 & \text{if }V\geq 3;    \\
 10V+90\leq 48V+24 &  \text{if }V=p_g=2;    \\
 25V+90\leq 70V &\text{if }V=2,p_g=3.    \\
 \end{cases}
\end{equation}

If $p_g\leq 1$, then $\lambda\leq 2$ by Lemma \ref{lem: createGinvpencil}.
So
\begin{equation}\label{eq: U2bd}
 U_2\leq\max\left\{
 30\chi+\frac{15V}{2}+10,
 30V+24\chi+5+\frac{12\chi-5}{3V+1}
 \right\} \leq \begin{cases}
 48V+24& \text{ if }V\geq 2,\chi\leq 2; \\
 70 &\text{ if }V=\chi=1; \\
 88 &\text{ if }V=1,\chi=2.    \\
 \end{cases}
\end{equation}

Then we bound $U_1$.
If $1\leq s \leq V$, then we can discard $r$ in the second line of \eqref{eq: ineqGKf} in Lemma \ref{lem: calcu}, and get
\begin{equation}\label{eq: U1bd}
 U_1=\left(2+\frac{4}{s}\right)\,\left(V-r+8s\right)\leq \left(2+\frac{4}{s}\right)\,\left(V+8s\right)\leq \max\{6V+48,18V+36\}< 48V+24.
\end{equation}
If $p_g\leq 1$ and $V\leq s \leq 3V$.
By Cauchy inequality, we have
\begin{equation*}
\begin{aligned}
 4V\cdot r &\geq \lambda^2V\cdot r \geq \left(\sum_i a_i^2\right)\cdot r \\
 &= \left(\sum_i a_i^2\right)\cdot \left(\sum_i 1 \right) \\
 &\geq \left(\sum_i a_i\right)^2
 \geq (2s-\lambda V)^2 \\
 &\geq 4(s-V)^2,
\end{aligned}    
\end{equation*}
where the last inequality holds when $s\geq V$.
So $r\geq \frac{(s-V)^2}{V}$ and 
\begin{equation}\label{eq: U1bd2}
 U_1=\left(2+\frac{4}{s}\right)\,\left(V-r+8s\right)\leq20s-\frac{2s^2}{V}+40-\frac{4s}{V}\leq 42V+28.
\end{equation}
as $V\leq s \leq 3V$.
By inequalities \eqref{eq: U2bd}, \eqref{eq: U1bd}, and \eqref{eq: U1bd2}, we obtain 
\[
|G|\leq \max\{U_1,U_2 \}\leq \begin{cases}
     88 & \text{ if }V=1,\chi=2;                   \\
    70 & \text{ if }V=1,\chi=1; \\
    48V+24 & \text{ otherwise } \\
\end{cases}
\]
for $p_g\leq 1$.
so that $(4)\sim(6)$ have been proved.

By inequalities \eqref{eq: U2} and \eqref{eq: U1bd}, and \eqref{eq: U1bd2},
we obtain 
\[
|G|\leq \max\{U_1,U_2 \}\leq  \begin{cases}
     70V& \text{ if }V=2,p_g=3;                   \\
    48V+24 & \text{ otherwise } \\
\end{cases}  \\
\]
for all $S$ in $(1)\sim (2)$.
Hence we complete the proof.
\end{proof}

Then we bound $|G|$ by irrational $G$-invariant pluricanonical pencil.
\begin{proposition}\label{prop: irratpencilcase}
Let $S$ be a smooth minimal surface of general type with a smooth model $S'$.
Let $G\leq \Aut(S)$ be a finite abelian subgroup.
Suppose that there is a Stein factorization $h:S'\xrightarrow[]{f} B \xrightarrow[]{g} \Pp^1$ of a $G$-invariant $n$-canonical pencil, where $n\in\{1,2\}$.

If $g(B)>0$,
then $V\geq 2$ and $|G|\leq \min\{54V, 48V+12\}$.
\end{proposition}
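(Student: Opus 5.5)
We split according to the genus $b:=g(B)\geq 1$, and inside the elliptic case according to how $G$ acts on $B$. Let $K:=\ker(G\to\Aut(B))$ and $Q:=\im(G\to \Aut(B))$, so $|G|=|K|\cdot|Q|$. As in the proof of Lemma \ref{lem: bdGwhendimalb=1andqgeq2}, $K$ acts faithfully on a general fiber $F$ of genus $g\geq 2$. Then Lemma \ref{lem: curveautos} gives $|K|\leq 4g+4$. Since $B$ is irrational, $f$ is the Albanese-type fibration onto an irrational curve, so $q\geq 1$. The fibers of the pencil are contracted to points and the movable part of $|nK_S|$ is $M\equiv kF$ numerically, where $k=\deg g\geq 2$; indeed a degree-one $g$ would make $B\cong\Pp^1$.

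\textbf{Case $b\geq 2$.} Here $|G|\leq 18V$ by Lemma \ref{lem: bdGwhendimalb=1andqgeq2}. Also $V\geq 8(g-1)(b-1)\geq 8$ by Lemma \ref{lem: slopeineqs}(2), so $V\geq 2$ holds trivially.

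\textbf{Case $b=1$.} We first bound $V$ from below. Since $nK_S\geq M\equiv kF$ with $k\geq 2$, we get $nV\geq k\,K_S\cdot F$. On the minimal model, $K_S\cdot F\geq 2g-2\geq 2$ (after using that $F$ is nef and $F^2\geq 0$ to get $K_S F=2g-2-F^2$; one should instead use $K_S\cdot \hat F$ on $S$ together with adjunction). This gives $V\geq \frac{k(2g-2)}{n}$. Hence $V\geq 2$ as soon as $k\geq 2$, except possibly when $n=2$, $k=2$, $g=2$. In that exceptional case I would argue via Lemma \ref{lem: slopeineqs}(1): $\chi\geq 1$ gives $V\geq 2$ when $g\geq 2$. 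We may need $\chi\geq 1$, which holds for minimal surfaces of general type. So in all cases $V\geq \frac{4(g-1)}{g}\chi\geq 2$, and also $g-1\leq \frac{nV}{2k}\leq \frac{V}{2}$ when $n\leq 2\leq k$.

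We then treat $Q$ by Lemma \ref{lem: bdQwithlinearpart}. \emph{If $Q$ acts on $B$ by translations}, Lemma \ref{lem: Qleqk} gives $|Q|\leq k$, so
\[|G|\leq (4g+4)k=4(g-1)k+8k\leq 2nV+8k.\]
The remaining task is to bound $k$ by $V$. From $nV\geq k(2g-2)\geq 2k$ we get $k\leq \frac{nV}{2}\leq V$, so $|G|\leq 4V+8V=12V$. \emph{If $Q$ has a nontrivial linear part}, then $|Q|\leq 9$, and
\[|G|\leq 9(4g+4)=36(g-1)+72\leq 18V+72.\]
Using $g-1\leq \frac{V}{2}$ and $V\geq 2$, this gives $|G|\leq 54V$. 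For the bound $48V+12$ we need more care at small $V$: $18V+72\leq 48V+12$ iff $V\geq 2$. Both bounds $54V$ and $48V+12$ then follow, since $18V+72\leq \min\{54V,48V+12\}$ for $V\geq 2$.

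\textbf{Main obstacle.} The delicate step is the intersection-theoretic inequality $nV\geq k\,K_S\cdot\hat F$ with $K_S\cdot\hat F\geq 2g-2$ on the minimal model. This must account for base points of the pencil, where Lemma \ref{lem: resolintnumcal}(1) only gives $2g-2\leq \sum a_i+K_S\cdot\hat F$. For an irrational pencil I expect the pencil $|\hat F|$ to be base-point free: distinct fibers of $f$ are disjoint, and $\hat F^2=0$ is forced because the fibration already exists on $S$ when $b>0$, as in the proof of Lemma \ref{lem: slopeineqs}. Then all $a_i=0$ and adjunction gives $K_S\cdot\hat F=2g-2$ exactly. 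If that argument fails, I would fall back on Lemma \ref{lem: slopeineqs}(1) and Lemma \ref{lem: resolintnumcal}(4) to bound $g$ linearly in $V$, and accept slightly weaker constants.
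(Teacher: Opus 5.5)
Your argument is correct and is essentially the paper's. Since $g(B)>0$, the fibration already lives on the minimal model, so $K_S\cdot F=2g-2$ and $nV\geq k(2g-2)$, which gives $V\geq k(g-1)\geq 2$ and $g-1\leq V/2$. The three cases are then handled exactly as you do: $b\geq 2$ via Lemma~\ref{lem: bdGwhendimalb=1andqgeq2}, non-translation $Q$ with $|Q|\leq 9$, and translation $Q$ with $|Q|\leq k$.

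The only blemishes are cosmetic. The triple $(n,k,g)=(2,2,2)$ is not actually exceptional, since $k(2g-2)/n\geq 2$ always, so the detour through Xiao's slope inequality is unnecessary. Your first parenthetical about $F^2\geq 0$ points the wrong way, though you correct it later.
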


\begin{proof}
Write $k:=\deg g \geq 2$. 
Since $g(B)>0$, we might contract $f$-vertical $(-1)$-curves and assume $S'=S$.
Then 
\begin{equation}\label{eq: kg-1}
    V\geq \frac{n}{2}K_S^2\geq \frac{k}{2}(K_S\cdot F)=k(g-1)\geq 2
\end{equation} by the adjunction formula and $n\leq 2,k\geq 2$.

Denote $Q:=\im(G \to \Aut(B))$ and $K:=\ker(G\to \Aut(B))$
with $|G|=|K|\cdot |Q|$.
The kernel $K$ acts faithfully on a general fiber, so Lemma \ref{lem: curveautos} implies $|K|\leq 4g+4$.

If $b=g(B)\geq2$, then 
$|G|\leq18V $ by Lemma \ref{lem: bdGwhendimalb=1andqgeq2}. 
Now suppose $b=1$. 
If $Q$ contains a non-translation, then $|Q|\leq 9$ by Lemma \ref{lem: bdQwithlinearpart} so that 
\[
|G|\leq9(4g+4)=36(g-1)+72\leq18V+72\leq \min\{54V, 48V+12\}
\]
as $V\geq 2$.
If $Q$ consists of translations, then Lemma \ref{lem: Qleqk} implies $|Q|\leq k$ and
\[
|G|\leq k(4g+4)=4k(g-1)+8k\leq 12k(g-1)\leq 12V
\]
by \eqref{eq: kg-1}.
\end{proof}

Moreover, in order to prove Theorem \ref{thm: geneisotrivialeffIitakafib} by Proposition \ref{prop: nVbdimply32K}, we need the following refined bound of $|G|$ 
when $p_g=q=1$.  
\begin{lemma}\label{lem: pgq1bdG}
If $p_g(S)=q(S)=1$, then $|G|\leq 36V$.
\end{lemma}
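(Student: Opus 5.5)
Since $q(S)=1$, the Albanese map $\alpha\colon S\to B:=\Alb(S)$ is a fibration onto an elliptic curve, because a surface with $q=1$ always has a pencil-type Albanese map. Because the Albanese map is canonical, every automorphism of $S$ descends to $B$. The plan is to use this fibration itself as the $G$-invariant irrational fibration, instead of the pluricanonical pencil of Lemma \ref{lem: createGinvpencil}. Write $Q:=\im(G\to\Aut(B))$ and $K:=\ker(G\to\Aut(B))$, so that $|G|=|K|\cdot|Q|$. Let $g\geq 2$ be the genus of the general fiber $F$ of $\alpha$. Then $K$ acts faithfully on $F$, and Lemma \ref{lem: curveautos} gives $|K|\leq 4g+4$.

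The first task is to bound $|Q|$. If $Q$ contains a non-translation, Lemma \ref{lem: bdQwithlinearpart} gives $|Q|\leq 9$. If $Q$ consists of translations, I will use $p_g=1$: the unique canonical curve $C\in|K_S|$ is $G$-invariant, and $C\neq 0$ since $p_g=1$. Its horizontal part $C_h$ (with respect to $\alpha$) meets the general fiber in $K_S\cdot F=2g-2$ points, counted with multiplicity. The fiber of $\alpha$ over a point of $\alpha(C)$ moves with the translations in $Q$, so a vertical component of $C$ has a $Q$-orbit of size $|Q|$. This gives $|Q|\cdot(K_S\cdot F_0)\leq K_S^2=V$, where $F_0$ is a fiber component, but only if $C$ has vertical components. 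In general I instead use that $C_h\cdot F\leq 2g-2$ bounds the degree of $C_h\to B$. Translations act freely on $B$, so the ramification points of $C_h\to B$ form $Q$-orbits of size $|Q|$. Either there are no such branch points, in which case each component of $C_h$ is étale over $B$ and hence elliptic, or there are branch points and their number is at least $|Q|$.

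The cleanest route is probably the simpler one used for $b=1$ in Proposition \ref{prop: irratpencilcase}. Since $\chi=p_g-q+1=1$, Lemma \ref{lem: slopeineqs}(1) gives $V\geq \frac{4(g-1)}{g}$, and Noether-type bounds give $V\geq 2$, because $K_S^2\geq 2p_g$ for irregular surfaces. In the non-translation case this yields
\[
|G|\leq 9(4g+4).
\]
To compare this with $V$ I need $V$ to grow linearly in $g$, which the slope inequality does not give when $\chi=1$. So I will replace the slope inequality by $K_S\cdot F=2g-2$ together with the Hodge index theorem for $K_S$ and the class $C$. Concretely, $C\cdot F=2g-2>0$, so $C$ has horizontal part. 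Decompose $K_S\equiv C$ and use $K_S^2\geq K_S\cdot C_h$. Then I try to show $K_S\cdot C_h\geq \frac{g-1}{2}$, via the $G$-invariance of $C_h$ and the Riemann--Hurwitz formula for the orbits.

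The main obstacle is precisely this step: getting a bound $V\gtrsim (g-1)/c$ with a good enough constant when $\chi=1$, so that $|K||Q|\leq 36V$. What I will try first is to pull back along an isogeny $B'\to B$ of degree $|Q|$, assuming $Q$ consists of translations. This makes $Q$ act freely, so $S\to S/Q$ is étale. The quotient $S/Q$ is again a surface with $q\geq 1$ and volume $V/|Q|$, and on it $K$ (and the induced group) acts fiberwise with $|K|\leq 4g+4$. The bound $V/|Q|\geq 2$ for irregular minimal surfaces then gives $|Q|\leq V/2$. Combined with $4g+4\leq 12(g-1)$ for $g\geq 2$ and the estimate $K_S\cdot F\leq 2V$, this should yield $|G|\leq 36V$. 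The residual case where $Q$ has a linear part, with $|Q|\leq 9$, then needs $4g+4\leq 4V$, and this is where I expect to have to argue most carefully.
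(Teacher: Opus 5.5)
Your proposal has a genuine gap, and it is the one you flag yourself: you never get an inequality linking $|K|$ (or $g$) to $V$ when $\chi(\mathcal O_S)=1$, where $K=\ker(G\to\Aut(B))$. Xiao's inequality only gives $V\ge 4(g-1)/g$. The Hodge-index argument with the unique canonical curve is never carried out. The estimate $K_S\cdot F\le 2V$ you invoke at the end is unjustified, since it is equivalent to $V\ge g-1$, which is exactly what is missing.

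The missing idea is to work with $\alpha_*\omega_S$ on $B$. It is nef of degree $\chi(\mathcal O_S)=1$, so its positive-slope part $A$ is stable of degree $1$ and some rank $r$.
\begin{itemize}
\item If $r=1$, the nonzero map $\alpha^*A\to\omega_S$ makes $K_S-\alpha^*A$ effective. Nefness of $K_S$ then gives $V\ge K_S\cdot\alpha^*A=2g-2$, hence $|K|\le 4g+4\le 2V+8$.
\item If $r\ge 2$, no bound on $g$ is available, so one bounds $|K|$ directly. Since $\operatorname{End}(A)=\mathbb C$ and $K$ acts $\mathcal O_B$-linearly on $\alpha_*\omega_S$, each element of $K$ acts on $A$ by a scalar. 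After base change by multiplication by $n$ on $B$, the $n^2$-dimensional subspace $H^0([n]^*A)\subset H^0(\omega_{S_n})$ defines a $K$-invariant, generically finite map onto a nondegenerate surface in $\mathbb P^{n^2-1}$. This gives $n^2V\ge |K|(n^2-2)$, and letting $n\to\infty$ yields $|K|\le V$.
\end{itemize}

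Your treatment of $Q$ has two further problems.

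First, the isogeny step is not well defined. $Q$ is a quotient of $G$, not a group acting on $S$, and pulling back along an isogeny of $B$ produces an \'{e}tale cover of $S$, not a quotient. An element of $G$ lifting a nontrivial translation does act freely, but the cyclic group it generates can contain nontrivial elements of $K$ with fixed points. So in general there is no free action of a group of order $|Q|$. If there were one, $\chi(\mathcal O)=1/|Q|$ on the quotient would force $|Q|=1$, which is far stronger than $|Q|\le V/2$.

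Second, the bound $|Q|\le 9$ in the mixed case is too weak. The inequality $9(4g+4)\le 36V$ requires $V\ge g+1$, and even the rank-one bound $V\ge 2g-2$ does not give this when $g=2$, $V=2$.

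What you need is that $Q$ contains no nontrivial translation at all. This follows from the holomorphic Lefschetz fixed point theorem. A lift $g\in G$ of a nontrivial translation has no fixed points. Yet its traces on $H^0,H^1,H^2(\mathcal O_S)$ are $1$, then $1$ (because $H^1(\mathcal O_S)=\alpha^*H^1(\mathcal O_B)$ and translations act trivially there), then a root of unity $\zeta$. Its holomorphic Lefschetz number is therefore $\zeta\neq 0$, a contradiction. Hence $Q$ embeds in the linear part of $\Aut(B)$ and $|Q|\le 6$. This gives $|G|\le 6(2V+8)\le 36V$ when $r=1$ (using $V\ge 2$), and $|G|\le 6V$ when $r\ge 2$.
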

\begin{proof}
Since $q(S)=1$, we can write 
$f:S \to B$ as the Albanese morphism to an elliptic curve $B$.
Let $L:=f_*\omega_S=f_*\omega_{S/B}$.
Then $L$ is nef and locally free by semi-positivity theorem (See \cite{Kawamata82}).
It has degree $\deg L=\chi(\OO_S)=1$.

Let $A\subseteq L$ be the positive-slope part of the Harder–Narasimhan filtration of $L$
(See subsection \ref{subsec: HNfiltration}).
Then 
$\deg A=1, r:=\operatorname{rk}A\geq 1$,
and $A$ is stable of slope $\frac{1}{r}$.

Denote $H:=\ker(G \to \Aut(B))$ and $G/H\cong \operatorname{im}(G \to \Aut(B))\leq \Aut(B)$ is an abelian automorphism group of an elliptic curve.
Then the image of $G/H$ in $ \Aut(B)$ has no translation part $t$.
Indeed, if there is a translation $t$ by contrary, then $t$ has no fixed point on $B$. 
Choose a lift $g\in G$ of $t$.
Since $t$ has no fixed points on $B$, $g$ has no fixed points on $S$.

Since $p_g=q=1$, the traces of $g$ on $H^0(S,\OO_S)$ and $H^1(S,\OO_S)$ are both equal to one, but its trace on one-dimensional $H^2(S,\OO_S)$ is $\zeta\neq0$, where $\zeta $ is a root of unity.
This contradicts the holomorphic Lefschetz fixed point theorem (See \cite[Theorem 4.12]{AtiyahBott68}).
So $G/H$ has only linear part and $|G/H|\leq 6$.

If $r=1$, then the evaluation morphism
$f^*A\to \omega_S$ is nonzero injective morphism between line bundles.
So
$K_S-f^*A$ is effective. 
Denote by $g$ the genus of general fiber of $f$.
Then $V\geq K_S\cdot f^*A=2g-2$.

The finite group $H$ acts faithfully on general fiber $F$, so
\[
 |H|\leq4g+4\leq2V+8
\]
by Lemma \ref{lem: curveautos}.
Since $|G/H|\leq 6$, one has
\[
 |G|\leq 6|H| \leq 12V+48\leq36V,
\]
where the last inequality holds as $V\geq2p_g= 2$ by \cite[Theorem~6.1]{Debarre1982}.

If $r\geq 2$, consider the following fiber product along the isogeny $[n]:B_n \to B, b\mapsto nb$
\[
\begin{tikzcd}
   S_n=S\times_B B_n &  S \\
   B_n   & B    \\
       \arrow[from=1-1, to=1-2,"\pi_n"]
    \arrow[from=1-1, to=2-1,"f_n"]
    \arrow[from=1-2, to=2-2,"f"]
    \arrow[from=2-1, to=2-2,"{[n]}"], 
\end{tikzcd}
\]
where $B_{n}\cong B$ is elliptic curve.

Since $f$ has connected fibers, $S_n$ is connected.
The map $\pi_n$ is \'{e}tale with $\deg\pi_n=N:=n^2$, so $S_n$ is smooth and minimal.
Moreover, the pullback of a semistable bundle by an \'{e}tale isogeny is
semistable.
So $A_n:=[n]^*A$ is semistable of rank $r$,  $h^0(B_n,A_n)=\deg(A_n)=N$, and slope $N/r$.
Let $a_n:S_n \dashrightarrow \Sigma_n\subset \Pp(W_n^{*})$ be the rational map induced by sublinear system $W_n:=H^0(B_n,A_n) \subseteq H^0(S_n,\omega_{S_n})$.

By taking large $n$ with $\deg(A_n\otimes\OO_{B_n}(-b-b'))=N-2r=n^2-2r>0$, one has 
\[
H^1(B_n,A_n\otimes\OO_{B_n}(-b-b'))=0
\]
for any two distinct closed points $b,b'\in B_n$.
So 
$H^0(B_n,A_n) \to H^0(\{b\},A_n|_{b}) \oplus H^0(\{b'\},A_n|_{b'})$ is surjective,
where $h^0(\{b\},A_n|_{b}) =h^0(\{b'\},A_n|_{b'})=\operatorname{rank}(A_n)=r \geq 2$ so that $\dim(a_n|_{f_n^{-1}(b)})\geq 1$ and $a_n$ separates distinct fibers $f_n^{-1}(b),f_n^{-1}(b')$. 
Thus $a_n$ is generically finite. 

Note that $A$ is stable bundle on $B$ and $\operatorname{End}(A)=\C$.
Then there is a character $\chi:H\to \C^*$ such that $h|_A=\chi(h)\cdot\operatorname{id}_A$ and $ a_n\circ h=a_n$ for any $h\in H$.

Since $H$ acts faithfully on general fibers, $|H|\leq \deg a_n$.
So the image $\Sigma_n$ of $a_n$ is a surface in $\Pp^{N-1}$ and
\[
 NV=\vol(S_n)
       \geq M_n^2
       \geq |H|\deg\Sigma_n
       \geq |H|(N-2),
\]
where $M_n$ is movable part of some resolution of linear system $W_n$.
Then $|H|\leq V$ by taking $N\to +\infty$.
Moreover, we have $|G/H|\leq 6$.
Hence $|G|\leq 6V$ when $r\geq 2$.
\end{proof}
\section{On the bound for Iitaka fibrations of translation quotients}\label{section: effIitgeneisotrivial}
In this section, we prove sharp Iitaka fibration in Theorem \ref{thm: geneisotrivialeffIitakafib} for generically isotrivial threefolds with the quotient group acting on elliptic-curve factor by translations. 

This implies Theorem \ref{mainthm: eff-iitaka} (1) in the non-Albanese-factoring case by Corollary \ref{cor: chargeneisotrivial3fold}.

\begin{theorem}\label{thm: geneisotrivialeffIitakafib}
Let $X$ be a smooth projective threefold with $\kappa(X)=2$. 
Suppose that 
$X\sim_{\text{bir}} (T\times E)/G$ is generically isotrivial, where $T,E,G$ are described in subsection \ref{subsec: geneiso}.
Assume $G$ acts on $E$ by translations.
Then
\begin{itemize}
    \item[(1)] $|mK_X|$ defines the Iitaka fibration for every integer $m\geq 32$;
    \item[(2)] The following conditions are equivalent.
    \begin{itemize}
        \item[(a)]  $|31K_X|$ does not define the Iitaka fibration;
        \item[(b)]  $h^0(X,31K_X)=1$;
        \item[(c)]   $X\sim_{\text{bir}} (T\times E)/G$ is generically isotrivial as in subsection \ref{subsec: geneiso}, where
    \[
 T\cong \{w^2=x^{10}+y^{10}+z^{5}\}\subset\Pp(1,1,2,5),\qquad G\cong \Z_{10} \times \Z_{10},
\]
and for any $g=(s,t)\in \Z_{10} \times \Z_{10},[x:y:z:w]\in T, e \in E$, one has
\[
 g.([x:y:z:w],e)=
([\zeta_{10}^sx:\zeta_{10}^ty:z:w],e+t_g),
\]
where the induced action of $g \in G$ on $E$ defined as $g: e \mapsto e+t_g$ is a translation.
    \end{itemize} 
\end{itemize}
\end{theorem}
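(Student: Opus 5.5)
We reduce to the quotient $T/G$. Since $G$ acts on $E$ by translations, $q:T\times E\to Y:=(T\times E)/G$ is étale in codimension one. For every $m$, $G$-invariant sections of $mK_{T\times E}=\operatorname{pr}_1^\ast mK_T$ are exactly the $G$-invariant elements of $H^0(T,mK_T)$: $G$ acts trivially on the translation-invariant form $dz$ on $E$, so there is no character twist. Hence
\[
H^0(X,mK_X)\cong H^0(T,mK_T)^G .
\]
Therefore $|mK_X|$ defines the Iitaka fibration if and only if the ratios of $G$-invariant $m$-canonical sections of $T$ generate $\C(T)^G=\C(T/G)$. Equivalently, the map $\Phi_m$ defined by $H^0(S,mK_S)^G$ is generically injective on $S/G$, i.e. it separates two very general $G$-orbits. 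This is the content I take as Lemma \ref{lem: reducemKbirattoS/G}.

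To produce invariant sections that separate orbits, let $\pi:S\to S/G$ and pass to a resolution $Y$ of $S/G$. Write $K_S=\pi^\ast(K_{S/G}+\Delta)$ by Hurwitz, with $\Delta$ the branch divisor. Invariant sections of $mK_S$ then correspond to sections of $\lfloor m(K_{S/G}+\Delta)\rfloor$. I will write $m(K_{S/G}+\Delta)=K_Y+\lceil L\rceil$ with $L\approx (m-1)(K_{S/G}+\Delta)$ pulled back to $Y$, so that
\[
L^2\approx \frac{(m-1)^2V}{|G|}.
\]
I then apply Lemma \ref{lem: criterionforsepa}. Condition (1) holds with $\beta>8$ as soon as $(m-1)^2V/|G|>8$. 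Theorem \ref{thm: abeliangroupbd} gives $|G|\le 100V$, so $m=32$ yields $961/100>8$. Condition (2) requires a lower bound on $L\cdot C$ for curves $C$ through very general points. Since $K_S$ is nef and big, such a curve pulls back to a $G$-orbit of curves $\tilde C$ with $K_S\cdot\tilde C\ge 1$ (and $\ge 2$ unless $\tilde C$ is special). This gives
\[
L\cdot C\ge \frac{(m-1)\,K_S\cdot \tilde C}{\deg(\pi|_{\tilde C})}.
\]
So the real task is to bound $\deg(\pi|_{\tilde C})$, which is the stabilizer of $\tilde C$ in $G$. That stabilizer acts on the normalization of $\tilde C$, so Lemma \ref{lem: curveautos} gives a bound $4g+4$ in terms of the genus. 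Combined with the Hodge index bound $g(\tilde C)$ versus $K_S\cdot\tilde C$, and the fact that very general points lie on no curves of small genus, this should give Proposition \ref{prop: nVbdimply32K}: the criterion holds whenever $|G|/V$ is below an explicit threshold, for all $m\ge 32$.

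The cases not covered by this are those with $|G|/V$ large. By Propositions \ref{prop: ratpencilcase} and \ref{prop: irratpencilcase} together with Lemma \ref{lem: pgq1bdG}, these occur only for $V=1$ with $q=0$, i.e. $(p_g,V)=(0,1),(1,1),(2,1)$, and possibly for $(V,p_g)=(2,3)$.

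\begin{itemize}
\item For $(1,1)$-surfaces, I would use the weighted-complete-intersection canonical ring together with Lemma \ref{lem: PS17}. A lattice argument like the one in Lemma \ref{lem: whenT12G100} then gives $|G|\le 36$, and rerunning the separation criterion with this bound suffices.
\item For $(0,1)$-surfaces, $|G|\le 70$. Most cases close by the same curve-degree argument. For the few remaining groups, I would exclude curves of low genus through very general points directly.
\item For $(1,2)$-surfaces and $(2,3)$-surfaces, I would compute explicitly. By Lemma \ref{lem: whenT12G100}, $G$ acts diagonally on $x,y,z,w$. I would write down invariant monomials of degree $m$ and check that their ratios generate $\C(T)^G$. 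For $|G|<100$ the lattice of invariant exponents is large enough already at $m=31$.
\end{itemize}

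The extremal case is $|G|=100$, with $F_{10}=x^{10}+y^{10}+z^5$ and $G=\Z_{10}^2$. Here the invariant ring is generated by $x^{10},y^{10},z,w$, and the invariant monomials of degree $31$ are only $z^{13}w$, since $x^{10}$, $y^{10}$ have degree $10$, $z$ has degree $2$ and $w$ has degree $5$, and one checks that no other combination has degree $31$. Thus $h^0(31K_X)=1$. For $m\ge 32$, the invariants $x^{10},y^{10},z,w$ supply enough independent monomials, whose ratios give the coordinates $x^{10}/z^5$ and $y^{10}/z^5$ of $T/G$. This proves both (1) and (c)$\Rightarrow$(b)$\Rightarrow$(a). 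The implication (a)$\Rightarrow$(c) follows because every other case was shown to work at $m=31$.

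I expect the main obstacle to be the case-by-case treatment of the $(0,1)$-surfaces and the genus analysis of curves through very general points. There the crude bounds leave residual groups, and these must be excluded one at a time using the induced actions on the normalizations.
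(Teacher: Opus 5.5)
\textbf{Verdict: genuine gap in the extremal case.} Several steps match the paper: the reduction $H^0(X,mK_X)\cong H^0(T,mK_T)^G$, the use of Lemma~\ref{lem: criterionforsepa} with the stabilizer bound $h\le 4g+4$ plus Hodge index, and your list of residual cases. The gap is in the extremal case, which is exactly where $31$ fails and $32$ works.

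You take the normalized point action $[\zeta_{10}^sx:\zeta_{10}^ty:z:w]$ of Lemma~\ref{lem: whenT12G100}(2) to be the action on pluricanonical sections. You removed the character coming from $E$ but forgot the one coming from $T$. The generator $\rho=\operatorname{Res}_T(\Omega/F)$ of $K_T$ satisfies $(s,t)^\ast\rho=\zeta_{10}^{s+t}\rho$. Hence $x^ay^bz^cw^d\rho^{\otimes m}$ is invariant if and only if $a\equiv b\equiv -m \pmod{10}$; in particular $z$ and $w$ are not invariant sections. For $m=31$ this forces $a\equiv b\equiv 9$, and the only invariant section is $x^9y^9z^4w\,\rho^{\otimes 31}$.

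Your count is wrong even inside your own model. The monomials $x^{10}z^8w$, $y^{10}z^8w$, $x^{10}y^{10}z^3w,\dots$ also have degree $31$. Then $x^{10}/z^5$ and $y^{10}/z^5$ would be ratios of invariant $31$-canonical sections, so your model would show that $|31K_X|$ does define the Iitaka fibration. That refutes (c)$\Rightarrow$(a) rather than proving it. For the same reason, ``$x^{10},y^{10},z,w$ supply enough monomials'' does not establish (1).

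What is needed is the twisted computation. One has $R(X,K_X)=\bigoplus_{s=0}^{9}h_s\,\C[x^{10},y^{10},z^5]$, with one generator $h_s$ in each residue class of degree mod $10$, of degrees $0,31,22,23,14,15,16,17,8,9$. So $|mK_X|$ defines the Iitaka fibration if and only if $m-\deg h_s\ge 10$ for the $s$ with $\deg h_s\equiv m\pmod{10}$. This holds for every $m\ge 32$ and fails at $m=31$ only because $\deg h_1=31$.

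The same twist must enter your lattice arguments for $(1,2)$-surfaces with $|G|<100$ and for $(2,3)$-surfaces. This is what the conditions $p+(m,m)\in L$ and $a+m(1,1,1)\in L$ in Propositions~\ref{prop: invariantsectionring-12} and~\ref{prop: invariantsectionring-23} encode; untwisted monomials are the wrong sections there too.

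\textbf{Two smaller points.} First, for (a)$\Rightarrow$(c) you need every non-extremal case to work already at $m=31$, but you state the general separation step only for $m\ge 32$. The paper runs it at $t=m-1=30$: there $V\ge 2$ and $|G|\le 48V+24$ give $L^2\ge 15$ and $h\le 12r$, hence $L\cdot C\ge 5/2>4/(1+\sqrt{7/15})$.

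Second, for numerical Godeaux surfaces the residual obstruction is not a low-genus curve. It is a $G$-invariant curve $\Gamma$ through very general points with $K_S\cdot\Gamma=5$, $g(\widetilde\Gamma)\le 16$, and $|G|\in\{63,\dots,68\}$. Excluding it takes several ingredients, none of which appear in your sketch:
\begin{itemize}
\item the structure of $G$ as a group of translations of $E$;
\item the exclusion of $3\mid |G|$;
\item a Riemann--Hurwitz analysis of $\widetilde\Gamma\to\widetilde\Gamma/G$, which for cyclic $G$ leaves only branch orders $(2,64,64)$;
\item a fixed-point count for the involution on a Godeaux surface, which excludes that remaining cyclic case;
\item a parity argument for the $G$-linearized degree-$5$ bundle $\nu^\ast(K_S|_\Gamma)$, which excludes the non-cyclic groups.
\end{itemize}
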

\begin{proof}[Proof of Theorem {\ref{thm: geneisotrivialeffIitakafib}}]
Let $S$ be the minimal resolution of $T$.
Denote the volume, geometric genus and irregularity of $S$ by $V,p_g,q$ respectively.

If $V \geq 2$ and $(V,p_g,q) \neq (2,3,0)$, then these surfaces satisfy one of the assumptions in Corollary \ref{cor: mostcases} so that $|mK_X|$ defines the Iitaka fibration for any $m\geq 31$.

If $(V,p_g,q)=(2,3,0)$, then the ratios of sections in $H^0(S,mK_S)^G$ generate $\C(S)^G$ for any $m\geq 31$ by Proposition \ref{prop: invariantsectionring-23}.
Then $|mK_X|$ defines the Iitaka fibration for any $m\geq 31$ by Lemma \ref{lem: reducemKbirattoS/G}.

If $(V,p_g,q)=(1,2,0)$ and $|G|<100$, then $|mK_X|$ defines the Iitaka fibration for any $m\geq 31$ by Proposition \ref{prop: invariantsectionring-12}.

If $(V,p_g,q)=(1,1,0)$, then $|G|\leq 36$
by Proposition \ref{prop: pg1-volume1-abelian} and $|mK_X|$ defines the Iitaka fibration for any $m\geq 31$ by Lemma \ref{lem: Godsurf}.

If $(V,p_g,q)=(1,0,0)$, then $|mK_X|$ defines the Iitaka fibration for any $m\geq 31$ by Proposition \ref{prop: Godsurf}.

If $(V,p_g,q)=(1,2,0)$ and $|G|=100$, then  $T,G$ satisfy the conclusions in Lemma \ref{lem: whenT12G100} (2).
By calculations of $G$-invariant canonical rings in Proposition \ref{prop: 31KdoesnotdefIitfib} of such $T$, we obtain $h^0(X,31K_X)=1$, but $|mK_X|$ defines the Iitaka fibration for any $m\geq 32$.

In summary, $|mK_X|$ defines the Iitaka fibration for any $m\geq 32$  so that we have proved (1).
As for (2), $|31K_X|$ does not define the Iitaka fibration if and only if $X$ lies in the case with $(V,p_g,q)=(1,2,0)$ and $|G|=100$. 
Then conditions (a),(b) and (c) of (2) are equivalent by the arguments of last paragraph.
\end{proof}

We fix the following notation.
\begin{definition}\label{defn: XS/GYP}
Let $S$ be a smooth minimal surface of general type.
Denote the volume, geometric genus, irregularity, algebraic Euler characteristic of $S$ by $V,p_g,q,\chi$ respectively.
Let $G\leq \Aut(S)$ be a finite abelian subgroup.
Then $\pi:S\to S/G$ is a finite morphism to normal projective variety with klt rational singularities.
Denote by $B$ an effective $\Q$-divisor on $S/G$ such that $K_S=\pi^*(K_{S/G}+B)$.
Denote by $\mu:Y\to S/G$ a resolution of $S/G$.
Set $P:=\mu^*(K_{S/G}+B)$ to be a $\Q$-Cartier nef and big divisor on $Y$.
\end{definition}
\begin{lemma}\label{lem: reducemKbirattoS/G}
Let $X$ be a smooth projective threefold with $\kappa(X)=2$. 
Suppose that 
$X\sim_{\text{bir}} (T\times E)/G$ is generically isotrivial, where $T,E,G$ are described in subsection \ref{subsec: geneiso}.
Assume that $G$ acts on $E$ by translations.

Denote by $S$ the minimal resolution of $T$.
Then
\begin{itemize}
    \item[(1)] $T/G$ is the Iitaka base of $X$;
    \item[(2)] Fix a positive integer $m\geq 2$.
Then 
\begin{itemize}
    \item[(a)] the ratios of sections in $H^0(S,mK_S)^G$ generate $\C(S)^G$, if and only if $|mK_X|$ defines the Iitaka fibration; 
    \item[(b)] if $|K_Y+\lceil (m-1)P \rceil|$ induces a birational map, then $|mK_X|$ defines the Iitaka fibration.
\end{itemize}
\end{itemize}
\end{lemma}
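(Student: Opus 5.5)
By Theorem \ref{thm:hao-schreieder} and Corollary \ref{cor: chargeneisotrivial3fold}, I may replace $X$ by the canonical model $Y_0:=(T\times E)/G$. This changes neither plurigenera nor the Iitaka fibration, since $X$ and a terminalization of $Y_0$ differ by flops and $Y_0$ has canonical singularities. The Iitaka fibration is the projection $\pi: Y_0\to T/G$, and $K_{Y_0}=\pi^\ast H$ with $\delta^\ast H=K_T$, where $\delta:T\to T/G$ is the quotient map. This already gives (1), since the Iitaka base is $T/G$.

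For (2)(a), I will compute $H^0(Y_0,mK_{Y_0})$ through invariants on the cover. Because $\alpha:T\times E\to Y_0$ is quasi-\'etale, pullback identifies
\[H^0(Y_0,mK_{Y_0})\cong H^0(T\times E, mK_{T\times E})^G = \bigl(H^0(T,mK_T)\otimes H^0(E,mK_E)\bigr)^G.\]
The key point is that $G$ acts on $E$ by translations, and translations act trivially on the generator $dz^{\otimes m}$ of $H^0(E,mK_E)\cong\C$. Hence
\[H^0(Y_0,mK_{Y_0})\cong H^0(T,mK_T)^G = H^0(S,mK_S)^G,\]
where the last equality uses that $S\to T$ is crepant. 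So $\Phi_{|mK_X|}$ is the composite of $\pi$ with the map $T/G\dashrightarrow \Pp$ given by $G$-invariant $m$-canonical sections, viewed as rational functions on $T/G$ through their ratios. Thus $|mK_X|$ defines the Iitaka fibration exactly when this map $T/G\dashrightarrow\Pp$ is birational onto its image. That happens exactly when the ratios of sections in $H^0(S,mK_S)^G$ generate $\C(T/G)=\C(S)^G$. This gives (a).

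For (b), I use Definition \ref{defn: XS/GYP} with $\pi:S\to S/G$ and $K_S=\pi^\ast(K_{S/G}+B)$. The plan is to show
\[H^0(Y,K_Y+\lceil (m-1)P\rceil)\subseteq H^0(S/G, m(K_{S/G}+B)) \cong H^0(S,mK_S)^G,\]
as subspaces of $\C(S)^G$ with compatible divisor data, so that birationality of the smaller system forces birationality of the larger one.
\begin{itemize}
\item Push forward by $\mu$: since $\mu_\ast K_Y=K_{S/G}$ and $\mu_\ast\lceil (m-1)P\rceil\le \lceil (m-1)(K_{S/G}+B)\rceil$, I get $\mu_\ast(K_Y+\lceil (m-1)P\rceil)\le \lfloor m(K_{S/G}+B)\rfloor$.
\item This inequality uses that the coefficients of $B$ have the form $1-\frac1{e_i}$. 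For such a coefficient $b<1$, the bound $\lceil(m-1)b\rceil\le\lfloor mb\rfloor$ must be checked.
\item Invariant sections of $mK_S$ correspond to sections of $\lfloor m(K_{S/G}+B)\rfloor$, because $\pi^\ast$ of a divisor with $\operatorname{div}(\phi)+m(K_{S/G}+B)\ge0$ is integral on $S$.
\end{itemize}
If the sub-linear system is birational, the larger system is too, and (a) concludes.

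The main obstacle is the coefficient inequality $\lceil (m-1)(1-\frac1e)\rceil\le\lfloor m(1-\frac1e)\rfloor$. Writing $m=qe+s$ with $0\le s<e$, it reduces to a direct case check on $s$, which I expect to go through for all $e\ge2$. A second point needing care is keeping track of exceptional divisors of $\mu$ with possibly negative discrepancy. For these I would use $\mu_\ast$ together with the inclusion $H^0(Y,D)\hookrightarrow H^0(S/G,\mu_\ast D)$, which holds for any divisor $D$.
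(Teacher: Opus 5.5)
Your argument is correct. For (1) and (2)(a) it is essentially the paper's proof. The paper also reduces to $H^0(X,mK_X)\cong H^0(S,mK_S)^G$, using that translations act trivially on $H^0(E,mK_E)$. It then deduces (a) from the fact that the function field of the image is generated by ratios of the defining sections.

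The only difference in (1) is the source of the identification. The paper computes the Iitaka model as $\operatorname{Proj}\bigoplus_l H^0(S,lkK_S)^G=T/G$, using Birkar--Zhang for sufficiently divisible $k$. You read it off from Theorem \ref{thm:hao-schreieder} and the proof of Corollary \ref{cor: chargeneisotrivial3fold}, which is equally valid.

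For (2)(b) your route is different and more economical. The paper proceeds as follows:
\begin{itemize}
\item it passes to the normalization $Z$ of the main component of $S\times_{S/G}Y$;
\item it builds a Hurwitz boundary $B_Y$ on $Y$ with $\pi_Y^*(K_Y+B_Y)=K_Z$;
\item it uses $K_{Z/S}\ge 0$ to write $K_Y+B_Y=P+N_0$ with $N_0\ge 0$ $\mu$-exceptional;
\item it identifies $H^0(S,mK_S)^G$ with $H^0(Y,K_Y+\lceil (m-1)(P+N_0)\rceil)$, which contains $H^0(Y,K_Y+\lceil (m-1)P\rceil)$.
\end{itemize}
You instead stay on $S/G$. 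You use only the trivial inclusion $H^0(Y,D)\subseteq H^0(S/G,\mu_*D)$ inside $\C(S)^G$, together with $\mu_*\lceil (m-1)P\rceil=\lceil (m-1)(K_{S/G}+B)\rceil$. This avoids $Z$, $B_Y$ and $N_0$. It also avoids having to justify equalities of section spaces across the $\mu$-exceptional locus. It yields only an inclusion rather than an identification of all invariant sections, but the inclusion is all (b) requires.

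The step you flag as the main obstacle is in fact an equality. With $m=qe+s$ and $0\le s<e$, both $\lceil (m-1)(1-\frac1e)\rceil$ and $\lfloor m(1-\frac1e)\rfloor$ equal $q(e-1)+\max\{s-1,0\}$. This is exactly the identity the paper invokes.
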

\begin{proof}
Since
$G$ acts on $E$ by translations,  $H^0(X,tK_X)\cong (H^0(S,tK_S)\otimes H^0(E,tK_E))^G=H^0(S,tK_S)^G$ holds for every $t\geq 0$.
By \cite[Theorem 1.2]{BirkarZhang16}, $|kK_X|$ defines the Iitaka fibration for sufficiently divisible $k\gg 0$.
So the Iitaka model of $X$ is 
\[
\operatorname{Proj} \bigoplus_{l\geq0}^{\infty} H^0(X,lkK_X)=\operatorname{Proj}\bigoplus_{l\geq0}^{\infty} H^0(S,lkK_S)^G=\left( \operatorname{Proj}\bigoplus_{l\geq0}^{\infty} H^0(S,lkK_S) \right)/G =T/G
\]
for sufficiently divisible $k$, as the canonical model $T$ of $S$ is normal.
Moreover, the function field of $T/G$ equals $\C(T)^G$.

Assertion (a) of (2) follows, because the function field of the image of the pluricanonical map is generated by ratios of its defining sections.

Now we prove assertion (b) of (2).
Let $Z$ be the normalization of the main component of $S\times_{S/G} Y$.
Denote by $\pi_Y: Z\to Y$.
By Hurwitz formula, there is an effective $\Q$-divisor $B_Y$ on $Y$, whose coefficients belong to
 $\{ 1-\frac{1}{e} |e\in \N_{\geq 2}\}$ such that
 \[
 \pi_Y^*(K_Y+B_Y)=K_Z.
 \]
 Then $\lfloor mB_Y\rfloor =\lceil (m-1)B_Y\rceil $ holds by $\lfloor m(1-\frac{1}{e}) \rfloor=\lceil (m-1)(1-\frac{1}{e})\rceil$ for any $e\in\N_{\geq 1}$.
 
Since $K_{Z/S}\geq 0$, there is an effective $\mu$-exceptional $\Q$-divisor $N_0$ such that
\begin{equation*}
 K_Y+B_Y=P+N_0,
\end{equation*}
where $P$ is defined in Definition \ref{defn: XS/GYP}.
Then for every integer $m\geq 2$, we have
\begin{align*}
H^0(X,mK_X)&=H^0(S,mK_S)^G=H^0(S/G,mK_{S/G}+\lfloor{mB}\rfloor)=H^0(Y,mK_Y+\lfloor{mB_Y}\rfloor)\\\ 
 &=H^0(Y,mK_Y+\lceil (m-1)B_Y\rceil)=H^0(Y,K_Y+\lceil{(m-1)(P+N_0)}\rceil)    \\
 &
 \supseteq H^0(Y, K_Y+\lceil{(m-1)P}\rceil).
\end{align*}
As $Y$ is a birational model of $T/G$, $|mK_X|$ defines the Iitaka fibration when $| K_Y+\lceil{(m-1)P}\rceil|$ induces a birational map.
\end{proof}
\subsection{Upper bound of group order implies effective Iitaka fibration}\label{subsubsec: verygencurves}

In this part, we prove Proposition \ref{prop: nVbdimply32K}, which implies  sharp Iitaka fibrations for generically isotrivial $X\sim_{\text{bir}}S\times E/G$ with bounded quotient group $G$.

To prove Proposition \ref{prop: nVbdimply32K}, we estimate the degree of the quotient map $\pi:S\to S/G$ along curves passing through very general points.
This leads to the following notations.
\begin{definition}\label{defn: verygencurv}
Denote
\[
 \Sigma_S:=\bigcup_{1\ne\gamma\in G}\operatorname{Fix}_S(\gamma),
\]
where $\operatorname{Fix}_S(\gamma):=\{s\in S|\gamma.s=s \}$ is a closed subset of fixed points.
A subset $\mathcal S\subsetneq Y$ is called an \emph{admissible exceptional
set}, if it is a countable union of proper closed subsets containing:
\begin{enumerate}
\item[(1)] $\operatorname{Supp} P\cup\operatorname{Exc}(\mu)\cup\mu^{-1}(\pi(\Sigma_S))$;
\item[(2)] every integral curve $C$ on $Y$ with $C^2<0$;
\item[(3)] $\mu^{-1}(\pi(D))$ for every integral curve $D\subset S$ with
$D^2<0$, or the normalization of $D$ has genus at most one.
\end{enumerate}
For an integral curve $C\subset Y$ with $C\cap(Y\setminus \mathcal S)\neq\varnothing$,
let $C_S=\mu(C)$ and let $\Gamma$ be an irreducible component of
$(\pi^{-1}(C_S))_{\mathrm{red}}$. Write
\[
\nu:\widetilde\Gamma\longrightarrow\Gamma,\qquad
 H:=\{\gamma\in G:\gamma(\Gamma)=\Gamma\},\qquad h:=|H|,
 \qquad r:=K_S\cdot\Gamma,\qquad a:=\Gamma^2,
\]
where $\nu$ is the normalization.
\end{definition}
\begin{lemma}\label{lem: excsetexists}
The admissible exceptional set $\mathcal S$ exists on $Y$.
\end{lemma}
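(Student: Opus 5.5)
We need to show that the union of the listed sets is a countable union of proper closed subsets of $Y$, so that $\mathcal S\neq Y$. Since we work over $\C$, a countable union of proper closed subsets of an irreducible variety is a proper subset. The plan is therefore to check that each piece is a countable union of proper closed subsets, and to take $\mathcal S$ to be the union of all pieces.

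\emph{Item (1).} Since $P$ is big, we may replace it by an effective $\Q$-divisor $\Q$-linearly equivalent to it, or use the support of $\mu^*B$ together with $\operatorname{Exc}(\mu)$. In either case $\operatorname{Supp}P$ is a proper closed subset. The exceptional locus $\operatorname{Exc}(\mu)$ is a finite union of curves, since $\mu$ is a resolution of a normal surface. Each $\operatorname{Fix}_S(\gamma)$ with $\gamma\neq 1$ is a proper closed subset, because $G$ acts faithfully. As $G$ is finite, $\Sigma_S$ is a proper closed subset. Since $\pi$ is finite, $\pi(\Sigma_S)$ is a proper closed subset of $S/G$, and hence $\mu^{-1}(\pi(\Sigma_S))$ is a proper closed subset of $Y$.

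\emph{Item (2).} The integral curves $C\subset Y$ with $C^2<0$ are rigid: each is the unique effective divisor in its numerical class. The Néron--Severi group, and indeed the group of divisor classes up to algebraic equivalence, is countable. Hence there are only countably many such curves, and each is a proper closed subset.

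\emph{Item (3).} Negative curves $D\subset S$ are countable for the same reason. For curves whose normalization has genus at most one, I use the countability of the Hilbert scheme (or Chow variety) of $S$, which has countably many components. Curves with geometric genus $\leq 1$ form a countable union of families. If a family covered $S$, then $S$ would be covered by rational or elliptic curves. This contradicts $S$ being of general type: rational curves cannot cover a non-uniruled surface, and a covering family of elliptic curves would force $\kappa(S)\leq 1$ via the standard argument, namely that the family dominates $S$ and the induced fibration or isotrivial structure bounds the Kodaira dimension (e.g.\ by the fact that varieties of general type are not covered by curves of genus $\le 1$). So each family sweeps out a proper closed subset, and the union over the countably many families is a countable union of proper closed subsets. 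Pushing forward by the finite map $\pi$ and pulling back by $\mu^{-1}$ preserves properness and closedness, since $\mu$ is birational and $\pi$ is finite.

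The only step requiring real input is item (3): the non-coverage of a general type surface by curves of geometric genus $\le 1$, together with the countability of the relevant families of curves. The remaining items are routine.
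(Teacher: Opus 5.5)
Your proposal is correct and follows the same item-by-item route as the paper. The three pieces are: (1) is a finite union of curves; negative curves are rigid and hence countable; and curves of geometric genus at most one on a surface of general type lie in a countable union of proper closed subsets, a property preserved under $\pi$ and $\mu^{-1}$. Your individual steps are more explicit than the paper's: you use uniqueness of a negative curve in its numerical class together with countability of $\mathrm{NS}(Y)$, where the paper only remarks $H^0(C,\OO_C(C))=0$, and for (3) you give the covering-family argument via the Kodaira dimension of the total space. The detour through an effective representative of $P$ is unnecessary, since the support of any $\Q$-divisor is already a finite union of curves.
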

\begin{proof}
We show that curves of $(1)\sim (3)$ inside the admissible exceptional set $\mathcal S$ are all countable union of integral curves. 

By definition, (1) is a finite union of integral curves on $Y$. 

An integral curve $C$ on a smooth projective surface $Y$ with $(C^2)<0$ is rigid as
$H^0(C,\OO_C(C))=0$.
So such curves in (2) form a countable set.

Since $S$ is of general type, the union of integral curves whose normalization have genus at most one, forms a countable union of proper closed subsets of $S$.
Taking its image under a finite quotient $\pi$ and inverse image of birational morphism $\mu$ preserves this property.
So curves in (3) still form a countable union of curves.
\end{proof}
\begin{lemma}\label{lem: curvebound}
For every $C,\Gamma$ in Definition \ref{defn: verygencurv}, we have
\begin{gather}
 C^2\geq 0,\quad r\in\Z_{>0},\quad 0\leq a\leq r^2/V,
 \quad r+a\in2\Z,\quad h\mid |G|,\label{eq:curvebasic}  \\ 
 P\cdot C=r/h,\qquad
 h\leq\min\{|G|,\,2r+2a+8\}
 \leq\min\left\{|G|,2r+\frac{2r^2}{V}+8\right\}.\label{eq:curvebound}
\end{gather}
If $V\geq 2$, then $r\geq 2$.
\end{lemma}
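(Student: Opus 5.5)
We check each claim in turn, using that $C$ meets $Y\setminus\mathcal S$, so $C$ avoids every curve excluded in Definition \ref{defn: verygencurv}.

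\emph{The basic facts \eqref{eq:curvebasic}.}
\begin{itemize}
\item By item (2) of the definition, $C^2\geq 0$.
\item Since $C$ is not $\mu$-exceptional, $C_S=\mu(C)$ is a curve on $S/G$, and $\Gamma\subset S$ is an integral curve.
\item By item (3), $\Gamma$ is not one of the curves with $\Gamma^2<0$, so $a=\Gamma^2\geq 0$.
\item $r=K_S\cdot\Gamma$ is an integer. It is positive because $K_S$ is nef and big, and a curve with $K_S\cdot\Gamma=0$ is a $(-2)$-curve, which has negative square.
\item The Hodge index theorem gives $a\,V=\Gamma^2K_S^2\leq (K_S\cdot\Gamma)^2=r^2$.
\item Adjunction gives $r+a=2p_a(\Gamma)-2\in 2\Z$.
\item $h\mid|G|$ since $H$ is a subgroup of $G$.
\end{itemize}

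\emph{The formula $P\cdot C=r/h$.} The preimage $\pi^{-1}(C_S)$ is the $G$-orbit of $\Gamma$, which has $|G|/h$ distinct components. Because $C$ avoids $\mu^{-1}(\pi(\Sigma_S))$, no component of $\pi^{-1}(C_S)$ lies in the branch locus. Hence $\pi^*C_S=\sum_{\gamma\in G/H}\gamma\Gamma$ with all multiplicities one.

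By the projection formula,
\[
|G|\,(K_{S/G}+B)\cdot C_S=K_S\cdot\pi^*C_S=\frac{|G|}{h}\,r .
\]
Since $C$ is not $\mu$-exceptional, $P\cdot C=\mu^*(K_{S/G}+B)\cdot C=(K_{S/G}+B)\cdot C_S$. This gives $P\cdot C=r/h$.

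\emph{The bound on $h$.} Clearly $h\leq |G|$. For the other bound, $H$ acts on $\Gamma$ and lifts to the normalization $\widetilde\Gamma$. The action is faithful, because a nontrivial element fixing $\Gamma$ pointwise would force $\Gamma\subset\Sigma_S$, which is excluded by item (1). So $H$ is an abelian subgroup of $\Aut(\widetilde\Gamma)$.

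By item (3), $g(\widetilde\Gamma)\geq 2$. Lemma \ref{lem: curveautos} then gives
\[
h\leq 4g(\widetilde\Gamma)+4\leq 4p_a(\Gamma)+4=2(r+a+2)+4=2r+2a+8 .
\]
Substituting $a\leq r^2/V$ gives the last inequality in \eqref{eq:curvebound}.

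\emph{The case $V\geq 2$.} Suppose $r=1$. Then $a\leq 1/V<1$, so $a=0$, and then $r+a=1$ is odd. This contradicts the parity $r+a\in 2\Z$, so $r\geq 2$.

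The only delicate step is the reduced-pullback claim behind $P\cdot C=r/h$. It needs $C_S$ to avoid the branch divisor, which is exactly what excluding $\mu^{-1}(\pi(\Sigma_S))$ in item (1) provides. Together with $C$ not being $\mu$-exceptional, this makes the projection-formula computation valid.
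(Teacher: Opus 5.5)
Your proof is correct and follows essentially the same route as the paper. Non-containment of $C$ in $\mathcal S$ gives $C^2\geq 0$, $a\geq 0$, $g(\widetilde\Gamma)\geq 2$ and faithfulness of $H$ on $\widetilde\Gamma$. The rest then comes from the Hodge index theorem, adjunction, the projection formula (with $\pi$ \'etale at the generic point of $\Gamma$), the bound $h\leq 4g(\widetilde\Gamma)+4$ for abelian groups, and the parity argument for $r\geq 2$. One wording point: where you say $C$ ``avoids'' the excluded sets, what you have (and all you use) is that $C$ is not contained in them.
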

\begin{proof}
Since $C \cap(Y\setminus \mathcal S)\neq\varnothing$, one has $C^2\geq 0$, $a=\Gamma^2\geq 0$, and
$g(\widetilde\Gamma)\geq 2$ by definition.

Since $K_S$ is nef and big and $a=\Gamma^2\geq 0$, $r=(K_S\cdot \Gamma)>0$.

The Hodge index theorem implies $a\leq  \frac{(K_S\cdot \Gamma)^2}{K_S^2}=\frac{r^2}{V}$.

By adjunction formula,
$r+a=K_S\cdot \Gamma+\Gamma^2=2p_a(\Gamma)-2\in2\Z$.

The group $H$ acts on $\Gamma$ faithfully
by definition (1) of $\mathcal S$.
Moreover, $\pi$ is \'etale over the generic point of $C$ so that
$\deg(\Gamma\to C)=h$.
Then projection formula implies $r=h(P\cdot C)$ and $(P\cdot C)=\frac{r}{h}$.

Since $(Y\setminus\mathcal{S})\cap \mu^{-1}(\pi(\Sigma_S))=\varnothing$, $H$ acts on $\widetilde{\Gamma}$ faithfully.
By $H$ is abelian and $g(\widetilde\Gamma)\geq 2$, Lemma
\ref{lem: curveautos} implies
\[
 h\le4g(\widetilde\Gamma)+4
 \le4p_a(\Gamma)+4=2r+2a+8 \leq 2r+\frac{2r^2}{V}+8.
\]

We have $h\mid |G|$ as $H\leq G$. 

Finally, if $V\geq2$ and $0<r=1$,
then $a\leq \frac{r^2}{V}= 1/V<1$, so $a=\Gamma^2=0$ and $r+a=1$.
This contradicts the result $r+a\in2\Z$.
\end{proof}

\begin{proposition}\label{prop: nVbdimply32K}
If $V\geq 2$ and $|G|\leq 48V+24$,
then $|mK_X|$ defines the Iitaka fibration for any $m\geq 31$.
\end{proposition}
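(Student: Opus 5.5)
The plan is to apply Lemma \ref{lem: reducemKbirattoS/G}(2)(b): it suffices to show that $|K_Y+\lceil (m-1)P\rceil|$ induces a birational map for every $m\geq 31$. For this we use the numerical criterion Lemma \ref{lem: criterionforsepa} with $L:=(m-1)P$, which is nef and big. Here $L\geq 30P$ and
\[
P^2=\frac{V}{|G|}\geq \frac{V}{48V+24}.
\]
Since $V\geq 2$, this gives $P^2\geq \frac{2}{120}=\frac{1}{60}$, with the minimum attained at $V=2$; as a function of $V$, $V/(48V+24)$ is increasing. Hence
\[
L^2\geq 900\cdot\frac{V}{48V+24}\geq 15=:\beta>8,
\]
and the threshold in Lemma \ref{lem: criterionforsepa}(2) becomes $\frac{4}{1+\sqrt{1-8/\beta}}$. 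For $\beta=15$ this is $4/(1+\sqrt{7/15})\approx 2.37$, and it decreases as $V$ grows.

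The second step is the curve condition. Take an admissible exceptional set $\mathcal S$ (Lemma \ref{lem: excsetexists}). Very general points avoid $\mathcal S$, so every irreducible curve $C$ through such a point satisfies the hypotheses of Definition \ref{defn: verygencurv}, and Lemma \ref{lem: curvebound} applies. We need
\[
L\cdot C=(m-1)\frac{r}{h}\geq \frac{30r}{h}>\frac{4}{1+\sqrt{1-8/\beta}}.
\]
Using $h\leq\min\{|G|,\,2r+2r^2/V+8\}$ together with $r\geq 2$, the ratio $r/h$ is bounded below in two regimes. For small $r$ we use $h\leq 2r+2r^2/V+8$; for example, at $V=2$, $r=2$ this gives $h\leq 16$, so $30r/h\geq 60/16=3.75$, which is comfortable. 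For large $r$ we use $h\leq |G|\leq 48V+24$, so $30r/h\geq 30r/(48V+24)$. The function $r\mapsto r/(2r+2r^2/V+8)$ is maximized near $r=2\sqrt{V}$ and decays like $V/(2r)$. Its minimum over the relevant range therefore occurs where the two bounds cross, that is, where $2r^2/V\approx 48V$, i.e. $r\approx 5V$. There
\[
\frac{30r}{h}\approx \frac{150V}{48V+24}\geq 3.
\]
For $r$ beyond the crossing, $30r/|G|$ only grows. So in all cases $L\cdot C>2.37$, comfortably.

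To carry this out rigorously, I will take the worst case as $\min_{r\geq2}\max\bigl\{r/\min\{|G|,2r+2r^2/V+8\}\bigr\}$ and treat $V=2$ separately (where $|G|\leq 120$ and $\beta=15$), then $V\geq 3$ (where $\beta$ is larger and the threshold is lower). In each case I will check the inequality at the integer crossing point, also using the parity constraint $r+a\in 2\Z$ and $h\mid |G|$ if a borderline case appears.

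The main obstacle I expect is exactly this uniform estimate near the crossing $r\sim 5V$ for small $V$, especially $V=2,3$, where the margin must be computed carefully with the exact threshold $4/(1+\sqrt{1-8/\beta})$ and the exact value of $\beta=900V/(48V+24)$ rather than the rough constants. Once this holds, Lemma \ref{lem: criterionforsepa} gives birationality of $|K_Y+\lceil(m-1)P\rceil|$ for each $m\geq31$, since $L$ only increases with $m$, and Lemma \ref{lem: reducemKbirattoS/G}(2)(b) concludes.
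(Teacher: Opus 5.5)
Your argument is the paper's: Lemma \ref{lem: reducemKbirattoS/G}(2)(b) combined with Lemma \ref{lem: criterionforsepa} for $L=(m-1)P$ with $\beta=15$, together with the curve bounds $P\cdot C=r/h$, $h\leq\min\{|G|,\,2r+2r^2/V+8\}$ and $r\geq 2$ from Lemma \ref{lem: curvebound}.

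One correction to your heuristic. The two bounds on $h$ cross strictly below $r=5V$ (at $r\approx 9.6$ when $V=2$), and $150V/(48V+24)\geq 3$ holds only for $V\geq 12$. For $V=2$ the worst integer case is $r=10$, $h\leq 120$, which gives $L\cdot C\geq 5/2$ against the threshold $4/(1+\sqrt{7/15})\approx 2.38$. So the margin is thin rather than comfortable, although your planned integer check does go through.

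The paper avoids this case analysis with the uniform estimate $h\leq 12r$. For $2\leq r\leq 4V+1$, the convex quadratic $r^2-5Vr+4V$ is negative at both endpoints, so $2r+2r^2/V+8\leq 12r$. For $r\geq 4V+2$, one has $h\leq |G|\leq 48V+24\leq 12r$. Hence $L\cdot C\geq 30/12=5/2$ for all $V\geq 2$ at once.
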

\begin{proof}
By Lemma \ref{lem: excsetexists}, there are curves $C\in Y,\Gamma\in S$ defined in Definition \ref{defn: verygencurv} and characterized in Lemma~\ref{lem: curvebound}.

Firstly we claim that $h\leq 12r$.
Indeed, if $2\le r\le4V+1$, the convex polynomial
$r^2-5Vr+4V \leq 0$, as its values at its endpoints are respectively
    $4-6V$ and $-4V^2+7V+1$, both are negative for $V\geq 2$.
So \eqref{eq:curvebound} implies $h\leq 2r+\frac{2r^2}{V}+8\leq  12r$.
If $r\geq 4V+2$, then $h\leq |G|\leq 48V+24\le12r$.

Then we apply Lemma \ref{lem: criterionforsepa}.
Fix $t:=m-1\geq30$ and set $L:=tP$. By $h\leq 12r$ and $L^2=\frac{t^2V}{|G|}\geq \frac{900V}{48V+24} \geq \frac{900}{60}$, we have
\[
 \beta=L^2\geq\frac{900}{60}=15>8,
 \qquad c=L\cdot C\geq\frac{30}{12}> 
        \frac{4}{1+\sqrt{\frac{7}{15}}}\geq\frac{4}{1+\sqrt{1-\frac{8}{\beta}}}.
\]
By Lemma~\ref{lem: criterionforsepa}, $|K_Y+\lceil tP \rceil|$ separates two distinct very general points for $t\geq 30$. 
This implies $|mK_X|$ defines the Iitaka fibration for any $m\geq 31$ by Lemma \ref{lem: reducemKbirattoS/G} (2). 
\end{proof}
\begin{corollary}\label{cor: mostcases}
Let $X\sim_{\text{bir}}(T\times E)/G$ be a smooth generically isotrivial threefold.
Assume $G$ acts on $E$ by translations.
Denote by $S$ the minimal resolution of $T$ with invariants $(V,p_g,q):=(K_S^2,p_g(S),q(S))$.

Then $|mK_X|$ defines the Iitaka fibration for any $m\geq 31$, if one of the following holds.
\begin{itemize}
    \item[(1)] $q>0$;
    \item[(2)] $q=0$, $p_g\leq 1$ and $V\geq 2$;
    \item[(3)] $q=0$, $p_g\geq 2$ and $V\geq 3$, or $p_g=V=2$.
\end{itemize}
\end{corollary}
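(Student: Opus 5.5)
The plan is to reduce every case to Proposition \ref{prop: nVbdimply32K}, which needs only $V\geq 2$ and $|G|\leq 48V+24$. Each hypothesis (1)--(3) already gives $V\geq 2$, or will be shown to force it. What remains is to check the group-order bound case by case, using Lemma \ref{lem: createGinvpencil} to produce a $G$-invariant pluricanonical pencil and then splitting by whether that pencil is rational or irrational. If the pencil is irrational, Proposition \ref{prop: irratpencilcase} gives $V\geq 2$ and $|G|\leq \min\{54V,48V+12\}\leq 48V+24$ directly. So the work is in the rational case, and in case (1) where $q>0$.

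Cases (2) and (3) are immediate from Proposition \ref{prop: ratpencilcase}. In case (2), with $q=0$, $p_g\leq 1$, $V\geq 2$, item (4) gives $|G|\leq 48V+24$. In case (3), with $p_g\geq 2$ and $V\geq 3$, or $p_g=V=2$, item (1) gives the same bound. In both cases Proposition \ref{prop: nVbdimply32K} then yields that $|mK_X|$ defines the Iitaka fibration for $m\geq 31$.

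For case (1), $q>0$, I first need $V\geq 2$. Debarre's inequality $K_S^2\geq 2p_g$ for irregular surfaces gives this when $p_g\geq 1$. When $p_g=0$ and $q>0$, we have $\chi=1-q+0\leq 0$, which contradicts $\chi\geq 1$ for surfaces of general type. So $p_g\geq q\geq 1$ and $V\geq 2p_g\geq 2$.

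Next I bound $|G|$ when $q>0$, splitting by $p_g$.
\begin{itemize}
\item If $p_g=q=1$, Lemma \ref{lem: pgq1bdG} gives $|G|\leq 36V\leq 48V+24$.
\item If $p_g\geq 2$ and the pencil is rational, Proposition \ref{prop: ratpencilcase}(1) applies whenever $V\geq 3$ or $p_g=V=2$. With $q>0$ and $p_g\geq 2$, Debarre gives $V\geq 4$, so item (1) applies and gives $|G|\leq 48V+24$. Item (2) only concerns $p_g=3$, $V=2$, which is excluded since Debarre would force $V\geq 6$.
\end{itemize}
The irrational case is handled as above. One should check that the constructions in Proposition \ref{prop: ratpencilcase} never implicitly assumed $q=0$. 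I expect this is fine, since only Noether's inequality and $\chi\leq V/2+3$ were used, and both hold in general.

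The step I expect to be the main obstacle is this bookkeeping in case (1), namely making sure that no irregular surface falls into the excluded $V=1$ or $(2,3,0)$ cases and that every subcase is covered by one of the earlier estimates. The Debarre inequality, which gives $V\geq 2p_g$ and hence $V\geq 4$ when $p_g\geq 2$, is the key input that closes this gap.
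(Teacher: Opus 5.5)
Your proposal is correct and follows the paper's proof essentially step for step. It reduces everything to Proposition \ref{prop: nVbdimply32K} by proving $|G|\leq 48V+24$: via Lemma \ref{lem: pgq1bdG} when $p_g=q=1$, via Debarre's inequality together with Propositions \ref{prop: ratpencilcase}(1) and \ref{prop: irratpencilcase} when $q>0$ and $p_g\geq 2$, and via Proposition \ref{prop: ratpencilcase}(1),(4) in cases (2)--(3).

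There are two cosmetic differences. In cases (2)--(3) the paper observes that $q=0$ forces the invariant pencil to be rational, whereas you cover the irrational alternative with Proposition \ref{prop: irratpencilcase}. You also make explicit, via $\chi(\OO_S)\geq 1$, that $q>0$ forces $p_g\geq q\geq 1$, which shows the subcases of (1) are exhaustive and that $V\geq 2$ holds there.
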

\begin{proof}
Assume condition (1) holds.
If $p_g=q=1$, then Lemma \ref{lem: pgq1bdG} implies $|G|\leq 36V<48V+24$.
If $p_g\geq 2$, then $V\geq 2p_g\geq 4$ by \cite[Theorem~6.1]{Debarre1982}.
There is a $G$-invariant canonical pencil by Lemma \ref{lem: createGinvpencil}.
If this pencil is irrational, then $|G|\leq 48V+24$ by Proposition \ref{prop: irratpencilcase}.
If this pencil is rational, then $|G|\leq 48V+24$ by Proposition \ref{prop: ratpencilcase} (1).

Assume condition (2) or (3) holds, then there is a $G$-invariant canonical or bicanonical pencil by Lemma \ref{lem: createGinvpencil}.
Since $q=0$, this pencil has to be rational.
So $|G|\leq 48V+24$ by Proposition \ref{prop: ratpencilcase} $(1),(4)$.

Hence $|G|\leq 48V+24$ holds for all above cases and $|mK_X|$ defines the Iitaka fibration for any $m\geq 31$ by Proposition \ref{prop: nVbdimply32K}.
\end{proof}

Note that Corollary \ref{cor: mostcases} covers all the possible $S$ with $V\geq 2$ and $(V,p_g,q)\neq (2,3,0)$.
In the following parts, we deal with the remaining cases with $V= 1$ or $(V,p_g,q)=(2,3,0)$ respectively.
.
\subsection{On numerical Godeaux surfaces}
In this part, we prove Proposition \ref{prop: Godsurf}, which gives effective Iitaka fibration for generically isotrivial case with a $(1,0)$-surface factor.
\begin{lemma}\label{lem: Godsurf}
Let $X\sim_{\text{bir}}(T\times E)/G$ be a smooth generically isotrivial threefold.
Assume $G$ acts on $E$ by translations and $|G|\leq 70$.
Denote by $S$ the minimal resolution of $T$.

If $\vol(T)=1$,
then one of the following holds.
\begin{itemize}
    \item[(1)] $|mK_X|$ defines the Iitaka fibration for any $m\geq 31$;
    \item[(2)] there is a $G$-invariant curve $\Gamma\subset S$ passing through very general points, such that
    \begin{equation}\label{eq:fourorders}
 K_S\cdot\Gamma=5,\qquad |G|\in\{63,64,65,66,67,68\}.
\end{equation}
\end{itemize}
\end{lemma}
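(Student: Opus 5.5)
The plan is to run the argument of Proposition \ref{prop: nVbdimply32K} with $V=1$, keeping track of the exact value of $|G|$ instead of the cruder bound $48V+24$. By Lemma \ref{lem: reducemKbirattoS/G} (2)(b), it suffices to show that $|K_Y+\lceil tP\rceil|$ is birational for every $t=m-1\geq 30$. We apply Lemma \ref{lem: criterionforsepa} to $L:=tP$. Here
\[
L^2=\frac{t^2V}{|G|}=\frac{t^2}{|G|}\geq \frac{900}{70}>8,
\]
so we may take $\beta=t^2/|G|$. Both conditions improve as $t$ grows: $L\cdot C$ is linear in $t$, while the threshold $4/(1+\sqrt{1-8/\beta})$ decreases in $\beta$. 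Hence it is enough to treat $t=30$. By Lemma \ref{lem: excsetexists} and Lemma \ref{lem: curvebound}, every curve $C$ through a very general point satisfies $P\cdot C=r/h$, where $h\mid |G|$, $h\leq |G|\leq 70$, and $h\leq 2r+2a+8$ with $0\leq a\leq r^2$ and $r+a$ even. So we need to verify
\[
\frac{30r}{h}>\frac{4}{1+\sqrt{1-8|G|/900}}.
\]
For $|G|\leq 70$ the right-hand side is at most about $2.48$, so $h\leq 12r$ suffices.

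Next we find when $h>12r$ can occur. From $h\leq 2r+2r^2+8$ we get $r^2-5r+4>0$, which forces $r\geq 5$. Together with $h\leq 70<12\cdot 6$ this leaves only $r=5$ and $60<h\leq 70$. Then $2a+18\geq h>60$ gives $a\geq 22$. Since $a\leq 25$ and $a$ must be odd (because $r+a$ is even), we get $a\in\{23,25\}$ and hence $h\leq 68$. As $h$ divides $|G|\leq 70$ and $h>60$, we must have $h=|G|$. Thus $H=G$, so $\Gamma$ is $G$-invariant, and $K_S\cdot\Gamma=r=5$.

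Finally we refine the range of $|G|$ using the exact threshold with $h=|G|$. In this case $L\cdot C=150/|G|$ and $\beta=900/|G|$. A direct check shows the inequality $150/h>4/(1+\sqrt{1-8h/900})$ holds for $h=61,62$: for example $2.459>2.387$ and $2.419>2.395$. It fails from $h=63$ onward. So for $|G|\in\{61,62\}$, and whenever $h\leq 12r$, we are in alternative (1). The only remaining situation is a $G$-invariant $\Gamma$ with $K_S\cdot\Gamma=5$ and $|G|\in\{63,\dots,68\}$, which is exactly alternative (2) with \eqref{eq:fourorders}.

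The step needing the most care is this borderline numerical comparison at $h=62,63$. It only involves explicit square roots, but it is what pins down the list $63$–$68$, and the strict inequality required by Lemma \ref{lem: criterionforsepa} must be checked honestly there.
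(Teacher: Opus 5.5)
Your proposal is correct and follows essentially the same route as the paper: apply Lemma \ref{lem: criterionforsepa} to $L=tP$ with $\beta=t^2/|G|$; show that $h\leq 12r$ suffices; isolate $r=5$ as the only exception; force $h=|G|$ by divisibility; then cut the range of $|G|$ at the borderline. The only differences are cosmetic: the paper rewrites the criterion as $c-\frac{c^2}{\beta}>2$ (equivalent below the larger root), which turns your square-root checks at $h=61,62$ into the clean inequality $\frac{125}{|G|}>2$, and it treats the subcase $h<|G|$ separately rather than deducing $h=|G|$ directly from $h>60$.
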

\begin{proof}
Fix $t:=m-1\geq 30$ and put $L=tP$. Then
\[
 \beta=L^2=\frac{t^2}{|G|}\geq \frac{900}{70}>8.
\]
For a curve passing through very general points in Lemma~\ref{lem: curvebound},
\begin{equation}\label{eq:godeauxcurvebound}
 h\leq\min\{70,2r^2+2r+8\},\qquad h\mid |G|.
\end{equation}
If $r\neq 5$, then $h\leq 12r$.
Indeed, $h\leq 12r$ follows by direct calculation
for $r=1,2,3,4$, and by $h\leq 70$ for $r\geq 6$. 
So we might assume $r=(K_S\cdot \Gamma)=5$.

Then we apply Lemma \ref{lem: criterionforsepa}.
Denote $t:=m-1\geq 30$ and $L:=tP$.
By calculations, we can reduce the assumptions of this lemma to
\begin{equation}\label{eq: cbeta}
 \beta=L^2=\frac{t^2}{|G|}\geq\frac{900}{70}>8,
 \qquad  c=L\cdot C>
        \frac{4}{1+\sqrt{1-\frac{8}{\beta}}} \quad \text { or } \quad c-\frac{c^2}{\beta}>2,    
\end{equation}
where $(L\cdot C)=\frac{t}{h}(K_S\cdot \Gamma)\geq \frac{150}{h}$.

If $L\cdot C \geq 4$, then \eqref{eq: cbeta} holds clearly.
We might assume $c=L\cdot C<4$.
By $r=5$, $h\leq 2r^2+2r+8= 68$ and $c\geq 150/68>2$. 

If $h<|G|$, $h\mid |G|$
implies $h\leq \frac{|G|}{2}\leq 35$, hence $c\geq 150/35>4$ and it satisfies \eqref{eq: cbeta}. 
By Lemma \ref{lem: criterionforsepa} and Lemma \ref{lem: reducemKbirattoS/G}, all above lower bounds for $c$ or $c-\frac{c^2}{\beta}$ implies $|mK_X|$ defines the Iitaka fibration for any $m\geq 31$ and (1) follows.

So we might assume $h=|G|\leq 68$.
Then $\beta=\frac{t^2}{|G|},c=\frac{5t}{|G|}$ and
\[
 c-\frac{c^2}{\beta}=\frac{5t-25}{|G|}\geq\frac{125}{|G|}.
\]
When $|G|\leq 62$, (1) follows by 
$c-\frac{c^2}{\beta} >2$, Lemma \ref{lem: criterionforsepa} and Lemma \ref{lem: reducemKbirattoS/G}. 
So we can assume $63\leq |G| \leq 68$ and $r=5$, which is exactly (2).
\end{proof}
\begin{proposition}\label{prop: Godsurf}
Let $X\sim_{\text{bir}}(T\times E)/G$ be a smooth generically isotrivial threefold.
Assume $G$ acts on $E$ by translations.
Denote by $S$ the minimal resolution of $T$.

If $K_S^2=1$ and $p_g(S)=q(S)=0$, then $|mK_X|$ defines the Iitaka
fibration for every integer $m\geq 31$.
\end{proposition}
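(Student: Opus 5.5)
The plan is to reduce to case (2) of Lemma \ref{lem: Godsurf} and then rule that case out.

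First, since $p_g=q=0$ and $V=1$, Proposition \ref{prop: ratpencilcase}(6) gives $|G|\leq 70$. So Lemma \ref{lem: Godsurf} applies, and either $|mK_X|$ defines the Iitaka fibration for all $m\geq 31$, or we are in case (2). In case (2) there is a $G$-invariant curve $\Gamma$ through very general points with $K_S\cdot\Gamma=5$, $h=|G|\in\{63,\dots,68\}$, and $G$ acting faithfully on $\widetilde\Gamma$. It remains to show case (2) cannot occur.

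The tool is Lemma \ref{lem: curvebound}. From $r=5$ we get $a=\Gamma^2\leq 25$, and $r+a$ must be even, so $a$ is odd. The faithful abelian action on $\widetilde\Gamma$ gives, by Lemma \ref{lem: curveautos},
\[
|G|\leq 4g(\widetilde\Gamma)+4\leq 2r+2a+8=2a+18 .
\]
Since $|G|\geq 63$, this forces $a\geq 23$, so $a\in\{23,25\}$. Then $p_a(\Gamma)=(r+a)/2+1\in\{15,16\}$, and $g(\widetilde\Gamma)\geq (|G|-4)/4$ forces $g(\widetilde\Gamma)\geq 15$.

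Next I would use the equality structure in Nakajima's bound. Abelian groups of order close to $4g+4$ acting on a genus-$g$ curve are very constrained. The quotient $\widetilde\Gamma/G$ must be $\Pp^1$ with few branch points. By Riemann--Hurwitz,
\[
2g-2=|G|\Bigl(-2+\sum_i \bigl(1-\tfrac{1}{e_i}\bigr)\Bigr),
\]
and for abelian $G$ the signature is essentially $(2,2,2,n)$-type or $(2,2n,2n)$-type, with $G$ needing at most two generators in the three-point case. I would check each $|G|\in\{63,\dots,68\}$ against $g\in\{15,16\}$ and expect a contradiction in each case.

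For odd orders 63, 65, 67, a $\Z_2$ factor is impossible, so every $e_i\geq 3$. The abelian bound then drops to roughly $3g+3$, which is below 63 when $g\leq 16$. For 64, 66, 68, compute $2g-2$ from the admissible signatures and check that no value $g\in\{15,16\}$ appears. For example, 68 would need a cyclic or $\Z_2\times\Z_{34}$ structure with branch data forcing $g=16$ and signature $(2,2,2,17)$-like data. I would test whether such data is compatible with abelian generation, where the product of the generators must be 1.

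If some case survives, the remaining constraint is that $\Gamma$ is $G$-invariant with $\Gamma^2\geq 23$ on a numerical Godeaux surface. Hodge index is then almost tight, $a\leq 25=r^2/V$, so $\Gamma\equiv 5K_S$ or nearly so. I would then argue that $\Gamma$ lies in $|5K_S+\text{torsion}|$ and moves in a family of dimension $h^0(5K_S)-1=10$. Since $\Gamma$ is $G$-invariant for a general point, the $G$-invariant sections in $H^0(5K_S)$ must separate points modulo $G$. Via Lemma \ref{lem: reducemKbirattoS/G}(2)(a), this should give the Iitaka fibration directly.

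\textbf{Main obstacle.} I expect the hard step to be the curve-automorphism exclusion for orders 64, 66 and 68. These admit a $\Z_2$ factor, so near-extremal Nakajima actions are genuinely possible there, and careful bookkeeping of signatures and of $g(\widetilde\Gamma)$ versus $p_a(\Gamma)$ will be needed.
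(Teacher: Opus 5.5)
Your reduction to case (2) of Lemma~\ref{lem: Godsurf} matches the paper, and your numerics are right: $a=\Gamma^2\in\{23,25\}$ and $15\le g(\widetilde\Gamma)\le 16$. Your exclusion of the odd orders $63,65,67$ by Riemann--Hurwitz also works, and it is a valid substitute for the paper's use of \cite[Theorem 1.3]{PE08} at $|G|=63$. Concretely, for odd $|G|\ge 63$ and $2g-2\le 30$ the only possible signature is $(0;e_1,e_2,e_3)$ with $\sum 1/e_i\ge 33/63$, and this forces $|G|\le e_1e_2\le 27$. Note that ``roughly $3g+3$'' is not literally a bound: $\Z_3\times\Z_9$ acts on a genus-$7$ curve.

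\textbf{The gap is the even orders.} Signature bookkeeping cannot give a contradiction, because the following abelian actions exist. Each is compatible with everything you use: $K_S\cdot\Gamma=5$, $h=|G|$, $a\in\{23,25\}$, and Nakajima's bound.
\begin{itemize}
\item $\Z_{64}$ on $y^2=x(x^{32}-1)$ via $(x,y)\mapsto(\zeta_{32}x,\zeta_{64}y)$: signature $(2,64,64)$, $g=16$.
\item $\Z_{66}$ on $y^2=x^{33}-1$ via $(x,y)\mapsto(\zeta_{33}x,-y)$: signature $(2,33,66)$, $g=16$.
\item $\Z_{32}\times\Z_2$ on $y^2=x^{32}-1$: signature $(2,32,32)$, $g=15$.
\item $\Z_{34}\times\Z_2$ on $y^2=x^{34}-1$: signature $(2,34,34)$, $g=16$.
\end{itemize}
So no argument carried out only on $\widetilde\Gamma$ can finish; you need input from the surface $S$.

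Your fallback does not supply it. First, $\Gamma\equiv 5K_S$ holds only when $a=25$. Second, a covering family of $G$-invariant members of $|5K_S+\tau|$ only shows that some character eigenspace of $H^0(5K_S+\tau)$ has dimension $\ge 2$. That gives a $G$-invariant pencil, which says nothing about $G$-invariant ratios generating $\C(S)^G$. It also concerns degree $5$ rather than $H^0(S,mK_S)^G$ with $m\ge 31$. So Lemma~\ref{lem: reducemKbirattoS/G}(2)(a) cannot be invoked.

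The paper closes the even cases with surface-level facts specific to numerical Godeaux surfaces.
\begin{enumerate}
\item Since $G$ acts on $E$ by translations, $G\cong\Z_m\times\Z_n$. The paper uses \cite[Theorem 1.3]{PE08} to get $3\nmid|G|$, leaving $|G|\in\{64,65,67,68\}$.
\item In the cyclic case, Riemann--Hurwitz leaves only $(2,64,64)$ with $g=16$. This forces equality throughout: $\Gamma$ is smooth, $\Gamma^2=25$, and $\Gamma\equiv 5K_S$.
\item The involution $\sigma\in\Z_{64}$ then fixes $32+1+1=34$ points of $\Gamma$. But by \cite[Proposition~4.5]{CCML}, the fixed locus of $\sigma$ on $S$ is a curve $R$ with $K_S\cdot R=1$ plus five isolated points. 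Hence $\#\operatorname{Fix}(\sigma|_\Gamma)\le\Gamma\cdot R+5=10$, a contradiction. The same count would also dispose of $\Z_{66}$.
\item In the non-cyclic cases, every point stabilizer on $\widetilde\Gamma$ is cyclic, hence a proper subgroup, so all $G$-orbits have even length. By Lemma~\ref{lem:linebundleorbit}, every $G$-linearized line bundle on $\widetilde\Gamma$ then has even degree. This contradicts $\deg\nu^*(K_S|_\Gamma)=K_S\cdot\Gamma=5$.
\end{enumerate}
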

\begin{proof}
Proposition~\ref{prop: ratpencilcase} (6) gives $|G|\leq 70$.
So by Lemma \ref{lem: Godsurf}, we might assume 
\[
K_S\cdot\Gamma=5,\qquad |G|\in\{63,64,65,66,67,68\},
\]
where $\Gamma\subset S$
is a $G$-invariant curve passing through very general points.
Denote by
$\nu:\widetilde{\Gamma}\to\Gamma$ the normalization of $\Gamma$. 
Then $G$ acts faithfully on $\widetilde{\Gamma}$, and
\begin{equation}\label{eq: genus16}
 g(\widetilde{\Gamma}) \geq 2,\qquad
 2g(\widetilde{\Gamma})-2\le K_S\cdot\Gamma+\Gamma^2\le5+25=30.
\end{equation}
by Lemma \ref{lem: curvebound}.
The line bundle $\mathcal L=\nu^*(K_S|_\Gamma)$ is naturally
$G$-linearized in the sense of \cite[Section 2.1]{GSSE2013}, and $\deg(\mathcal L)= K_S\cdot\Gamma=5$.

Since $G\leq \Aut(E)$ for some elliptic curve $E$, either $G$ is cyclic or $|G|=68$ and $G\cong C_{34}\times C_2$ or $|G|=64$.
Moreover,
\cite[Theorem 1.3]{PE08} implies $3\nmid |G|$ and $|G|\in \{ 64,65,67,68 \}$.
So we have the following cases.
\\
\medskip
{\bf Case 1} \ If $G$ is cyclic group with $|G|\in\{ 64,65,67,68 \}$.
Write $b:=g(\widetilde{\Gamma}/G)$ and let $e_1,\ldots,e_k$ be the
branch orders. 
Applying Hurwitz formula on quotient map $\widetilde{\Gamma}\to \widetilde{\Gamma}/G$ and taking degree, we have
\begin{equation}\label{eq: 30gGamma}
    30\geq 2g(\widetilde{\Gamma})-2=|G|\cdot\left(2b-2+\sum_{i=1}^k(1-e_i^{-1})\right), 
\end{equation}
where $|G| \in \{ 64,65,67,68 \}$.

If $b\geq 2$, then the right side is at least $128$, this contradicts $30\geq 2g(\widetilde{\Gamma})-2$. 

If $b=1$, an unramified
cover has genus one, while a ramified cover has right hand side at least
$|G|\cdot(1-1/2)\geq 32 >30$. 
This contradicts $30\geq 2g(\widetilde{\Gamma})-2$.

Hence $b=0$ and $k\geq 3$.
Write monodromies on branch points in $\widetilde{\Gamma}/G\cong \Pp^1$ additively as $a_i\in\mathbb Z/n\mathbb Z$, where $\operatorname{ord}(a_i)=e_i$.
Moreover,
we have
\[
 \langle a_1,\ldots,a_k\rangle=G,\qquad
 \sum_{i=1}^k a_i=0.
\]
For every prime number $p\mid |G|$, at least two branch orders $e_i$ are divisible by $p^r$, where $p^r\mid |G|$ but $p^{r+1}\nmid |G|$.
Indeed, since $a_i$ generates $G$, at least one $a_1$ is non-zero modulo $p$.
By
$\sum a_i\equiv 0 \bmod{p}$, at least two $a_1,a_2$ are non-zero modulo $p$.
Then $p^r| e_j=\frac{|G|}{\gcd(|G|,a_j)}$ for $j=1,2$.

Let $p$ be the largest prime divisor of $|G|$.
There is a positive integer $r$ such that $p^r\mid |G|$ but $p^{r+1}\nmid |G|$.
If $k\geq 4$, then
\[
 2g(\widetilde{\Gamma})-2\ge
 |G|\cdot \left(-2+2\left(1-\frac{1}{p^r}\right)+\frac{k-2}{2}\right)
 \geq |G|\cdot  \left(1-\frac{2}{p^r}\right)\geq 54,
\]
where $(|G|,p^r)\in\{(64,64),(65,13),(67,67),(68,17)\}$ here.

Thus $k=3$. Any two of the $a_i$'s generate $G$, as $a_3=-(a_1+a_2)\in \frac{\Z}{{|G|\cdot\Z}}$. 
So
\[
 e_i\mid |G|,\qquad \operatorname{lcm}(e_i,e_j)=|G|\quad(i\ne j).
\]
By calculations on prime divisors of $|G|\in \{64,65,67,68\}$ respectively, we have
\[
\begin{array}{c|c|c}
 |G| & \text{triple maximizing }\displaystyle\sum_{i=1}^3 e_i^{-1}
   & \text{lower bound for }2g(\widetilde{\Gamma})-2\\ \hline
 64 & (2,64,64)        &   30   \\
 65 &(5,13,65)&46\\
 67 &(67,67,67)&64\\
 68 &(2,68,68)&32
\end{array}
\]
By \eqref{eq: genus16}, all cases except the first row are excluded.
So we might assume $(e_1,e_2,e_3,|G|)=(2,64,64,64)$.

Now equality holds throughout \eqref{eq: genus16} and \eqref{eq: 30gGamma}.
These equalities imply $p_a(\Gamma)=g(\widetilde{\Gamma)}$ and $\Gamma$ is smooth, $\Gamma^2=25$, and 
\[
\Gamma\equiv 5K_S,
\]
by the Hodge index theorem.

Denote by $\sigma$ the unique involution of $G$.
Then every nontrivial subgroup of $G$ contains $\sigma$.
Then the set of fixed points $\operatorname{Fix}(\sigma|_\Gamma)$ of $\sigma$
are precisely the points over the three branch
points, and
\[
\#\operatorname{Fix}(\sigma|_\Gamma)
=\frac{64}{2}+\frac{64}{64}+\frac{64}{64}
=34.
\]

However, write the fixed locus of
$\sigma$ on $S$ as the union of its divisorial
part $R$ and its isolated fixed points.
By \cite[Proposition~4.5(i),(iii)]{CCML}, there
are exactly five isolated fixed points
and
\[
K_S \cdot R=1.
\]
Since the action on $\Gamma$ is faithful,  $\Gamma$ is
not a component of $R$. 
Therefore
\[
\#\operatorname{Fix}(\sigma|_\Gamma)
\leq\Gamma \cdot R+5
=5K_S\cdot R+5
=10,
\]
contradicting the preceding count of $34$.
\\
\medskip
{\bf Case 2} \ If $|G|=64$ and $G$ is non-cyclic, or $|G|=68$ and $G\cong \Z_{34}\times \Z_2$.
For any closed point $p\in \widetilde{\Gamma}$,
the stabilizer $G_p:=\{g\in G|g.p=p\}$ of a faithful finite action $G$ on a smooth complex curve
is cyclic, as the representation on its one-dimensional tangent space is faithful.
Thus cyclic $G_p$ must be a proper subgroup of non-cyclic group $G$.

If $|G|=64$, then $2\mid [G:G_p]$ as $G_p\neq G$ and $|G|=2^6$.
If $|G|=68$ and $G\cong \Z_{34}\times \Z_2$, then $|G_p|\in \{1,2,17,34\}$, and the corresponding orbit lengths lie in
$\{68,34,4,2\}$. So the lengths of orbits are all even in both cases.

Then
Lemma~\ref{lem:linebundleorbit} implies that every
$G$-linearized line bundle has even degree, contrary to the naturally
$G$-linearized line bundle $\mathcal L=\nu^*(K_S|_\Gamma)$ with $\deg(\mathcal L)= K_S\cdot\Gamma=5$.
\end{proof}
\begin{lemma}\label{lem:linebundleorbit}
Let $G$ be a finite group acting faithfully on a smooth projective curve $C$.
Let $\mathcal{L}$ be a $G$-linearized line bundle defined in \cite[Section 2.1]{GSSE2013} on $C$.
Then $d|\deg\mathcal{L}$, where $d$ is common factor of the lengths of all $G$-orbits.
\end{lemma}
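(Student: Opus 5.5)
The plan is to compute the degree of $\mathcal L$ locally via $G$-invariant meromorphic sections and a descent to the quotient curve. Let $\pi:C\to C':=C/G$ be the quotient map. Choose a nonzero rational section $s$ of $\mathcal L$ that is $G$-semi-invariant. To produce one, start from any nonzero rational section $s_0$. The space of rational sections is a $\C(C)$-vector space of dimension one with a semilinear $G$-action. By Hilbert 90 (Galois descent for the extension $\C(C)/\C(C)^G$), it has a basis of $G$-invariant vectors. So there is a nonzero rational section $s$ with $g^\ast s=s$ for all $g\in G$, where $g^\ast$ denotes the action induced by the linearization.

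For such an invariant $s$, the divisor $\operatorname{div}(s)$ is $G$-invariant. Indeed, the linearization gives isomorphisms $g^\ast\mathcal L\cong\mathcal L$ carrying $s$ to $s$, so $\operatorname{ord}_{g(p)}(s)=\operatorname{ord}_p(s)$ for every point $p$ and every $g\in G$. Therefore we can write
\[
\operatorname{div}(s)=\sum_{O} n_O\sum_{p\in O}p,
\]
where $O$ runs over finitely many $G$-orbits and $n_O\in\Z$. Taking degrees gives $\deg\mathcal L=\sum_O n_O\,|O|$. Since $d$ divides every orbit length $|O|$, it follows that $d\mid\deg\mathcal L$.

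The only delicate point is the existence of the invariant rational section, that is, the descent step. I expect this to be the main obstacle, but it is standard. The $G$-linearization makes the generic fiber $\mathcal L_\eta$ a one-dimensional $\C(C)$-vector space with a semilinear action of $G=\operatorname{Gal}(\C(C)/\C(C)^G)$; here $G$ acts faithfully on $C$, so this is a Galois extension. Speiser's lemma (Hilbert 90) then gives $\mathcal L_\eta=\C(C)\otimes_{\C(C)^G}(\mathcal L_\eta)^G$, so an invariant nonzero vector exists.

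Concretely, one can take $s=\sum_{g\in G}g^\ast(fs_0)$ for a suitable $f\in\C(C)$ making this sum nonzero. Such an $f$ exists by Dedekind's independence of the characters and automorphisms $g$. The choice of linearization only matters through the cocycle condition, which is exactly what makes $g\mapsto g^\ast$ a group action on sections.
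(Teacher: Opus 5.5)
Your proof is correct and follows the paper's approach: both produce a $G$-invariant divisor $D$ with $\mathcal{O}_C(D)\cong\mathcal{L}$, then decompose it orbit by orbit to get $\deg\mathcal{L}=\sum_O n_O|O|$. The only difference is that the paper cites \cite[Lemma 2.1]{GSSE2013} for the invariant divisor, while you prove its existence directly, via Speiser's lemma (Hilbert 90) applied to the semilinear $G$-action on the generic fiber, or by Dedekind averaging.
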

\begin{proof}
Since $\mathcal{L}$ is $G$-linearized, by \cite[Lemma 2.1]{GSSE2013}, there is a $G$-invariant divisor $D$ with $\OO_C(D)\cong \mathcal{L}$ so that $\deg D=\deg \mathcal L$ .
Then 
\[
D=\sum_{\mathcal O}n_{\mathcal O}
  \left(\sum_{p\in\mathcal O}p\right),\qquad n_{\mathcal O}\in\mathbb Z,
\]
where $\mathcal O$ runs through the orbits of $G$ supported on $\operatorname{Supp}(D)$, and $n_{\OO}$ is the common multiplicity of $D$ on $\mathcal O$.
Hence 
\[
\deg D=\sum_{\mathcal O}n_{\mathcal O}
  \cdot|\mathcal O|=\left(\sum_{\mathcal O}n_{\mathcal O}
  \cdot\frac{|\mathcal O|}{d}\right)\cdot d
\]
is divisible by $d$.
\end{proof}
\subsection{On $(1,2)$ and $(2,3)$-surfaces}
In this part, we prove Proposition \ref{prop: invariantsectionring-12} and Proposition \ref{prop: invariantsectionring-23}, which together with Proposition \ref{prop: nVbdimply32K} give effective Iitaka fibration for generically isotrivial case with a $(1,2)$ or $(2,3)$-surface factor.

For $u=(w_1,\dots,w_r)\in  \mathbb R_{\geq 0}^r$, write $|u|=w_1+\dots +w_r$.
\begin{lemma}
\label{lem: latticegenerator}
Let $L_0\subset L\subset\mathbb Z^r$ be lattices of rank $r$.
Suppose that
$L_0$ has a basis $u_1,\ldots,u_r\in\mathbb Z_{\geq0}^r$.
Denote $U:=u_1+\cdots+u_r$ with norm $|U|=|u_1|+\cdots+|u_r|$.

Then every coset of $\mathbb Z^r/L_0$ has a nonnegative integral
representative $p$ with $|p|<|U|$.
Moreover, $L$ is generated by $u_1,\ldots,u_r$ together with nonnegative
vectors $s\in L$ satisfying $|s|\leq |U|/2$.
\end{lemma}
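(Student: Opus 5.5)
The plan is to use the fundamental parallelepiped of $L_0$ spanned by $u_1,\dots,u_r$.

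\emph{First assertion.} Consider the half-open box
\[
\Pi:=\Bigl\{\textstyle\sum_i t_iu_i \;\Big|\; 0\le t_i<1\Bigr\}\subset\mathbb R^r,
\]
which is a fundamental domain for $\mathbb R^r/L_0$. Every coset $x+L_0$ with $x\in\mathbb Z^r$ then has a unique representative $p=x-\sum_i\lfloor t_i(x)\rfloor u_i$ lying in $\Pi\cap\mathbb Z^r$. Such a $p$ is integral. Since each $u_i\in\mathbb Z_{\ge0}^r$ and $t_i\ge0$, the vector $p$ is also nonnegative. Because the coordinate sum $|\cdot|$ is linear on the nonnegative orthant, $|p|=\sum_i t_i|u_i|$. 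We have $|u_i|>0$, since the $u_i$ are linearly independent and hence nonzero. Together with $t_i<1$ this gives $|p|<|U|$, which proves the first claim.

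\emph{Second assertion.} Apply the first step to the finite group $L/L_0\subset\mathbb Z^r/L_0$: each coset of $L_0$ in $L$ has a representative $p\in L\cap\Pi$. Hence $L$ is generated by $u_1,\dots,u_r$ together with the vectors in $L\cap\Pi$. To cut the bound down to $|U|/2$, use the involution $p\mapsto p^\ast:=U-p$, i.e.\ $t_i\mapsto 1-t_i$. Write $p=\sum_i t_iu_i\in L\cap\Pi$.

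If all $t_i>0$, then $p^\ast$ again lies in $\Pi$, lies in $L$ because $U\in L_0\subset L$, and satisfies $|p|+|p^\ast|=|U|$. So one of $p,p^\ast$ has norm at most $|U|/2$. Both are nonnegative, and $p=U-p^\ast$ lies in the span of the $u_i$ and $p^\ast$, so we may keep whichever of the two is smaller.

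If some $t_i=0$, replace $U$ by the partial sum $U_I:=\sum_{i\in I}u_i$, where $I=\{i : t_i>0\}$. The reflection $p\mapsto U_I-p$ keeps $p$ in the face spanned by $\{u_i\}_{i\in I}$, stays in $L$, and gives $\min\{|p|,|U_I-p|\}\le |U_I|/2\le |U|/2$.

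The case $p=0$ is trivial. Thus every element of $L\cap\Pi$ is expressible through the $u_i$ and nonnegative vectors of $L$ with norm at most $|U|/2$, which proves the second claim.

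The only point needing care is this reflection step: checking that the reflected vector remains nonnegative and in $L$, and handling the faces where some $t_i=0$. The partial sums $U_I$ take care of both issues. Everything else is the standard fundamental-domain argument.
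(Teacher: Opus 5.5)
Your proof is correct and follows the paper's argument. The paper uses the same fundamental parallelepiped $\Pi=\sum_i[0,1)u_i$ of $L_0$ for the first claim, quoting it from \cite{BP99} where you prove it directly, and the same reflection $s\mapsto U-s$ for the second. Your separate treatment of the faces where some $t_i=0$ is harmless but unnecessary: for every $p\in L\cap\Pi$ the vector $U-p=\sum_i(1-t_i)u_i$ is already nonnegative and lies in $L$, and membership of $U-p$ in $\Pi$ is never needed.
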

\begin{proof}
By 
\cite[Example 3.3]{BP99},
$\Pi:=\sum_i[0,1)u_i$ contains a unique representative $p$ of
each coset of $L/L_0$
with $|p|<|U|$.
Then such $p$ is the desired non-negative representative by taking $L=\Z^r$.

Moreover, for each representative $s\in\Pi\cap L$ with $|s|>|U|/2$,
one has $ |U-s|\leq \frac{|U|}{2}$.
Its class in $L/L_0$ is $-s$.
Replacing $s$ with $U-s$ does not change the generated subgroup.
Together with the $u_i$, the resulting vectors generate $L$.
\end{proof}
\begin{proposition}
\label{prop: invariantsectionring-12}
Let $X\sim_{\text{bir}}(T\times E)/G$ be a smooth generically isotrivial threefold.
Assume $G$ acts on $E$ by translations.
Denote by $S$ the minimal resolution of $T$ with $(K_S^2,p_g(S),q(S))=(1,2,0)$.

If $|G|<100$, then $|mK_X|$ defines the Iitaka fibration for any $m\geq 31$.
\end{proposition}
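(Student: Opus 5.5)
By Lemma \ref{lem: reducemKbirattoS/G}~(2)(a), it suffices to show that for every $m\geq 31$ the ratios of elements of $H^0(S,mK_S)^G$ generate $\C(S)^G=\C(T)^G$. Lemma \ref{lem: whenT12G100}~(1) puts us in a concrete setting: $R(T,K_T)=\C[x,y,z,w]/(w^2-F_{10})$ with weights $(1,1,2,5)$, and $G$ acts diagonally by characters. As in that proof, we may normalize so that $z,w$ are fixed and $G\hookrightarrow(\C^*)^2$ acts on $(x,y)$. The invariant sections in degree $m$ are spanned by the invariant monomials $x^ay^bz^cw^\epsilon$ with $a+b+2c+5\epsilon=m$ and $\epsilon\in\{0,1\}$. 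Let
\[
L:=\{(a,b)\in\Z^2 : x^ay^b \text{ is } G\text{-invariant}\}.
\]
This is a lattice of index $|G|$ in $\Z^2$, and the same lattice argument as in Lemma \ref{lem: whenT12G100} shows that $L$ contains the rank-two lattice $L_0$ spanned by two exponent vectors $u_1,u_2\in\Gamma$ with $|u_i|\leq 10$.

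The function field $\C(T)^G$ is generated over $\C$ by the invariant rational functions
\[
\frac{z}{x^2},\qquad \frac{w}{x^5},\qquad \frac{x^ay^b}{x^{a+b}} \quad ((a,b)\in L).
\]
The last family is the invariant part of $\C(y/x)$, which is generated by $(y/x)^{\ell}$ for $(\ell,-\ell)$, or more generally by the characters coming from $L$. Since $T\to\Pp(1,1,2)$ is a double cover and $G$ acts diagonally, it suffices to realize the following as ratios of degree-$m$ invariant monomials:
\begin{enumerate}
\item[(i)] generators of the group $\{x^ay^b : (a,b)\in L,\ a+b\equiv 0 \pmod{\text{appropriate}}\}$ of degree-zero invariant Laurent monomials in $x,y$;
\item[(ii)] $z$ divided by a degree-two invariant combination in $x,y$;
\item[(iii)] $w$ divided by a degree-five invariant combination in $x,y$.
\end{enumerate}
Lemma \ref{lem: latticegenerator} controls the lengths involved: $L$ is generated by $u_1,u_2$ together with nonnegative vectors $s\in L$ with $|s|\leq|U|/2\leq 10$. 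Moreover, every coset of $\Z^2/L_0$ has a nonnegative representative $p$ with $|p|<|U|\leq 20$.

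Fix $m\geq31$. For each needed generator $v$ (one of $u_1,u_2$ or $s$), we find an invariant monomial $M_0=x^py^q z^c$ of degree $m$ such that $M_0\cdot x^{v_1}y^{v_2}/x^{|v|}$ is again a monomial of degree $m$. Concretely, we choose a base monomial $x^{a}y^{b}$ with $(a,b)\in L$ and $a\geq |v|$, and fill up the remaining degree with powers of $z$, or with $w$ to fix parity. Since $L$ contains $(|u_1|,0)$-type vectors up to the coset representatives, and degrees up to about $10+20$ are needed, $m\geq31$ is expected to leave enough room. The parity in $m$ is absorbed by $w$ (degree $5$) or by $z$ (degree $2$). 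For $z$ and $w$ themselves, we take the ratios $x^{a}y^{b}z\cdot(\cdots)/(x^{a+2}y^b\cdots)$, which requires an invariant monomial of degree $m-2$ in $x,y$ times the coset of $z$; since $z$ is fixed, that coset is trivial. For $w$ we proceed similarly. This yields generators of $\C(x,y,z,w)^G$ restricted to $T$, i.e.\ of $\C(T)^G$.

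The main obstacle is the bookkeeping of step (i) in the worst case, when $|G|$ is close to $100$ (say $|G|\in\{90,\dots,99\}$) and $|U|$ is close to $20$. There we must check that every generator $s$ of $L$ and every $u_i$ fits into degree $m=31$ simultaneously with a complementary invariant monomial. I would first try the uniform bound $|s|+|U|\leq 30$ together with the room $m-30\geq1$ supplied by a factor $w$ or $z$. If this fails in the parity case $m$ odd, I would use $w$ (odd degree $5$) to shift parity and rely on $|U|\leq 20$ with $|s|\leq 10$, giving a total degree $\leq 30+5$ that is reduced by replacing $x^{10}$-type blocks with $z^5$ via $F_{10}$. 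The case $|G|=100$ must fail at $m=31$, which explains why the hypothesis $|G|<100$ is exactly where the slack of one degree appears.
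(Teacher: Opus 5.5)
Your overall plan is the paper's: use Lemma~\ref{lem: reducemKbirattoS/G}~(2)(a) to reduce to generating $\C(S)^G$ by ratios of invariant degree-$m$ monomials, then use the lattice $L$ and Lemma~\ref{lem: latticegenerator}, padding with $z$ and $w$. However, two steps are wrong and the decisive estimate is missing.

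First, your description of the invariant field is incorrect. With $z,w$ normalized to be fixed, $z/x^2$ is invariant only if $\alpha_g^2=1$ for all $g$, and $w/x^5$ only if $\alpha_g^5=1$. Likewise $(y/x)^b$ with $(a,b)\in L$ is not invariant in general. The correct statement is $\C(T)^G=\C\bigl(x^ay^bz^{-(a+b)/2} : (a,b)\in L\bigr)$. This needs $L\subseteq\Lambda=\{(a,b): a+b\equiv 0 \pmod 2\}$: then every invariant degree-zero Laurent monomial has even $w$-exponent, and $w^2=F_{10}$ eliminates $w$. The paper secures this by a reduction you are missing. If $L_0\subsetneq L$, then $|G|\le\frac12[\Z^2:L_0]\le 50$ and Lemma~\ref{lem: Godsurf} already gives the conclusion. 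Otherwise $L=L_0=\Z u+\Z v\subseteq\Lambda$, and only the two ratios $M_u/M_0$ and $M_v/M_0$ are needed. Without this reduction, a generator $s$ of $L$ with $|s|$ odd forces a factor $w$ into one of the two sections being compared, and the degree count $|p|+|s|+5$ can exceed $31$.

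Second, your invariance criterion ignores the canonical form. After normalizing, $G$ still acts on $\Omega_S$ by $\alpha_g\beta_g$. So a degree-$m$ monomial $x^ay^bz^cw^\epsilon$ is a $G$-invariant section of $mK_S$ if and only if $(a,b)+(m,m)\in L$, not $(a,b)\in L$. Your ``base monomial with $(a,b)\in L$ filled up with $z$ or $w$'' is in general not invariant.

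A sanity check shows your criterion cannot be right. In the extremal case $L=10\Z\oplus 10\Z$, it would make $z^{13}w$, $x^{10}z^{8}w$ and $y^{10}z^{8}w$ invariant in degree $31$. Their ratios $x^{10}/z^5$ and $y^{10}/z^5$ generate $\C(T)^G$, so your argument would also ``prove'' the case $|G|=100$, contradicting Proposition~\ref{prop: 31KdoesnotdefIitfib}.

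What is actually needed for $m=31$ is a nonnegative $p\in -(m,m)+L_0$ with $|p|\le 16$, so that $|p|+|s|+5\le 31$ for $s\in\{0,u,v\}$. This is exactly where $|G|<100$ enters:
\begin{itemize}
\item Lemma~\ref{lem: latticegenerator} and the evenness of $|p|$ give $|p|\le 16$ unless $|u|=|v|=10$.
\item In that case write $u=(a,10-a)$ and $v=(a+d,10-a-d)$. Then $|G|=[\Z^2:L_0]=10d$, so $d\le 9$.
\item Now $p=(r,8-r)$ with $0\le r<d$ and $r\equiv 7a-31 \pmod d$ works.
\end{itemize}
Your proposal never locates this step. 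The remedies you suggest for odd $m$ are not available. A ``spare degree'' $m-30\ge 1$ cannot be filled, since $z$ and $w$ have degrees $2$ and $5$. Trading $x^{10}$ for $z^5$ via $F_{10}$ is not a valid move, because $w^2=F_{10}$ is the only relation, and it would not lower the degree anyway.
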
 
\begin{proof}
Since $T$ is $(1,2)$-surface, we can represent $T$ as hypersurface $\{w^2=F_{10}(x,y,z)\} \subset \Pp(1,1,2,5)$ as described in Lemma \ref{lem: whenT12G100} (1).
By  Lemma \ref{lem: PS17}, 
we may assume $G$ acts on $x,y,z,w$ by scalars, and write the action of $G$ as
\[
 g[x:y:z:w]=[\alpha_gx:\beta_gy:z:w],\qquad \alpha_g,\beta_g\in \C^*.
\]
Then $g.\Omega_S=\alpha_g\beta_g \cdot\Omega_S$, where $\Omega_S$ is the canonical form on smooth minimal model $S$.

There are two linearly independent nonzero exponent vectors $u,v$
among monomials occurring in $F_{10}(x,y,z)$. 
Indeed, if not, denote $u:=(a,b)\in \Z^2$.
Consider $(w^2-F_{10})$-invariant actions
\[
\lambda:[x:y:z:w] \mapsto [\lambda^bx:\lambda^{-a}y:z:w], \qquad \lambda \in \C^*, 
\]
which generates an infinite subgroup of $\Aut(S)$.
This contradicts the finiteness of $\Aut(S)$ for general type $S$.
Denote $L_0:=\Z u+\Z v$ with $|u|,|v|\leq 10$.

Define the lattice
\[
 L=\{(a,b)\in\mathbb Z^2:
             \alpha_g^a\beta_g^b=1\text{ for all }g\in G\}.
\]
Define
\[
 \Lambda=\{(a,b)\in\mathbb Z^2:a+b\equiv0\pmod2\},
 \qquad K=L\cap\Lambda.
\]
Then $L_0\subseteq K\subseteq L$.

If $L_0\subsetneq L$, then
\[
|G|=[\Z^2:L]\leq \frac{1}{2}[\Z^2:L_0] \leq 50,
\]
where the last inequality holds by the proof of Lemma \ref{lem: whenT12G100} (2).
Then $|31K_X|$ defines the Iitaka fibration by Lemma \ref{lem: Godsurf}.
So we might assume $K=L=L_0$, $L\subseteq \Lambda$ and $L$ is generated by $u,v\in L_0$.

By definition of $L$, $\C(S)^{G}=\C\bigl(x^{a}y^{b}z^cw^d|a,b,c,d\in \Z,a+b+2c+5d=0,(a,b)\in L\bigr)$.
Since $2\mid a+b$ by $L=K\subseteq \Lambda$, modulo $2$ one has $2\mid d$. 
By relation $w^2=F_{10}(x,y,z)$,  
\[\C(S)^{G}=\C\left(x^{a}y^{b}z^c(F_{10}(x,y,z))^{\frac{d}{2}}\right)=\C\left(x^{a'}y^{b'}z^{c'}\right)\]
where $a'+b'+2c'=0$.
Moreover, $(a',b')\in L$ as both $(a,b)$ and exponents of monomials appearing in $F_{10}(x,y,z)$
belong to $L$.
Replacing $a,b,c$ by $a',b',c'$, we might assume $\C(S)^{G}\subseteq\C(x^{a}y^{b}z^c|a+b+2c=0,(a,b)\in L)=\C\bigl(x^{a}y^{b}z^{-\frac{(a+b)}{2}}\bigr|(a,b)\in L) $.

We also claim that, for any $m\geq 31$ there is
$p\in\mathbb Z_{\ge0}^2$ with $p+(m,m)\in L_0$ such that
\begin{equation}\label{eq: |p|bd}
|p|\leq \begin{cases}
 16  & \text{ if }m=31; \\
18   & \text{ if }m\geq 32.\\
\end{cases}  
\end{equation}
Since $L=L_0$ is generated by $u,v$ and $|u|,|v|\leq 10$,
there is a representative $p$ of
$(-m,-m)+L_0$ with
\[
|p|<|u|+|v|\leq 20
\]
by Lemma \ref{lem: latticegenerator}.
As $2\mid |p|$, we have $|p|\leq 18$ for any $m$.

Then we assume $m=31$ and prove $|p|\leq 16$ by discussing $|u|,|v|$. 
If $|u|+|v|\leq 18$, then there is a representative $p$ of
$(-31,-31)+L_0$ with
\[
|p|<|u|+|v|\leq 18
\]
by Lemma \ref{lem: latticegenerator}.
Moreover, $|p|$ is even, so $|p|\leq 16$.

Otherwise $|u|=|v|=10$. After interchanging
$u,v$, write
\[
u=(a,10-a),\qquad
v=(a+d,10-a-d),\qquad 1\leq d\leq10.
\]

If $d=10$, then $|G|=[\Z^2:L]=[\Z^2:L_0]=100$.
This contradicts assumption.
Thus $d \leq 9$.
There is an integer $0\leq r<d\leq 9$ satisfying
$r\equiv 7a-31\pmod d$.
Set
$p:=(r,8-r)$.
Since $r\leq 8$, this vector is nonnegative, and
\[
p+(31,31)
=
7u+\frac{r+31-7a}{d}(v-u)\in L_0=L.
\]
So we have proved the claim.

For every $s\in\mathcal \{0,u,v\}\subseteq \Z^2$, define 
\begin{equation*}
 M_s=x^{p_1+s_1}y^{p_2+s_2}
      z^{(m-5\delta-|p+s|)/2}w^\delta.             
\end{equation*}
Since $m\equiv \delta\bmod 2$, exponent of $z$ above is an integer. 
 By equation \eqref{eq: |p|bd} bounding $|p|$ and calculations,
\[
|p+s|\leq |p|+|s| \leq |p|+10 \leq m-5\delta,\]
where  $m\geq 31$, $\delta\in\{0,1\}$ with $m\equiv \delta \bmod 2$, and $|s|\leq 10$.
Thus all exponents in $M_s$ are nonnegative.
Moreover, $M_s$ is $G$-invariant as
$p+(m,m)\in L$ and $s\in \{0,u,v\}$.
Thus $M_s\in H^0(S,mK_S)^G $.

Consider the ratios
\begin{equation*}
 \frac{M_s}{M_0}=x^{s_1}y^{s_2}z^{-|s|/2}\in \mathbb C(S)^G .                 \label{eq:12-base-ratios}
\end{equation*}
Since $s \in \{0,u,v\}$ which generates $L$, we finally obtain
\begin{equation*}
 \mathbb C(S)^G
  \subseteq\mathbb C\bigl(x^{s_1}y^{s_2}z^{-|s|/2}|s\in \{0,u,v\}\bigr)=\C \left( \left\{ \frac{M_s}{M_0} \right\}_{s\in \{0,u,v\}}\right) \subseteq  \mathbb C(S)^G.    
\end{equation*}
Hence the ratios of sections in $H^0(S,mK_S)^G$ generate $\C(S)^G$ and $|mK_X|$ defines the Iitaka fibration  for any $m\geq 31$ by Lemma \ref{lem: reducemKbirattoS/G} (2).

\end{proof}

\begin{proposition}
\label{prop: invariantsectionring-23}
Suppose $(K_S^2,p_g(S),q(S))=(2,3,0)$.
For every finite abelian subgroup $G\leq \operatorname{Aut}(S)$ and
every integer $m\geq 31$, ratios of sections in
$H^0(S,mK_S)^G$ generate $\mathbb C(S)^G$.
\end{proposition}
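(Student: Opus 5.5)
We mimic the proof of Proposition \ref{prop: invariantsectionring-12}, replacing the weighted hypersurface description of a $(1,2)$-surface by that of a $(2,3)$-surface. The first step is to fix an explicit model of the canonical ring. For $(K_S^2,p_g,q)=(2,3,0)$ the canonical map is a morphism of degree two onto $\Pp^2$. Hence the canonical model $T$ is a double cover of $\Pp^2$ branched along an octic, that is,
\[
T\cong\{w^2=F_8(x,y,z)\}\subset\Pp(1,1,1,4),\qquad R(T,K_T)=\C[x,y,z,w]/(w^2-F_8),
\]
with $x,y,z$ of degree one and $w$ of degree four. By Lemma \ref{lem: PS17}, after a graded change of coordinates $G$ acts on $x,y,z,w$ by scalars. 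Comparing the action on $w^2$ with a monomial of $F_8$ and rescaling by a common scalar, we may normalize so that $g$ acts on $x,y,z$ by $(\alpha_g,\beta_g,\gamma_g)$ and on $w$ by a sign $\pm$ times a fixed character. Passing to the affine chart, $\C(S)^G$ is generated by invariant Laurent monomials $x^ay^bz^cw^d$ of total degree zero. Using $w^2=F_8$ we eliminate even powers of $w$, so the only elements still involving $w$ are those of the form $w\cdot(\text{monomial})$.

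The second step is the lattice setup. Let $L\subset\{(a,b,c)\in\Z^3: a+b+c\equiv 0 \pmod 8\}$ be the lattice of exponents of invariant monomials in $x,y,z$, or more conveniently its projection to $\Z^2$ via $(a,b)$ with $c$ determined by degree. Finiteness of $\Aut(S)$, argued exactly as in Lemma \ref{lem: whenT12G100} with a one-parameter torus, shows that the exponent vectors of monomials of $F_8$ span a rank-two lattice $L_0\subseteq L$, generated by vectors of norm at most $8$. Lemma \ref{lem: latticegenerator} then shows that $L$ is generated by $L_0$ together with nonnegative vectors of norm at most $8$. It also gives, for every coset, a nonnegative representative $p$ of norm less than $16$. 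In this case $|G|$ is at most the index, which is at most about $64$ times a factor two coming from $w$, so it is small.

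The third step constructs sections. For $m\geq 31$ we choose a nonnegative $p$ with $p$ shifted by the weight of the appropriate power of $\Omega_S$ lying in the invariant lattice; note that $g$ acts on $H^0(mK_S)$ with the character $\det$ to the power $m$. We then set
\[
M_s=x^{p_1+s_1}y^{p_2+s_2}z^{m-4\delta-|p+s|}w^\delta,\qquad \delta\in\{0,1\},
\]
for $s$ ranging over $0$ and a generating set of $L$, and also over a generator carrying $w$ when the invariant field contains $w\cdot(\text{monomial})$. Nonnegativity of the $z$-exponent needs $|p|+|s|+4\leq m$, and $15+8+4<31$, so this holds with room to spare. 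The ratios $M_s/M_0$ then recover all generators of $\C(S)^G$.

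The main obstacle is the bookkeeping of characters. We have to handle the extra $\Z/2$ from $w$, check that semi-invariance of $F_8$ forces all its monomials into a single coset, and confirm that $p$ can be chosen simultaneously for both parities of $\delta$, which amounts to bounding $|p|$ uniformly for all $m\geq 31$. If a degenerate case arises, say $L_0$ of small index, we instead fall back on the bound $|G|\leq 70V$ from Proposition \ref{prop: ratpencilcase}(2) and an argument in the style of Lemma \ref{lem: Godsurf}.
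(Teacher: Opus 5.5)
Your proposal has a genuine gap at the lattice step, and it is exactly where the $(1,2)$ argument fails to transfer. In $\Pp(1,1,2,5)$ a single weighted rescaling, $\lambda_g=\delta_g/\gamma_g^2$, fixes $z$ and $w$ at the same time, because $z^5$ and $w^2$ have the same degree. So every monomial of $F_{10}$ is invariant and its exponent vector lies in $L$.

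In $\Pp(1,1,1,4)$ the rescaling $(\lambda,\lambda,\lambda,\lambda^4)$ can fix $z$. But then $w$ and all monomials of $F_8$ are only semi-invariant, with the character of $w^2$. Fixing $w$ instead needs a fourth root of $g(w)$, which is not a homomorphism on $G$ in general, and it leaves none of $x,y,z$ fixed. Consequently, in your rank-two picture the exponent vectors of $F_8$ lie in one nontrivial coset of the degree-zero invariant lattice $L$, not in $L$. Only their differences lie in $L$, and those have negative entries, so Lemma~\ref{lem: latticegenerator} does not apply. Your claims that $L$ is generated by nonnegative vectors of norm at most $8$, and that every coset has a representative with $|p|<16$, are therefore unsupported.

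Concretely, take the Klein-type octic $F_8=x^7y+y^7z+z^7x$ with the $\Z/43$ generated by $(x,y,z)\mapsto(\zeta_{43}^7x,\zeta_{43}y,z)$, lifted to $T$. Then $L=\{(a,b):43\mid 7a+b\}$. The exponent vectors $(7,1),(0,7),(1,0)$ all lie outside $L$, and $(6,1)$ is the only nonzero nonnegative vector of norm at most $8$ in $L$.

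What you call the main obstacle (invariants carrying $w$, and choosing $p$ for both values of $\delta$) is left unresolved. The fallback is not available either. Lemma~\ref{lem: Godsurf} assumes $\vol(T)=1$. Proposition~\ref{prop: nVbdimply32K} needs $|G|\leq 48V+24=120$, whereas for $(2,3,0)$ one only has $|G|\leq 70V=140$, and $|G|=128$ occurs, e.g.\ $(\Z/8)^2\times\Z/2$ acting on $w^2=x^8+y^8+z^8$. That is precisely why this case needs its own argument.

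The missing idea is to make $w$ invariant by enlarging the group rather than by rescaling. The paper works on the chart $U=\{w\neq 0\}\cap T$ and its degree-four cover $\widetilde U=\{F_8=1\}\subset\mathbb A^3$. The group is $\widetilde G=\{(g,t)\in G\times\C^*: t^4=g(w)\}$, acting on coordinates by $\alpha_i=g(x_i)t^{-1}$. Then $\C(T)^G=\C(\widetilde U)^{\widetilde G}$ is generated by Laurent monomials $x^a$ with $a$ in a rank-three lattice $L\subset\Z^3$. Every monomial of $F_8$ is now invariant, so three independent exponent vectors $v_i$ with $|v_i|=8$ span $L_0\subseteq L$.

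Lemma~\ref{lem: latticegenerator}, now with $|v_1+v_2+v_3|=24$, gives a nonnegative $p\in -m(1,1,1)+L_0$ with $|p|\leq 23$ and generators $s$ of $L$ with $|s|\leq 12$. Since $23+12>31$ the margin is tight, and the paper closes it with a congruence. Because $|v_i|=8$, one gets $|p|+3m\equiv 0\pmod 8$, hence $4\mid m-|p|$, hence $m-|p|\geq 12$ for every $m\geq 31$. The sections $M_s=x^{p+s}w^{(m-|p+s|)/4}$ are then invariant $m$-canonical sections, and the ratios $M_s/M_0$ pull back to the generators $x^s$. Here $w$ rather than $z$ pads the degree, so no parity bookkeeping in $\delta$ arises. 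Your estimate $15+8+4<31$ comes from the rank-two picture and hides that this congruence is needed.
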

\begin{proof}

By \cite[Lemma 1.1]{HE76},
we can write the canonical model $T$ and section ring of $S$ as
\[
 T=\{w^2=F_8(x_0,x_1,x_2)\}\subset\Pp(1,1,1,4),\qquad
 R(S,K_S)=\C[x_0,x_1,x_2,w]/(w^2-F_8).
\]
By Lemma \ref{lem: PS17}, we might assume
 $G$ acts on coordinates of affine subset $U:=\{w\neq 0\}\cap T$ by scalars.

Consider the degree-$4$ cover
\[
u_4: \widetilde{U}=\{F_8(x_0,x_1,x_2)=1\}\longrightarrow
       U, \qquad x\longmapsto[x:1].
\]
between affine charts,
and the finite abelian group
\[
 \widetilde G=\{(g,t)\in G\times\mathbb C^*\mid t^4=g(w)\}.
\]
acting on $\widetilde{U}$ corresponding to $G$.
Then $u_4$ induces isomorphisms $\widetilde G/(\Z/4\Z) \cong G$,  $\C(\widetilde{U})^{\widetilde G}\cong \C(U)^{G}=\C(T)^{G}$.
So it suffices to show that $u_4^*(H^0(S,mK_S)^G)$ generates $\C(\widetilde{U})^{\widetilde G}$.

Rescaling the weighted coordinates by
$t^{-1}$, one can write the action for all $(g,t)\in \widetilde{G}$ as
\[
 (x_0,x_1,x_2)\longmapsto
  (\alpha_0x_0,\alpha_1x_1,\alpha_2x_2),
 \qquad \alpha_i=g(x_i)\cdot t^{-1}.
\]

Define
\[
 \Lambda=\{a\in\Z^3:|a|\equiv0\pmod4\},\qquad
 L=\{a\in\Z^3|\alpha_0^{a_0}\alpha_1^{a_1}\alpha_2^{a_2}=1 \text{ for all }(g,t)\in \widetilde{G}\}.
\]
Then $\C(\widetilde{U})^{\widetilde G}=\C\bigl(x_0^{a_0}x_1^{a_1}x_2^{a_2}|(a_0,a_1,a_2)\in L\bigr)$, $L\subseteq \Lambda$, and a monomial $x^a=u_4^*(x^aw^d) \in u_4^*(H^0(S,mK_S)^G)$ of degree $m$, if and only if 
\begin{equation}
 a+m(1,1,1)\in L.                          \end{equation}

Let $\mathcal A$ be the set of exponent vectors of monomials occurring in $F_8$. 
Then $\mathcal A\subset L$. 
Moreover, $\mathcal A$ spans $\mathbb R^3$,
otherwise its common annihilator in the diagonal torus $(\C^*)^3$ would contain a
positive-dimensional torus acting faithfully on $T$, contradicting
finiteness of $\Aut(T)$.

Choose linearly independent $v_1,v_2,v_3\in\mathcal A$ and set
\[
 L_0=\sum_i\Z v_i
\]
where $|v_i|=8$ for each $i$.

Fix $m\geq 31$, set $e=(1,1,1)$, and choose
$p\in -me+L_0$ with $|p|\leq 23<24$ by Lemma \ref{lem: latticegenerator}. 
Then $|p|+3m\equiv0\pmod8$.
So we have $4\mid m-|p|$.
If $m\geq 32$, then $m-|p|\geq 32-23=9$ and $m-|p|\geq 12$ by $4\mid m-|p|$.
If $m=31$, then $|p|+93\equiv0\pmod8$ and $|p|\leq 19<24$. 
So $m-|p|\geq 31-19= 12$.

By Lemma \ref{lem: latticegenerator} applied on $L/L_0$ and $|v_i|=8<12$, there is a finite subset $\mathcal{S}$ generating $L$, with $|s|\leq 12$ where $s\in \mathcal{S}$.
Replacing $\mathcal{S}$ by $\mathcal{S} \cup \{0\}$, we may assume $0\in \mathcal{S}$.

For any $s\in \mathcal{S}$, set
\[
 M_s:=x^{p+s}w^{(m-|p+s|)/4},\qquad s\in\mathcal S.
\]
Since $m-|p|\geq 12\geq |s|$, $M_s$ is nonzero $m$-canonical section with nonnegative exponents.
Since $p+s+me\in L$, $M_s$ is $G$-invariant and $M_s\in H^0(S,mK_S)^G$.

For any $s\in \mathcal{S}$, consider
\[x^s=u_4^*\left(\frac{x^s}{w^{\frac{|s|}{4}}}\right)=u_4^*\left( \frac{M_s}{M_0} \right)\in u_4^*\bigl(\C\bigl(H^0(S,mK_S)^G\bigr)\bigr).\]
Since $s\in \mathcal{S}$ generates $L$,
$\left\{u_4^*(\frac{M_s}{M_0}): s\in \mathcal{S}\right\}$ generates $\C(\widetilde{U})^{\widetilde{G}}=\C(U)^G=\C(T)^G$.
\end{proof}

\subsection{On $(1,1)$-surfaces}
In this part, we prove Proposition \ref{prop: pg1-volume1-abelian}, which together with Lemma \ref{lem: Godsurf} gives effective Iitaka fibration for generically isotrivial case with a $(1,1)$-surface factor.
\begin{proposition}\label{prop: pg1-volume1-abelian}
Let $S$ be a smooth minimal projective surface of general type with
$V=p_g=1$ and $q=0$. 
Let $G\leq \Aut(S)$ be a finite abelian subgroup. 
Then $|G|\leq 36$.
\end{proposition}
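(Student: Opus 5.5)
The plan is to imitate the lattice argument in the proof of Lemma \ref{lem: whenT12G100}, with the weighted hypersurface replaced by the standard model of a surface with $K^2=p_g=1$, $q=0$. By Catanese's and Todorov's description, the canonical model $T$ of $S$ is a complete intersection of two sextics in $\Pp(1,2,2,3,3)$. Its canonical ring is
\[
R(S,K_S)=\C[x,y_1,y_2,z_1,z_2]/(f_6,g_6),\qquad \deg x=1,\ \deg y_i=2,\ \deg z_j=3 .
\]
By Lemma \ref{lem: PS17}, after a graded coordinate change, $G$ acts on $x,y_1,y_2,z_1,z_2$ by scalars. The degree-$6$ part of the ideal $(f_6,g_6)$ is a two-dimensional $G$-representation. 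Since $G$ is abelian, I may take $f_6,g_6$ to be semi-invariant, say with characters $\chi_f,\chi_g$. Using the $\C^*$-ambiguity of weighted coordinates, I normalize the action so that $x$ is fixed. Then $G$ embeds into the diagonal torus $(\C^*)^4$ acting on $(y_1,y_2,z_1,z_2)$, and $G$ lies in the finite subgroup cut out by the character relations.

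The key steps are as follows. First, I define the lattice
\[
L=\{a\in\Z^4 : \text{the character } \textstyle\prod t_i^{a_i} \text{ is trivial on } G\},
\]
so that $|G|\le[\Z^4:L]$. Second, I observe that if two monomials $m,m'$ occur in the same semi-invariant relation, then the difference of their exponent vectors (with the $x$-exponent discarded, since $x$ is fixed) lies in $L$. Let $L_0\subseteq L$ be the lattice generated by all such differences from $f_6$ and $g_6$. Exactly as in Lemma \ref{lem: whenT12G100}, $L_0$ must have rank $4$; otherwise a positive-dimensional subtorus would preserve $f_6,g_6$ and act on $T$, contradicting the finiteness of $\Aut(T)$. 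Third, I use the canonical-singularity condition at the coordinate points to force certain monomials to appear. At $P_{z_j}=[0:0:0:\ast:\ast]$ the quadratic form in $z_1,z_2$ from the two relations must be nondegenerate enough, and at the points of the $y$-line one needs pure cubics in $y_1,y_2$. Otherwise $T$ would contain a non-canonical singular point, for example a $\frac13$-quotient point lying on the surface with a bad tangent cone. This should yield monomials such as $z_1^2,z_2^2$ (or $z_1z_2$) together with $y_1^3,y_2^3$ (or mixed cubics), as well as terms $x^6,x^4y_i,x^2y_iy_j,xy_iz_j,\dots$.

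Fourth, I run through the finitely many configurations of which "forced" monomials lie in $f_6$ and which in $g_6$. In each configuration I pick four independent differences, for instance $2e_{z_1}-3e_{y_1}$, $2e_{z_2}-3e_{y_2}$, $e_{z_1}-e_{y_1}$ (from an $xy_1z_1$ versus $x^6$ type pair), and so on. I then bound $[\Z^4:L_0]=|\det|$ by an AM–GM/Hadamard-type estimate, as in $|ad-bc|\le(a+b)(c+d)$ in Lemma \ref{lem: whenT12G100}. The expectation is that the worst case is a product of the form $2\cdot2\cdot3\cdot3=36$, coming from independent orders-$2$ actions on the $z_j$ and orders-$3$ actions on the $y_i$.

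The main obstacle is the third and fourth steps. One has to make precise which monomials are forced by canonicity of $T$, because a priori the relations are only constrained modulo each other, and the pairing of forced monomials between $f_6$ and $g_6$ produces several cases. I would first try to reduce to the generic shape $f_6=z_1^2+\cdots$, $g_6=z_2^2+\cdots$ by a $G$-equivariant linear change in the $z$-variables, which is possible when $\chi_f\ne\chi_g$. I would treat $\chi_f=\chi_g$ separately, where one gets extra relations and hence a smaller $L_0$-index. If the determinant estimate overshoots $36$ in some degenerate case, my fallback is to use the $G$-invariant bicanonical pencil $|2K_S|$, which is rational since $q=0$. Combining it with the Hurwitz-type bound from Lemma \ref{lem: calcu}, as in Proposition \ref{prop: ratpencilcase}(5), together with the divisibility constraint $h\mid|G|$, should exclude the few remaining orders between $37$ and $88$.
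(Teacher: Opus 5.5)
Your framework matches the paper's. You take the complete-intersection model in $\Pp(1,2,2,3,3)$ and use Lemma \ref{lem: PS17} with a semi-invariant eigenbasis of the sextics. You normalize $x$ to be fixed and form the lattice $L_0$ of exponent differences in the chart $x=1$. You get $\operatorname{rank}L_0=4$ from finiteness of $\Aut(T)$, and then $|G|\le[\Z^4:L_0]$.

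The gap is the step that actually produces $36$: you never bound $[\Z^4:L_0]$, and the tools you name cannot do it. The estimate $|ad-bc|\le(a+b)(c+d)$ in Lemma \ref{lem: whenT12G100} works because there $G$ can be normalized to fix $z$ and $w$. So the nonnegative exponent vectors themselves lie in the lattice. Here the generators are differences $a-b$ with entries of both signs, and $x^6$ need not occur, so you cannot reduce to nonnegative vectors.

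For such vectors the available crude bounds are too weak. Hadamard's inequality gives products of Euclidean norms such as $\lVert(3,0,-2,0)\rVert=\sqrt{13}$. The bound $|\det A|\le\prod_i\sum_j|a_{ij}|$ gives only $12^4/(2\cdot2\cdot3\cdot3)=576$, since a difference can have weighted $\ell^1$-norm $12$.

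Your Steps 3--4 (forcing monomials via canonicity, then a case-by-case determinant computation) are only announced, not carried out. The fallback does not work either. Proposition \ref{prop: ratpencilcase}(5) stops at $88$. The divisibility $h\mid|G|$ in Lemma \ref{lem: curvebound} concerns the stabilizer of a curve through a very general point, so it cannot exclude any order between $37$ and $88$.

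The missing idea makes Steps 3--4 unnecessary. In the chart $x=1$, every monomial of $f_i$ has exponent vector in the weighted simplex $P=\{a\in\mathbb{R}_{\ge0}^4:\ 2a_1+2a_2+3a_3+3a_4\le 6\}$. Hence every generator of $L_0$ lies in $P-P$. Scale coordinates by $w=(2,2,3,3)$. Then each row $w\odot(a-b)$ has positive entries summing to at most $6$, and the absolute values of its negative entries also sum to at most $6$.

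Schinzel's determinant inequality, $|\det A|\le\prod_i\max\bigl(\sum_j a_{ij}^{+},\sum_j a_{ij}^{-}\bigr)$, then gives $|\det|\le 6^4$ for any four independent differences. Therefore $|G|\le[\Z^4:L_0]\le 6^4/(2\cdot2\cdot3\cdot3)=36$. This holds uniformly in the configuration of monomials and uses no canonicity; it is the paper's proof.

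Your guess about the extremal case is right. Two generic linear combinations of $x^6,y_1^3,y_2^3,z_1^2,z_2^2$ define a smooth $(1,1)$-surface with a faithful $\Z_3^2\times\Z_2^2$-action. So whatever determinant bound you use must be exactly sharp there.
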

\begin{proof}
Let $T=\operatorname{Proj}R(S,K_S)$ be the canonical model of $S$.
The classification of canonical model of $(1,1)$-surfaces
(See \cite[Proposition 1.6 and Theorem~1.7]{Catanese80} for example) implies
\begin{equation}\label{eq:pg1-volume1-ring}
\begin{aligned}
 R(S,K_S)&\simeq\mathbb C[x,y_1,y_2,z_1,z_2]/(F_1,F_2),\\
 \deg(x,y_1,y_2,z_1,z_2)&=(1,2,2,3,3),\qquad
 \deg F_1=\deg F_2=6.
\end{aligned}
\end{equation}

By Lemma \ref{lem: PS17}, we can choose the five generators $x,y_1,y_2,z_1,z_2$ of $R(S,K_S)$ to be semi-invariant vectors of $G$.
The degree-six part of the defining ideal is a two-dimensional $G$-invariant vector space.
Since $G$ is finite and abelian, we may choose an eigenbasis $F_1,F_2$ of this space so that they are semi-invariant.

Take the affine chart $U:= \{x\neq 0  \}\cap T$.
Write polynomial $f_i(u):=F_i(1,u)$.
Let $S_i\subseteq \Z_{\geq 0}^4$ be the set of exponents of monomials $y_1,y_2,z_1,z_2$ occurring in $f_i$.
Define
\[
 L=\left\langle a-b:
       a,b\in S_i\text{ for some }i\in\{1,2\}\right\rangle
       \subseteq\mathbb Z^4.
\]
Since $f_i$ is semi-invariant as $G$ acts on $F_i$ by scalars, the image of $G$ is contained in
\[
 D_L=\{\lambda\in(\mathbb C^*)^4:
                   \lambda^\ell=1\text{ for every }\ell\in L\}.
\]

The $D_L$-action on $U$ is also faithful, and
\[
 \operatorname{rank}L=4,
 \qquad
 |D_L|=[\mathbb Z^4:L].
\]
The last equality follows directly from
$D_L\simeq\operatorname{Hom}(\mathbb Z^4/L,\mathbb C^*)$. 

Since $\deg F_1=\deg F_2=6$, every vector in
$S_1\cup S_2$ belongs to the weighted simplex
\[
 P=\{a\in\mathbb R_{\geq0}^{4}:
                2a_1+2a_2+3a_3+3a_4\leq6\}.
\]
So for any integral matrix $M$ generated by four linearly independent vectors in $(S_1-S_1)\cup (S_2-S_2)\subseteq P-P$, one has
\[
 |\det M|\leq\frac{6^4}{2\cdot2\cdot3\cdot3}=36
\]
by Schinzel's determinant inequality
(See \cite{Schinzel77}).
Hence we have
\begin{equation*}
 |G|\leq |D_L|= [\mathbb Z^4:L]
       \leq [\mathbb Z^4:L_0]=|\det M|\leq 36,
\end{equation*}
where $L_0\subseteq  L$ is a sublattice spanned by generators of $M$.
\end{proof}

\addtocontents{toc}{\protect\setcounter{tocdepth}{2}}
\section{Examples}
The following example shows the sharpness of Theorem \ref{mainthm: eff-iitaka} (1) and Theorem \ref{mainthm: iitaka-vol} (1).
\subsection{Sharpness of $1/100$ and $32K$}
\label{subsec: 32K}
\begin{example}\label{ex: 32Kissharp}
Let
\[
T
=
\left\{
w^2=z^5+x^{10}+y^{10}
\right\}
\subset
\mathbb P(1,1,2,5).
\]
and the defining equation has weighted degree $10$.

Consider the diagonal quotient
\[
X:=(T\times E)/(\Z_{10} \times \Z_{10}),
\]
where $g=(s,t)\in\Z_{10} \times \Z_{10}$ acts on $T\times E$ by 
\[
g\cdot([x:y:z:w],e)
:=
([\zeta_{10}^sx:\zeta_{10}^ty:z:w],g.e).
\]
where $g.e$ is corresponding translation of the isomorphism $\Z_{10} \times \Z_{10}\cong E[10]$.

Then it is easy to see that $\pi:T\times E \to X$ is an \'etale quotient so that $X$ is smooth $3$-fold with $\kappa(X)=2$ and $\Iv(X)=100^{-1}\Iv(T\times E)=\frac{1}{100}$.
Moreover, $\Z_{10} \times \Z_{10}$ acts on the second factor $E$ by translations and acts trivially on one-forms of $E$.
So the Albanese morphism of $X$ does not contract $E$.
Since $E$ is general fiber of the Iitaka fibration, the Albanese morphism of $X$ does not factor through the Iitaka fibration.

We can also calculate the canonical ring of $X$ as follows.
\end{example}

\begin{lemma}\label{lem: listinvform}
The section ring $R(X,K_X)$ is the following free graded $B$-module of rank $10$
\begin{equation}\label{eqsectionringX}
 R(X,K_X)=\bigoplus_{s=0}^{9}h_sB,   
\end{equation}
where $B:=\C [x^{10},y^{10},z^5]$, and each monomial $h_s$ appears in linear system $|mK_X|$ when $m\leq 31$, as follows
\begin{equation} \label{lisths}
\begin{array}{c|c|c} 
\deg_K h_s \pmod{10} & h_s & \deg_K h_s\\ \hline
0 & 1 & 0\\
1 & x^9y^9wz^4 & 31\\
2 & x^8y^8z^3 & 22\\
3 & x^7y^7wz^2 & 23\\
4 & x^6y^6z & 14\\
5 & x^5y^5w & 15\\
6 & x^4y^4z^4 & 16\\
7 & x^3y^3wz^3 & 17\\
8 & x^2y^2z^2 & 8\\
9 & xywz & 9
\end{array}
\end{equation}
\end{lemma}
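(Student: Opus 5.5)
The plan is to compute $R(X,K_X)$ as an invariant ring. Since $G=\Z_{10}\times\Z_{10}$ acts on $E$ by translations, $H^0(E,tK_E)$ is spanned by the $G$-invariant form $(dz)^{\otimes t}$. Hence, as in the proof of Lemma \ref{lem: reducemKbirattoS/G},
\[
H^0(X,mK_X)\cong H^0(S,mK_S)^G=R(T,K_T)_m^G .
\]
By Lemma \ref{lem: whenT12G100}(1), $R(T,K_T)=\C[x,y,z,w]/(w^2-x^{10}-y^{10}-z^5)$. Using the relation, this ring is a free $\C[x,y,z]$-module with basis $\{1,w\}$, and $\C[x,y,z]$ has monomial basis $x^ay^bz^c$. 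We must also check how $G$ acts on $K_S$. Taking the generator $dx\wedge dy/w$ on the affine chart $z=1$, $g=(s,t)$ multiplies it by $\zeta_{10}^{s+t}$. So the $m$-canonical section represented by the degree-$m$ element $x^ay^bz^cw^\delta$ transforms by $\zeta_{10}^{s(a+m)+t(b+m)}$, possibly up to a sign convention that I will fix. This twist is exactly the "$p+(m,m)\in L$" condition used in Proposition \ref{prop: invariantsectionring-12}.

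The monomial $x^ay^bz^cw^\delta$ (with $\delta\in\{0,1\}$) of degree $m=a+b+2c+5\delta$ is therefore $G$-invariant if and only if $a\equiv b\equiv -m \pmod{10}$. Now let $B=\C[x^{10},y^{10},z^5]$. Each of $x^{10},y^{10},z^5$ is invariant and has degree $\equiv 0\pmod{10}$. Every invariant monomial can be written uniquely as an element of $B$ times a reduced monomial $x^{a}y^{b}z^{c}w^\delta$ with $0\le a,b\le9$ and $0\le c\le4$.

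In that reduced monomial, invariance forces $a=b\equiv -m \pmod{10}$, and $m\equiv 2a+2c+5\delta\pmod{10}$ gives $3a+2c+5\delta\equiv 0\pmod{10}$. For each residue $r=\deg\bmod 10$, this has a unique solution $(a,c,\delta)$ with $0\le a\le 9$, $0\le c\le 4$, $\delta\in\{0,1\}$, namely $a\equiv -r$ with $c,\delta$ determined by $2c+5\delta\equiv r-2a$. The solutions are exactly the ten $h_s$ in the table, for example $r=1$ gives $a=9$, $c=4$, $\delta=1$. Checking the table's degrees is a one-line computation per residue.

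It follows that the invariant monomials, modulo $B$, are exactly the $h_sB$. These are free summands, because the $h_s$ lie in distinct residue classes modulo 10 and the monomial basis of $\C[x,y,z]\oplus w\C[x,y,z]$ is $B$-adapted. This gives \eqref{eqsectionringX}. The main obstacle is getting the character of the canonical form right, i.e.\ the shift $a\equiv -m$ rather than $a\equiv 0$. I will verify it independently by checking that $p_g(X)=h^0(S,K_S)^G$ agrees with $h^0(X,K_X)$ computed from the Albanese structure. With the shift, degree 1 has no invariants, consistent with the table starting at degree 31 for residue 1.
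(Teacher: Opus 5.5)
Your proposal is correct and follows essentially the paper's route. You identify $H^0(X,mK_X)$ with $H^0(T,mK_T)^G$, and use that the canonical form has character $\zeta_{10}^{s+t}$ to get $a\equiv b\equiv -m\pmod{10}$; the paper's form is the residue $\rho=\operatorname{Res}_T(\Omega/F)$, which is a nonzero multiple of your $dx\wedge dy/w$ on $z=1$. You then reduce exponents modulo $x^{10},y^{10},z^5$ (and $w^2\in B$) to the ten listed monomials, and your CRT uniqueness argument together with the monomial $B$-basis of $R(T,K_T)$ makes the freeness over $B$ cleaner than the paper's sketch.

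The proposed $p_g$ cross-check is superfluous and would not detect the twist, since degree $1$ has no invariants with or without it, but nothing in your argument depends on that check.
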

\begin{proof}
It is easy to see that
\[
H^0(X,mK_X)=(H^0(T,mK_T)\otimes H^0(E,mK_E))^{\Z_{10} \times \Z_{10}}=
  H^0(T,mK_T)^{\Z_{10} \times \Z_{10}}.
\]
as $mK_E$ is invariant under translations.

Denote the canonical form of $\Pp(1,1,2,5)$ by
\[
\begin{aligned}
\Omega={}&
 x\,dy\wedge dz\wedge dw
-y\,dx\wedge dz\wedge dw \\
&+2z\,dx\wedge dy\wedge dw
-5w\,dx\wedge dy\wedge dz.
\end{aligned}
\]
By adjunction formula, the canonical form of $T$ is 
\[
\rho=\operatorname{Res}_{T}\!\left(\frac{\Omega}{F}\right),
\]
where $F=-w^2+z^5+x^{10}+y^{10}$ is $\Z_{10} \times \Z_{10}$-invariant.

Then $|mK_T|$ is generated by $\{P\cdot\rho^{\otimes m}\}$, where $P=P(x,y,z,w)$ is a monomial with $\deg(P)=m$.

By definition, $(s,t)\in \Z_{10} \times \Z_{10}$ acts on $\rho$ by
\[
(s,t)^*\rho=(\zeta_{10}^s\zeta_{10}^{t})\rho.
\]
Then, for a monomial
$P=x^a y^b z^c w^d$
with
$a+b+2c+5d=m\leq 31$.
The associated $m$-canonical form $P\cdot\rho^{\otimes m}$ is $\Z_{10} \times \Z_{10}$-invariant if and only if
\begin{equation}\label{eqabequiv-mmod10}
\begin{cases}
a+m \equiv 0 \pmod{10} \\
b+m\equiv0\pmod{10}
     \\
m=a+b+2c+5d.   
\end{cases} 
\end{equation}
or  
\begin{equation}\label{eqabcdequiv-mmod10}
\begin{cases}
2a+b+2c+5d \equiv 0 \pmod{10} \\
a+2b+2c +5d\equiv 0\pmod{10}
\end{cases}
\end{equation}
as $m=a+b+2c+5d$.
Then we obtain that such monomial $P$ is generated by elements listed in \eqref{lisths} together with $x^{10}, y^{10}, z^5$ by \eqref{eqabcdequiv-mmod10}.

Moreover, for each $m\geq 32$, consider monomial $P':=x^{a'} y^{b'} z^{c'} w^{d'} $ with $\deg P'=m$ and $\Z_{10} \times \Z_{10}$-invariant $P'\cdot \rho^{\otimes m}$.
Let $k_{a'},k_{b'},k_{c'},k_{d'}$ be maximal integers such that 
\[
\begin{cases}
0 \leq  a'-10k_{a'} \leq 9\\
0\leq b'-10k_{b'} \leq 9 \\
0\leq c'-5k_{c'} \leq  4   \\
0\leq d'-2k_{d'} \leq 1.                             
\end{cases}
\]
then $Q':=x^{a'-10k_{a'} } y^{b'-10k_{b'} } z^{c'-5k_{c'} } w^{d'-2k_{d'} } $ is $\Z_{10}\times \Z_{10}$-invariant with $\deg Q'\leq 9+9+4\times2+5=31$ so that $Q'$ is generated by the monomials $h_s$ listed in \eqref{lisths} and $x^{10}, y^{10}, z^5$.
Hence $P'$ is also generated by these monomials and we have proved \eqref{eqsectionringX}.

In particular, we have
\[
H^0(X,14K_X)=
\mathbb C\cdot x^6y^6z
\]
so that $|14K_X|$ is constant and it does not define an Iitaka fibration.     
\end{proof}
\begin{lemma}\label{lem: iitakabaseX}
We can calculate the $10$-th Veronese subring as
\begin{equation} \label{eq10qKX}
R(X,K_X)^{(10)}
=
\bigoplus_{q\geq0}H^0(X,10qK_X)
\cong
\C [x^{10},y^{10},z^5].
\end{equation}
Then the Iitaka fibration of $X$ is
\[
f:X\longrightarrow\mathbb P^2,
\qquad
[s,e]\longmapsto
[x^{10}(s):y^{10}(s):z^5(s)].
\]    
\end{lemma}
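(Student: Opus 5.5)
The plan is to read off the $10$-th Veronese subring directly from the module decomposition \eqref{eqsectionringX} in Lemma \ref{lem: listinvform}, and then identify the Iitaka fibration with the map defined by $|10qK_X|$ for suitable $q$.

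\textbf{Step 1: the Veronese subring.} By Lemma \ref{lem: listinvform}, $R(X,K_X)=\bigoplus_{s=0}^9 h_sB$ with $B=\C[x^{10},y^{10},z^5]$. Here every generator of $B$ has canonical degree $10$: $x^{10},y^{10}$ have weight $10$, and $z^5$ has weight $2\cdot 5=10$. Consequently an element of $h_sB$ has degree $\equiv\deg_K h_s \pmod{10}$.

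By the table \eqref{lisths}, $\deg_K h_s\equiv s\pmod{10}$. Hence the only summand contributing to degrees divisible by $10$ is $h_0B=B$. This gives $\bigoplus_q H^0(X,10qK_X)=B$, with $H^0(X,10qK_X)=B_q$, the space of degree-$q$ polynomials in the three algebraically independent elements $x^{10},y^{10},z^5$.

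Algebraic independence holds because these are distinct monomials in independent coordinates of $\Pp(1,1,2,5)$ restricted to $T$. The relation $w^2=z^5+x^{10}+y^{10}$ involves $w$, so it gives no relation among them. This proves \eqref{eq10qKX}.

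\textbf{Step 2: the Iitaka fibration.} For sufficiently divisible $k$, the Iitaka model is $\operatorname{Proj}\bigoplus_l H^0(X,lkK_X)$ (as in the proof of Lemma \ref{lem: reducemKbirattoS/G}(1)). Taking $k=10$, this is $\operatorname{Proj}\C[x^{10},y^{10},z^5]=\Pp^2$ with standard grading.

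Already $|10K_X|=\langle x^{10},y^{10},z^5\rangle$ defines the rational map $[s,e]\mapsto[x^{10}(s):y^{10}(s):z^5(s)]$. This map is well defined on all of $X$, because $x^{10},y^{10},z^5$ have no common zero on $T$: if $x=y=z=0$ then $w=0$ too. It is therefore a morphism $X\to\Pp^2$, dominant since $\dim=\kappa=2$. Moreover $|10qK_X|$ is exactly the $q$-th Veronese of this system for every $q$, so all the maps $\Phi_{|10qK_X|}$ coincide up to the Veronese embedding.

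\textbf{Step 3: connected fibers.} It remains to check that $f$ is the Iitaka fibration, i.e.\ that $f$ has connected fibers (equivalently, the image is normal and the base is not a finite cover). This follows because the section ring in degrees $10\Z$ equals the full polynomial ring, so $\C(X)$ contains $\C(\Pp^2)$ algebraically closed in it: $\operatorname{Proj}$ of the Veronese subring of $R(X,K_X)$ is the Iitaka base itself. Alternatively, the base is $T/G$ by Lemma \ref{lem: reducemKbirattoS/G}(1), and one checks directly that $T\to\Pp^2$ via $(x^{10},y^{10},z^5)$ has degree $200=2\cdot 100$. The factor $2$ comes from $w\mapsto -w$, which is realized by the element of $G$ acting via $(\zeta^5,\zeta^5)$ (up to weighted rescaling), so $T/G\to\Pp^2$ is birational and finite, hence an isomorphism by normality.

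The main subtle point is this last degree count, i.e.\ confirming that the involution $w\mapsto -w$ lies in $G$ via the weighted $\C^*$-scaling $\lambda=-1$. Everything else is immediate from Lemma \ref{lem: listinvform}.
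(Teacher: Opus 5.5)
Your main argument (Steps 1--2, and the first justification in Step 3) is correct and is essentially the paper's proof. You read off $R(X,K_X)^{(10)}=B$ from Lemma~\ref{lem: listinvform} and then identify the Iitaka base with $\operatorname{Proj}$ of this Veronese subring. The paper does the same, simply citing Birkar--Zhang for the fact that $|10qK_X|$ defines the Iitaka fibration for $q$ large and divisible. Your remarks that $|10K_X|$ is base point free and that $|10qK_X|$ is its $q$-th Veronese are harmless additions.

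The ``alternative'' check in Step 3, which you single out as the subtle point, contains a wrong count. In fact $\deg(T\to\Pp^2)=(10K_T)^2=100$, not $200$. Over a general point $[1:b:c]$ you may normalize $x=1$, and then there are $10$ choices of $y$, $5$ of $z$ and $2$ of $w$. So the degree is $50=\deg(\Pp(1,1,2)\to\Pp^2)$ times the factor $2$ coming from $w\mapsto -w$. Since that involution is the element $(5,5)\in G$, $G$ acts simply transitively on general fibers and $T/G\to\Pp^2$ is indeed birational. With your figure of $200$ and $|G|=100$ acting faithfully, you would instead get $\deg(T/G\to\Pp^2)=2$, contradicting your own conclusion. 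The main route does not depend on this, but the alternative as written should be corrected.
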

\begin{proof}
Equation \eqref{eq10qKX} follows from Lemma \ref{lem: listinvform}.

For $q$ sufficiently large and divisible, $|10qK_X|$ defines an Iitaka fibration by \cite[Theorem 1.2]{BirkarZhang16}.
So the Iitaka base is $\Pp^2$ by  \eqref{eq10qKX}.   
\end{proof}

\begin{proposition}\label{prop: 31KdoesnotdefIitfib}
In fact, $|mK_X|$ gives an Iitaka fibration, if and only if
\[
 m\in
 \{10,18,19,20,24,25,26,27,28,29,30\}
 \cup\{m\in\mathbb Z:m\geq32\}.
\]    
\end{proposition}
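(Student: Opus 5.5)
By Lemma \ref{lem: iitakabaseX}, the Iitaka base of $X$ is $\Pp^2$ with function field $\C(u,v)$, where $u=x^{10}/z^5$ and $v=y^{10}/z^5$. By Lemma \ref{lem: reducemKbirattoS/G}(2)(a), $|mK_X|$ defines the Iitaka fibration exactly when the ratios of elements of $H^0(X,mK_X)$ generate $\C(u,v)$. The invariant field $\C(T)^G$ equals $\C(u,v)$, since $[\C(T):\C(u,v)]=200=2\cdot 100$ accounts for $w$ together with the degree-$100$ cover. The plan is therefore to use Lemma \ref{lem: listinvform} to describe $H^0(X,mK_X)$ explicitly for each $m$ and then decide, degree by degree, whether the ratios generate.

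For each residue $\bar m = m \bmod 10$, Lemma \ref{lem: listinvform} gives
\[H^0(X,mK_X)=h_{\bar m}\cdot B_{m-\deg h_{\bar m}},\]
where $B_k$ is the span of the monomials $x^{10a}y^{10b}z^{5c}$ with $10(a+b+c)=k$. This space is zero if $m<\deg h_{\bar m}$. Otherwise, setting $k=(m-\deg h_{\bar m})/10$, the ratios of its sections are exactly the ratios of degree-$k$ monomials in the three coordinates $X_0=x^{10}$, $X_1=y^{10}$, $X_2=z^5$ of $\Pp^2$. These ratios generate $\C(u,v)$ if and only if $k\geq 1$, since $|\OO_{\Pp^2}(k)|$ is then very ample. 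For $k=0$ the space is one-dimensional and the map is constant.

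The criterion is thus: $|mK_X|$ defines the Iitaka fibration if and only if $m\geq \deg h_{\bar m}+10$, using the degrees $\deg h_s=0,31,22,23,14,15,16,17,8,9$ for $s=0,\dots,9$. Checking residues for $m\le 31$:
\begin{itemize}
\item residue $0$: need $m\ge 10$, giving $10,20,30$;
\item residue $8$: need $m\ge 18$, giving $18,28$;
\item residue $9$: need $m\ge 19$, giving $19,29$;
\item residue $4$: need $m\ge 24$;
\item residue $5$: need $m\ge 25$;
\item residue $6$: need $m\ge 26$;
\item residue $7$: need $m\ge 27$;
\item residues $2,3$: need $m\ge 32,33$, so none;
\item residue $1$: need $m\ge 41$, so $m=31$ fails.
\end{itemize}
For $m\geq 32$, each residue meets its threshold except residue $1$ at $m=31$, and residues $2,3$ first qualify exactly at $32,33$. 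Every $m\ge 32$ therefore works, which gives the stated set.

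The main obstacle is the direct-sum decomposition $R(X,K_X)=\bigoplus_s h_sB$ in Lemma \ref{lem: listinvform}, in particular that $h_s$ is the unique lowest-degree generator in each residue class. I will take this lemma as given. I also need to confirm that each $h_s$ is a nonzero section modulo the relation $w^2=F$, which holds because each $h_s$ has $w$-degree at most $1$.
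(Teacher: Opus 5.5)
Your proposal is correct and is essentially the paper's proof. Both arguments identify the Iitaka fibration with $[x^{10}:y^{10}:z^5]$ via Lemma \ref{lem: iitakabaseX}, and use the decomposition $R(X,K_X)=\bigoplus_s h_sB$ of Lemma \ref{lem: listinvform} to reduce to the criterion $m-\deg h_{\bar m}\in 10\mathbb{Z}_{\geq 1}$. The set is then read off from the table.

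One side remark is wrong, though nothing depends on it: $[\C(T):\C(u,v)]$ is $100$, not $200$. The sign change of $w$ is already realized by the element $(5,5)\in G$, equivalently by $\lambda=-1$ in the weighted $\C^*$-action on $\Pp(1,1,2,5)$. If the degree really were $200$, you would get $[\C(T)^G:\C(u,v)]=2$, which contradicts the equality you are trying to justify. Drop that sentence and take $\C(T)^G=\C(u,v)$ from Lemma \ref{lem: iitakabaseX} together with Lemma \ref{lem: reducemKbirattoS/G}(1).
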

\begin{proof}
By Lemma \ref{lem: iitakabaseX}, the Iitaka fibration of $X$ is determined by degree $10$ monomials $x^{10},y^{10},z^5$.
By Lemma \ref{lem: listinvform}, $|mK_X|$ defines the Iitaka fibration, if and only if there is $h_s$ in list \eqref{lisths} such that $m-\deg_K h_s=10k$ for some $k\in\N_{\geq 1}$.
Then the proof is completed by list \eqref{lisths}.
\end{proof}

\numberwithin{equation}{section}

\subsection{Sharpness of $14K$ in the Albanese-factoring case}
The following example shows the sharpness of Theorem \ref{mainthm: eff-iitaka} (2).
\begin{example}\label{ex: 14Kissharp}
    We construct a smooth projective threefold $X$ of Kodaira dimension two, which is birational to \[(D \times W)/G,\] where $D$ is a curve of genus $2$, $W$ is an elliptic modular surface with $\kappa(W)=1$ and $G$ acts diagonally. Moreover, the Iitaka fibration of $X$ is not generically isotrivial and $\lvert 13K_X \rvert$ does not define the Iitaka fibration.

    Let $D$ be the smooth projective model of \[y^2=x^6-1,\] which has genus two. Define $G$ as \[<\sigma, \tau>\cong \mathbb{Z}/6\mathbb{Z}\oplus \mathbb{Z}/2\mathbb{Z},\; \sigma(x,y)=(\zeta_6 x, y), \; \tau(x,y)=(x,-y),\] where $\zeta_6$ is a primitive sixth root of unity. 

    Let \[\Gamma(6):=\{\gamma\in \operatorname{SL}_2(\mathbb{Z})\vert \gamma\equiv \operatorname{Id}\; (\operatorname{mod}6)\}\] be the principal congruence subgroup of level $6$. According to \cite[Definition 4.1]{Shioda72}, one can obtain an elliptic surface fibration \[\pi: W\to B\] attached to $\Gamma(6)$. By \cite[Theorem 5.5]{Shioda72}, the group of its sections is isomorphic to $\mathbb{Z}/6\mathbb{Z}\oplus \mathbb{Z}/6\mathbb{Z}$. Let $P$ and $Q$ be two generators. By \cite[Example 5.4]{Shioda72}, the genus of $B$ is equal to one and $\pi$ has exactly $12$ singular fibers, all of type $I_6$. \cite[Theorem 6.10, Corollary 6.11]{SchuettShioda2010} give 
    
    \begin{equation} \label{eqchiW6}
        e(W)=72, \qquad \chi(\OO_W)=\frac{e(W)}{12}=6.
    \end{equation}
    
    By \cite[Section 7.6]{SchuettShioda2010}, $P$ and $Q$ give automorphisms $t_P$ and $t_Q$ of $W$ preserving $B$ and both have order $6$. Thus, we can give an embedding of $G\hookrightarrow \operatorname{Aut}(W)$ by 
    \[\sigma\mapsto t_P, \; \tau \mapsto t_{3Q}.\]

    Hence, $X_0:=(D\times W)/G$ is a normal projective threefold. Choose a resolution $X\to X_0$ that is an isomorphism over the smooth locus. By \cite[Lemma 2.1]{MirandaPersson1989}, every non-identity element of $G$ has finitely many fixed points on $D\times W$ so $p: D\times W\to (D\times W)/G$ is \'{e}tale in codimension one. Thus, we have $p^\ast K_{X_0}=K_{D\times W}$ and $K_{X_0}$ is nef. By Reid-Tai Criterion in \cite[(4.11)]{Reid1987}, one can check that $(D\times W)/G$ has terminal cyclic quotient singularities. 
\end{example}
\begin{lemma}\label{lem: h0(D)^G}
    The quotient map $\tau: D\to D/G$ identifies $D/G$ with $\Pp^1$. Moreover, for every $m\geq 1$, 
    \[H^0(D, mK_D)^G\cong H^0(\Pp^1, \omega_{\Pp^1}^{\otimes m}(\lfloor \frac{5m}{6}\rfloor [0]+\lfloor\frac{m}{2} \rfloor [1]+\lfloor \frac{5m}{6} \rfloor [\infty])).\]
    In particular, its dimension is equal to $\max\{d_m+1, 0\}$, where \[d_m=-2m+\lfloor \frac{m}{2} \rfloor +2\lfloor \frac{5m}{6} \rfloor.\]
\end{lemma}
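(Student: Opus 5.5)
The plan is to compute the quotient map $\tau\colon D\to D/G$ explicitly and then use the standard description of invariant pluricanonical forms on a cyclic/abelian cover of $\Pp^1$. Concretely, $G=\langle\sigma,\tau\rangle$ acts on $D\colon y^2=x^6-1$ and the invariants of $\C(D)$ are generated by $u:=x^6$, since $\tau$ kills $y$ and $\sigma$ fixes $y$ but multiplies $x$ by $\zeta_6$. As $|G|=12=[\C(D):\C(x^6)]$ (degree $2$ for $y$ over $\C(x)$ and $6$ for $x$ over $\C(x^6)$), we get $\C(D)^G=\C(u)$, so $D/G\cong\Pp^1$ with coordinate $u$.

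Next I will determine the branch data via Riemann--Hurwitz, $2=12(-2+\sum(1-1/e_i))$, so $\sum(1-1/e_i)=13/6$. Locally I expect:
\begin{itemize}
\item over $u=0$, the six points $x=0$, $y=\pm i$ have stabilizer of order $6$, with $x$ a local parameter and $\sigma$ acting by $\zeta_6$, so $e=6$;
\item over $u=1$, the points $x^6=1$, $y=0$ are fixed by $\tau$ ($y$ is a local parameter), so $e=2$;
\item over $u=\infty$, the two points at infinity have stabilizer of order $6$, so $e=6$.
\end{itemize}
Indeed $5/6+1/2+5/6=13/6$, which confirms the data $(6,2,6)$ at $(0,1,\infty)$.

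Finally I will use the standard fact (valid since $D\to\Pp^1$ is a Galois quotient by a finite group): for a Galois cover $\tau$ with branch indices $e_p$, the invariant $m$-canonical forms are
\[
H^0(D,mK_D)^G\cong H^0\Bigl(\Pp^1,\ \omega_{\Pp^1}^{\otimes m}\Bigl(\textstyle\sum_p\lfloor m(1-1/e_p)\rfloor p\Bigr)\Bigr),
\]
because $\tau^*(K_{\Pp^1}+\sum(1-1/e_p)p)=K_D$ and an invariant rational $m$-form $\tau^*\eta$ is regular iff $\operatorname{ord}_p\eta\ge -m(1-1/e_p)$. Plugging in $e=(6,2,6)$ gives coefficients $\lfloor 5m/6\rfloor,\lfloor m/2\rfloor,\lfloor 5m/6\rfloor$. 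The resulting line bundle on $\Pp^1$ has degree $d_m=-2m+\lfloor m/2\rfloor+2\lfloor 5m/6\rfloor$, so $h^0=\max\{d_m+1,0\}$.

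The main obstacle is correctly identifying the stabilizers at the points at infinity, where one must use the local chart $x=1/t$, $y=x^3v$ with $v^2=1-t^6$. There $\tau$ swaps $v=\pm1$, and $\tau\sigma^3$ does as well, so the stabilizer of each point at infinity is the order-$6$ subgroup $\langle\sigma\rangle$. The Riemann--Hurwitz check guards against mistakes here.
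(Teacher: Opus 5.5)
Your route is the same as the paper's: identify $\C(D)^G=\C(x^6)$, find the branch data $(6,2,6)$ over $(0,1,\infty)$, and use $K_D=\tau^*(K_{\Pp^1}+\Delta)$ to see that a $G$-invariant $m$-form $\tau^*\eta$ is regular exactly when $\operatorname{ord}_p\eta\geq -\lfloor m(1-1/e_p)\rfloor$. You then finish with Riemann--Roch on $\Pp^1$.

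Two details in your branch computation are wrong, although neither changes the indices:
\begin{itemize}
\item Over $u=0$ there are only two points $(0,\pm i)$, not six.
\item At infinity, since $v=y/x^3$ and $\zeta_6^3=-1$, the automorphism $\sigma$ acts by $(t,v)\mapsto(\zeta_6^{-1}t,-v)$. So $\sigma$ swaps the two points at infinity, while $\tau\sigma^3$ fixes them. The stabilizer is therefore $\langle\sigma\tau\rangle$, not $\langle\sigma\rangle$; it still has order $6$.
\end{itemize}

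A simpler check: $1/u=t^6$, with $t$ a local parameter at $v=\pm1$, gives $e_\infty=6$ directly. Your Riemann--Hurwitz count, together with freeness of the action over $u\neq 0,1,\infty$, also forces $e_\infty=6$.
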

\begin{proof}
One can easily check that $\mathbb{C}(D)^G=\mathbb{C}(x^6)$ and the branch values are $0,1,\infty$, with ramification indices $6, 2, 6$, respectively. 

By the Hurwitz formula, we have
\[K_D=\tau^\ast(K_{\mathbb P^1}+\Delta),\quad \Delta:=\frac{5}{6}[0]+\frac{1}{2}[1]+\frac{5}{6}[\infty]. \]
Thus,
\[\begin{aligned}
\bigl(\tau_\ast\OO_D(mK_D)\bigr)^G &\cong \left(\tau_\ast\OO_D\left(\tau^\ast \left(mK_{\mathbb P^1}+\frac{5m}{6}[0]+\frac{m}{2}[1]+\frac{5m}{6}[\infty]\right)\right)\right)^G\\
&\cong\OO_{\Pp^1}\left(mK_{\mathbb P^1}+\left\lfloor\frac{5m}{6}\right\rfloor[0]+\left\lfloor\frac{m}{2}\right\rfloor[1]+\left\lfloor\frac{5m}{6}\right\rfloor[\infty]\right).
\end{aligned}\]
Taking global sections, we obtain
\[H^0(D,mK_D)^G\cong H^0\left(\Pp^1, \OO_{\Pp^1}\left(mK_{\Pp^1}+\left\lfloor\frac{5m}{6}\right\rfloor[0]+\left\lfloor\frac{m}{2}\right\rfloor[1]+\left\lfloor\frac{5m}{6}\right\rfloor[\infty]\right)\right).\]
The divisor on the right has degree
\[d_m=-2m+\left\lfloor\frac{m}{2}\right\rfloor+2\left\lfloor\frac{5m}{6}\right\rfloor\]
and we conclude by the Riemann-Roch theorem.
\end{proof}
\begin{lemma}\label{lem:h^0(W)^G}
    For every $m\geq 1$, $G$ acts trivially on $H^0(W, mK_W)$.
\end{lemma}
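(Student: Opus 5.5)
The plan is to reduce the claim to a statement about the tangent action at a single fixed point. Every $m$-canonical form on $W$ is pulled back from the base $B$, and translations act trivially on the local fiber coordinate $dz$ along the zero section.

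First, use the elliptic structure $\pi: W\to B$. Here $g(B)=1$, $\chi(\OO_W)=6$, and there are no multiple fibers, since $\pi$ has a section. The canonical bundle formula gives
\[
K_W=\pi^\ast(K_B+\mathbb L),\qquad \mathbb L:=(R^1\pi_\ast\OO_W)^{\vee},\qquad \deg \mathbb L=6 .
\]
Since $\pi$ is a fibration, we get
\[
H^0(W,mK_W)=\pi^\ast H^0\bigl(B,(K_B+\mathbb L)^{\otimes m}\bigr)=\pi^\ast H^0(B,\mathbb L^{\otimes m}),
\]
using $K_B\sim 0$. Concretely, $\mathbb L\cong \pi_\ast\omega_{W/B}$ is identified with $0^\ast\omega_{W/B}$, the conormal bundle of the zero section $O$. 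A local generator is the invariant differential $dz$ on the fibers. After choosing a trivialization $\eta$ of $\omega_B$, every $m$-canonical form on $W$ can be written as $\pi^\ast(s)\,(\eta\wedge dz)^{\otimes m}$ with $s$ a section of $\mathbb L^{m}$ on $B$.

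Second, analyze how $t_P$ and $t_{3Q}$ act. These are translations by sections in the Mordell--Weil group, so they commute with $\pi$ and induce the identity on $B$: Shioda's construction gives fiberwise translations over the base of the modular surface. On each smooth fiber a translation preserves the invariant differential. Hence the induced action on $\pi_\ast\omega_{W/B}=\mathbb L$ is trivial on the generic fiber, and therefore trivial as a bundle automorphism. Since the action on $B$ is trivial, $\pi^\ast\eta$ is also preserved. So every form $\pi^\ast(s)(\eta\wedge dz)^{m}$ is $G$-invariant, which is the claim.

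The main obstacle is making the second step rigorous across the singular $I_6$ fibers. One must check that the fiberwise action on $\omega_{W/B}$ extends with scalar $1$, and not merely with some constant. The first argument I would try is that the automorphism of the line bundle $\mathbb L$ over the compact curve $B$ is a global nowhere-zero function, hence a constant. That constant can be computed at a general point of $B$, where it equals $1$ because translations fix $dz$.

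As an alternative check, use Lefschetz. Each $t_P$ with $P\neq O$ is fixed-point free on smooth fibers and has only finitely many fixed points, by \cite[Lemma 2.1]{MirandaPersson1989}. Holomorphic Lefschetz on $H^0(K_W)$ then constrains the trace, which is a consistency check when $m=1$. For general $m$ the constant-function argument suffices. It also covers $\sigma$ and $\tau$ mapping to $t_P$ and $t_{3Q}$, since they generate $G$.
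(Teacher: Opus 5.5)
Your proposal is correct and is essentially the paper's argument: $G$ acts trivially on $B$ and by fiberwise translations, and translations fix the invariant differential, so every $m$-canonical form (a form pulled back from the base times $(dz)^{\otimes m}$) is invariant. The paper sidesteps your ``main obstacle'' at the $I_6$ fibers by using the injection $H^0(W,mK_W)\hookrightarrow H^0(W_\eta,\omega_{W_\eta}^{\otimes m})\otimes(\Omega^1_{\C(B)/\C})^{\otimes m}$ into the generic fiber and citing translation-invariance of the invariant differential there; your constant-automorphism argument on the line bundle $\mathbb L$ does the same job, and the Lefschetz remark is not needed.
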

\begin{proof}
    Let $W_\eta$ be the generic fiber of $\pi$. We have 
    \[H^0(W, mK_W)\hookrightarrow H^0(W_\eta, (mK_W)\vert_{W_{\eta}})=H^0(W_\eta, \omega_{W_{\eta}}^{\otimes m})\otimes (\Omega^1_{\mathbb{C}(B)/\mathbb{C}})^{\otimes m}.\]
    Thus, we conclude by \cite[Chapter III, Proposition 5.1]{Silverman2009}.
\end{proof}
\begin{lemma}\label{lem: h0=G+h}
    For $m\geq 1$, there is a natural identification \[H^0(X, mK_X)\cong H^0(D, mK_D)^G \otimes H^0(B, L^{\otimes m}), \]
    where $L:=\pi_\ast (\omega_{W/B})$.
\end{lemma}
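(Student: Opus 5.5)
The identification should follow from the fact that $X$ is a resolution of $X_0=(D\times W)/G$ with only terminal (in particular canonical) quotient singularities. Plurigenera are birational invariants, and $K_{X_0}$ pulls back to $K_{D\times W}$ because $p$ is étale in codimension one. So I would first reduce to
\[H^0(X,mK_X)\cong H^0(X_0,mK_{X_0})\cong H^0(D\times W, mK_{D\times W})^G .\]
For the first isomorphism I use that $X_0$ has canonical singularities, so sections of $mK$ extend over the resolution without poles. For the second I use that $p$ is quasi-étale, so $\OO_{X_0}(mK_{X_0})=(p_\ast\OO(mK_{D\times W}))^G$ as reflexive sheaves. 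The $G$-action on $m$-canonical forms is the natural pullback action; since the action is diagonal, no sign twist appears.

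Next I apply the Künneth formula: $H^0(D\times W,mK_{D\times W})=H^0(D,mK_D)\otimes H^0(W,mK_W)$, and this decomposition is $G$-equivariant for the diagonal action. By Lemma~\ref{lem:h^0(W)^G}, $G$ acts trivially on the second factor. The $G$-invariants of the tensor product are therefore exactly $H^0(D,mK_D)^G\otimes H^0(W,mK_W)$. (In general one would need to pair characters on the two factors; triviality on $W$ removes this issue.)

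The remaining step, which I expect to be the only substantive one, is to show $H^0(W,mK_W)\cong H^0(B,L^{\otimes m})$ with $L=\pi_\ast\omega_{W/B}$. Here $\pi:W\to B$ is a relatively minimal elliptic surface over an elliptic curve $B$. It has a section, so there are no multiple fibres, and its singular fibres are all of type $I_6$. The canonical bundle formula then gives
\[\omega_W\cong \pi^\ast(\omega_B\otimes L)=\pi^\ast L,\]
using $\omega_B\cong\OO_B$. Here $L$ is the dual of $R^1\pi_\ast\OO_W$ and has degree $\chi(\OO_W)=6$ by \eqref{eqchiW6}. Hence $mK_W=\pi^\ast L^{\otimes m}$, and the projection formula together with $\pi_\ast\OO_W=\OO_B$ gives $H^0(W,mK_W)=H^0(B,L^{\otimes m})$.

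All these isomorphisms are canonical, namely pullbacks and the Künneth product, so composing them gives the natural identification claimed. I would close the proof by noting that $\deg L^{\otimes m}=6m>0$ on the elliptic curve $B$, so $h^0(B,L^{\otimes m})=6m$. This is the dimension count that will be used in the subsequent analysis of $|13K_X|$.
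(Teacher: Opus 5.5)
Your proposal is correct and follows the paper's proof essentially step by step. You use canonical singularities and quasi-\'etaleness of $D\times W\to X_0$ to reduce to $H^0(D\times W,mK_{D\times W})^G$, then the K\"unneth formula together with Lemma~\ref{lem:h^0(W)^G}, and finally the canonical bundle formula $\omega_W\cong\pi^\ast(\omega_B\otimes L)\cong\pi^\ast L$, valid because there are no multiple fibres and $\omega_B$ is trivial, giving $H^0(W,mK_W)\cong H^0(B,L^{\otimes m})$ of dimension $6m$.
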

\begin{proof}
Note that $L$ is a line bundle. By canonical bundle formula and \eqref{eqchiW6}, $\operatorname{deg}L=\chi(\OO_W)=6$. We have $\omega_W\cong \pi^\ast (\omega_{B}\otimes L)\cong \pi^\ast L$, since $\pi$ has no multiple fibers. Thus, for every $m\geq 1$, \[H^0(W, mK_W)\cong H^0(B, L^{\otimes m}), \qquad P_m(W)=6m.\]

    Since $D\times W\to X_0$ is \'{e}tale in codimension one and $X_0$ has canonical singularities, we have \[H^0(X,mK_X)\cong H^0(X_0, \omega_{X_0}^{[m]})\cong H^0(D\times W, mK_{D\times W})^G,\] which is isomorphic to \[H^0(D, mK_D)^G \otimes H^0(W, mK_W)\cong H^0(D, mK_D)^G \otimes H^0(B, L^{\otimes m})\]
    by Lemma \ref{lem:h^0(W)^G}.
\end{proof}
\begin{proposition}
    There is a factorization \[X\xrightarrow{f} \Pp^1\times B \xrightarrow{\operatorname{pr_2}} B,\]
     such that $f$ is birationally equivalent to the Iitaka fibration and $a:=\operatorname{pr}_B \circ f$ is the Albanese map of $X$. 

     Moreover, $\lvert 13K_X\rvert $ does not define the Iitaka fibration.
\end{proposition}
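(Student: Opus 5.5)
The plan is to read everything off the product decomposition of the pluricanonical ring in Lemma \ref{lem: h0=G+h}. For $m\geq 1$ we have
\[H^0(X,mK_X)\cong H^0(D,mK_D)^G\otimes H^0(B,L^{\otimes m}).\]
Lemma \ref{lem: h0(D)^G} identifies the first factor with $H^0(\Pp^1,\OO_{\Pp^1}(d_m))$, viewed as sections of a divisor on $D/G\cong\Pp^1$. The second factor consists of sections of a degree-$6m$ line bundle on the elliptic curve $B$. Both identifications are compatible with multiplication. So for sufficiently divisible $m$ (say $6\mid m$, where $d_m=m/3-\dots>0$ grows linearly), the map $\Phi_{|mK_X|}$ is the composition
\[X\dashrightarrow X_0=(D\times W)/G\to D/G\times W/G.\]
The action of $G$ on $W$ is by translations along the fibres of $\pi$, so $\pi$ descends to $W/G\to B$. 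The pluricanonical map therefore factors through $D/G\times B=\Pp^1\times B$, followed by the Segre-type embedding given by the complete linear systems $|\OO(d_m)|$ and $|L^{\otimes m}|$, which is an embedding once $d_m\geq 1$ and $6m\geq 3$. We define $f$ as the composite $X\to X_0\to\Pp^1\times B$. Its general fibre is $(\{pt\}\times W_b)$ modulo the stabilizer, i.e.\ an elliptic curve, so $f$ is birational to the Iitaka fibration ($\kappa=2$ by the growth $h^0\sim c\,m^2$).

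For the Albanese statement, I compute $q(X)$. Since $X_0$ has rational singularities, $q(X)=h^1(\OO_{D\times W})^G=h^1(\OO_D)^G+h^1(\OO_W)^G$.

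\textbf{First term.} $h^1(\OO_D)^G=g(D/G)=0$.

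\textbf{Second term.} $h^1(\OO_W)=q(W)=g(B)=1$, since $\chi(\OO_W)=6$ and $p_g(W)=6$ give $q=1$, and $H^1(\OO_W)=\pi^*H^1(\OO_B)$. Translations act on this space compatibly with the trivial action on $B$, so $G$ fixes it.

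Hence $q(X)=1$, and the composite $X\to B$ is a morphism to an elliptic curve with connected fibres that induces an isomorphism on $H^1(\OO)$. By the universal property it is the Albanese map, up to translation, and $a=\operatorname{pr}_2\circ f$.

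For the failure at $m=13$, I compute $d_{13}=-26+6+2\cdot10=0$. So $H^0(D,13K_D)^G$ is one-dimensional, and $|13K_X|$ equals a fixed section times $\operatorname{pr}_2^*|L^{\otimes13}|$. Its image is the curve $B$ (embedded by a degree-$78$ bundle), which has dimension $1<2=\kappa(X)$. Hence it cannot be birational to the Iitaka base $\Pp^1\times B$.

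The main obstacle I expect is justifying that the tensor decomposition of sections respects the map structure, i.e.\ that $\Phi_{|mK_X|}$ is exactly the product of the maps given by the two factors. I would handle this by working on $D\times W$, where sections of $mK_{D\times W}$ are sums of products $s\boxtimes t$. The $G$-invariant ones are $s\boxtimes t$ with $s$ invariant, because $G$ acts trivially on the $W$-factor by Lemma \ref{lem:h^0(W)^G}. The rational map defined by all products $s_i\boxtimes t_j$ is the Segre composite of the factor maps. A secondary point is checking that the $t_j$ descend through $W\to B$, which is exactly the isomorphism $H^0(W,mK_W)\cong H^0(B,L^{\otimes m})$ in the proof of Lemma \ref{lem: h0=G+h}.
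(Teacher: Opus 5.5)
Your proposal is correct and follows essentially the same route as the paper. Like the paper, you use the invariant-section decomposition of Lemma \ref{lem: h0=G+h} to show that $\Phi_{|mK_X|}$ is $f$ followed by the map given by $\OO_{\Pp^1}(d_m)\boxtimes L^{\otimes m}$, obtain $q(X)=1$ from K\"unneth with $h^1(\OO_D)^G=0$ and a $G$-fixed $H^1(\OO_W)=\pi^*H^1(\OO_B)$, and use $d_{13}=0$ to collapse $\Phi_{|13K_X|}$ onto the Albanese curve. The differences are cosmetic: you identify $f$ with the Iitaka fibration via divisible $m$ rather than all $m\geq 14$, you derive $q(W)=1$ from $\chi(\OO_W)=p_g(W)=6$ instead of citing Sch\"utt--Shioda, and you have a harmless slip, since for $6\mid m$ one actually has $d_m=m/6$.
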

\begin{proof}
    Since $(D\times W)/G$ is terminal, it has rational singularities so \[H^1(X, \OO_X)\cong H^1((D\times W)/G, \OO_{(D\times W)/G}).\] By K\"unneth formula, we have \[ H^1(X,\OO_X)\cong H^1(D\times W, \OO_{D\times W})^G\cong H^1(D, \OO_D)^G\oplus H^1(W, \OO_W)^G,\]
    where the first summand is isomorphic to $H^1(D/G, \OO_{D/G})=0$.
    By \cite[Theorem 6.12]{SchuettShioda2010}, pullback induces an isomorphism \[\pi^\ast: H^1(B, \OO_B)\xrightarrow{\sim} H^1(W, \OO_W).\] For every $g\in G$, we have $\pi \circ g=\pi$ so $g^\ast \circ \pi^\ast =\pi^\ast $. Hence, $g$ acts trivially on $H^1(W, \OO_W)$ and $H^1(W,\OO_W)^G$ is isomorphic to $\mathbb{C}$, which implies that $q(X)=1$.

    Note that $f$ is induced by the $G$-invariant map $D\times W \to \Pp^1\times B$. By the construction $f$ has connected fibers so the composition $X\to B$ is also a fibration. Note that $B$ and $\operatorname{Alb}(X)$ are both elliptic curves. By the universal property of Albanese map and the connectedness of the fibers of $X\to B$, $\operatorname{pr}_B \circ f$ must be the Albanese map.

    By Lemma \ref{lem: h0=G+h} and Lemma \ref{lem: h0(D)^G}, if $d_m\geq 0$, the $m$-canonical map $\varphi_{\lvert mK_X\rvert}$ is the composition of $f$ and the map of $\Pp^1\times B$ defined by $\OO_{\Pp^1}(d_m)\boxtimes L^{\otimes m}$. 

    For $m=13$, we obtain $d_{13}=0$ and $P_{13}=78$. The first factor of $\OO_{\Pp^1}(d_m)\boxtimes L^{\otimes m}$ is trivial and the second factor is very ample on the elliptic curve. Thus, $\varphi_{\lvert 13K_X\rvert}$ is just $a=\operatorname{pr}_B\circ f$.

    For $m\geq 14$, we obtain $d_{m}\geq 1$ and $\operatorname{deg}L^{\otimes m}=6m$. Thus, $\OO_{\Pp^1}(d_m)\boxtimes L^{\otimes m}$ is very ample and $\varphi_{\lvert mK_X\rvert}$ is just $f$. Thus, $f$ is birationally equivalent to the Iitaka fibration.
\end{proof}

\addtocontents{toc}{\protect\setcounter{tocdepth}{2}}
\section{Proof of the main theorems}
In this section, we prove Theorems \ref{mainthm: eff-iitaka}, \ref{mainthm: iitaka-vol}, \ref{mainthm: eqcaseiveff} and \ref{mainthm:translations}.

\begin{proof}[Proof of Theorem {\ref{mainthm: eff-iitaka}}]
If the Albanese morphism of $X$ factors through the Iitaka fibration, then $|mK_X|$ defines the Iitaka fibration for $m\geq 14$ by Theorem \ref{thm: 14K}.

If the Albanese morphism of $X$
does not factor through its Iitaka fibration, $X$ is generically isotrivial by Corollary \ref{cor: chargeneisotrivial3fold} $(2)$.
Moreover, $G$ is abelian, and it acts on $E$ by translations by Corollary \ref{cor: chargeneisotrivial3fold} (3).

Hence $X$ satisfies all the assumptions in Theorem \ref{thm: geneisotrivialeffIitakafib} and $|mK_X|$ defines the Iitaka fibration for any $m\geq 32$, and $|31K_X|$ does not define the Iitaka fibration if and only if $T$ and
$G$ satisfy the conclusions in Lemma \ref{lem: whenT12G100} (2).

The sharpness of $32$ and $14$ follows from Example \ref{ex: 32Kissharp} and \ref{ex: 14Kissharp} respectively.
\end{proof}

\begin{proof}[Proof of Theorem \ref{mainthm: iitaka-vol}]
If the Albanese morphism of $X$ factors through the Iitaka fibration, then $\Iv(X)\geq \frac{1}{24}$ by Theorem \ref{thm:non-iso-vol}.

If the Albanese morphism of $X$ does not factor through the Iitaka fibration, $X\sim_{\text{bir}} (T\times E)/G$ is generically isotrivial by Corollary \ref{cor: chargeneisotrivial3fold} (2).

Thus $X$ satisfies the assumptions in Theorem \ref{thm: geneisotrivialvollowerbd} and $\Iv(X)\geq \frac{1}{100}$.

Moreover, sharpness of $\frac{1}{100}$ follows from Example \ref{ex: 32Kissharp}.
\end{proof}

\begin{proof}[Proof of Theorem {\ref{mainthm: eqcaseiveff}}]
If the Albanese map of $X$ factors through the Iitaka fibration,
then $\operatorname{Ivol}(X)\geq 1/24>1/100$ and $|31K_X|$ defines the Iitaka fibration
by Theorem \ref{mainthm: iitaka-vol} (2) 
and Theorem \ref{mainthm: eff-iitaka} (2) respectively.
Otherwise, Corollary \ref{cor: chargeneisotrivial3fold} (3) applies and we might assume $X$ is generically isotrivial.

Then Condition (1) $\Leftrightarrow$ (4) follows by Theorem \ref{thm: geneisotrivialvollowerbd} (2).
Condition  (2) $\Leftrightarrow$ (3) $\Leftrightarrow$ (4) follows by Theorem \ref{thm: geneisotrivialeffIitakafib} (2).
\end{proof}

\begin{proof}[Proof of Theorem {\ref{mainthm:translations}}]
    It follows directly from Theorem \ref{thm: geneisotrivialvollowerbd}.
\end{proof}

\bibliographystyle{alpha}
\bibliography{reference.bib}
\end{document}